	\newglossaryentry{mapctsxy}
{
    name=\ensuremath{\mathrm{Map_{cts}}(X,Y)},
    description={Set of continuous maps between the topological spaces $X$ and $Y$}, 
    sort=Mapcts
}
		\newglossaryentry{nn}
	{
	    name=\ensuremath{\mathbb{N}},
	    description={Natural numbers, starting with $1$}, 
	    sort=Na
	}
		\newglossaryentry{nnn}
	{
	    name=\ensuremath{\mathbb{N}_0},
	    description={Natural numbers, starting with $0$, i.e. $\NN_0=\NN \cup \{0\}$}, 
	    sort=Naa
	}
		\newglossaryentry{mku}
	{
	    name=\ensuremath{\mathrm{M}(K,U)},
	    description={Set of all $f \in \mapcts(X,Y)$ with $f(K) \subseteq U$, where $K \subseteq X$ compact
	    and $U \subseteq Y$ open, i.e. basis open sets for the compact open topology on $\mapcts(X,Y)$}, 
	    sort=Mapku
	}
		\newglossaryentry{homrmn}
	{
	    name=\ensuremath{\mathrm{Hom}_R(M,N)},
	    description={R-linear Homomorphisms between the $R$-modules $M$ and $N$}, 
	    sort=Homrmn,
	    symbol=\ensuremath{\Hom_{\ZZ[M]}(\ZZ,A)}
	}
		\newglossaryentry{DDM}
	{
	    name=\ensuremath{\mathcal{DIS}_{M}},
	    description={Category of discrete abelian groups with a continuous action from a topological monoid $M$
	    together with continuous group homomorphisms respecting the action from $M$}, 
	    sort=DDM
	}
		\newglossaryentry{DDG}
	{
	    name=\ensuremath{\mathcal{DIS}_{G}},
	    description={Category of discrete abelian groups with a continuous action from a profinite group $G$
	    together with continuous group homomorphisms respecting the action from $G$}, 
	    sort=DDG
	}
		\newglossaryentry{DDGM}
	{
	    name=\ensuremath{\mathcal{DIS}_{G,M}},
	    description={Category of discrete abelian groups with commuting continuous actions from a profinite group $G$
	    and a topological monoid $M$ 
	    together with continuous group homomorphisms respecting the actions from $G$ and $M$}, 
	    sort=DDGM
	}
		\newglossaryentry{AbsM}
	{
	    name=\ensuremath{\mathcal{ABS}_{M}},
	    description={Category of (abstract) abelian groups with an action from a topological monoid $M$
	    together with group homomorphisms respecting the action from $M$}, 
	    sort=ABSM
	}
		\newglossaryentry{AbsG}
	{
	    name=\ensuremath{\mathcal{ABS}_{G}},
	    description={Category of (abstract) abelian groups with an action from a profinite group $G$
	    together with group homomorphisms respecting the action from $G$}, 
	    sort=ABSG
	}
		\newglossaryentry{AbsGM}
	{
	    name=\ensuremath{\mathcal{ABS}_{GM}},
	    description={Category of (abstract) abelian groups with commuting actions from a profinite group $G$
	    and a topological monoid $M$ 
	    together with group homomorphisms respecting the actions from $G$ and $M$}, 
	    sort=ABSGM
	}
		\newglossaryentry{TopG}
	{
	    name=\ensuremath{\mathcal{TOP}_{G}},
	    description={Category of topological abelian Hausdorff groups with a continuous action from a 
	    profinite group $G$
	    together with continuous group homomorphisms respecting the action from $G$}, 
	    sort=TOPG
	}
		\newglossaryentry{TopGM}
	{
	    name=\ensuremath{\mathcal{TOP}_{G,M}},
	    description={Category of topological abelian Hausdorff groups with commuting continuous actions f
	    rom a profinite group $G$ and a topological monoid $M$ 
	    together with continuous group homomorphisms respecting the actions from $G$ and $M$}, 
	    sort=TOPGM
	}
		\newglossaryentry{adel}
	{
	    name=\ensuremath{A^\delta},
	    description={$=\bigcup_{U \leq G \mathrm{\ open}} A^{U},$
	    for an abelian group $A$ with an action from a profinite group $G$}, 
	    sort=A
	}
		\newglossaryentry{ker}
	{
	    name=\ensuremath{\mathrm{ker}(f)},
	    description={Kernel of the homomorphism $f$}, 
	    sort=Ker
	}
		\newglossaryentry{im}
	{
	    name=\ensuremath{\mathrm{im}(f)},
	    description={Image of the homomorphism $f$}, 
	    sort=Im
	}
		\newglossaryentry{coker}
	{
	    name=\ensuremath{\mathrm{coker}(f)},
	    description={Cokernel of the homomorphism $f$}, 
	    sort=Coker
	}
		\newglossaryentry{Abcat}
	{
	    name=\ensuremath{\mathbf{Ab}},
	    description={Category of abelian groups}, 
	    sort=Ab
	}
		\newglossaryentry{totabb}
	{
	    name=\ensuremath{\mathrm{Tot}(A^{\bullet,\bullet})},
	    description={Total complex of the double complex $A^{\bullet,\bullet}$}, 
	    sort=Tot
	}
		\newglossaryentry{totnabb}
	{
	    name=\ensuremath{\mathrm{Tot}^n(A^{\bullet,\bullet})},
	    description={$n$-th object of the total complex $\mathrm{Tot}(A^{\bullet,\bullet})$ of the 
	    double complex $A^{\bullet,\bullet}$}, 
	    sort=Tot
	}
		\newglossaryentry{difnabb}
	{
	    name=\ensuremath{\mathrm{d}_{\mathrm{Tot}(A^{\bullet,\bullet})}^n},,
	    description={$n$-th differential of the total complex $\mathrm{Tot}(A^{\bullet,\bullet})$ of the 
	    double complex $A^{\bullet,\bullet}$}, 
	    sort=Dif
	}
		\newglossaryentry{Ccb}
	{
	    name=\ensuremath{\mathcal{C}^{\bullet}(G,A)},
	    description={Standard complex for continuous cochain cohomology of the profinite group $G$ with
	    coefficients in $A$, cf. \cite[Chapter II \S 7, p.\,136]{nswo}}, 
	    sort=CCB
	}
		\newglossaryentry{CcbX}
	{
	    name=\ensuremath{\mathcal{C}^{\bullet}_X(G,A)},
	    description={Total complex of the double complex 
	    $\mathcal{C}^{\bullet}(G,A) \overset{X-1}{\longrightarrow} \mathcal{C}^{\bullet}(G,A)$}, 
	    sort=CCBX
	}
		\newglossaryentry{Ccbf}
	{
	    name=\ensuremath{\mathcal{C}^{\bullet}_f(G,A)},
	    description={$=\mathcal{C}^{\bullet}_X(G,A)$ where the $\mathbb{N}_0$-action on $A$ comes from
	    an endomorphism $f$ of $A$.}, 
	    sort=CCBXf
	}
		\newglossaryentry{HXGA}
	{
	    name=\ensuremath{\mathcal{H}_X^*(G,A)},
	    description={Cohomology of the complex $\mathcal{C}^{\bullet}_X(G,A)$}, 
	    sort=HXGA
	}
		\newglossaryentry{HXGAf}
	{
	    name=\ensuremath{\mathcal{H}_f^*(G,A)},
	    description={Cohomology of the complex $\mathcal{C}^{\bullet}_f(G,A)$}, 
	    sort=HXGAf
	}
		\newglossaryentry{Qp}
	{
	    name=\ensuremath{\mathbb{Q}_p},
	    description={Field of $p$-adic numbers}, 
	    sort=Qp
	}
		\newglossaryentry{Zp}
	{
	    name=\ensuremath{\mathbb{Z}_p},
	    description={Integral $p$-adic numbers}, 
	    sort=Zp
	}
		\newglossaryentry{Qpalg}
	{
	    name=\ensuremath{\overline{\mathbb{Q}_p}},
	    description={Fixed algebraic closure of $\mathbb{Q}_p$}, 
	    sort=Qpa
	}
		\newglossaryentry{Cp}
	{
	    name=\ensuremath{\mathbb{C}_p},
	    description={Completion of $\overline{\mathbb{Q}_p}$ with respect to $v_p$ with $v_p(p)=1$}, 
	    sort=Cp
	}
		\newglossaryentry{OK}
	{
	    name=\ensuremath{\mathcal{O_K}},
	    description={Ring of integers of the extension $\mathcal{K}|\mathbb{Q}_p$}, 
	    sort=OK,
	    symbol=\ensuremath{\mathcal{O}_L}
	}
		\newglossaryentry{piK}
	{
	    name=\ensuremath{\pi_\mathcal{K}},
	    description={Prime element of the finite extension $\mathcal{K}|\mathbb{Q}_p$}, 
	    sort=PiK,
	    symbol=\ensuremath{\pi_L}
	}
		\newglossaryentry{kK}
	{
	    name=\ensuremath{k_\mathcal{K}},
	    description={Residue class field of the finite extension $\mathcal{K}|\mathbb{Q}_p$}, 
	    sort=kK,
	    symbol=\ensuremath{k_L}
	}
		\newglossaryentry{Kur}
	{
	    name=\ensuremath{\mathcal{K}^{\mathrm{ur}}},
	    description={Maximal unramified extension of $\mathbb{Q}_p$ inside $\mathcal{K}$}, 
	    sort=Kur,
	    symbol=\ensuremath{L^{\mathrm{ur}}}
	}
		\newglossaryentry{GK}
	{
	    name=\ensuremath{G_{\mathcal{K}}},
	    description={Absolute Galois Group of $\mathcal{K}|\mathbb{Q}_p$}, 
	    sort=GK,
	    symbol=\ensuremath{G_{\mathbb{Q}_p}}
	}
		\newglossaryentry{WL}
	{
	    name=\ensuremath{W(\cdot)_L},
	    description={Ramified Witt vectors over $L$ with $L|\mathbb{Q}_p$ finite}, 
	    sort=WL
	}
		\newglossaryentry{Kb}
	{
	    name=\ensuremath{\mathcal{K}^{\flat}},
	    description={Tilt of the perfectoid field $\mathcal{K}\subseteq \mathbb{C}_p$}, 
	    sort=Kb
	}	
		\newglossaryentry{RX}
	{
	    name=\ensuremath{R \llbracket X_1,\dots X_n \rrbracket},
	    description={Power series ring with variables $X_1, \dots X_n$ and with coefficients in the ring $R$}, 
	    sort=RX,
	    symbol=\ensuremath{\mathcal{O} \llbracket X \rrbracket}
	}
		\newglossaryentry{GGphi}
	{
	    name=\ensuremath{\mathcal{G}_{\phi}},
	    description={Lubin-Tate formal group associated to the Frobenius power series $\phi$}, 
	    sort=Gphi
	}
		\newglossaryentry{endphi}
	{
	    name=\ensuremath{[a]_{\phi}},
	    description={Endomorphism of $\mathcal{G}_{\phi}$ associated to $a \in \mathcal{O}_L$}, 
	    sort=Gphiend
	}
		\newglossaryentry{GGnphi}
	{
	    name=\ensuremath{\mathcal{G}_{\phi,n}},
	    description={$=\mathrm{ker}([\pi_L^n]_\phi\colon \mathfrak{M} \to \mathfrak{M}) = 
	    \{ x \in \mathfrak{M} \mid [\pi_L^n]_\phi(x)=0\}$}, 
	    sort=Gphia
	}
		\newglossaryentry{TGphi}
	{
	    name=\ensuremath{\mathcal{TG}_{\phi}},
	    description={Tate module of $\mathcal{G}_{\phi}$}, 
	    sort=TGphi
	}
		\newglossaryentry{Ln}
	{
	    name=\ensuremath{L_n},
	    description={$=L(\mathcal{G}_{\phi}[\pi_L^n])$}, 
	    sort=Ln
	}
		\newglossaryentry{Lin}
	{
	    name=\ensuremath{L_{\infty}},
	    description={$=\cup_n L_n$}, 
	    sort=Lna
	}
		\newglossaryentry{Kn}
	{
	    name=\ensuremath{K_n},
	    description={$=K(\mathcal{G}_{\phi}[\pi_L^n])=KL_n$}, 
	    sort=Kn
	}
		\newglossaryentry{Kin}
	{
	    name=\ensuremath{K_{\infty}},
	    description={$=\cup_n K_n$}, 
	    sort=Kna
	}
		\newglossaryentry{GaL}
	{
	    name=\ensuremath{\Gamma_L},
	    description={$=\mathrm{Gal}(L_{\infty}|L)$}, 
	    sort=GaL
	}
		\newglossaryentry{GaK}
	{
	    name=\ensuremath{\Gamma_K},
	    description={$=\mathrm{Gal}(K_{\infty}|K)$}, 
	    sort=GaL
	}
		\newglossaryentry{HL}
	{
	    name=\ensuremath{H_L},
	    description={$=\mathrm{Gal}(\overline{\mathbb{Q}_p}|L_{\infty})$}, 
	    sort=GaL
	}
		\newglossaryentry{HK}
	{
	    name=\ensuremath{H_K},
	    description={$=\mathrm{Gal}(\overline{\mathbb{Q}_p}|K_{\infty})$}, 
	    sort=GaL
	}
		\newglossaryentry{RXl}
	{
	    name=\ensuremath{R((X_1,\dots,X_n))},
	    description={Ring of Laurent series in the variables $X_1,\dots,X_n$ with coefficients in $R$}, 
	    sort=RXl,
	    symbol=\ensuremath{\mathcal{O}_L((X))}
	}
		\newglossaryentry{AL}
	{
	    name=\ensuremath{\mathscr{A}_L},
	    description={$=\varprojlim_n \mathcal{O}_L/\pi^n_L \mathcal{O}_L ((X))$; in 
	    \hyperref[cwh]{Chapter \ref*{cwh}} we use the variable $Z$}, 
	    sort=AL
	}
		\newglossaryentry{EEL}
	{
	    name=\ensuremath{\mathbf{E}_L},
	    description={Image of the inclusion $k_L((X)) \hookrightarrow \mathbb{C}_p^{\flat}, \ X \mapsto \omega$}, 
	    sort=EEL
	}
		\newglossaryentry{EELp}
	{
	    name=\ensuremath{\mathbf{E}_L^+},
	    description={Ring of integers of $\mathbf{E}_L$}, 
	    sort=EELp
	}
		\newglossaryentry{EELsep}
	{
	    name=\ensuremath{\mathbf{E}_L^{\mathrm{sep}}},
	    description={Separable closure of $\mathbf{E}_L$ inside $\mathbb{C}_p^{\flat}$}, 
	    sort=EELsep
	}
		\newglossaryentry{EELsepp}
	{
	    name=\ensuremath{\mathbf{E}_L^{\mathrm{sep},+}},
	    description={Integral closure of $\mathbf{E}_L^+$ inside $\mathbf{E}_L^{\mathrm{sep}}$}, 
	    sort=EELsepp
	}
		\newglossaryentry{omega}
	{
	    name=\ensuremath{\omega},
	    description={Uniformizer of $\mathbf{E}_L$}, 
	    sort=omega
	}
		\newglossaryentry{omphi}
	{
	    name=\ensuremath{\omega_{\phi}},
	    description={Lift of $\omega \in \mathbf{E}_L$ to $W(\mathbf{E}_L)_L$}, 
	    sort=omegaphi
	}
		\newglossaryentry{Fr}
	{
	    name=\ensuremath{\mathrm{Fr}},
	    description={Frobenius on $W(\mathbb{C}_p^{\flat})_L$}, 
	    sort=Frobenius
	}
		\newglossaryentry{AAL}
	{
	    name=\ensuremath{\mathbf{A}_L},
	    description={Image of the inclusion $\mathscr{A}_L \hookrightarrow W(\mathbf{E}_L)_L, \ 
	    X \mapsto \omega_{\phi}$}, 
	    sort=ALbf
	}
		\newglossaryentry{AALp}
	{
	    name=\ensuremath{\mathbf{A}_L^+},
	    description={$=\mathcal{O}_L \llbracket \omega_{\phi} \rrbracket$}, 
	    sort=ALbfp
	}
		\newglossaryentry{AALnr}
	{
	    name=\ensuremath{\mathbf{A}_L^{\mathrm{nr}}},
	    description={maximal unramified extension of $\mathbf{A}_L$ inside $W(\mathbf{E}_L^{\mathrm{sep}})_L$}, 
	    sort=ALbfpnr
	}
		\newglossaryentry{AALnrp}
	{
	    name=\ensuremath{\mathbf{A}_L^{\mathrm{nr},+}},
	    description={$=\mathbf{A}_L^{\mathrm{nr}} \cap W(\mathbf{E}_L^{\mathrm{sep},+})_L $}, 
	    sort=ALbfpnrp
	}
		\newglossaryentry{AAA}
	{
	    name=\ensuremath{\mathbf{A}},
	    description={$=\varprojlim_n \mathbf{A}_L^{\mathrm{nr}}/\pi_L^n\mathbf{A}_L^{\mathrm{nr}}$}, 
	    sort=ALc
	}
		\newglossaryentry{AAAp}
	{
	    name=\ensuremath{\mathbf{A}^+},
	    description={$=\mathbf{A} \cap W(\mathbf{E}_L^{\mathrm{sep},+})_L $}, 
	    sort=ALcp
	}
		\newglossaryentry{AAK}
	{
	    name=\ensuremath{\mathbf{A}_{\mathcal{K}|L}},
	    description={$=(\mathbf{A})^{H_{\mathcal{K}|L}}$ for $\mathcal{K}|L$ finite}, 
	    sort=ALbfz,
	    symbol=\ensuremath{\mathbf{A}_K}
	}
		\newglossaryentry{AAKp}
	{
	    name=\ensuremath{\mathbf{A}_{\mathcal{K}|L}^+},
	    description={$=\mathbf{A}_{\mathcal{K}|L}\cap W(\mathbf{E}_L^{\mathrm{sep},+})_L$ for $\mathcal{K}|L$ finite}, 
	    sort=ALbfzp,
	    symbol=\ensuremath{\mathbf{A}_K^+}
	}
		\newglossaryentry{phKL}
	{
	    name=\ensuremath{\varphi_{K|L}},
	    description={Restriction from the Frobenius of $W(\mathbf{E}_L^{\mathrm{sep}})_L$ to
	    $\mathbf{A}_K$}, 
	    sort=phizKL
	}	
		\newglossaryentry{BBL}
	{
	    name=\ensuremath{\mathbf{B}_{L}},
	    description={Quotient field of $\mathbf{A}_{L}$}, 
	    sort=BLbf
	}
		\newglossaryentry{BBK}
	{
	    name=\ensuremath{\mathbf{B}_{\mathcal{K}|L}},
	    description={Quotient field of $\mathbf{A}_{\mathcal{K}|L}$ for $\mathcal{K}|L$ finite}, 
	    sort=BLKbf,
	    symbol=\ensuremath{\mathbf{B}_{K|L}}
	}
		\newglossaryentry{BBLnr}
	{
	    name=\ensuremath{\mathbf{B}_{L}^{\mathrm{nr}}},
	    description={Quotient field of $\mathbf{A}_L^{\mathrm{nr}}$}, 
	    sort=BLbfnr
	}
		\newglossaryentry{BBB}
	{
	    name=\ensuremath{\mathbf{B}},
	    description={Quotient field of $\mathbf{A}$}, 
	    sort=BLbfz
	}
		\newglossaryentry{EEK}
	{
	    name=\ensuremath{\mathbf{E}_{\mathcal{K}|L}},
	    description={$=(\mathbf{E}_L^{\mathrm{sep}})^{H_{\mathcal{K}}}$ for $\mathcal{K}|L$ finite}, 
	    sort=EELK,
	    symbol=\ensuremath{\mathbf{E}_{K|L}}
	}
		\newglossaryentry{EEKp}
	{
	    name=\ensuremath{\mathbf{E}_{\mathcal{K}|L}^+},
	    description={Integral closure of $\mathbf{E}_{L}^+$ inside $\mathbf{E}_{\mathcal{K}|L}$ for $\mathcal{K}|L$ finite}, 
	    sort=EELKp,
	    symbol=\ensuremath{\mathbf{E}_{K|L}^+}
	}
		\newglossaryentry{tenphaaK}
	{
	    name=\ensuremath{\ _{f}\otimes_{R}},
	    description={Tensor product over the ring $R$, where $f$ is an endomorphism of $R$ and the module on
	    the left is considered as right $R$-module via $f$ while the module on the right has its usual left-operation
	    from $R$}, 
	    sort=Ten,
	    symbol=\ensuremath{\ _{\varphi_{K|L}} \otimes_{\mathbf{A}_{K|L}}}
	}
		\newglossaryentry{phMlin}
	{
	    name=\ensuremath{f_M^{\mathrm{lin}}},
	    description={Linearization of the $f$-linear endomorphism $f_M$ of $M$, where $M$ is
	    an $R$-module and $f$ is an endomorphism of $R$, i.e. the homomorphism
	    \[ f_M^{\mathrm{lin}}\colon R \ _{f}\otimes_R M \to M, \ a \otimes m \mapsto a f_M(m)\]}, 
	    sort=flin,
	    symbol=\ensuremath{\varphi_M^{\mathrm{lin}}}
	}	
		\newglossaryentry{ModpGKe}
	{
	    name=\ensuremath{\mathbf{Mod}_{\varphi,\Gamma}^{\mathrm{\acute{e}t}}(\mathbf{A}_{\mathcal{K}|L})},
	    description={Category of \'{e}tale $(\varphi_{\mathcal{K}|L},\Gamma_{\mathcal{K}})$-modules
	    over $\mathbf{A}_{\mathcal{K}|L}$, where $\mathcal{K}|L$ is finite}, 
	    sort=modpgk,
	    symbol=\ensuremath{\mathbf{Mod}_{\varphi,\Gamma}^{\mathrm{\acute{e}t}}(\mathbf{A}_{{K|L}})}
	}
		\newglossaryentry{RepGKOLfg}
	{
	    name=\ensuremath{\mathbf{Rep}_{\mathcal{O}_L}^{\mathrm{(fg)}}(G)},
	    description={Category of finitely generated $G$-representations of $\mathcal{O}_L$, where $G$
	    is a group}, 
	    sort=RepGK,
	    symbol=\ensuremath{\mathbf{Rep}_{\mathcal{O}_L}^{\mathrm{(fg)}}(G_K)}
	}
		\newglossaryentry{VK}
	{
	    name=\ensuremath{\mathcal{V_{K|L}}},
	    description={Functor from $\mathbf{Mod}_{\varphi,\Gamma}^{\mathrm{\acute{e}t}}(\mathbf{A}_{\mathcal{K}|L})$
	    to $\mathbf{Rep}_{\mathcal{O}_L}^{\mathrm{(fg)}}(G_{\mathcal{K}})$ with
	   $\mathcal{V_K}(M)=\left( \mathbf{A} \otimes_{\mathbf{A}_{\mathcal{K}|L}} M)^{\mathrm{Fr} 
	   \otimes \varphi_M=1}\right)$; defining an equivalence with inverse $\mathcal{M_{K|L}}$ ($\mathcal{K}|L$ finite)}, 
	    sort=VK,
	    symbol=\ensuremath{\mathcal{V}_{\mathcal{K}|L}}
	}
		\newglossaryentry{MAK}
	{
	    name=\ensuremath{\mathcal{M}_{\mathcal{K}|L}},
	    description={Functor from $\mathbf{Rep}_{\mathcal{O}_L}^{\mathrm{(fg)}}(G_{\mathcal{K}})$ to 
	    $\mathbf{Mod}_{\varphi,\Gamma}^{\mathrm{\acute{e}t}}(\mathbf{A}_{\mathcal{K}|L})$ with
	    $\mathcal{M}_{\mathcal{K}|L}(V)=\left(\mathbf{A} \otimes_{\mathcal{O}_L} V \right)^{H_{\mathcal{K}}}$; defining an 
	    equivalence with inverse $\mathcal{V}_{\mathcal{K}|L}$ ($\mathcal{K}|L$ finite)}, 
	    sort=MK,
	    symbol=\ensuremath{\mathcal{M}_{K|L}}
	}
		\newglossaryentry{adV}
	{
	    name=\ensuremath{\mathrm{ad}_V},
	    description={The homomorphism
	    \[ \mathbf{A} \otimes_{\mathbf{A}_{\mathcal{K}}} \mathcal{M_K}(V) \to \mathbf{A} \otimes_{\mathcal{O}_L} V,
	    \ a \otimes m \mapsto am\]
	    for $V \in \mathbf{Rep}_{\mathcal{O}_L}^{\mathrm{(fg)}}(G_{\mathcal{K}})$
	    and $\mathcal{K}|L$ finite}, 
	    sort=adv
	}
		\newglossaryentry{adM}
	{
	    name=\ensuremath{\mathrm{ad}_M},
	    description={The homomorphism
	    \[ \mathbf{A} \otimes_{\mathcal{O}_L} \mathcal{V_K}(M) \to \mathbf{A} \otimes_{\mathbf{A}_{\mathcal{K}}} M,
	    \ a \otimes v \mapsto av\]
	    for $M \in \mathbf{Mod}_{\varphi,\Gamma}^{\mathrm{\acute{e}t}}(\mathbf{A}_{\mathcal{K}})$ 
	    and $\mathcal{K}|L$ finite}, 
	    sort=adm
	}
		\newglossaryentry{sigKL}
	{
	    name=\ensuremath{\sigma_{\mathcal{K}|L}},
	    description={Lift of the $q_L$-Frobenius of the residue class field extension to $\mathcal{K}$ for
	    $\mathcal{K}|L$ finite unramified}, 
	    sort=sigkl,
	    symbol=\ensuremath{\sigma_{K|L}}
	}
		\newglossaryentry{fthet}
	{
	    name=\ensuremath{f^{\vartheta}},
	    description={Applying $\vartheta$ to the coefficients of $f \in \mathbf{A}_{\mathcal{K}}$, where
	    $\mathcal{K}|L$ is finite unramified and $\vartheta$ is an $\mathcal{O}_L$-linear endomorphism
	    of $\mathcal{O_K}$, i.e.
	    \[ f^{\vartheta}=\sum \vartheta(a_i) \omega_{\phi}^{i}\]}, 
	    sort=ftheta
	}
		\newglossaryentry{chLT}
	{
	    name=\ensuremath{\chi_{\mathrm{LT}}},
	    description={Lubin-Tate character, giving the isomorphism 
	    $\chi_{\mathrm{LT}}\colon \Gamma_L \overset{\cong}{\longrightarrow} \mathcal{O}_L^{\times}$}, 
	    sort=chilt
	}
		\newglossaryentry{gLT}
	{
	    name=\ensuremath{g_{\mathrm{LT}}},
	    description={The inverse of $\frac{\partial(\mathcal{G}_{\phi}(X,Y))}{\partial Y}\vert_{(X,Y)=(0,Z)}$ in
	    $\mathcal{O}_L \llbracket Z \rrbracket$}, 
	    sort=glt
	}
		\newglossaryentry{logLT}
	{
	    name=\ensuremath{\mathrm{log_{LT}}},
	    description={Power series in $ZL\llbracket Z \rrbracket$ whose formal derivative is $g_{\mathrm{LT}}$}, 
	    sort=loglt
	}
		\newglossaryentry{dinv}
	{
	    name=\ensuremath{\partial_{\mathrm{inv}}},
	    description={Invariant derivation corresponding to $\mathrm{d} \mathrm{log_{LT}}$}, 
	    sort=dinv
	}
		\newglossaryentry{pscolt}
	{
	    name=\ensuremath{\widetilde{\psi_{\mathrm{Col}}}},
	    description={Unique endomorphism of $\mathcal{O}_K \llbracket Z \rrbracket$ with
	    \[ \left( \varpi_{K|L} \circ \widetilde{\psi_{\mathrm{Col}}} \right)(f(Z)) =
	  \sum_{a \in \mathcal{G}_{\phi}[\pi_L]} f(a +_{\mathcal{G}_\phi} Z) \]
	  for all $f \in \mathcal{O} \llbracket Z \rrbracket$  }, 
	    sort=pscolt
	}
		\newglossaryentry{Nort}
	{
	    name=\ensuremath{\widetilde{\mathcal{N}}},
	    description={Unique multiplicative map from $\mathcal{O}_K \llbracket Z \rrbracket$ to itself with
	        \[ \left( \varpi_{K|L} \circ \widetilde{\mathcal{Nor}} \right)(f(Z)) =
	  \prod_{a \in \mathcal{G}_{\phi}[\pi_L]} f(a +_{\mathcal{G}_\phi} Z) \]
	  for all $f \in \mathcal{O} \llbracket Z \rrbracket$  }, 
	    sort=nort
	}
		\newglossaryentry{pscol}
	{
	    name=\ensuremath{\psi_{\mathrm{Col}}},
	    description={$=\sigma_{K|L}^{-1} \circ \widetilde{\psi_{\mathrm{Col}}}=\widetilde{\psi_{\mathrm{Col}}}
	    \circ \sigma_{K|L}^{-1}$ }, 
	    sort=pscolz
	}
		\newglossaryentry{Nor}
	{
	    name=\ensuremath{\mathcal{N}},
	    description={$= \sigma_{K|L}^{-1} \circ \widetilde{\mathcal{N}} = \widetilde{\mathcal{N}} \circ 
	    \sigma_{K|L}^{-1} $}, 
	    sort=norz
	}	
		\newglossaryentry{gut}
	{
	    name=\ensuremath{g_{u,t_0}},
	    description={Unique Laurent series in $(\mathcal{O}_K((Z))^{\times})^{\mathcal{N}=\mathrm{id}}$ to
	    $u=(u_n)_n \in \varprojlim_n K_n^{\times}$ with $\sigma_{K|L}^{-n}(g_{u,t_0}(t_{0,n})) = u_n$ were
	    $t_0 = (t_{0,n})_n$ is an $\mathcal{O}_L$-generator of $\mathcal{TG}_{\phi}$ }, 
	    sort=gut
	}
		\newglossaryentry{DLT}
	{
	    name=\ensuremath{\Delta_{\mathrm{LT}}},
	    description={Logarithmic homomorphism from $\mathcal{O}_K \llbracket Z \rrbracket^{\times}$ to
	    $\mathcal{O}_K \llbracket Z \rrbracket$ with $\Delta_{\mathrm{LT}}(f) = \frac{\partial_{\mathrm{inv}}(f)}{f}$}, 
	    sort=Dellt
	}
		\newglossaryentry{AK}
	{
	    name=\ensuremath{\mathscr{A}_{\mathcal{K}|L}},
	    description={$\pi_L$-adic completion of $\mathcal{O_K} ((Z))$ for $\mathcal{K}|L$ finite, unramified}, 
	    sort=ALz
	}
		\newglossaryentry{BL}
	{
	    name=\ensuremath{\mathscr{B}_L},
	    description={Fraction field of $\mathscr{A}_L$}, 
	    sort=BL
	}
		\newglossaryentry{BK}
	{
	    name=\ensuremath{\mathscr{B}_{\mathcal{K}|L}},
	    description={Fraction field of $\mathscr{A}_{\mathcal{K}|L}$, for $\mathcal{K}|L$ finite, unramified}, 
	    sort=BLz
	}
		\newglossaryentry{psKL}
	{
	    name=\ensuremath{\psi_{K|L}},
	    description={$=\frac{1}{\pi_L} \phi_{K|L}^{-1} \circ \mathrm{Tr}$, where $\mathrm{Tr}$ is the trace map
	    of $\mathscr{B}_{K|L}|\phi_{K|L}(\mathscr{B}_{K|L})$}, 
	    sort=psikl
	}
		\newglossaryentry{NorKL}
	{
	    name=\ensuremath{\mathrm{Nor}_{K|L}},
	    description={$=\phi_{K|L}^{-1} \circ \mathrm{Nor}$, where $\mathrm{Nor}$ is the norm map
	    of $\mathscr{B}_{K|L}|\phi_{K|L}(\mathscr{B}_{K|L})$}, 
	    sort=norkl
	}
		\newglossaryentry{OmeAK}
	{
	    name=\ensuremath{\Omega^1_{\mathscr{A}_{\mathcal{K}|L}}},
	    description={$=\mathscr{A}_{\mathcal{K}|L} \mathrm{d} Z$; free rank one differential forms over 
	    $\mathscr{A}_{\mathcal{K}|L}$, for $\mathcal{K}|L$ finite, unramified}, 
	    sort=omeak
	}
		\newglossaryentry{Res}
	{
	    name=\ensuremath{\mathrm{Res}},
	    description={Residue homomorphism from $\Omega^1_{\mathscr{A}_{\mathcal{K}|L}}$ to
	    $\mathcal{O_K}$ with $\mathrm{Res}(\sum a_i Z^{i} \mathrm{d} Z) = a_{-1}$}, 
	    sort=res
	}
		\newglossaryentry{Homcts}
	{
	    name=\ensuremath{\mathrm{Hom^{cts}}},
	    description={Continuous homomorphisms}, 
	    sort=homcts
	}
		\newglossaryentry{psM}
	{
	    name=\ensuremath{\psi_M},
	    description={The homomorphism
	    \[ \begin{xy} \xymatrix{
		 M \ar[r]^-{(\varphi_M^{\mathrm{lin}})^{-1}} &\mathscr{A} K \ _{\varphi{K|L}}\otimes_{\mathscr{A}_K} M 
		 		\ar[r] &  M \\
				& f \otimes m \ar@{|->}[r]& \psi_{K|L} (f)m.
	} \end{xy}\]
	for $M \in \mathbf{Mod}_{\varphi,\Gamma}^{\acute{e}t}(\mathscr{A}_K)$}, 
	    sort=
	}
		\newglossaryentry{Mvee}
	{
	    name=\ensuremath{M^{\vee}},
	    description={$= \mathrm{Hom^{cts}}_{\mathcal{O}_L}(M,L/\mathcal{O}_L)$. Pontrjagin dual of the 
	    $\mathcal{O}_L$-module $M$}, 
	    sort=Mvee
	}
		\newglossaryentry{HIW}
	{
	    name=\ensuremath{H_{\mathrm{Iw}}^*(K_{\infty}|K,-)},
	    description={Generalized Iwasawa cohomology, i.e. for 
	    $V \in \mathbf{Rep}_{\mathcal{O}_L}^{\mathrm{(fg)}}(G_K)$ we have
	   \[ H_{\mathrm{Iw}}^{i}(K_{\infty}|K,V)= \varprojlim_{\overset{K \subseteq E \subseteq K_{\infty}}{\mathrm{finite}}}
	   	 H^{i}(G_E,V)\]
	where $i \in \mathbb{N}_0$ }, 
	    sort=HIW,
	    symbol=\ensuremath{H_{\mathrm{Iw}}^{i}}
	}	
		\newglossaryentry{rec}
	{
	    name=\ensuremath{\mathrm{rec}},
	    description={The map from $\varprojlim K_n^{\times}$ into the maximal abelian pro-$p$ quptient
	   $ H_K^{\mathrm{ab}}(p)$ of $H_K$ induced from the reciprocity map}, 
	    sort=rec
	}
		\newglossaryentry{recEK}
	{
	    name=\ensuremath{\mathrm{rec}_{\mathbf{E}_K}},
	    description={The reciprocity homomorphism $\mathbf{E}_K \to H_K^{\mathrm{ab}}(p)$ in characteristic $p$}, 
	    sort=recek
	}
	\newglossaryentry{ModpGKef}
	{
	    name=\ensuremath{\mathbf{Mod}_{\varphi,\Gamma}^{\mathrm{\acute{e}t},f}(\mathbf{A}_{\mathcal{K}|L})},
	    description={Category of free \'{e}tale $(\varphi_{\mathcal{K}|L},\Gamma_{\mathcal{K}})$-modules
	    over $\mathbf{A}_{\mathcal{K}|L}$, where $\mathcal{K}|L$ is finite}, 
	    sort=modpgkf,
	    symbol=\ensuremath{\mathbf{Mod}_{\varphi,\Gamma}^{\mathrm{\acute{e}t,f}}(\mathbf{A}_{{K}|L})}
	}
		\newglossaryentry{RepGKOLfgf}
	{
	    name=\ensuremath{\mathbf{Rep}_{\mathcal{O}_L}^{\mathrm{(fg,f)}}(G)},
	    description={Category of finite free $G$-representations of $\mathcal{O}_L$, where $G$
	    is a group}, 
	    sort=RepGKf,
	    symbol=\ensuremath{\mathbf{Rep}_{\mathcal{O}_L}^{\mathrm{(fg,f)}}(G_K)}
	}
		\newglossaryentry{Fn}
	{
	    name=\ensuremath{F_n},
	    description={Unramified extension of degree $p^n$ over $F$, where $F|L$ is unramified}, 
	    sort=Fn
	}
		\newglossaryentry{Fin}
	{
	    name=\ensuremath{F_{\infty}},
	    description={$=\cup_n F_n$}, 
	    sort=Fnz
	}
		\newglossaryentry{Upsilon}
	{
	    name=\ensuremath{\Upsilon},
	    description={Galois group if $F_{\infty}|F$, where $F|L$ unramified}, 
	    sort=Upsi
	}
		\newglossaryentry{Upsilonn}
	{
	    name=\ensuremath{\Upsilon_{F_n|F}},
	    description={Galois group of $F_{n}|F$, where $F|L$ unramified and $F_n|F$ is the unique unramified 
	    extension of degree $p^n$}, 
	    sort=Upsin
	}
		\newglossaryentry{RG}
	{
	    name=\ensuremath{R[G]},
	    description={Group ring of $G$ with coefficients in the ring $R$}, 
	    sort=RG
	}
		\newglossaryentry{LambdaRG}
	{
	    name=\ensuremath{\Lambda_R(G)},
	    description={$=\varprojlim_{H \triangleleft G \ \mathrm{open}} R(G/H)$ the Iwasawa module of the profinite
	    group $G$ with coefficients in the ring $R$}, 
	    sort=LambdaRG
	}
		\newglossaryentry{ThetOCpb}
	{
	    name=\ensuremath{\Theta_{\mathcal{O}_{\mathbb{C}_p^{\flat}}}},
	    description={Surjective homomorphism from $W(\mathcal{O}_{\mathbb{C}_p^{\flat}})$ to 
	    $\mathcal{O}_{\mathbb{C}_p^{\flat}}$ with kernel generated by $\xi=\tau(\widetilde{\pi_L}-\pi_L)$}, 
	    sort=thetaocpb
	}
		\newglossaryentry{xi}
	{
	    name=\ensuremath{\xi},
	    description={$=\tau(\widetilde{\pi_L})-\pi_L$, where $\tau$ denotes the Teichm\"uller Lift}, 
	    sort=xi
	}
		\newglossaryentry{tildpiL}
	{
	    name=\ensuremath{\widetilde{\pi_L}},
	    description={$=(\pi_n \bmod \pi_L \mathcal{O}_{\mathbb{C}_p})_n \in \mathcal{O}_{\mathbb{C}_p^{\flat}}$, where
	    $\pi_0=\piL$ and $\pi_{n+1}^{q_L}=\pi_n$}, 
	    sort=piltild
	}
		\newglossaryentry{Bdrp}
	{
	    name=\ensuremath{\mathbf{B}_{\mathrm{dR}}^+},
	    description={$=\varprojlim_n W(\mathcal{O}_{\mathbb{C}_p^{\flat}})[1/p]/(\xi)^n$}, 
	    sort=bdrp
	}
		\newglossaryentry{Bdr}
	{
	    name=\ensuremath{\mathbf{B}_{\mathrm{dR}}},
	    description={$=\mathbf{B}_{\mathrm{dR}}^+[1/\xi]]$ -  de Rham period ring}, 
	    sort=bdr
	}
		\newglossaryentry{tdr}
	{
	    name=\ensuremath{t_{\mathrm{LT}}},
	    description={$=\mathrm{log_{LT}}(\omega_{\phi}) $}, 
	    sort=tlt
	}
		\newglossaryentry{Acrisn}
	{
	    name=\ensuremath{\mathbf{A}_{\mathrm{cris}}^0},
	    description={$=\left\{\left. \sum_{n=0}^N a_n \frac{\xi^n}{n!}\right| N \in \mathbb{N}_0, a_n \in 
	    W(\mathcal{O}_{\mathbb{C}_p^{\flat}}) \right\}$}, 
	    sort=Acris
	}
		\newglossaryentry{Acris}
	{
	    name=\ensuremath{\mathbf{A}_{\mathrm{cris}}},
	    description={$=\varprojlim_{n} \mathbf{A}_{\mathrm{cris}}^0/p^n\mathbf{A}_{\mathrm{cris}}^0$}, 
	    sort=Acrisa
	}
		\newglossaryentry{Bcrisp}
	{
	    name=\ensuremath{\mathbf{B}_{\mathrm{cris}}^+},
	    description={$=\mathbf{A}_{\mathrm{cris}}[1/\pi_L]$}, 
	    sort=Bcris
	}
		\newglossaryentry{Bcris}
	{
	    name=\ensuremath{\mathbf{B}_{\mathrm{cris}}},
	    description={$=\mathbf{B}_{\mathrm{cris}}[1/t_{\mathrm{LT}}]=\mathbf{A}_{\mathrm{cris}}[1/t_{\mathrm{LT}}]$
	    - crystalline period ring}, 
	    sort=Bcrisa
	}
		\newglossaryentry{ALcris}
	{
	    name=\ensuremath{\mathbf{A}_{\mathrm{cris},L}},
	    description={$=\mathbf{A}_{\mathrm{cris}} \otimes_{\mathcal{=}_{L_0}} \mathcal{O}_L$}, 
	    sort=Acrisl
	}
		\newglossaryentry{BLcrisp}
	{
	    name=\ensuremath{\mathbf{B}_{\mathrm{cris},L}^+},
	    description={$=\mathbf{B}_{\mathrm{cris}}^+ \otimes_{L_0} L$}, 
	    sort=Bcrisl
	}
		\newglossaryentry{BLcris}
	{
	    name=\ensuremath{\mathbf{B}_{\mathrm{cris},L}},
	    description={$=\mathbf{B}_{\mathrm{cris}} \otimes_{L_0} L$}, 
	    sort=Bcrisla
	}
		\newglossaryentry{Mdr}
	{
	    name=\ensuremath{\mathcal{D}_{\mathrm{dR}}(-)},
	    description={$=\left(\mathbf{B}_{\mathrm{dR}} \otimes_{\mathbb{Q}_p} - \right)^{G_L}$}, 
	    sort=Mdr,
	    symbol=\ensuremath{\mathcal{D}_{\mathrm{dR}}(V)}
	}
		\newglossaryentry{Mcris}
	{
	   name=\ensuremath{\mathcal{D}_{\mathrm{cris}}(-)},
	    description={$=\left(\mathbf{B}_{\mathrm{cris}} \otimes_{\mathbb{Q}_p} - \right)^{G_L}$}, 
	    sort=Dcris,
	    symbol=\ensuremath{\mathcal{D}_{\mathrm{cris}}(V)}	}
		\newglossaryentry{MLcris}
	{
	    name=\ensuremath{\mathcal{D}_{\mathrm{cris},L}(-)},
	    description={$=\left(\mathbf{B}_{\mathrm{cris},L} \otimes_{L} - \right)^{G_L}=
	    \left(\mathbf{B}_{\mathrm{cris}} \otimes_{L_0} - \right)^{G_L}$}, 
	    sort=Dcrisl,
	    symbol=\ensuremath{\mathcal{D}_{\mathrm{cris},L}(V)}	}
		\newglossaryentry{FiliMdr}
	{
	    name=\ensuremath{\mathrm{Fil}^{i}\mathcal{D}_{\mathrm{dR}}(V)},
	    description={$i$-th filtration step of $\mathcal{D}_{\mathrm{dR}}(V)$, where
	    $V \in \mathbf{Rep}_L^{\mathrm{(fg)}}(G_L)$}, 
	    sort=Filmdr
	}
		\newglossaryentry{griMdr}
	{
	    name=\ensuremath{\mathrm{Fil}^{i}\mathcal{D}_{\mathrm{dR}}(V)},
	    description={$=\mathrm{Fil}^{i}\mathcal{D}_{\mathrm{dR}}(V)/\mathrm{Fil}^{i+1}\mathcal{D}_{\mathrm{dR}}(V)$,
	     where  $V \in \mathbf{Rep}_L^{\mathrm{(fg)}}(G_L)$}, 
	    sort=grmdr
	}
		\newglossaryentry{RepGLOLcrisan}
	{
	    name=\ensuremath{\mathbf{Rep}_{\mathcal{O}_L}^{\mathrm{cris,an}}(G_L)},
	    description={Full subcategory of $\mathbf{Rep}_{\mathcal{O}_L}^{\mathrm{(fg,f)}}(G_L)$ consisting 
	    of the crystalline and analytic reprensentations}, 
	    sort=RepGLOLcrisan
	}
		\newglossaryentry{RepGLLcrisan}
	{
	    name=\ensuremath{\mathbf{Rep}_{L}^{\mathrm{cris,an}}(G_L)},
	    description={Full subcategory of $\mathbf{Rep}_{L}^{\mathrm{(fg)}}(G_L)$ consisting 
	    of the crystalline and analytic reprensentations}, 
	    sort=RepGLOLcrisan
	}
		\newglossaryentry{Sn}
	{
	    name=\ensuremath{S_n},
	    description={$=(\mathcal{O}_{F_n}[\Upsilon_n])^{\Delta_1=\Delta_2}$}, 
	    sort=Sn
	}
		\newglossaryentry{Delta1}
	{
	    name=\ensuremath{\Delta_1},
	    description={Action from $\Upsilon_n$ on $\mathcal{O}_{F_n}$ with
	    \[ \Delta_1(h,\sum x_g \cdot g) = \sum h(x_g) \cdot g\]}, 
	    sort=Deltaz1
	}
		\newglossaryentry{Delta2}
	{
	    name=\ensuremath{\Delta_2},
	    description={Action from $\Upsilon_n$ on $\mathcal{O}_{F_n}$ with
	    \[ \Delta_1(h,\sum x_g \cdot g) = \sum x_{h^{-1}g} \cdot g\] }, 
	    sort=Deltaz2
	}
		\newglossaryentry{Xin}
	{
	    name=\ensuremath{\Xi_n},
	    description={Galois group of $F_n|F_{n-1}$}, 
	    sort=Xin
	}
		\newglossaryentry{Trn}
	{
	    name=\ensuremath{\mathrm{Tr}_n},
	    description={Trace map of $F_n|F_{n-1}$}, 
	    sort=Trn
	}
		\newglossaryentry{Sin}
	{
	    name=\ensuremath{S_{\infty}},
	    description={$=\varprojlim_n S_n$}, 
	    sort=Sna
	}
		\newglossaryentry{Thetan}
	{
	    name=\ensuremath{\Theta_n},
	    description={Galois group of $F_{\infty}|F_n$}, 
	    sort=thetan
	}
		\newglossaryentry{NE}
	{
	    name=\ensuremath{\mathcal{N}_{\mathcal{K}|L}(T)},
	    description={Wach module of $T \in \mathbf{Rep}_{\mathcal{O}_L}^{\mathrm{cris,an}}(G_{\mathcal{K}})$ for
	    $\mathcal{K}|L$ finite}, 
	    sort=N,
	    symbol=\ensuremath{\mathcal{N}_{E|L}(T)}
	}
		\newglossaryentry{NOFin}
	{
	    name=\ensuremath{\mathcal{N}_{{F_\infty}|L}(T)},
	    description={$=\varprojlim_n \mathcal{N}_{{F_n}|L}(T)$}, 
	    sort=NA
	}
		\newglossaryentry{tenc}
	{
	    name=\ensuremath{\widehat{\otimes}},
	    description={Completed tensor product}, 
	    sort=tenc
	}
		\newglossaryentry{Qph}
	{
	    name=\ensuremath{Q_{\phi}},
	    description={$=\frac{[\piL]_{\phi} \omega_{\phi}}{\omega_{\phi}}$}, 
	    sort=Qphi
	}
		\newglossaryentry{phNEV}
	{
	    name=\ensuremath{\varphi^*\mathcal{N}_{\mathcal{K}|L}(V)},
	    description={The $\mathbf{A}_{\mathcal{K}|L}^+$-submodule of $\mathcal{N}_{\mathcal{K}|L}(V)[1/Q_{\phi}]$
	    generated by $\mathrm{im}(\varphi_{\mathcal{N_K}(V)})$, where 
	    $V \in \mathbf{Rep}_{L}^{\mathrm{cris,an}}$ and $\mathcal{K}|L$ finite}, 
	    sort=phNEV,
	    symbol=\ensuremath{\varphi^*\mathcal{N}_{E|L}(V)}
	}
		\newglossaryentry{psNEV}
	{
	    name=\ensuremath{\psi_{\mathcal{N}_{\mathcal{K}|L}(V)}},
	    description={From $\psi_{\mathcal{M}_{\mathcal{K}|L}(V)}$ induced homomorphism from 
	    $\varphi^*\mathcal{N}_{\mathcal{K}|L}(V)$ to
	    $\mathcal{N}_{\mathcal{K}|L}(V)$, where 
	    $V \in \mathbf{Rep}_{L}^{\mathrm{cris,an}}$ and $\mathcal{K}|L$ finite}, 
	    sort=psNEV,
	    name=\ensuremath{\psi_{\mathcal{N}_{E|L}(V)}}
	}	
		\newglossaryentry{Robp}
	{
	    name=\ensuremath{\mathcal{R}_{\mathcal{K}}^+},
	    description={Subring of the power series ring with coefficients in $\mathcal{K}$, consisting of those
	    power series converging for all $z \in \mathbb{C}_p$ with absolute value less than $1$, 
	    $\mathcal{K}|L$ finite}, 
	    sort=Robp,
	    symbol={\ensuremath{\mathcal{R}_K^+}}
	}
		\newglossaryentry{Robi}
	{
	    name=\ensuremath{\mathcal{R}_{\mathcal{K}}^{I}},
	    description={Ring inside $\mathcal{K}\llbracket Z \rrbracket$, consisting of those
	    elements converging for $z \in \mathbb{C}_p$ with absolute value in $I$, where $I \subseteq [0,1]$
	    is an interval and $\mathcal{K}$ is a complete extension of $L$}, 
	    sort=Robpa
	}
		\newglossaryentry{Robr}
	{				
	    name=\ensuremath{\mathcal{R}_{\mathcal{K}}^{[r,1)}},			
	    description={$= \varprojlim_{r<s<1} \mathcal{R}_{\mathcal{K}}^{[r,s]}$ with
	    $0<r<1$ and where $\mathcal{K}$ is a complete extension of $L$}, 
	    sort=Robpb
	}
		\newglossaryentry{Rob}
	{
	    name=\ensuremath{\mathcal{R}_{\mathcal{K}}},		
	    description={$\cup_{0<r<1} \mathcal{R}_{\mathcal{K}}^{[r,1)}$, where 	
	    $\mathcal{K}$ is a complete extension of $L$}, 			
	    sort=Robpc
	}
		\newglossaryentry{RobO}
	{
	    name=\ensuremath{\mathcal{R}_{\mathcal{K}}(\mathcal{O}_L)},		
	    description={ring extension of $D_L(\mathcal{O}_L,\mathcal{K})$ corresponding to the
	    extension $\mathcal{R}_{\mathcal{K}}^+ \subseteq \mathcal{R}_{\mathcal{K}}$, where
	    $\mathcal{K}$ is a complete extension of $L$. 
	    Note: $\mathcal{R}_{\mathcal{K}}^+ \cong D_L(\mathcal{O}_L,\mathcal{K})$}, 			
	    sort=Robpd
	}
		\newglossaryentry{RobGa}
	{
	    name=\ensuremath{\mathcal{R}_{\mathcal{K}}(\Gamma_L)},		
	    description={ring extension of $D_L(\Gamma_L,\mathcal{K})$ corresponding to the
	    extension $(\mathcal{R}_{\mathcal{K}}^+)^{\psi_L=0} \subseteq (\mathcal{R}_{\mathcal{K}})^{\psi_L=0}$, where
	    $\mathcal{K}$ is a complete extension of $L$. 
	    Note: $(\mathcal{R}_{\mathcal{K}}^+)^{\psi_L=0} \cong D_L(\Gamma_L,\mathcal{K})$}, 	
	    sort=Robpd
	}
		\newglossaryentry{etabz}
	{
	    name=\ensuremath{\eta(b,Z)},
	    description={$=\mathrm{exp}(b \Omega \mathrm{log_{LT}}(Z))$, where 
	    $b \in \mathcal{O}_L/\pi_L\mathcal{O}_L$ and $\Omega$ the period of a fixed generator 
	    $t'$ of the dual of the Tate module $\mathcal{TG}_{\phi}$ of the chosen Lubin-Tate group}, 
	    sort=etabZ
	}
		\newglossaryentry{LGUV}
	{
	    name=\ensuremath{\mathcal{L}_{V}^{\Gamma_L, \Upsilon}},
	    description={Regulator map from $H_{\mathrm{Iw}}^1(K_{\infty}F_{\infty}|K,T)$ to
	    $D_{L}(\Gamma_L,\mathbb{C}_p) \widehat{\otimes}_{\mathcal{O}_F}(\Upsilon, \mathbb{C}_p) 
	    \otimes_L \mathcal{M}_{\mathrm{cris},L}(V)$,
	    where $T \in \mathbf{Rep}_{\mathcal{O}_L}^{\mathrm{cris,an}}(G_L)$ and $V= T[1/\pi_L]$}, 
	    sort=LGUV
	}
		\newglossaryentry{mupin}
	{
	    name=\ensuremath{\mu_{\pi_L^n}},
	    description={Multiplication with $\pi_L^n$ on an $\mathcal{O}_L$-module}, 
	    sort=mupi
	}
		\newglossaryentry{An}
	{
	    name=\ensuremath{A_n},
	    description={$=\mathrm{ker}(\mu_{\pi_L^n}\colon A \to A$, where $A$ is a cofinitely
	    generated $\mathcal{O}_L$-module}, 
	    sort=An
	}
		\newglossaryentry{UM}
	{
	    name=\ensuremath{{}_U M},
	    description={$= \mathrm{Hom}_{\mathcal{O}_L}(\mathcal{O}_L[G/U],M)$, where $M$ is an
	    ind-admissible $\mathcal{O}_L[G]$-module, $G$ a profinite group and $U \subseteq G$ an
	    open subgroup}, 
	    sort=MU
	}
		\newglossaryentry{UK}
	{
	    name=\ensuremath{\mathcal{U}(G;H)},
	    description={Open subgroups of a profinite group $G$ containing $H$, which is a closed, normal
	    subgroup }, 
	    sort=UGH,
	    symbol=\ensuremath{\mathcal{U}(G;H)}
	}
		\newglossaryentry{UKK}
	{
	    name=\ensuremath{\mathcal{U}_K},
	    description={$=\mathcal{U}(G_K;H_K)$}, 
	    sort=UGHK,
	    symbol=\ensuremath{\mathcal{U}_K=\mathcal{U}(G_K;H_K)}
	}
	\newglossaryentry{UKE}
	{
	    name=\ensuremath{\mathcal{U}(G)},
	    description={$=\mathcal{U}(G;\{1\}) $ for a profinite group $G$}, 
	    sort=UGHE
	}
		\newglossaryentry{FGH}
	{
	    name=\ensuremath{F_{G/H}(M)},
	    description={$= \varinjlim_{U \in \mathcal{U}(G;H)} {}_U M$, where $M$ is a discrete 
	    $\mathcal{O}_L[G]$-module, $G$ is a profinite group and $H\triangleleft G$ is a closed, normal
	    subgroup}, 
	    sort=FGAH,
	    symbol=\ensuremath{F_{G/H}(M)}
	}
			\newglossaryentry{FG}
	{
	    name=\ensuremath{F_{G}(M)},
	    description={$F_{G/\{1\}}(M)$, where $M$ is a discrete 
	    $\mathcal{O}_L[G]$-module and $G$ is a profinite group}, 
	    sort=FGAHG
	}
		\newglossaryentry{FGK}
	{
	    name=\ensuremath{F_{\Gamma_K}(M)},
	    description={$F_{G_K/H_K}(M)$, where $M$ is a discrete 
	    $\mathcal{O}_L[G_K]$-module}, 
	    sort=FGAK
	}
		\newglossaryentry{LamK}
	{
	    name=\ensuremath{\Lambda_{\mathcal{K}}},
	    description={$= \mathcal{O}_L \llbracket \Gamma_{\mathcal{K}} \rrbracket$, Iwasawa algebra of 
	    $\Gamma_{\mathcal{K}}$, with $\mathcal{K}|L$ finite },
	    sort=LamK,
	    symbol=\ensuremath{\Lambda_{K}=\mathcal{O}_L \llbracket \Gamma_K \rrbracket}
	}
		\newglossaryentry{ODK}
	{
	    name=\ensuremath{\overline{\mathrm{D}_{\mathcal{K}}}(-)},
	    description={$=\mathrm{Hom}_{\Lambda_{\mathcal{K}}}(-,\Lambda_{\mathcal{K}}^{\vee})$, 
	    so called Matlis dual, where $\mathcal{K}|L$ finite}, 
	    sort=DK,
	    symbol=\ensuremath{\overline{\mathrm{D}_{K}}}(M)
	}	
		\newglossaryentry{RGa}
	{
	    name=\ensuremath{\mathbf{R}\Gamma(\mathfrak{C}^{\bullet})},
	    description={The image of the complex $\mathfrak{C}^{\bullet}$ in the derived corresponding category}, 
	    sort=RG
	}
		\newglossaryentry{RGacts}
	{
	    name=\ensuremath{\mathbf{R}\Gamma_{\mathrm{cts}}^{\bullet}(G,M)},
	    description={$=\mathbf{R}\Gamma(\mathcal{C}^{\bullet}(G,M))$ for a profinite group $G$ and a
	    topological $G$-module $M$}, 
	    sort=RGc
	}
		\newglossaryentry{FCK}
	{
	    name=\ensuremath{\mathcal{F}_{\Gamma_{\mathcal{K}}}(T)},
	    description={$\varprojlim_{U \in \mathcal{U_K}} M \otimes_{\mathcal{O}_L} \mathcal{O}_L[G_\mathcal{K}/U]$,
	    for a topological $G_{\mathcal{K}}$-module $T$, where $\mathcal{K}|L$ finite}, 
	    sort=FK,
	    symbol=\ensuremath{\mathcal{F}_{\Gamma_{K}}(T)}
	}
		\newglossaryentry{RGaiw}
	{
	    name=\ensuremath{\mathbf{R}\Gamma_{\mathrm{Iw}}^{\bullet}(\mathcal{K}_{\infty}|\mathcal{K},T)},
	    description={$=\mathbf{R}\Gamma_{\mathrm{cts}}(G_{\mathcal{K}},\mathcal{F}_{\Gamma_{\mathcal{K}}} (T))$,
	    where $T$ is an $\OL$-representation of $G_{\mathcal{K}}$ and $\mathcal{K}|L$ finite}, 
	    sort=RGiw,
	    symbol=\ensuremath{\mathbf{R}\Gamma_{\mathrm{Iw}}^{\bullet}(K_{\infty}|K,T)}
	}
		\newglossaryentry{tender}
	{
	    name=\ensuremath{\protect\overset{\mathbf{L}}{\otimes}_R},
	    description={Tensor product in the derived category over the ring $R$}, 
	    sort=tenzd
	}
		\newglossaryentry{Xctsb}
	{
	    name=\ensuremath{X_{\mathrm{cts}}^{\bullet}(G,A)},
	    description={the complex with objects $X_{\mathrm{cts}}^n(G,A)$ and differentials $\partial_{\mathrm{cts}}$}, 
	    sort=xctsb
	}
		\newglossaryentry{Xctsn}
	{
	    name=\ensuremath{X_{\mathrm{cts}}^n(G,A)},
	    description={$=\mathrm{Map_{cts}}(G^{n+1},A)$ for an topological abelian Hausdorff group $A$ with
	    a continuous actions from the profinite group $G$}, 
	    sort=Xctsn
	}
		\newglossaryentry{difcts}
	{
	    name=\ensuremath{\partial_{\mathrm{cts}}},
	    description={the differential $X_{\mathrm{cts}}^{n-1}(G,A) \to X_{\mathrm{cts}}^n(G,A)$}, 
	    sort=difcts,
	    symbol=\ensuremath{\partial_{\mathrm{cts}}^n}
	}
		\newglossaryentry{Cctsn}
	{
	    name=\ensuremath{C_{\mathrm{cts}}^n(G,A)},
	    description={$=X_{\mathrm{cts}}(G,A)^G$}, 
	    sort=cctsn
	}	
		\newglossaryentry{Cctsb}
	{
	    name=\ensuremath{C_{\mathrm{cts}}^{\bullet}(G,A)},
	    description={the complex with objects $C_{\mathrm{cts}}^n(G,A)$ and differentials $\partial_{\mathrm{cts}}$}, 
	    sort=cctsb
	}
		\newglossaryentry{varprojlim}
	{
	    name=\ensuremath{\protect\varprojlim^r},
	    description={$r$-th right derived functor of $\varprojlim$}, 
	    sort=lim
	}
		\newglossaryentry{sigfn}
	{
	    name=\ensuremath{\sigma_{F_n}},
	    description={generator of the Galois group $\Upsilon_n=\mathrm{Gal}{F_n|F}$}, 
	    sort=sigfn
	}
		\newglossaryentry{phsin}
	{
	    name=\ensuremath{\varphi_{S_{\infty}}},
	    description={$=\varprojlim_n \Delta_1(\sigma_{F_n})$, Frobenius of $S_{\infty}$}, 
	    sort=phsin
	}
		\newglossaryentry{pssin}
	{
	    name=\ensuremath{\psi_{S_{\infty}}},
	    description={inverse of the Frobenius $\varphi_{S_{\infty}}$ of $S_{\infty}$}, 
	    sort=pssin
	}
		\newglossaryentry{IndUG}
	{
	    name=\ensuremath{\mathrm{Ind}_U^{G}(M)},
	    description={$=\{ f \colon G \to X\mid f \text{ locally constant and } U\text{-linear}\}$, for a profinite group $G$
	    an open subgroup $U$ and a discrete $U$-module $M$.}, 
	    sort=IndUG
	}
		\newglossaryentry{Adt}
	{
	    name=\ensuremath{\widetilde{\mathrm{Ad}}},
	    description={the action of $G$ on $\mathrm{Ind}_U^G(M)$ given by 
	    $\widetilde{\mathrm{Ad}}(g)(f)(\sigma)=g(f(g^{-1}\sigma g))$ of $G/U$, where $G$ is a profinite group, 
	    $U \triangleleft G$ an open, normal
	    subgroup, $M$ a discrete $\mathcal{O}_L[G]$-module, $f \in \mathrm{Ind}_U^G(M)$ and $g \in G$. This action
	    is trivial on $U$ and therefore induces an action of $G/U$.\\
	    It denotes also the action $\widetilde{\mathrm{Ad}}(gU)(f)(\sigma U) = f(\sigma g U)$ from $G/U$ on
	    ${}_U M$. If $H \triangleleft G$ is a closed normal subgroup, such that $G/H$ is abelian, then
	    it also denotes the induced actions from both, $G/H$ and $\mathcal{O}_L
	    \llbracket G/H \rrbracket$, on $F_{G/H}(M)$.\\
	    Furthermore, it denotes the action $\widetilde{\mathrm{Ad}}{gU}(a \otimes xU) = a \otimes xg^{-1}U$
	    on $T \otimes \mathcal{O}_L[G_K/U]$
	    }, 
	    sort=Adguf,
	    symbol=\ensuremath{\widetilde{\mathrm{Ad}}(g)(f)}
	}
		\newglossaryentry{Ad}
	{
	    name=\ensuremath{\mathrm{Ad}},
	    description={the action of $G$ on $C_{\mathrm{cts}}^{\bullet}(H,M)$ given by
	    \[ \mathrm{Ad}(g)(c)(h_0,\dots,h_n) = g(c(g^{-1}h_0g,\dots,g^{-1}h_ng)),\]
	    where $c \in C_{\mathrm{cts}}^{n}(H,M)$, $G$ is a profinite group,
	    $H \triangleleft G$ a closed, normal subgroup and $M$ a discrete $G$-module\\
	    It denotes also the induced action from $G/H$ on $\mathbf{R}\Gamma_{\mathrm{cts}}(H,M)$
	    }, 
	    sort=Ad,
	    symbol=\ensuremath{\mathrm{Ad}(g)(c)(h_0,\dots,h_n)}
	}
		\newglossaryentry{CD}
	{
	    name=\ensuremath{\mathcal{D}(-)},
	    description={$=\Hom_{\mathbf{A}_{\mathcal{K}|L}}(-,\Omega^1_{\mathbf{A}_{\mathcal{K}|L}} 
	    \otimes_{\mathbf{A}_{\mathcal{K}|L}} \mathbf{B}_{\mathcal{K}|L}/\mathbf{A}_{\mathcal{K}|L})$, where
	    $\mathcal{K}|L$ is finite},
	    sort=DJ,
	    symbol=\mathcal{D}(M)
	}
		\newglossaryentry{DA}
	{
	    name=\ensuremath{\mathbf{D}(\mathbf{A})},
	    description={derived category of the abelian category $\mathbf{A}$}, 
	    sort=DA,
	    symbol=\ensuremath{\mathbf{D}(\mathbf{C})}
	}
		\newglossaryentry{DPA}
	{
	    name=\ensuremath{\mathbf{D}^+(\mathbf{A})},
	    description={full subcategory of $\mathbf{D}(\mathbf{A})$ whose objects are the complexes 
	    with no nonnegative entries}, 
	    sort=DAA,
	    symbol=\ensuremath{\mathbf{D}^+(\mathbf{C})}
	}
		\newglossaryentry{DBA}
	{
	    name=\ensuremath{\mathbf{D}^{\mathrm{b}}(\mathbf{A})},
	    description={full subcategory of $\mathbf{D}(\mathbf{A})$ whose objects are the bounded below complexes 
		}, 
	    sort=DAB,
	    symbol=\ensuremath{\mathbf{D}^{\mathrm{b}}(\mathbf{C})}
	}
		\newglossaryentry{iota}
	{
	    name=\ensuremath{\iota},
	    description={the involution $x \mapsto x^{-1}$ of a group}, 
	    sort=iota
	}
		\newglossaryentry{Miota}
	{
	    name=\ensuremath{M^{\iota}},
	    description={the $\Lambda_K$-module $M$ where $\Gamma_K$ acts via the involution, i.e. we have
	    $\gamma \cdot m= \gamma^{-1} m$ for all $\gamma \in \Gamma_K$ and $m \in M$}, 
	    sort=Miota,
	    symbol=\ensuremath{\Lambda_K^{\iota}}
	   }
		\newglossaryentry{Can}
	{
	    name=\ensuremath{C^{X\text{-an}}(B,E)},
	    description={locally $X$-analytic functions from $B$ to $E$, where $X|\mathbb{Q}_p$ is finite and
	    $E|X$ is complete, $W$ is a finite dimensional $X$-vector space, and $B\subseteq W$ is
	    a closed polydisk}, 
	    sort=Can
	}
		\newglossaryentry{DisX}
	{
	    name=\ensuremath{D_X(G,E)},
	    description={$E$-valued locally $X$-analytic distributions on $B$, where $X|\Qp$ is finite and $E|X$ is
	    complete, and $G$ is a Lie group over $X$. Equivalently, this is the continuous dual of $C^{X\text{-an}}(G,E)$}, 
	    sort=DisX
	}	
		\newglossaryentry{ModpGKan}
	{
	    name=\ensuremath{\mathbf{Mod}_{\varphi,\Gamma}^{\mathrm{an}}(\mathbf{A}_{\mathcal{K}|L}^+)},
	    description={Category of analytic $(\varphi_{\mathcal{K}|L},\Gamma_{\mathcal{K}})$-modules
	    over $\mathbf{A}_{\mathcal{K}|L}^+$, where $\mathcal{K}|L$ is finite}, 
	    sort=modpgkan
	}
		\newglossaryentry{CCbs}
	{
	    name=\ensuremath{{C}^{\bullet}[n]},
	    description={shift of the complex by $n \in \ZZ$, i.e. we have ${C}^{i}[n]={C}^{i+n}$}, 
	    sort=CCbs
	}
\DeclareMathOperator{\im}{im}												%
\DeclareMathOperator{\Tot}{Tot}
\DeclareMathOperator{\coker}{coker}										%
\DeclareMathOperator{\DDG}{\mathfrak{Dis}_G}								
\DeclareMathOperator{\DDGNMOD}{\mathfrak{Dis}_{G/N}}						%
\DeclareMathOperator{\DDGN}{\mathfrak{Dis}_{G,\mathbb{N}_0}}					%
\DeclareMathOperator{\DDM}{\mathfrak{Dis}_{M}}								%
\DeclareMathOperator{\DDGM}{\mathfrak{Dis}_{G,M}}							%
\DeclareMathOperator{\DDGNM}{\mathfrak{Dis}_{G/N,M}}							%
\DeclareMathOperator{\DDGXM}{\mathfrak{Dis}_{G\times M}}						%
\DeclareMathOperator{\AbsG}{\mathfrak{Abs}_G}								%
\DeclareMathOperator{\AbsN}{\mathfrak{Abs}_{\mathbb{N}_0}}						%
\DeclareMathOperator{\AbsM}{\mathfrak{Abs}_{M}}								%
\DeclareMathOperator{\AbsGM}{\mathfrak{Abs}_{G,M}}							%
\DeclareMathOperator{\TopG}{\mathfrak{Top}_G}								%
\DeclareMathOperator{\TopGM}{\mathfrak{Top}_{G,M}}							%
\DeclareMathOperator{\TopGN}{\mathfrak{Top}_{G,\NN_0}}						%
	 \renewcommand{\theenumi}{\emph{\arabic{enumi}.}}
	\theoremstyle{definition}
	\newtheorem{mydef}{Definition}[subsection]
	\newtheorem{rem1}[mydef]{Remark}
	\theoremstyle{plain}
	\newtheorem{prop}[mydef]{Proposition}
	\newtheorem{lem}[mydef]{Lemma}
	\newtheorem{cor}[mydef]{Corollary}
	\newtheorem{thm}[mydef]{Theorem}
	\newtheorem{thma}{Theorem}
	\newtheorem{rem}[mydef]{Remark}
    \newtheorem{qes}[mydef]{Question}
	\newcommand{\RR}{\mathbb{R}}
	\newcommand{\NN}{\mathbb{N}}		%
	\newcommand{\ZZ}{\mathbb{Z}}
	\newcommand{\Zp}{\mathbb{Z}_p}
	\newcommand{\OO}{\mathcal{O}}
	\newcommand{\OK}{\mathcal{O}_{K}}
	\newcommand{\kK}{k_K}
	\newcommand{\kapX}{\kappa \llbracket X \rrbracket}
	\newcommand{\kapY}{\kappa \llbracket Y \rrbracket}
	\newcommand{\kX}{k \llbracket X \rrbracket}
	\newcommand{\kpY}{k' \llbracket Y \rrbracket}
	\newcommand{\Kur}{K_{0}}
	\newcommand{\OKur}{\mathcal{O}_{\Kur}}
	\newcommand{\Kin}{K_{\infty}}
	\newcommand{\OL}{\mathcal{O}_{L}}
	\newcommand{\OLX}{\mathcal{O}_{L} \llbracket X \rrbracket}
	\newcommand{\OLXY}{\mathcal{O}_{L} \llbracket X, Y \rrbracket}
	\newcommand{\kL}{k_{L}}
	\newcommand{\Lur}{L_0}  
	\newcommand{\piL}{\pi_{L}}
	\newcommand{\piK}{\pi_{K}}
	\newcommand{\qL}{q_{L}}
	\newcommand{\qK}{q_{K}}
	\newcommand{\Lin}{L_{\infty}}
	\newcommand{\OLur}{\mathcal{O}_{\Lur}}
	\newcommand{\End}{\mathrm{End}}
	\newcommand{\EndctsG}{\mathrm{End}_{\mathrm{cts},G}}
	\newcommand{\Aut}{\mathrm{Aut}}
	\newcommand{\GG}{\mathcal{G}}
	\newcommand{\GGphn}{\mathcal{G}_{\phi,n}}
	\newcommand{\MM}{\mathfrak{M}}
	\newcommand{\TG}{\mathcal{TG}}
	\newcommand{\Gal}{\mathrm{Gal}}
	\newcommand{\Qp}{\mathbb{Q}_p}
	\newcommand{\oQp}{\overline{\mathbb{Q}_p}}
	\newcommand{\Cp}{\mathbb{C}_{p}}
	\newcommand{\Cpb}{\mathbb{C}_{p}^{\flat}}
	\newcommand{\OCp}{\mathcal{O}_{\Cp}}
	\newcommand{\OCpb}{\mathcal{O}_{\Cp^{\flat}}}
	\newcommand{\Fr}{\mathrm{Fr}}
	\newcommand{\phL}{\varphi_L}
	\newcommand{\phKL}{\varphi_{K|L}}
	\newcommand{\phEL}{\varphi_{E|L}}
	\newcommand{\phMlin}{\varphi_M^{\mathrm{lin}}}
	\newcommand{\GaL}{\Gamma_L}
	\newcommand{\GaLn}{\Gamma_{L_n|L}}
	\newcommand{\GaK}{\Gamma_K}
	\newcommand{\chLT}{\chi_{\mathrm{LT}}}
	\newcommand{\chL}{\chi_{L}}
	\newcommand{\chcyc}{\chi_{\mathrm{cyc}}}
	\newcommand{\phM}{\varphi_M}	
	\newcommand{\phN}{\varphi_N}
	\newcommand{\ModpGKe}{\mathbf{Mod}^{\mathrm{\acute{e}t}}_{\varphi,\Gamma}(\AAK)}
	\newcommand{\RepGKOLfg}{\mathbf{Rep}_{\OL}^{\mathrm{(fg)}}(G_K)}
	\newcommand{\MOEK}{\mathcal{D}_{K|L}}
	\newcommand{\MAK}{\mathcal{D}_{K|L}}
	\newcommand{\VK}{\mathcal{V}_{K|L}}
	\newcommand{\pr}{\mathrm{pr}}
	\newcommand{\EEL}{\mathbf{E}_L}
	\newcommand{\EELp}{\mathbf{E}_L^{+}}
	\newcommand{\EELsep}{\mathbf{E}_L^{\mathrm{sep}}}
	\newcommand{\EELsepp}{\mathbf{E}_L^{\mathrm{sep},+}}
	\newcommand{\EEK}{\mathbf{E}_{K|L}}
	\newcommand{\EEKp}{\mathbf{E}_{K|L}^{+}}
	\newcommand{\EE}{\mathbf{E}}
	\newcommand{\EEp}{\mathbf{E}^{+}}
	\newcommand{\FF}{\mathbb{F}}
	\newcommand{\AAL}{\mathbf{A}_L}
	\newcommand{\AALp}{\mathbf{A}_L^+}
	\newcommand{\AALnr}{\mathbf{A}_L^{\mathrm{nr}}}
	\newcommand{\AALnrp}{\mathbf{A}_L^{\mathrm{nr},+}}
	\newcommand{\BBL}{\mathbf{B}_L}
	\newcommand{\AAK}{\mathbf{A}_{K|L}}
	\newcommand{\AAKp}{\mathbf{A}_{K|L}^+}
	\newcommand{\AK}{\mathscr{A}_{K|L}}
	\newcommand{\AL}{\mathscr{A}_L}
	\newcommand{\tenphaaK}{\ _{\phKL}{\otimes}_{\AAK}}
	\newcommand{\omphi}{\omega_{\phi}}
	\newcommand{\nuphi}{\nu_{\phi}}
	\newcommand{\AAA}{\mathbf{A}}
	\newcommand{\AAAp}{\mathbf{A}^{+}}
	\newcommand{\EEE}{\mathscr{E}}
	\newcommand{\BBK}{\mathbf{B}_{K|L}}
	\newcommand{\BBLnr}{\mathbf{B}_L^{\mathrm{nr}}}
	\newcommand{\Ap}{A^+}
	\newcommand{\Epp}{E^+}
	\newcommand{\Unm}{U_{n,m}}
	\newcommand{\Vnm}{V_{n,m}}
	\newcommand{\Vkm}{V_{k,m}}
	\newcommand{\Vqliem}{V_{q_L^{l_{i-1}+1},m}}
	\newcommand{\Uqlmm}{U_{q_L^{l_m},m}}
	\newcommand{\Vqlmem}{V_{q_L^{l_{m-1}+1},m}}
	\newcommand{\Uqlmem}{U_{q_L^{l_{m-1}+1},m}}
	\newcommand{\Uqlmmem}{U_{q_L^{l_{m-1}},m}}
	\newcommand{\mapcts}{\mathrm{Map}_{\mathrm{cts}}} 			%
	\newcommand{\Xcts}{X_{\mathrm{cts}}}
	\newcommand{\Xctsb}{X_{\mathrm{cts}}^{\bullet}}
	\newcommand{\Cctsb}{C_{\mathrm{cts}}^{\bullet}}
	\newcommand{\Ccts}{C_{\mathrm{cts}}}
	\newcommand{\difcts}{\partial_{\mathrm{cts}}}
	\newcommand{\Abb}{A^{\bullet,\bullet}}
	\renewcommand{\Bbb}{B^{\bullet,\bullet}}
	\newcommand{\id}{\mathrm{id}}
	\newcommand{\Hctss}{H_{\mathrm{cts}}^*}
	\newcommand{\Hcts}{H_{\mathrm{cts}}}
	\newcommand{\Cb}{C^{\bullet}}
	\newcommand{\Cc}{\mathcal{C}}					%
	\newcommand{\Ccb}{\mathcal{C}^{\bullet}}			%
	\newcommand{\Cf}{\mathcal{C}}					%
	\newcommand{\Cfb}{\mathcal{C}^{\bullet}}	
	\newcommand{\Cfrb}{\mathfrak{C}^{\bullet}}				%
	\newcommand{\HfphKLs}{\mathcal{H}_{\phKL}^*}
	\newcommand{\HfphKL}{\mathcal{H}_{\phKL}}
	\newcommand{\Hfph}{\mathcal{H}_{\varphi}}
	\newcommand{\Hf}{\mathcal{H}}
	\newcommand{\HFr}{\mathcal{H}_{\mathrm{Fr}}}
	\newcommand{\HFrs}{\mathcal{H}_{\mathrm{Fr}}^*}						%
	\newcommand{\CphKLb}{\mathcal{C}^{\bullet}_{\phKL}}
	\newcommand{\CfphKLb}{\mathfrak{C}^{\bullet}_{\phKL}}
	\newcommand{\CFrb}{\mathcal{C}^{\bullet}_{\Fr}}
	\newcommand{\CFr}{\mathcal{C}_{\Fr}}
	\newcommand{\CfphKL}{\mathcal{C}_{\phKL}}
	\newcommand{\Mmn}{M_{m,n}}
	\newcommand{\Men}{M_{1,n}}
	\newcommand{\alphmn}{\alpha_{m,n}}
	\newcommand{\Ein}{E_{\infty}}
	\newcommand{\HomctsOL}{\mathrm{Hom}^{\mathrm{cts}}_{\OL}}
	\newcommand{\Xb}{X^{\bullet}}
	\newcommand{\Yb}{Y^{\bullet}}
	\newcommand{\Zb}{Z^{\bullet}}
	\newcommand{\gb}{g^{\bullet}}
	\newcommand{\dy}{\mathrm{d}_{Y}}
	\newcommand{\fb}{f^{\bullet}}
	\newcommand{\fbb}{f^{\bullet,\bullet}}
	\newcommand{\Hom}{\mathrm{Hom}}
	\newcommand{\etale}{{\'{e}tale}}
	\newcommand{\CKK}{\mathcal{K}}
	\newcommand{\OCKK}{\overline{\mathcal{K}}}
	\newcommand{\CLL}{\mathcal{L}}
	\newcommand{\CCC}{\mathcal{C}}
	\newcommand{\GCLL}{G_{\CLL}}
	\newcommand{\GCLLH}{G_{\CLL^H}}
	\newcommand{\GCLK}{\mathrm{Gal}(\CLL|\CKK)}
	\newcommand{\HCLL}{\widehat{\CLL}}
	\newcommand{\sigKL}{\sigma_{K|L}}
	\newcommand{\fsig}{f^{\sigKL}}
	\newcommand{\gLT}{g_{\mathrm{LT}}}
	\newcommand{\mrmd}{\mathrm{d}}
	\newcommand{\fthet}{f^{\vartheta}}
	\newcommand{\OmeAK}{\Omega^1_{\AK}}
	\newcommand{\OmeAAK}{\Omega^1_{\AAK}}
	\newcommand{\HomOL}{\mathrm{Hom}_{\OL}}
	\newcommand{\HomcOK}{\mathrm{Hom}^{\mathrm{cts}}_{\OK}}
	\newcommand{\HomcOL}{\mathrm{Hom}^{\mathrm{cts}}_{\OL}}
	\newcommand{\HIW}{H_{\mathrm{Iw}}}
	\newcommand{\LamK}{\Lambda_K}
	\newcommand{\LamKp}{\Lambda_{K'}}
	\newcommand{\UK}{\mathcal{U}(G;H)}
	\newcommand{\UKK}{\mathcal{U}_K}
	\newcommand{\UU}{\mathcal{U}}
	\newcommand{\FGK}{F_{\Gamma_K}}
	\newcommand{\FGH}{F_{G/H}}
	\newcommand{\ODK}{\overline{\mathrm{D}_K}}
	\newcommand{\CD}{\mathrm{D}}    
	\newcommand{\RGa}{\mathbf{R}\Gamma}
	\newcommand{\RGacts}{\mathbf{R}\Gamma_{\mathrm{cts}}}
	\newcommand{\RGactsb}{\mathbf{R}\Gamma_{\mathrm{cts}}^{\bullet}}
	\newcommand{\RGaiw}{\mathbf{R}\Gamma_{\mathrm{Iw}}}
	\newcommand{\RGaiwb}{\mathbf{R}\Gamma_{\mathrm{Iw}}^{\bullet}}
	\newcommand{\FCK}{\mathcal{F}_{\GaK}}
	\newcommand{\tender}{\overset{\mathbf{L}}{\otimes}}
	\newcommand{\GaKp}{\Gamma_{K'}}
	\newcommand{\Nekovar}{Nekov\'a\v{r}\ }
	\newcommand{\Nekovars}{Nekov\'a\v{r}'s\ }
	\newcommand{\IndUG}{\mathrm{Ind}_U^{G}}
	\newcommand{\OLGH}{\OL \llbracket G/H \rrbracket}
	\newcommand{\Ad}{\mathrm{Ad}}
	\newcommand{\Adt}{\widetilde{\mathrm{Ad}}}
	\newcommand{\Amn}{\mathcal{A}_{mn}}
	\newcommand{\OLMod}{\OL\text{-}{\mathbf{Mod}}}
	\newcommand{\LamKMod}{\LamK\text{-}{\mathbf{Mod}}}
	\newcommand{\DDp}{\mathbf{D}^+}
	\newcommand{\DDb}{\mathbf{D}^{\mathrm{b}}}
\address{Mathematisches Institut\\Universit\"{a}t Heidelberg\\Im
  Neuenheimer Feld 205\\D-69120 Heidelberg}
\author{Benjamin Kupferer}
\email[B.\,Kupferer]{bkupferer@mathi.uni-heidelberg.de}
\urladdr[B.\,Kupferer]{https://www.mathi.uni-heidelberg.de/$\,\tilde{}\,$bkupferer/}
\author{Otmar Venjakob}
\email[O.\,Venjakob]{otmar@mathi.uni-heidelberg.de}
\urladdr[O.\,Venjakob]{https://www.mathi.uni-heidelberg.de/$\,\tilde{}\,$otmar/}
\title[Herr-complexes in the Lubin-Tate setting]{Herr-complexes in the Lubin-Tate setting}
\subjclass[2010]{11S25 (11F85 11R23 14F30 14G20)}
\keywords{}
\begin{document}

\begin{abstract}
In this article we extend work of Herr from the cyclotomic $(\varphi,\Gamma)$-module to the general case of Lubin-Tate $(\varphi,\Gamma)$. In particular, we define generalized $\varphi$- and $\psi$-Herr complexes, which calculate Galois cohomology, when applied to the \'{e}tale $(\varphi,\Gamma)$-modules attached to the coefficients.
\end{abstract}
	\maketitle
	
	
	
	

\section{Introduction} 
\pagenumbering{arabic}

Fontaine's theory \cite{fo1} of (cyclotomic) $(\varphi,\Gamma)$-modules  plays a central role both in the $p$-adic local Langlands programme, more specifically in Colmez' celebrated work \cite{col4}, as well as in (local) Iwasawa, where, for example, it contributes to proofs of reciprocity formulas or the construction of regulator maps and big exponential maps \cite{ber03} \`{a} la Perrin Riou. One reason for this impact stems from the possibility to explicitly calculate Iwasawa and Galois cohomology of a Galois representation $V$ in terms of the associated $(\varphi,\Gamma)$-module $\mathcal{D}(V)$. While the description of Iwasawa cohomology was given by Fontaine himself, it was his disciple Herr \cite{Herr, Herr1} who described
 Galois cohomology as the cohomology of the following complex, now named after him. To this end we fix an odd prime $p$ and consider a  $\Zp$-representation $V$    of the absolute Galois group $G_{\Qp}$ of the $p$-adic numbers $\Qp.$  Then the complex $\mathcal{C}^{\bullet}_{\varphi}(\Gamma,\mathcal{D}(V)) $
	\[  \xymatrix@C=1.3pc{
		0 \ar[r] & \mathcal{D}(V) \ar[rr]^-{(\varphi-1,\gamma-1)} & & \mathcal{D}(V) \oplus \mathcal{D}(V)
		\ar[rrrr]^-{(\gamma-1)\pr_1 - (\varphi-1)\pr_2} & & & & \mathcal{D}(V) \ar[r] & 0
	}  \]
	computes the group cohomology of $G_{\Qp}$ with values in $V$, where   $\varphi$ denotes the Frobenius endomorphism while
	 $\gamma$ is a topological generator of $\Gamma=G(\Qp(\mu(p))/\Qp).$
	(cf.\ e.g. \cite[Theorem 5.2.2., p.\,93--94]{col1} and \cite[Theorem 5.3.15, p.\,103--104]{col1}).
Upon replacing $\varphi$ by  its left-inverse $\psi$ we obtain the complex $\mathcal{C}^{\bullet}_{\psi}(\Gamma,\mathcal{D}(V)) $, which like a miracle  turns out to be quasi-isomorphic to $\mathcal{C}^{\bullet}_{\varphi}(\Gamma,\mathcal{D}(V)) $ in the  cyclotomic  theory (cf.\ \cite[Proposition 5.3.14, p.\,103]{col1}). Moreover, Herr established that taking an appropriate dual of the Herr complex $\mathcal{C}^{\bullet}_{\varphi}(\Gamma,\mathcal{D}(V^*(1))) $ results - up to shifting by $2$ - another complex quasi-isomorphic to $\mathcal{C}^{\bullet}_{\psi}(\Gamma,\mathcal{D}(V)) $ giving rise to Tate's local duality.

Recently there has been quite some activity to develop a theory of Lubin-Tate $(\varphi,\Gamma)$-modules \cite{ber16,berFo,ber13,berschxie,SV15,fx}, where the ground field now is  a finite extension $L$  of $\Qp$ with ring of integers $\mathcal{O}_L$ and prime element $\pi_L.$ Fixing  a Lubin-Tate formal group $\mathcal{G}$ associated to $\pi_L$, we obtain the  Galois extension $L_\infty$ by adjoining the $\pi_L^n$-division points of $\mathcal{G}$ to $L$ with $L$-analytic Galois group $\Gamma_L=G(L_\infty/L);$ we also set $H_L:=G_{L_\infty} $. Following Fontaine's original ideas Kisin-Ren   established an equivalence of categories \cite[Theorem (1.6), p.\,446]{kisren}), which for the convenience of the reader we recall in section 3 adding some details concerning topologies etc.\ based on the very detailed account \cite[Theorem 3.3.10, p.\,134]{schn1}). In particular, to any finitely generated $\mathcal{O}_L$-module $V$ with linear and continuous action by $G_K$, where $K$ is any finite extension of $L$, we may attach an \'{e}tale $(\varphi,\Gamma)$-module $\MOEK(V)$.
In this context the Iwasawa cohomology again has been successfully described in terms of $\MOEK(V)$ in \cite{SV15}. The purpose of this article is to add an explicit description of the Galois cohomology groups in terms of a $\varphi$- and $\psi$-Herr complex, respectively. To this end let    $V$ any such $\OL$-representation  of $G_K$ as above. Then there is a complex of the corresponding $(\varphi,\Gamma)$-module, of
	which the cohomology is exactly the continuous group cohomology of $G_K$ with coefficients in $V$ as follows.
	By $\Cctsb(G,A)$ we denote the continuous cochain complex of a profinite group $G$ with values
	in the abelian group $A$. Furthermore, for any $(\varphi,\Gamma)$-module $M $ we introduce a generalized Herr complex $\CphKLb(\GaK,M)$ as the
	total complex of the double complex
	\[ \begin{xy} \xymatrix{
		\Cctsb(\GaK,M) \ar[rrr]^-{\Cctsb(\GaK,\varphi_{M})-\id} & & & \Cctsb(\GaK,M)
	} \end{xy} \]
	and we denote by $\HfphKLs(\GaK,M)$ its cohomology. If $\Ccb$ is a bounded below complex of abelian groups (or of $R$-modules
	for a suitable ring $R$), then we denote by $\RGa(\Ccb)$ the same complex
	viewed as object in the derived category $\mathbf{D}^{\mathrm{b}/+}(\mathbf{Ab})$ (respectively in
	$\mathbf{D}^{\mathrm{b}/+}(R\text{-}\mathbf{Mod})$).
	\begin{thma}[{cf.\ \hyperref[galc1.3]{Theorem \ref*{galc1.3}}}]
	Let $V \in \RepGKOLfg$ and set $M = \MOEK(V)$. Then there are isomorphisms
	\[ \begin{xy} \xymatrix@R-1pc{
		&\Hctss(G_K,V) \ar[r]^{\cong} &\HfphKLs(\GaK,M),\\
		&\Hctss(H_K,V) \ar[r]^{\cong} &\HfphKLs(M).
	} \end{xy} \]
	These isomorphisms are functorial in $V$ and compatible with restriction and corestriction. They stem from isomorphisms in $\DDp(\OL\text{-}\mathbf{Mod})$
\[ \xymatrix@R-1pc{
		&\RGa (\Cctsb(G_K,V)) \ar[r]^{\cong} &\RGa (\CphKLb(\GaK,M)),\\
		&\RGa (\Cctsb(H_K,V)) \ar[r]^{\cong} &\RGa (\CphKLb(M)).
	}                                                                  \]
	\end{thma}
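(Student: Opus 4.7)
The plan is to first establish the $H_K$-statement and then deduce the $G_K$-statement via a Hochschild--Serre argument, working throughout at the level of complexes in the derived category so as to obtain isomorphisms in $\DDp(\OL\text{-}\mathbf{Mod})$ rather than merely on cohomology.

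For the $H_K$-case I start from the fundamental short exact sequence of continuous $G_L$-modules
\[ 0 \to \OL \to \AAA \xrightarrow{\mathrm{Fr}-1} \AAA \to 0, \]
a Lubin--Tate analogue of Artin--Schreier--Witt (reflecting $\AAA^{\mathrm{Fr}=1}=\OL$ and the surjectivity of $\mathrm{Fr}-1$ on $\AAA$). Tensoring over $\OL$ with $V$ yields an exact sequence
\[ 0 \to V \to \AAA \otimes_{\OL} V \xrightarrow{\mathrm{Fr}\otimes\id - 1} \AAA \otimes_{\OL} V \to 0, \]
and applying $\RGacts(H_K,-)$ produces a distinguished triangle in $\DDp(\OL\text{-}\mathbf{Mod})$. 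The decisive input is the Hilbert-$90$-type acyclicity
\[ H^i_{\mathrm{cts}}(H_K, \AAA \otimes_{\OL} V) = 0 \text{ for } i \geq 1, \qquad (\AAA \otimes_{\OL} V)^{H_K} = M = \MOEK(V), \]
where the first identity follows by devissage on $V$ (reduce via the $\piL$-adic filtration to $V=\OL/\piL$) combined with classical Galois-cohomological vanishing for $\EELsep$ under the Fontaine--Wintenberger identification $H_K \cong \mathrm{Gal}(\EELsep|\EEK)$, and the second is the definition of $\MOEK$. The triangle then collapses to an isomorphism
\[ \RGacts(H_K, V) \cong [M \xrightarrow{\phM - 1} M] \]
in $\DDp(\OL[\GaK])$, with $M$ placed in degrees $0$ and $1$; taking cohomology delivers the $H_K$-statement.

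For the $G_K$-case, applying Hochschild--Serre to the extension $H_K \triangleleft G_K$ with quotient $\GaK$ yields
\[ \RGacts(G_K, V) \cong \RGacts(\GaK, \RGacts(H_K, V)) \cong \RGacts(\GaK, [M \xrightarrow{\phM-1} M]). \]
Computing the outer $\RGacts(\GaK,-)$ termwise via the continuous cochain complex $\Cctsb(\GaK,-)$ identifies the right-hand side with the total complex of the double complex
\[ \Cctsb(\GaK, M) \xrightarrow{\Cctsb(\GaK, \phM) - \id} \Cctsb(\GaK, M), \]
which is by definition $\CphKLb(\GaK, M)$. Functoriality in $V$ is immediate from the tensor-product constructions, and compatibility with restriction and corestriction follows from naturality of Hochschild--Serre together with functoriality of $\MOEK$ under base change $\AAK \hookrightarrow \mathbf{A}_{K'|L}$ along an inclusion $K \subseteq K'$.

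The main obstacle is the acyclicity $H^i_{\mathrm{cts}}(H_K, \AAA \otimes_{\OL} V) = 0$ for $i \geq 1$. The reduction modulo $\piL^n$ is straightforward in the abstract, but since $\AAA$ is $\piL$-adically complete rather than discrete, passing to the inverse limit over the $\piL$-torsion levels introduces $\varprojlim^1$-terms that must be controlled by a Mittag--Leffler argument on the Galois cohomology of the finite-level quotients. A secondary but nontrivial point is realising all the above identifications at the chain level rather than on cohomology alone; this is achieved by using the continuous cochain resolutions throughout and verifying that the formation of mapping cones commutes with cochain evaluation, so that the quasi-isomorphism between $\RGacts(G_K,V)$ and $\CphKLb(\GaK,M)$ is canonical and functorial in $V$.
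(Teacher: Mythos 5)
Your treatment of the $H_K$-half is essentially sound and close in spirit to what the paper does: the sequence $0 \to \OL \to \AAA \xrightarrow{\Fr-1} \AAA \to 0$, its tensorization with $V$, the vanishing of $\Hcts^{i}(H_K,\AAA\otimes_{\OL}V)$ for $i\geq 1$ by d\'evissage to $\piL$-torsion (where $\AAA\otimes_{\OL}V_m$ is genuinely discrete) and a Mittag--Leffler control of $\varprojlim^1$, and the identification $(\AAA\otimes_{\OL}V)^{H_K}=\MOEK(V)$ all appear in the paper (cf.\ Lemma \ref{galc1.1}, Lemma \ref{lemcomtriv}, Proposition \ref{torcofquas}, Proposition \ref{galc1.16}). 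The genuine gap is in your passage to $G_K$: you invoke a Hochschild--Serre decomposition $\RGacts(G_K,V)\cong\RGacts(\GaK,\RGacts(H_K,V))$ and then compute the outer term ``termwise'' on the complex $[M\xrightarrow{\varphi_M-1}M]$. For continuous cochain cohomology with non-discrete coefficients there is no such spectral sequence or decomposition -- this is precisely the obstruction the paper flags as the main technical difficulty (with a reference to the literature on its failure). Here $V$ is finitely generated over $\OL$, hence compact but not discrete, and $M$ carries the weak topology, which is not discrete either; continuous cochain cohomology of these coefficients is not a derived functor, so neither the outer decomposition nor the inner identification lives in a framework where your manipulations are licensed. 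Moreover, applying $\Cctsb(\GaK,-)$ termwise to a quasi-isomorphism of complexes of topological modules need not produce a quasi-isomorphism (the functor is not exact on non-discrete modules), and the quasi-isomorphism $\Cctsb(H_K,V)\simeq[M\to M]$ is not a $\GaK$-equivariant chain map in any naive sense: the $\GaK$-action on $\Cctsb(H_K,V)$ is only defined up to homotopy, which is exactly why the paper later has to pass through Nekov\'a\v{r}'s functor $\FGH$ to rigidify it.

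The paper circumvents this by never leaving the discrete world until the very end: it replaces $V$ by $V_m=V/\piL^mV$ and $\AAA$ by the quotients $\AAA/\omphi^n\AAAp$, so that the modules $M_{m,n}=M_m/(\omphi^n\AAAp\otimes_{\OL}V_m)^{H_K}$ are discrete $\GaK$-modules; it constructs explicit morphisms of complexes $\alpha_{m,n}\colon\CphKLb(\GaK,M_{m,n})\to\CFrb(G_K,(\AAA/\omphi^n\AAAp)\otimes_{\OL}V_m)$, factors $\alpha_{1,n}$ through an auxiliary finite Galois extension $E|K$ on whose residual objects $H_E$ acts trivially ($\beta_n$ and $\gamma_n$), and proves these are quasi-isomorphisms using the Hochschild--Serre type spectral sequences for the monoid cohomology $\Hf_X^*$ -- which are only available for discrete coefficients -- together with the ML-zero statements of Proposition \ref{galc1.11}; only then does it pass to $\varprojlim_{n,m}$, using surjectivity of transition maps (and the continuous sections of Lemma \ref{galc1a.5}) to ensure the limits preserve quasi-isomorphisms. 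Your proposal compresses all of this into ``verifying that the formation of mapping cones commutes with cochain evaluation,'' but that is the theorem's actual content, not a routine verification; as written, the $G_K$-statement does not follow.
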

In our proof, we follow closely the approach of Scholl in \cite[Theorem 2.2.1, p.\,702--705]{scholl}. Due to the lack of Hochschild-Serre spectral sequences for general continuous cohomology (see \cite{thomas} for a discussion of this issue), the main technical difficulty consists of using Mittag-Leffler type arguments to reduce to cases in which the coefficients become discrete (and hence admit such a spectral sequence).
We would like to mention that in the course of writing up our results we learned that  independently 	Aribam and Kwatra have achieved a (partial) result of this kind, too, concerning torsion coefficients (cf.\ \cite[Theorem 3.16, p.\,10--11]{arikwa}).

The situation for a generalized $\psi$-Herr complex is more difficult. First of all there is no reason why one should obtain a quasi-isomorphic complex upon replacing $\varphi$ by $\psi$. One reason is, that the integral operator $\psi$ considered in \cite{SV15} is   no longer a left inverse to $\varphi$ if $L\neq\Qp.$ Furthermore, the structure of the kernel  $M^{\psi=0}$ of the $\psi$-operator of an \'{e}tale -module  $M$ is difficult to analyse (but see \cite{SV2,berFo} for some aspects in this regard). Therefore, we decided to follow the path of dualizing as this was already successful in \cite{SV15} for the purpose of Iwasawa cohomology:
		 Since $\varphi$ and $\psi$ are related to each other
	under Pontrjagin duality (cf.\ \cite[Remark 5.6, p.\,27]{SV15}), it seems to be the correct way, to dualize
	the complex of $\varphi$.
	One attempt would have been to imitate the methods of Herr (cf.\
	\cite[Lemme 5.6, p.\,333]{Herr1}) to establish a quasi isomorphism between the complexes
	of $(\varphi,\Gamma)$-modules related to $\varphi$ and $\psi$
	using Tate duality. This approach requires to show that all the differentials of the $\varphi$-Herr complex
	have closed image, which implies that they are strict which then implies that the cohomology
	groups of the dualized complex coincide with the dual of the cohomology groups of the complex we
	started with. In his original work, Herr checked that the differentials have closed
	image for each differential separately (cf.\ \cite[p.\,334]{Herr1}). Unfortunately, in the general case
	we have to deal with direct products of Herr's differentials and modules and it is no longer clear,
	that the differentials have closed image.\\
	Instead we imitate results of \Nekovar (cf.\
	\cite[Sections (8.2) and (8.3), p.\,157--160]{neksc}) to replace the complex $\Cctsb(H_K,A)$ with a complex
	$\Cctsb(G_K,\FGK(A))$ of $\LamK=\OL \llbracket \GaK \rrbracket$-modules, where $A = V^{\vee}$ is the dual
	of some $G_K$-representation. Here "replace" means, that the two complexes are quasi isomorphic
	(cf.\ \hyperref[shaind]{Proposition \ref*{shaind}}). This then has the advantage that we can apply the
	Matlis dual $\ODK=\Hom_{\LamK}(-,\LamK^{\vee})$ to this complex. \Nekovar proved that this dualized
	complex is quasi isomorphic to a complex computing the Iwasawa cohomology
	(cf.\ \hyperref[iwdcts]{Lemma \ref*{iwdcts}}). We then finally check, that the complex related to $\psi$
	is quasi isomorphic to this dualized complex.
Using again a result of \Nekovar, we then get the following
	statement.
	\begin{thma}[{cf.\ \hyperref[thmpsicts]{Theorem \ref*{thmpsicts}}}]
	Let $T \in \RepGKOLfg$ and let
	$K \subseteq K' \subseteq \Kin$ an intermediate field, finite over $K$, such that
	$\GaKp\coloneqq \Gal(\Kin|K')$ is isomorphic to some $\Zp^r$. Then we have an isomorphism
	in $\DDp(\OL\text{-}\mathbf{Mod})$
	\[ \RGa(\Ccb_{\psi}(\MAK(T(\tau^{-1})) \tender_{\Lambda_{K'}} \OL
		\cong \RGactsb(G_{K'},T).\]
	\end{thma}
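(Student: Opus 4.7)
The plan is to compose a chain of quasi-isomorphisms
\[ \Ccb_{\psi}(\MAK(T(\tau^{-1}))) \tender_{\LamKp} \OL\ \simeq\ \RGaiw(\Kin|K',T) \tender_{\LamKp} \OL\ \simeq\ \RGactsb(G_{K'},T), \]
using the three main ingredients announced in the introduction: the Schneider--Venjakob description of Iwasawa cohomology via $\psi$ on Lubin--Tate $(\varphi,\Gamma)$-modules, the \Nekovars reformulation via $\Cctsb(G_K,\FGK(-))$ combined with Matlis duality (the results referred to in the introduction as \emph{shaind} and \emph{iwdcts}), and the Koszul resolution of $\OL$ over $\LamKp \cong \OL\llbracket\Zp^r\rrbracket$.

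First, I would establish that, as a complex of $\LamKp$-modules, $\Ccb_{\psi}(\MAK(T(\tau^{-1})))$ computes the Iwasawa cohomology $\RGaiw(\Kin|K',T)$. The $\varphi$-analogue of this is Theorem~A applied at each finite layer and passing to the inverse limit, and the transition $\varphi\rightsquigarrow\psi$ is effected by Pontrjagin duality (cf.\ \cite[Remark~5.6]{SV15}). The twist by $\tau^{-1}$ appears because this duality interchanges $T$ with its local Tate dual, and our normalization absorbs the resulting character into the coefficient twist $T\rightsquigarrow T(\tau^{-1})$.

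Second, I would apply the \Nekovars-style replacement $\Cctsb(H_{K'},A) \simeq \Cctsb(G_{K'}, F_{\GaKp}(A))$ (result \emph{shaind}) to $A = T^{\vee}$, and combine it with the identification (\emph{iwdcts}) of the Matlis dual $\ODK$ of the resulting complex with a complex computing $\RGaiw(\Kin|K',T)$. This yields a Galois-cochain-theoretic presentation of the same Iwasawa complex appearing in step one, fixing the identifications needed in step three. Third, since $\GaKp\cong\Zp^r$, the Koszul complex on a topological basis of $\GaKp$ furnishes an explicit $\LamKp$-free resolution of $\OL$; tensoring it with the Iwasawa complex and unwinding the identifications from step two produces a complex visibly quasi-isomorphic to $\Cctsb(G_{K'},T)$. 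Combining the three steps produces the claimed isomorphism in $\DDp(\OL\text{-}\mathbf{Mod})$.

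The principal obstacle, inherited from Theorem~A, is the absence of a usable Hochschild--Serre spectral sequence for continuous cohomology (cf.\ \cite{thomas}): one cannot simply invoke $H^p(\GaKp,\HIW^q(\Kin|K',T)) \Rightarrow H^{p+q}(G_{K'},T)$ and be done. Everything must be carried out at the level of complexes, combining the Koszul resolution with the Mittag--Leffler-type reduction to discrete coefficients already deployed in the proof of Theorem~A. A secondary, more bookkeeping-oriented difficulty is the coherent tracking of the $\tau^{-1}$-twist through the sequence Pontrjagin duality $\leftrightarrow$ Matlis duality $\leftrightarrow$ $\varphi\!\leftrightarrow\!\psi$ swap, so that no spurious twist survives in the final identification.
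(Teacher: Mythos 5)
Your overall architecture coincides with the paper's (identify the generalized $\psi$-Herr complex with the Iwasawa complex over $\Lambda_{K'}$, then descend along $\tender_{\Lambda_{K'}}\OL$), and you name the right toolkit, but the decisive step is missing: you never construct an actual morphism, in the derived category of $\Lambda_{K'}$-modules, between $\Ccb_{\psi}(\MAK(T(\tau^{-1})))$ and $\RGaiwb(\Kin|K,T)$. Your step one argues purely on cohomology groups — Theorem A at each finite layer, inverse limits, and the module-level Pontrjagin duality of \cite[Remark 5.6]{SV15} exchanging $\varphi$ and $\psi$ — but isomorphisms of cohomology groups do not produce an isomorphism of complexes in $\DDb(\Lambda_{K'}\text{-}\mathbf{Mod})$, and without a $\Lambda_{K'}$-linear derived-category identification the operation $\tender_{\Lambda_{K'}}\OL$ cannot even be transported from the Iwasawa complex to the $\psi$-complex. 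The paper flags precisely this point: the remark following Theorem \ref{loctat13} states that the derived version is \emph{not} a direct consequence of the $\psi$-exact sequence, and the introduction explains why the naive Herr-style dualization of the $\varphi$-complex is blocked in the Lubin–Tate case (strictness/closed image of the differentials is unknown). What closes the gap in Proposition \ref{iwpsi} is a concrete comparison map: the $\GaK$-equivariant identification $\MAK(T(\tau^{-1}))\cong \CD(\MAK(T^{\vee}(1)))$ (Lemma \ref{incdual3}) combined with the inclusion $\CD(M)\hookrightarrow \ODK(M)$ into the Matlis dual (Lemmas \ref{incdual1}, \ref{incdual2}), yielding an inclusion of complexes $\Ccb_{\psi}(\MAK(T(\tau^{-1})))\hookrightarrow \ODK\bigl(\Ccb_{\varphi}(\MAK(T^{\vee}(1)))\bigr)[-2]$; this inclusion is proved to be a quasi-isomorphism by matching the Matlis-dualized four-term sequence of Proposition \ref{secdual} against the $\psi$-sequence of Theorem \ref{loctat13}, and the $\LamK$-linearity needed before tensoring is supplied by Propositions \ref{shaind} and \ref{qind} and Lemma \ref{iwdcts}. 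None of this appears in your sketch, and it is the heart of the theorem; "Pontrjagin duality effects $\varphi\rightsquigarrow\psi$" is only a statement about finite-length modules, not about complexes over $\Lambda_{K'}$.

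A secondary point: for the descent $\RGaiwb(\Kin|K,T)\tender_{\Lambda_{K'}}\OL\cong\RGactsb(G_{K'},T)$ the paper simply cites Nekov\'a\v{r} (8.4.8.1) (and, in the variant, Fukaya--Kato together with Shapiro's lemma); your Koszul-resolution route can be made to work, but the result is not "visibly" $\Cctsb(G_{K'},T)$: one must justify commuting the finite free tensor with continuous cochains, verify the Tor-vanishing giving $\FCK(T)\tender_{\Lambda_{K'}}\OL\simeq T\otimes_{\OL}\OL[\Gal(K'|K)]^{\iota}$ in degree zero, and then apply Shapiro — in effect reproving Nekov\'a\v{r}'s descent rather than doing bookkeeping. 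The Koszul complex enters the paper only afterwards, in the closing remark, to give the explicit shape of the generalized $\psi$-Herr complex.
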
		
The left hand side of the isomorphism in the theorem should be considered as generalised $\psi$-Herr complex. For instance, by choosing a Koszul type complex associated with topological generators of $ \GaKp$ one obtains a quite explicit complex, which specialises to the $\psi$-Herr complex in the cyclotomic situation.
Moreover, this result is crucial for descent calculations, see e.g.\ \cite{SV2} in the context of pairings and regulator maps.

\section{Preliminaries} 

	By \gls{nn} we denote the natural numbers starting with $1$ and we let
	$\gls{nnn} = \NN \cup \{0 \}$. For a homomorphism $f \colon A \to B$ we denote by
	\gls{ker} its kernel, by \gls{im} its image and by \gls{coker} its cokernel.
			
	\subsection{On Continuous Group Cohomology}
Continuous group cohomology has been introduced by Tate. For the convenience of the reader we recall the basic notions, but refer the reader to \cite{tate76}, \cite[\S 2.7]{nsw} and \cite[\S 2.1]{kupferer} for further details.

	For topological spaces $X,Y$ we endow the set of continuous maps
	\gls{mapctsxy} always with the compact open topology \index{Topology!compact open}
	(cf.\ \cite[Definition 1, Chapter X \S3.4, p.\,301]{bour3}).
	Note, that in this topology $\mapcts(X,Y)$ is a Hausdorff space if $Y$ is (cf.\ \cite[Remarks (1),
	Chapter X \S3.4, p.\,301--302]{bour3}). For $K \subseteq X$ compact and $U \subseteq Y$ open
	denote by \gls{mku} the set of all $f \in \mapcts(X,Y)$ with $f(K) \subseteq U$.

	\begin{rem} \label{defctscoh}
	We recall that  for  a profinite group $G$   and  an abelian topological group $A$ on which $G$ acts continuously we have the canonical acyclic complex
\begin{equation*}
  \label{pre6.7}  \begin{xy} \xymatrix@R-2pc{
		0 \ar[r] &A \ar[r] &\mapcts(G,A) \ar[r] &\mapcts(G^2,A) \ar[r] & \mapcts(G^3,A) \ar[r] & \cdots
	} \end{xy}
\end{equation*}
Let for $n \in \NN_0$
	\[ \gls{Xctsn} \coloneqq \mapcts(G^{n+1},A) \]
	and $\glssymbol{difcts}\colon \Xcts^{n-1} \to \Xcts^{n}$ be the differential \index{differential}, which is given by
	\[ \difcts^n(x)(\sigma_0,\dots,\sigma_n) = \sum_{i=0}^n (-1)^{i} x(\sigma_0,\dots,\hat{\sigma_i},\dots,\sigma_n), \]
	where "\ $\widehat{\ }$\ " means that the corresponding element is omitted. Furthermore, we denote by
	$\gls{Xctsb}$ the corresponding complex, i.e.
	\[ \begin{xy} \xymatrix{
		\Xctsb(G,A) = \cdots \ar[r]^{\difcts^{n-1}} & \Xcts^{n-1}(G,A) \ar[r]^{\difcts^n}
		& \Xcts^{n}(G,A) \ar[r]^-{\difcts^{n+1}} & \cdots.
	} \end{xy} \]
	As usual, we then set
	\[ \gls{Cctsn} \coloneqq \Xcts^n(G,A)^G.\]
	One checks that $\difcts^n$ restricts to a homomorphism $\Ccts^{n-1}(G,A)\to \Ccts^n(G,A)$. We then let
	\gls{Cctsb} be the complex
	\[ \begin{xy} \xymatrix{
		\Cctsb(G,A) = \cdots \ar[r]^{\difcts^{n-1}} & \Ccts^{n-1}(G,A) \ar[r]^{\difcts^n}
		& \Ccts^{n}(G,A) \ar[r]^-{\difcts^{n+1}} & \cdots.
	} \end{xy} \]
	This complex is called the \emph{\bfseries continuous standard resolution} of $G$ with coefficients in $A$.
	We denote
	its $n$-th cohomology group by $\Hcts^n(G,A)$ and call it the $n$-th \emph{\bfseries
	continuous cohomology group} \index{group cohomology ! continuous}of $G$ with coefficients in $A$.
	
	\end{rem}

	\begin{lem} \label{pre6.10}
	Let $G$ be a profinite group and let
	\[ \begin{xy} \xymatrix{
		&0 \ar[r] & A \ar[r]^-{\alpha} & B \ar[r]^-{\beta} & C \ar[r] &0
	} \end{xy} \]
	be an exact sequence of topological $G$-modules such that the topology of $A$ is induced by that of $B$
	and that $B \to C$ has a continuous set theoretical section $s \colon C \to B$. Then for all
	$n > 0$ the diagrams
	\[ \begin{xy} \xymatrix{
		0 \ar[r] & \mapcts(G^{n-1},A) \ar[r] \ar[d] & \mapcts(G^{n-1},B) \ar[r] \ar[d]& \mapcts(G^{n-1},C) \ar[r] \ar[d] &0 \\
		0 \ar[r] & \mapcts(G^n,A) \ar[r] & \mapcts(G^n,B) \ar[r] & \mapcts(G^n,C) \ar[r] &0
	} \end{xy} \]
	and
	\[ \begin{xy} \xymatrix{
		0 \ar[r] & \mapcts(G^n,A)^G \ar[r] \ar[d]& \mapcts(G^n,B)^G \ar[r] \ar[d]& \mapcts(G^n,C)^G \ar[r] \ar[d]&0\\
		0 \ar[r] & \mapcts(G^{n+1},A)^G \ar[r] & \mapcts(G^{n+1},B)^G \ar[r] & \mapcts(G^{n+1},C)^G \ar[r] &0
	} \end{xy} \]
	are commutative with exact rows and the latter diagram induces a long exact sequence of continuous
	cohomology
	\[ \begin{xy} \xymatrix{
		0 \ar[r] & A^G \ar[r] & B^G \ar[r] & C^G \ar[r] & \Hcts^1(G,A) \ar[r] & \cdots \\
		\cdots \ar[r] & \Hcts^n(G,A) \ar[r] & \Hcts^n(G,B) \ar[r] & \Hcts^n(G,C) \ar[r] & \Hcts^{n+1}(G,A) \ar[r] & \cdots
	} \end{xy} \]
	Furthermore, the topology of $\mapcts(G^n,A)$ is induced by the topology of
	$\mapcts(G^n,B)$ and the section $s\colon C \to B$ induces a continuous, set theoretical
	section  $s_*\colon \mapcts(G^n,C) \to \mapcts(G^n,B)$.
	\end{lem}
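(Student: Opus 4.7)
The plan is first to establish that for any topological space $X$, applying $\mapcts(X,-)$ to the short exact sequence $0 \to A \to B \to C \to 0$ yields a short exact sequence of abelian groups. Injectivity of $\mapcts(X,A) \to \mapcts(X,B)$ and exactness in the middle are immediate at the set-theoretic level (using only that $\alpha$ is injective and that $A=\ker(\beta)$); what requires the hypotheses is surjectivity onto $\mapcts(X,C)$. Given a continuous $\bar{f}\colon X \to C$, the composition $s \circ \bar{f}$ is a continuous lift to $B$, which proves surjectivity. Specializing to $X = G^{n-1}$ and $X = G^{n}$ gives exactness of the rows in the first diagram; commutativity with the differentials on $\mapcts(G^\bullet,-)$ is immediate from naturality.

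Next I would check the claim about the topology and the induced section. The topology on $\mapcts(G^n,A)$ coincides with the subspace topology from $\mapcts(G^n,B)$: a subbasis of opens in $A$ is obtained as $U \cap A$ for $U \subseteq B$ open, so the basic compact-open sets of $\mapcts(G^n,A)$ are precisely $\mathrm{M}(K,U) \cap \mapcts(G^n,A)$. For continuity of $s_*\colon \mapcts(G^n,C) \to \mapcts(G^n,B)$, $f \mapsto s \circ f$, the preimage of a subbasic open $\mathrm{M}(K,U)$ is $\mathrm{M}(K,s^{-1}(U))$, which is open since $s$ is continuous.

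For the second diagram, left-exactness of $(-)^G$ gives exactness except at the right term; the essential new input is surjectivity of $\mapcts(G^{n+1},B)^G \to \mapcts(G^{n+1},C)^G$. Given $\bar{f} \in \mapcts(G^{n+1},C)^G$, I would define
\[ f(g_0,\dots,g_n) \;=\; g_0 \cdot s\bigl(g_0^{-1}\bar{f}(g_0,\dots,g_n)\bigr). \]
Continuity of $f$ is clear as all ingredients are continuous; that $f$ lifts $\bar{f}$ follows from $\beta \circ s = \id_C$; and $G$-equivariance is a one-line computation, since replacing each $g_i$ by $hg_i$ and using $G$-equivariance of $\bar{f}$ produces $hg_0 \cdot s(g_0^{-1}h^{-1}\cdot h\bar{f}(g_0,\dots,g_n)) = h\cdot f(g_0,\dots,g_n)$, the factor $h^{-1}h$ cancelling inside $s$. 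With the short exact sequence of complexes $0 \to \Cctsb(G,A) \to \Cctsb(G,B) \to \Cctsb(G,C) \to 0$ in hand, the long exact sequence in continuous cohomology follows from the standard snake lemma argument.

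The main obstacle is the $G$-equivariant lifting. In the profinite setting one cannot average, so the trick $f(g_0,\dots,g_n) = g_0\cdot s(g_0^{-1}\bar{f}(g_0,\dots,g_n))$ is the essential device; it replaces averaging and crucially exploits that we work with homogeneous cochains so that the $G$-action on the first coordinate is by left translation, allowing the twist by $g_0^{\pm 1}$ to absorb the group element. The other steps are routine manipulations with the compact-open topology and naturality of the standard cochain construction.
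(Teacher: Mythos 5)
Your proof is correct and follows the standard Tate/NSW-style argument that the paper itself relies on (the lemma is stated there without proof): the key device, the equivariant lift $f(g_0,\dots,g_n)=g_0\cdot s\bigl(g_0^{-1}\bar f(g_0,\dots,g_n)\bigr)$, is exactly the right one, and your compact-open topology verifications for the subspace topology and for $s_*$ are fine. One correction of attribution: exactness of the rows at the middle term is \emph{not} purely set-theoretic, because given a continuous $f\colon G^n\to B$ with image in $\alpha(A)$ one must know that the corestricted map $G^n\to A$ is continuous, and this is precisely where the hypothesis that the topology of $A$ is induced from $B$ enters; your later identification of $\mapcts(G^n,A)$ with a subspace of $\mapcts(G^n,B)$ supplies exactly this point, but it is already needed for middle exactness, not only for the final statement about topologies and sections.
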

	
	\subsection{Monoid Cohomology}  \label{pre7}
	
	As described in the introduction, the aim of \hyperref[galco]{Section 5} is to compute Galois cohomology using
	the theory of Lubin-Tate $(\varphi,\Gamma)$-modules. For this, we also compute
	the cohomology of complexes like $A \overset{f-1}{\longrightarrow} A$, where $A$ is a
	topological abelian group and $f$ is a continuous endomorphism of $A$.\\
	This can be embedded in the theory of monoid cohomology, which then allows us, in the case
	of discrete coefficients, to write this cohomological functor as derived functor. We then combine this
	with a usual group action, which commutes with the endomorphism and obtain spectral sequences
	on cohomology.
		
	 Let $A$ be a topological abelian group and $f \in \End(A)$ continuous. Then
\begin{align}\label{pre7.2}
 \begin{xy} \xymatrix{
		&\cdot\colon \NN_0 \times A \ar[r] &A, \ (n,a) \ar@{|->}[r] & f^n(a),
	} \end{xy}
\end{align}
	defines a continuous $\NN_0$-action on $A$. As in the group case the following holds:
		
		Let $M$ be a topological monoid and $A$ be a discrete abelian group with a continuous action of $M$. Then we have
	\begin{align} \label{pre7.3} A^M \cong \glssymbol{homrmn},
	\end{align}
as $\ZZ[M]$-modules, where $\ZZ$ is considered as trivial $\ZZ[M]$-module.

	We are mostly interested in the case of a discrete $G$-module $A$, where $G$ is a profinite group,
	together with an $\NN_0$-action (which then automatically is continuous since both, $\NN_0$ and $A$
	are discrete), which comes from a $G$-homomorphism of $A$. To shorten notation, we make the following
	definitions.
\begin{mydef}
	Let $G$ be a profinite group and $M$ a topological monoid.\\
	By $\DDM$ we denote the category whose objects are discrete abelian groups with a
	continuous action of $M$ and whose morphisms are the continuous group homomorphisms which
	respect the operation of $M$. \\
	Similarly we denote by $\DDG$ the category whose objects are discrete abelian groups with a
	continuous action of $G$ and whose morphisms are the continuous group homomorphisms which respect the
	operation of $G$. \\
	And finally we denote by $\DDGM$ the category whose objects are discrete
	abelian groups, together with
	commuting continuous actions of $G$ and $M$ and whose morphisms are the continuous
	group homomorphisms which respect the operations from $G$ and $M$.\\
	The corresponding categories, whose objects are abstract abelian groups, are denoted
	by $\AbsM$, $\AbsG$ and $\AbsGM$.\\
	Furthermore, by $\TopG$ we denote the category of topological abelian
	Hausdorff groups with a continuous action from $G$. The morphisms of this category are the
	continuous group homomorphisms which respect the action from $G$.\\
	Analogously we denote by $\TopGM$ the category of topological abelian Hausdorff groups
	with continuous actions from both, $G$ and $M$, such that these actions commute. The morphisms
	of this category are the continuous group homomorphisms which respect the actions from $G$ and $M$.
	\end{mydef}
	
	\begin{rem} \label{pre7.4a}
	Let $G$ be a profinite group and $M$ a topological monoid. Then the categories
	$\DDGM$ and $\DDGXM$ coincide, where $G \times M$ is considered as a topological monoid.
	\end{rem}
	
	Our aim now is to see that the category $\DDGM$ has enough injective objects. For this,
	we follow the idea of  \cite[(2.6.5) Lemma, Chapter I \S 6, p.\,131]{nswo} and outline some details.
	
	\begin{rem} \label{pre7.5}
	Let $G$ be a group and $M$ a monoid.
	Then the category $\AbsGM$ coincides with the category of $\ZZ[G][M]$-modules. In particular, the category $\AbsGM$ has enough injectives.
	\end{rem}

Let $G$ be a profinite group and $M$ a discrete monoid. The usual arguments show that the category $\DDGM$ has 	enough injective objects (cf.\ \cite[Prop.\ 2.2.12]{kupferer} for details).
	
	\begin{lem} \label{pre7.13}
	Let $G$ be a profinite group and $M$ a discrete monoid. Then the functor
	\[ (-)^{G,M}\colon \DDGM \to \gls{Abcat}\]
	is left exact and additive ($\mathbf{Ab}$ denotes the category of abelian groups).
	\begin{proof}
	Since $\DDGM$ and $\DDGXM$ coincide (cf.\ \hyperref[pre7.4a]{Remark \ref*{pre7.4a}}) we can view
	the functor $(-)^{G,M}$ as $(-)^{G\times M}$. Then \hyperref[pre7.3]{  \eqref{pre7.3}}
	says
	\[ (-)^{G \times M} = \Hom_{\ZZ[G \times M]}(\ZZ,-)\]
	which immediately gives the claim, since $\Hom(\ZZ,-)$ is left exact and additive.
	\end{proof}
	\end{lem}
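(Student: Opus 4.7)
The plan is to follow exactly the route signalled by the placement of Remark \ref{pre7.4a} and the isomorphism \eqref{pre7.3}: translate the two commuting actions into a single action of a product monoid, and then identify fixed points with a $\Hom$-functor.

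First I would invoke Remark \ref{pre7.4a} to replace the category $\DDGM$ by $\DDGXM$, viewing $G\times M$ as a topological monoid (discrete on the $M$-factor, profinite on the $G$-factor, so the product is a topological monoid acting continuously on any object $A \in \DDGM$). Under this identification the fixed-point functor $(-)^{G,M}$ becomes $(-)^{G\times M}$, so it suffices to prove the statement for the latter.

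Next, I would apply \eqref{pre7.3} to the topological monoid $G \times M$ and a discrete abelian group $A$ carrying a continuous $(G\times M)$-action. This yields a natural isomorphism
\[
A^{G\times M} \;\cong\; \Hom_{\ZZ[G\times M]}(\ZZ, A),
\]
where $\ZZ$ carries the trivial $(G\times M)$-action. The key point here is naturality in $A$: a morphism $f\colon A \to B$ in $\DDGXM$ is $\ZZ[G\times M]$-linear and continuous, so it induces the expected map on both sides and the square commutes. Hence $(-)^{G\times M}$ coincides, as a functor $\DDGXM \to \mathbf{Ab}$, with $\Hom_{\ZZ[G\times M]}(\ZZ,-)$ restricted to $\DDGXM$.

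Finally, I would conclude using the standard fact that $\Hom_R(N,-)$ is additive and left exact on the category of $R$-modules for any ring $R$ and any $N$: additivity is the compatibility of $\Hom$ with finite direct sums, and left exactness is the universal property of kernels. Applied with $R = \ZZ[G\times M]$ and $N = \ZZ$, and combined with the fact that kernels and finite direct sums in $\DDGM$ are computed on underlying abelian groups (and inherit the discrete topology together with the continuous commuting actions), this gives left exactness and additivity of $(-)^{G,M}$. There is no real obstacle: the only minor point to verify carefully is that kernels and finite direct sums formed in $\DDGM$ agree with those formed in $\AbsGM$ so that left exactness in the latter transfers to the former, which is immediate since a subgroup of a discrete group is discrete and the induced actions remain continuous.
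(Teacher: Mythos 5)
Your proposal is correct and follows essentially the same route as the paper: identify $\DDGM$ with $\DDGXM$ via Remark \ref{pre7.4a}, use \eqref{pre7.3} to write $(-)^{G\times M}=\Hom_{\ZZ[G\times M]}(\ZZ,-)$, and conclude from the standard left exactness and additivity of $\Hom$. The extra checks you mention (naturality in $A$, and that kernels and finite direct sums in $\DDGM$ are computed on underlying groups) are exactly the points the paper leaves implicit.
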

	
	By the above the right derivations for $(-)^{G,M}$, where $G$ is a profinite group and $M$ a discrete monoid,
	exist (cf.\ \cite[\href{https://stacks.math.columbia.edu/tag/05TI}{Tag 0156,
	Lemma 10.3.2 (2)}]{stacks-project}). This then leads us to the following definition.
	
	\begin{mydef}
	Let $G$ be a profinite group and $M$ a discrete monoid.
	Then \linebreak $H^n(G,M; - ) \coloneqq R^n (-)^{G,M}$ denotes the $n$-th right derived functor of $(-)^{G,M}$ and
	is called the $n${\bfseries -th cohomology group}. \index{Cohomology Group}
	\end{mydef}

	\begin{prop} \label{pre7.18}
	Let $G$ be a profinite group, $N \triangleleft G$ a closed, normal subgroup and $M$ a
	discrete monoid. Then for every $A \in \DDGM$ there are two
	cohomological spectral sequences converging to $H^n(G,M;A)$:
	\[ \begin{xy} \xymatrix@R-2pc{
		& H^{a}(G/N, H^b(N,M;A)) \ar@{=>}[r] & H^{a+b}(G,M;A) \\
		& H^{a}(G/N,M; H^b(N,A)) \ar@{=>}[r] & H^{a+b}(G,M;A). \\
	} \end{xy} \]
	\begin{proof}
	  Recall that the categories $\DDGM$, $\DDGNM$ and
	$\DDGNMOD$ have enough injectives. The
	functors $(-)^{N,M}\colon \DDGM \to \DDGNMOD $ respectively   $(-)^N\colon \DDGM \to \DDGNM$
	send injectives to injectives as is straightforward to check, see \cite[Lem.\ 2.2.16]{kupferer}. Furthermore, since the actions of $G$ and $M$ on objects of
	$\DDGM$ commute, the compositions
	\[ \begin{xy} \xymatrix{
		& \DDGM \ar[rr]^-{(-)^{N,M}} & & \DDGNMOD \ar[rr]^-{(-)^{G/N}} & & \mathbf{Ab}
	} \end{xy} \]
	and
	\[ \begin{xy} \xymatrix{
		& \DDGM \ar[rr]^-{(-)^{N}} & & \DDGNM \ar[rr]^-{(-)^{G/N,M}} & & \mathbf{Ab}
	} \end{xy} \]
	both coincide with $(-)^{G,M}$. This then leads to the claimed Grothendieck
	spectral sequences.
	\end{proof}
	\end{prop}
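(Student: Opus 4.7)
The plan is to recognise the two asserted spectral sequences as instances of the Grothendieck composition-of-functors spectral sequence, applied to two different factorisations of $(-)^{G,M}$. First I would record the equalities of functors
\[ (-)^{G,M} = (-)^{G/N} \circ (-)^{N,M} = (-)^{G/N,M} \circ (-)^{N}, \]
where the intermediate categories are $\DDGNMOD$ and $\DDGNM$ respectively. Both equalities are immediate once one notes that the $G$- and $M$-actions on any object of $\DDGM$ commute, so that passing to $N$- or $(N,M)$-invariants leaves the residual $G/N$-action (and, in the second factorisation, the $M$-action) well defined; normality of $N$ is what makes these residual actions actual actions rather than mere partial ones.

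To apply Grothendieck's theorem I then need to verify three standard hypotheses for each factorisation: enough injectives in the source and in the intermediate category, left-exactness of both factors, and preservation of injectives by the inner (left) factor. The existence of enough injectives in the three relevant categories $\DDGM$, $\DDGNM$, $\DDGNMOD$ is already recorded via \hyperref[pre7.5]{Remark \ref*{pre7.5}} combined with the sketch preceding \hyperref[pre7.13]{Lemma \ref*{pre7.13}}, while left-exactness of each outer factor is just \hyperref[pre7.13]{Lemma \ref*{pre7.13}} applied with the appropriate pair of profinite group and discrete monoid (taking the monoid to be trivial in the case of $(-)^{G/N}$).

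The main obstacle will be verifying that $(-)^{N}$ and $(-)^{N,M}$ send injectives to injectives. The natural approach is to produce exact left adjoints, as in the classical case $M=1$ treated in \cite[Chapter I \S 6]{nswo}. For $(-)^{N}\colon \DDGM \to \DDGNM$ the left adjoint is induction from $N$ to $G$, enriched with the commuting $M$-action, which is exact because induction along a closed subgroup of a profinite group is. For $(-)^{N,M}\colon \DDGM \to \DDGNMOD$ one argues identically after identifying $\DDGM$ with $\DDGXM$ via \hyperref[pre7.4a]{Remark \ref*{pre7.4a}}, so that $(-)^{N,M}$ becomes invariants under the (closed, normal) submonoid $N \times M$ of $G \times M$. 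Once (i) enough injectives, (ii) left-exactness, and (iii) preservation of injectives are established, Grothendieck's theorem produces the two spectral sequences with the stated $E_2$-terms and common abutment $H^{a+b}(G,M;A)$.
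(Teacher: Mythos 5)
Your overall strategy is the same as the paper's: factor $(-)^{G,M}$ as $(-)^{G/N}\circ(-)^{N,M}$ and as $(-)^{G/N,M}\circ(-)^{N}$, check that the source and intermediate categories have enough injectives, that the functors are left exact, that the inner invariants functors preserve injectives, and then invoke the Grothendieck spectral sequence. All of that matches the paper, which delegates the injectivity-preservation step to \cite[Lem.\ 2.2.16]{kupferer}.

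The one step that does not work as you wrote it is precisely that verification, which you yourself single out as the main obstacle. The left adjoint of $(-)^{N}\colon \DDGM \to \DDGNM$ is \emph{not} induction $\mathrm{Ind}_N^G$: induction goes from discrete $N$-modules to discrete $G$-modules and is adjoint to restriction $\mathrm{Res}^G_N$, a functor that does not occur here; in particular its source is not $\DDGNM$, whose objects carry a $G/N$-action, not an $N$-action. The correct exact left adjoint is inflation along the projection $G \twoheadrightarrow N\backslash G=G/N$: given $B \in \DDGNM$, let $G$ act on $B$ through $G/N$ and keep the $M$-action; then $\Hom_{\DDGM}(\mathrm{infl}\,B,A)\cong \Hom_{\DDGNM}(B,A^{N})$ (a $(G,M)$-morphism out of $\mathrm{infl}\,B$ lands in $A^{N}$ because $N$ acts trivially on the source, and $A^N$ is $M$-stable since the actions commute), and inflation is exact because it leaves the underlying abelian group unchanged; continuity is automatic as everything is discrete. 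Similarly, the exact left adjoint of $(-)^{N,M}\colon \DDGM \to \DDGNMOD$ sends a discrete $G/N$-module to the object of $\DDGM$ on which $G$ acts through $G/N$ and $M$ acts trivially. Note that an induction-style argument would in any case be problematic for the monoid factor, where no induction theory has been set up, whereas trivial-action inflation causes no difficulty. With this substitution (and your observation that left exactness follows from Lemma \ref{pre7.13}, applied with trivial monoid where appropriate), your proof is complete and coincides with the paper's.
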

	
	As we now have accomplished the abstract theory for our goals, we want to discuss how to compute
	these cohomology groups when the monoid action arises from an endomorphism. First of all, we want to
	compare $\NN_0$-actions with $\ZZ[X]$-modules.
	
	\begin{rem}
	The category $\AbsN$ coincides with the category of $\ZZ[X]$-modules.
	\end{rem}
	
	 In the following, we will switch between these
	two concepts without further mentioning it.
	
	\begin{rem}
	Let $G$ be a profinite group, $A \in \DDGN$.
	For every $n \in \NN_0$ we can define an $\NN_0$-action on $\Ccts^n(G,A)$ by operating on the coefficients:
	\[ (X \cdot f)(\sigma) \coloneqq X \cdot (f(\sigma)).\]
	\end{rem}
	
	\begin{rem} \label{pre7.19}
	Let $\Abb$ be a (commutative) double complex of abelian groups. We write $\gls{totabb}$ for its total complex,
	\index{Total Complex} by which we mean the complex with objects
	\[ \gls{totnabb} \coloneqq \bigoplus_{i+j=n} A^{i,j}\]
	and differentials
	\[ \gls{difnabb}\coloneqq \bigoplus_{i+j=n} \mathrm{d}^{i,j}_{\mathrm{hor}}\circ \mathrm{pr}_{i-1,j} \oplus (-1)^{i}
	 \mathrm{d}^{i,j}_{\mathrm{vert}} \circ \mathrm{pr}_{i,j-1}.\]
	 If $\fbb\colon \Abb \to \Bbb$ is a morphism of (commutative) double complexes, then
	 \[ \begin{xy} \xymatrix@R-2pc{
	 	&\Tot^n(\fbb)\colon \Tot^n(\Abb) \ar[rr] & & \Tot^n(\Bbb) \\
		& \hspace*{2cm} (a_{ij})_{i+j=n} \ar@{|->}[rr] & & (f_{ij}(a_{ij}))_{i+j=n}
	} \end{xy} \]
	defines a morphism of the corresponding total complexes.\\
	If $\Xb$ and $\Yb$ are complexes of abelian groups and $\gb\colon \Xb \to \Yb$ is a morphism
	of complexes, then it also is a double complex concentrated in degrees $0$ and $1$ and we again write
	$\Tot(\gb\colon \Xb \to \Yb)$
	for its total complex.
	\end{rem}
	
	\begin{rem}
	Let $G$ be a profinite group and $A \in \DDG$. As in \emph{\cite[p.\,12--13]{nswo}} we omit the subscript
	"$\mathrm{cts}$" for the notations introduced in \emph{\hyperref[defctscoh]{Remark \ref*{defctscoh}}}, i.e., we write
	\[ X^n(G,A)\coloneqq \mapcts(G^{n+1},A),\]
	$\partial^n$ for the differential $X^{n-1}(G,A) \to X^n(G,A)$ and
	\[ C^n(G,A) \coloneqq X^n(G,A)^G.\]
	\end{rem}
	
	\begin{mydef} \label{deftotcoh}
	Let $G$ be a profinite group and $A \in \DDGN$. Then define
	\[ \begin{xy} \xymatrix@R-2pc{
	&\gls{CcbX}\coloneqq \Tot( \Cb(G,A) \ar[rr]^-{X-1} & & \Cb(G,A) ), \\
	&\gls{HXGA}\coloneqq H^*(\Ccb_X(G,A)). & &
	} \end{xy} \]
	If the $\NN_0$-action on $A$ comes from an endomorphism $f \in \End_G(A)$
	(cf.\ \hyperref[pre7.2]{ \eqref{pre7.2}}), then we also write
	\[ \begin{xy} \xymatrix@R-2pc{
	&\gls{Ccbf}\coloneqq \Tot( \Cb(G,A) \ar[rr]^-{\Ccb(G,f)-\id} & & \Cb(G,A) ),\\
	&\gls{HXGAf}\coloneqq H^*(\Ccb_f(G,A)).
	} \end{xy} \]
	If $A \in \AbsN$ then we also write $\Hf_X^*(A)$ for the cohomology of the
	complex $A \overset{X-1}{\longrightarrow} A$ concentrated in the degrees $0$ and $1$.
	\end{mydef}
	
	The aim now is to see that the cohomology of the complex $\Ccb_X(G,A)$ coincides with the
	right derived functors of $(-)^{G,\NN_0}$. Before proving this, we want to make a smaller step and explain
	first how to compute the right derived functors of $(-)^{\NN_0}$ and that these coincide with
	the cohomology of the complex $A \overset{X-1}{\longrightarrow} A$ concentrated in degrees $0$ and $1$.
	\begin{prop} \label{pre7.20}
	Let $A \in \AbsN$. Then we have
	\begin{align*}
		 H^0(\NN_0;A) & = A^{\NN_0},\\
		 H^1(\NN_0;A) &= A_{\NN_0},\\
		 H^{i}(\NN_0;A) &= 0  \text{ for all } i \in \ZZ\setminus\{0,1\}.
	\end{align*}
	In particular, the right derived functors of $(-)^{\NN_0}$ coincide with the cohomology
	of the complex $A \overset{X-1}{\longrightarrow} A$ concentrated in degrees $0$ and $1$. Using the
	notation from above, this means that for all $i \in \ZZ$ there are natural isomorphisms
	\[H^{i}(\NN_0;A) = \Hf_X^{i}(A).\]
	\begin{proof} Follows immediately from %
the projective resolution of $\ZZ$ by:
	\[ \begin{xy} \xymatrix@R-2pc{
		& 0 \ar[r] & \ZZ[X] \ar[r] & \ZZ[X] \ar[r] & \ZZ \ar[r] & 0 \\
		& & P(X) \ar@{|->}[r] & (X-1)P(X) & & \\
		& & & P(X) \ar@{|->}[r] & P(1). &
	} \end{xy} \]
	\end{proof}
	\end{prop}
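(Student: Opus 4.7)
The plan is to compute $H^i(\NN_0;A) = R^i(-)^{\NN_0}(A)$ as $\Ext^i_{\ZZ[X]}(\ZZ,A)$ via the proposed projective resolution. Under the identification of $\AbsN$ with the category of $\ZZ[X]$-modules and the isomorphism $(-)^{\NN_0} \cong \Hom_{\ZZ[X]}(\ZZ,-)$ from \eqref{pre7.3} (where $\ZZ$ carries the trivial $X$-action), the derived functors are computed by resolving $\ZZ$ projectively.

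First I would verify that the displayed two-term sequence
\[ 0 \to \ZZ[X] \xrightarrow{\cdot(X-1)} \ZZ[X] \xrightarrow{\mathrm{ev}_1} \ZZ \to 0 \]
is exact: the evaluation map $P(X)\mapsto P(1)$ is surjective with kernel the principal ideal $(X-1)$, and multiplication by $X-1$ is injective because $\ZZ[X]$ is an integral domain. Since $\ZZ[X]$ is a free, hence projective, $\ZZ[X]$-module, this is a length-one projective resolution of $\ZZ$, which already forces $H^i(\NN_0;A)=0$ for $i\notin\{0,1\}$.

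Next I would apply $\Hom_{\ZZ[X]}(-,A)$ to the truncated resolution $\ZZ[X]\xrightarrow{\cdot(X-1)}\ZZ[X]$ and use the natural identification $\Hom_{\ZZ[X]}(\ZZ[X],A)\cong A$ (evaluation at $1$). The dualised map becomes multiplication by $X-1$ on $A$, so the resulting complex is
\[ A \xrightarrow{\,X-1\,} A \]
concentrated in degrees $0$ and $1$. Reading off cohomology gives $H^0 = \ker(X-1) = A^{\NN_0}$ and $H^1 = \coker(X-1) = A_{\NN_0}$. The very same complex is, by Definition (of $\Hf_X^*$ following \hyperref[deftotcoh]{Definition \ref*{deftotcoh}}), the one computing $\Hf_X^*(A)$, which yields the final natural isomorphism $H^i(\NN_0;A) \cong \Hf_X^i(A)$ for all $i \in \ZZ$.

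There is no substantial obstacle here: the argument is a routine application of the Ext-computation via a two-term projective resolution. The only points requiring care are (i) to check that the identification in \eqref{pre7.3} really makes $(-)^{\NN_0}$ the same functor as $\Hom_{\ZZ[X]}(\ZZ,-)$ on the category $\AbsN$ (so that the derived functors agree), and (ii) to confirm that the sign convention for the differential in $\Hom_{\ZZ[X]}(-,A)$ applied to $\cdot(X-1)$ produces exactly $X-1$ on $A$, so that the complex matches $\Hf_X^\bullet(A)$ on the nose rather than up to sign.
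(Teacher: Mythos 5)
Your proposal is correct and follows exactly the route the paper intends: it spells out how the two-term projective resolution $0 \to \ZZ[X] \xrightarrow{\cdot(X-1)} \ZZ[X] \to \ZZ \to 0$, combined with the identification $(-)^{\NN_0} \cong \Hom_{\ZZ[X]}(\ZZ,-)$ from \eqref{pre7.3}, yields the complex $A \xrightarrow{X-1} A$ and hence the stated cohomology. The paper's proof is just a terse version of the same argument, so there is nothing to add.
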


	\begin{prop} \label{pre7.21}
	Let $G$ be a profinite group and $A \in \DDGN$. Then the double complex
	\[ \begin{xy} \xymatrix{
		&K^{\bullet,\bullet}: \Cb(G,A) \ar[rr]^-{X-1} & & \Cb(G,A)
	} \end{xy} \]
	gives rise to two spectral sequences converging
	to the cohomology $\Hf_X^*(G,A)$:
	\[ \begin{xy} \xymatrix@R-2pc{
		&\Hf_X^{a}(H^b(G,A)) \ar@{=>}[r] & \Hf_X^{a+b}(G,A) \\
		&H^{a}(G,\Hf_X^b(A)) \ar@{=>}[r] & \Hf_X^{a+b}(G,A).
	} \end{xy} \]
	\begin{proof}
	Since for every $n \in \ZZ$ the double complex	$K^{\bullet,\bullet}$
	has at most two nonzero entries $K^{p,q}$ with $p+q=n$, this is shown in
	\cite[\href{https://stacks.math.columbia.edu/tag/012X}{Tag 012X, Lemma 12.22.6}]{stacks-project}.
	\end{proof}
	\end{prop}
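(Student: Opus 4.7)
The plan is to apply the standard machinery of spectral sequences attached to a commutative double complex to the situation where $K^{p,q}=C^q_{\mathrm{cts}}(G,A)$ is concentrated in the two columns $p=0,1$, with horizontal differential $X-1$ and vertical differential $\partial_{\mathrm{cts}}$. The two filtrations of $\Tot(K^{\bullet,\bullet})$ (by columns, respectively by rows) yield two spectral sequences whose $E_2$-pages are the two iterated cohomologies of $K^{\bullet,\bullet}$ and whose common abutment is $H^{\ast}(\Tot(K^{\bullet,\bullet}))=\Hf_X^{\ast}(G,A)$. Because $K^{\bullet,\bullet}$ has only two nonzero columns, at each total degree $n$ there are only finitely many nonzero entries $K^{p,q}$ with $p+q=n$, so the convergence of both spectral sequences is automatic.

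For the first spectral sequence I would take the vertical cohomology first. Since each of the two columns is $\Cctsb(G,A)$, this computes the continuous group cohomology $H^q(G,A)$ in both columns, and the induced horizontal differential is $X-1$ acting via its functoriality on $H^q(G,A)$. Applying $\Hf_X$ then yields the $E_2$-page
\[
E_2^{p,q}=\Hf_X^{p}\bigl(H^q(G,A)\bigr),
\]
matching the first spectral sequence in the statement.

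For the second spectral sequence I would take horizontal cohomology first. The crucial identification is
\[
\Hf_X^q\bigl(\Cctsb(G,A)\bigr)\cong \Cctsb(G,\Hf_X^q(A))
\]
as complexes in $p$. This rests on the exactness of $C^p_{\mathrm{cts}}(G,-)$ on discrete $G$-modules, which follows via the standard isomorphism with non-homogeneous cochains $X^p_{\mathrm{cts}}(G,B)^G\cong \mapcts(G^p,B)$ together with the observation that continuous maps from the compact profinite space $G^p$ into a discrete module are locally constant and hence lift through surjections. Applied to the tautological exact sequences $0\to\ker(X-1)\to A\to (X-1)A\to 0$ and $0\to (X-1)A\to A\to A/(X-1)A\to 0$, this exactness yields the identification above. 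Taking $\partial_{\mathrm{cts}}$-cohomology afterwards produces
\[
E_2^{p,q}=H^p\bigl(G,\Hf_X^q(A)\bigr),
\]
as required; note also that this spectral sequence is concentrated in the two rows $q\in\{0,1\}$ since $\Hf_X^q(A)=0$ otherwise.

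The main technical obstacle is precisely the identification of the horizontal cohomology $\Hf_X^q(\Cctsb(G,A))$ with $\Cctsb(G,\Hf_X^q(A))$; once this is in hand, both spectral sequences and their formulas follow from the general formalism of double complex spectral sequences (e.g.\ \cite{stacks-project}).
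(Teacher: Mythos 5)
Your proposal is correct and takes essentially the same route as the paper, whose proof simply invokes the two standard spectral sequences of a double complex (Stacks Project, Tag 012X), with convergence automatic because only the two columns $p=0,1$ are nonzero. The additional detail you work out --- identifying the horizontal-first $E_2$-page via the exactness of $C^q_{\mathrm{cts}}(G,-)$ on discrete modules --- is precisely what the paper leaves implicit there (compare Lemma \ref{pre7.23} and Corollary \ref{pre6.10}).
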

	
	\begin{lem} \label{pre7.22}
	Let $G$ be a profinite group and $f \colon A \to B$ be a morphism in $\DDGN$. Then
	the diagram
	\[ \begin{xy} \xymatrix{
		&\Cc^n_X(G,A) \ar[d]^-{\partial_A} \ar[rr]^-{C^n_X(G,f)} & &C^n_X(G,B) \ar[d]^{\partial_B} \\
		&\Cc^{n+1}_X(G,A) \ar[rr]^-{C^{n+1}_X(G,f)} & &C^{n+1}_X(G,B)
	} \end{xy} \]
	is commutative for all $n \in \NN_0$.
	\begin{proof} Left to the reader.
	\end{proof}
	\end{lem}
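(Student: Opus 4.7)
The plan is to unwind the definitions and observe that everything in sight is functorial. First I would write out $\Cc^n_X(G,A)$ explicitly: by the total complex recipe, $\Cc^n_X(G,A) = C^n(G,A) \oplus C^{n-1}(G,A)$, where the left summand sits in the first column $(i=0)$ and the right summand in the second column $(i=1)$ of the double complex $K_A^{\bullet,\bullet}$. Writing a generic element as $(a,b)$ with $a \in C^n(G,A)$, $b \in C^{n-1}(G,A)$, the total differential reads
\[
\partial_A(a,b) \;=\; \bigl(\partial^{n+1} a,\; (X-1)a - \partial^n b\bigr) \;\in\; C^{n+1}(G,A) \oplus C^n(G,A),
\]
with the sign $(-1)^1=-1$ in front of $\partial b$ coming from the sign convention in the definition of $\Tot$. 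The same formula holds for $B$.

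Next, the map $C^n_X(G,f)\colon \Cc^n_X(G,A) \to \Cc^n_X(G,B)$ is, by construction, induced componentwise from $f$ by postcomposition on cochains, so $(a,b) \mapsto (f\circ a,\, f\circ b)$. The crucial observation is that $f$ being a morphism in $\DDGN$ means it is simultaneously $G$-equivariant and $\NN_0$-equivariant. The $G$-equivariance ensures that postcomposition with $f$ sends $G$-invariant cochains to $G$-invariant cochains and commutes with the standard continuous coboundary $\partial$, which is the classical functoriality of $C^\bullet(G,-)$. The $\NN_0$-equivariance, i.e.\ $f\circ X_A = X_B\circ f$, ensures that postcomposition with $f$ commutes with the horizontal map $X-1$, since $X$ acts on cochains coefficientwise.

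Together, these two facts say that $C^\bullet(G,f)$ is a morphism of the underlying double complexes $K_A^{\bullet,\bullet} \to K_B^{\bullet,\bullet}$. The desired square then expresses nothing more than the functoriality of the total complex construction (as already recorded in the remark on $\Tot$ of a morphism of double complexes), so the claim follows by comparing the two compositions component by component on $C^n(G,A)$ and on $C^{n-1}(G,A)$ using the explicit formula for $\partial_A$ above.

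There is no real obstacle here; this is a bookkeeping statement. The only care needed is consistency with the sign convention of the total differential when checking the identity on the second summand, where the minus sign in front of $\partial^n b$ must appear equally on both sides of the equation. No further input beyond the defining properties of a morphism in $\DDGN$ is required.
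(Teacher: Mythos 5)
Your proof is correct, and since the paper simply leaves this verification to the reader, your componentwise unwinding of $\Cc^n_X(G,A)=C^n(G,A)\oplus C^{n-1}(G,A)$ with the signed total differential, plus the observation that $G$- and $\NN_0$-equivariance of $f$ make $C^\bullet(G,f)$ commute with $\partial$ and with $X-1$ respectively, is exactly the intended routine argument. No gaps; the sign bookkeeping you flag is the only point of care and you handle it consistently with the paper's convention for $\Tot$.
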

	
	\begin{lem} \label{pre7.23}
	Let $G$ be a profinite group and
	\[ \begin{xy} \xymatrix{
		&0 \ar[r] & A \ar[r]^{\alpha} &B \ar[r]^{\beta} &C \ar[r] &0
	} \end{xy} \]
	be an exact sequence in $\DDGN$. Then, 
the sequence
	\[ \begin{xy} \xymatrix{
	&0 \ar[r] & \Ccb_X(G,A) \ar[rr]^-{\Ccb_X(G,\alpha)} & & \Ccb_X(G,B)
		\ar[rr]^-{\Ccb_X(G,\beta)} & &  \Ccb_X(G,C) \ar[r] &0
	} \end{xy} \]
	is exact.
	\begin{proof}
	Since $A, B$ and $C$ are discrete groups, we deduce from \hyperref[pre6.10]{Corollary \ref*{pre6.10}}
	that for all $n \in \NN_0$ the sequence
	\[ \begin{xy} \xymatrix{
		&0 \ar[r] & \Cc^n(G,A)  \ar[rr]^-{\Cc^n(G,\alpha)} & & \Cc^n(G,B)
		 \ar[rr]^-{\Cc^n(G,\beta)} & & \Cc^n(G,C) \ar[r] &0
	} \end{xy} \]
	is exact. But since $\Cc^n_X(G,Z)= \Cc^n(G,Z) \oplus \Cc^{n-1}(G,Z)$ (where
	$\Cc^{-1}(G,Z)=0$) and $\Cc^n_X(G,\eta) = \Cc^n(G,\eta) \oplus \Cc^{n+1}(G,\eta)$ for all
	$Z \in \DDGN$ and any morphism $\eta$ in $\DDGN$, we immediately deduce that the
	sequence
	\[ \begin{xy} \xymatrix{
		&0 \ar[r] & \Cc^n_X(G,A)  \ar[rr]^-{\Cc^n_X(G,\alpha)} & & \Cc^n_X(G,B)
		 \ar[rr]^-{\Cc^n_X(G,\beta)} & & \Cc^n_X(G,C) \ar[r] &0
	} \end{xy} \]
	is also exact.
	\end{proof}
	\end{lem}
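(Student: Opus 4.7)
The plan is to reduce the claim to the already-established exactness of the continuous cochain complexes $\Ccb(G,-)$, and then to observe that passing to the total complex preserves exactness because it only amounts to taking direct sums.

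First I would unpack the definition: by \hyperref[deftotcoh]{Definition \ref*{deftotcoh}}, the complex $\Ccb_X(G,Z)$ is the total complex of the double complex given by the vertical map $X-1\colon \Ccb(G,Z) \to \Ccb(G,Z)$. Therefore in degree $n$ one has
\[ \Cc^n_X(G,Z) = \Cc^n(G,Z) \oplus \Cc^{n-1}(G,Z), \]
with the convention that $\Cc^{-1}(G,Z)=0$, and for a morphism $\eta$ in $\DDGN$ the map $\Cc^n_X(G,\eta)$ is just $\Cc^n(G,\eta)\oplus \Cc^{n-1}(G,\eta)$ on components. Since $X-1$ is a morphism in $\DDGN$ (it acts through a $G$-equivariant endomorphism on the coefficients), this decomposition is compatible with all the horizontal maps in our candidate short exact sequence.

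Next I would invoke exactness at the cochain level. Because $A,B,C$ are discrete, any short exact sequence in $\DDG$ (forgetting the $\NN_0$-action) has a continuous set-theoretic section $s\colon C\to B$ — discreteness makes this trivial to construct — and the inclusion $A\hookrightarrow B$ carries the induced topology of $B$. Applying \hyperref[pre6.10]{Lemma \ref*{pre6.10}}, one obtains an exact sequence
\[ 0 \longrightarrow \Cc^n(G,A) \longrightarrow \Cc^n(G,B) \longrightarrow \Cc^n(G,C) \longrightarrow 0 \]
for every $n \in \NN_0$. (Alternatively one can argue directly: for discrete $G$-modules, $\Cc^n(G,-)=\mapcts(G^{n+1},-)^G$ is exact because continuous maps from a profinite set into a discrete module factor through a finite quotient, so everything reduces to honest group cohomology of finite groups with values in $G/U$-modules.)

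Finally I would combine the two ingredients: for each $n$, the sequence
\[ 0 \longrightarrow \Cc^n_X(G,A) \longrightarrow \Cc^n_X(G,B) \longrightarrow \Cc^n_X(G,C) \longrightarrow 0 \]
is the direct sum of the two (exact) sequences at levels $n$ and $n-1$ obtained in the previous step, hence itself exact. Compatibility with the differentials of $\Ccb_X(G,-)$ follows from \hyperref[pre7.22]{Lemma \ref*{pre7.22}} applied to $\alpha$ and $\beta$. There is no real obstacle here; the only point requiring a moment of care is checking that the section $s\colon C\to B$ used to invoke \hyperref[pre6.10]{Lemma \ref*{pre6.10}} indeed exists, but this is automatic from discreteness, and the $\NN_0$-equivariance plays no role in the argument beyond ensuring that $X-1$ is a morphism of cochain complexes to which the direct-sum formation applies.
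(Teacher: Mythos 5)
Your proposal is correct and follows essentially the same route as the paper's own proof: exactness of $0 \to \Cc^n(G,A) \to \Cc^n(G,B) \to \Cc^n(G,C) \to 0$ via Lemma \ref{pre6.10} (with the section and induced-topology hypotheses automatic from discreteness, as you note), followed by the observation that $\Cc^n_X(G,-)$ is the direct sum of the levels $n$ and $n-1$, so exactness passes to the totalized sequence. The extra remarks on the alternative cochain-level argument and on Lemma \ref{pre7.22} are harmless additions but do not change the argument.
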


	\begin{lem} \label{pre7.27}
	Let $G$ be a profinite group. The functors $(\Hf^n_X(G,-))_n$ then form a cohomological $\delta$-functor, i.e.
	if
	\[ \begin{xy} \xymatrix{
		&0 \ar[r] & A \ar[r]^{\alpha} &B \ar[r]^{\beta} &C \ar[r] &0
	} \end{xy} \]
	is an exact sequence in $\DDGN$ then, for every $n \in \NN_0$, there is a group homomorphism
	\[ \begin{xy} \xymatrix{
		&\delta^n \colon \Hf_X^n(G,C) \ar[rr] & & \Hf_X^{n+1}(G,A)
	} \end{xy} \]
	such that the sequence
	\[ \begin{xy} \xymatrix{
		 \cdots \ar[r] & \Hf_X^n(G,B) \ar[r] & \Hf_X^n(G,C) \ar[r]^-{\delta^n} & \Hf_X^{n+1}(G,A)
		\ar[r] & \Hf_X^{n+1}(G,B) \ar[r] & \cdots
	} \end{xy} \]
	is exact.
	\begin{proof}
	The proof is the standard application for the snake lemma (cf.\ for example at \cite[(1.3.2) Theorem,
	Chapter I \S 3, p.\,27]{nswo}). 
	\end{proof}
	\end{lem}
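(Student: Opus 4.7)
The plan is a direct invocation of standard homological algebra, since all the substantive work has already been carried out in the preceding lemma. First I would apply Lemma \ref{pre7.23} to the short exact sequence $0 \to A \to B \to C \to 0$ in $\DDGN$; this gives a short exact sequence of cochain complexes of abelian groups
\[
0 \longrightarrow \Ccb_X(G,A) \longrightarrow \Ccb_X(G,B) \longrightarrow \Ccb_X(G,C) \longrightarrow 0.
\]
Since by Definition \ref{deftotcoh} the functor $\Hf_X^n(G,-)$ is by construction the $n$-th cohomology of $\Ccb_X(G,-)$, the classical long exact cohomology sequence associated to a short exact sequence of cochain complexes---obtained by applying the snake lemma to the ladder of differentials, or equivalently by the zigzag lemma---immediately produces the connecting homomorphisms $\delta^n$ and the asserted exact sequence.

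Explicitly, $\delta^n$ sends the class of a cocycle $c \in \Ccb_X^n(G,C)$ to the class in $\Hf_X^{n+1}(G,A)$ represented by the unique preimage in $\Ccb_X^{n+1}(G,A)$ of $\partial b$, where $b \in \Ccb_X^n(G,B)$ is any lift of $c$. Well-definedness, independence of the choice of $b$, and the cocycle property of the preimage are each verified by a routine diagram chase in the short exact sequence of complexes. Naturality of the $\delta^n$ with respect to morphisms of short exact sequences in $\DDGN$ follows from the corresponding functoriality of the snake lemma, which is exactly what is needed for $(\Hf_X^n(G,-))_n$ to qualify as a cohomological $\delta$-functor.

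There is no genuinely hard step here: the only potentially subtle point, namely that surjectivity of $\beta\colon B\to C$ is preserved after applying $\mapcts(G^{\bullet+1},-)$ in spite of $\mapcts$ not being exact in general, was already dispatched in the proof of Lemma \ref{pre7.23} via Lemma \ref{pre6.10} (using that $A,B,C$ are discrete, so that $\beta$ admits a continuous set-theoretic section) together with the componentwise decomposition $\Cc^n_X(G,Z)=\Cc^n(G,Z)\oplus \Cc^{n-1}(G,Z)$ of the total complex. Everything else is formal.
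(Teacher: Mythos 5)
Your proposal is correct and follows essentially the same route as the paper: Lemma \ref{pre7.23} supplies the short exact sequence of complexes $0 \to \Ccb_X(G,A) \to \Ccb_X(G,B) \to \Ccb_X(G,C) \to 0$, and the long exact sequence with its connecting maps $\delta^n$ is then the standard snake-lemma (zigzag) construction, which is exactly what the paper cites. Your added remarks on the explicit description of $\delta^n$ and on why surjectivity survives applying $\mapcts(G^{\bullet+1},-)$ for discrete coefficients are consistent with how the paper handles these points in Lemmas \ref{pre6.10} and \ref{pre7.23}.
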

	
	
	\begin{lem} \label{pre7.29}
	Let $G$ be a group. Then there holds
	\[ \ZZ[G][X] \cong \ZZ[G] \otimes_\ZZ \ZZ[X].\]
	\end{lem}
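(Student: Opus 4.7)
The plan is to construct an explicit ring isomorphism and verify it is bijective by comparing $\ZZ$-bases. The underlying idea is that both sides have the same natural $\ZZ$-basis indexed by pairs $(g, n) \in G \times \NN_0$, namely $g X^n$ on the left and $g \otimes X^n$ on the right.

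First I would define a map
\[ \varphi \colon \ZZ[G] \otimes_\ZZ \ZZ[X] \to \ZZ[G][X], \quad a \otimes p(X) \mapsto a \cdot p(X) \]
on pure tensors. Since the map $\ZZ[G] \times \ZZ[X] \to \ZZ[G][X]$, $(a, p) \mapsto a \cdot p$, is $\ZZ$-bilinear, the universal property of the tensor product yields a well-defined homomorphism of abelian groups. I would then check that $\varphi$ respects multiplication on pure tensors: for $a, a' \in \ZZ[G]$ and $p, p' \in \ZZ[X]$, noting that the coefficients of $p$ and $p'$ lie in $\ZZ$ and hence commute with elements of $\ZZ[G]$, we have $\varphi((a \otimes p)(a' \otimes p')) = \varphi(aa' \otimes pp') = aa' \cdot pp' = (a \cdot p)(a' \cdot p') = \varphi(a \otimes p)\varphi(a' \otimes p')$. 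The identity element $1 \otimes 1$ is sent to $1$, so $\varphi$ is a ring homomorphism.

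To see that $\varphi$ is bijective, I would exhibit a $\ZZ$-basis on each side. The elements $\{g \otimes X^n : g \in G, n \in \NN_0\}$ form a $\ZZ$-basis of $\ZZ[G] \otimes_\ZZ \ZZ[X]$ because $\ZZ[X]$ is a free $\ZZ$-module with basis $\{X^n\}_{n \in \NN_0}$ and $\ZZ[G]$ is a free $\ZZ$-module with basis $G$. On the other hand, $\{g X^n : g \in G, n \in \NN_0\}$ is a $\ZZ$-basis of $\ZZ[G][X]$ by definition of the polynomial ring. Since $\varphi(g \otimes X^n) = g X^n$, the map $\varphi$ sends a basis to a basis and is therefore a $\ZZ$-linear isomorphism, hence a ring isomorphism.

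There is no substantial obstacle here; the only minor subtlety is that $\ZZ[G]$ need not be commutative, so one must verify that $\ZZ[G] \otimes_\ZZ \ZZ[X]$ carries a well-defined ring structure (it does, because $\ZZ[X]$ is a commutative $\ZZ$-algebra) and that the multiplication check above actually goes through, which relies crucially on the coefficients of polynomials in $X$ being central in $\ZZ[G]$.
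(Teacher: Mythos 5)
Your proof is correct: the map induced by $(a,p)\mapsto a\cdot p$ is a ring homomorphism (the multiplicativity check works exactly because integer coefficients and the variable $X$ are central in $\ZZ[G][X]$), and matching the $\ZZ$-bases $\{g\otimes X^n\}$ and $\{gX^n\}$ gives bijectivity. The paper states this lemma without proof, treating it as standard, and your argument via the universal property plus basis comparison is precisely the expected one.
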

	
	\begin{lem} \label{pre7.30}
	Let $G$ be an abelian profinite group. Then, for every $n \in \NN$ the functor $\Hf_X^n(G,-)$ is effaceable, i.e.
	for every $A \in \DDGN$ there exists a $B \in \DDGN$ and a monomorphism
	$u\colon A \to B$ in $\DDGN$ such that $\Hf_X^n(G,u)=0$.
	\begin{proof} Left to the reader, or \cite[Lem.\ 2.2.31]{kupferer}.
	\end{proof}
	\end{lem}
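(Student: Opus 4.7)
The plan is to embed the given $A \in \DDGN$ into some $B \in \DDGN$ whose cohomology $\Hf_X^n(G, B)$ vanishes in all positive degrees; then the induced map $\Hf_X^n(G, u)\colon \Hf_X^n(G,A) \to \Hf_X^n(G,B) = 0$ is automatically zero. To control $\Hf_X^n(G,B)$ I will apply the spectral sequence $\Hf_X^a(\Hcts^b(G,B)) \Rightarrow \Hf_X^{a+b}(G,B)$ from \hyperref[pre7.21]{Proposition \ref*{pre7.21}}, which degenerates whenever $\Hcts^q(G,B)=0$ for all $q\geq 1$ and then yields $\Hf_X^n(G,B) \cong \Hf_X^n(B^G)$. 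Since the complex defining $\Hf_X^{\bullet}(B^G)$ has length one, vanishing in degrees $\geq 2$ is automatic, leaving only the requirement that $X-1$ act surjectively on $B^G$.

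I will construct $B$ in two steps, addressing the $\NN_0$- and $G$-directions separately. First I enlarge $A$ within $\DDGN$ to a module on which $X-1$ is surjective. Using $A=\bigcup_U A^U$ over open normal subgroups $U$ of $G$ (and that each $A^U$ is a sub-$\NN_0$-module since the two actions commute), I set $C:=\bigcup_U \Hom_{\ZZ}(\ZZ[X],A^U)$, equipped with the pointwise $G$-action inherited from $A^U$ and with the $\NN_0$-action $(X\cdot f)(P):=f(XP)$. The assignment $a\mapsto f_a$ with $f_a(P):=P(X)a$ gives an embedding $A\hookrightarrow C$ in $\DDGN$, and the telescoping preimage $g(X^k):=\sum_{j<k} f(X^j)$ shows that $X-1$ is surjective on $C$.

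Second, I embed $C$ into $B:=\mapcts(G,C)$ equipped with the $G$-action by left translation $(g\cdot f)(h):=f(g^{-1}h)$ and the pointwise $\NN_0$-action. Because $C$ is already a discrete $G$-module, every element of $\mapcts(G,C)$ is locally constant; in particular $B\in\DDGN$ and the two actions commute. The map $c\mapsto\phi_c$ with $\phi_c(h):=h^{-1}c$ (using the original $G$-action on $C$) is then a $G$-$\NN_0$-equivariant embedding $C\hookrightarrow B$. The $G$-module $B$ is coinduced from the trivial subgroup, so $\Hcts^q(G,B)=0$ for $q\geq 1$, and the identification $B^G \cong C$ via constant functions is an isomorphism of $\NN_0$-modules, so $X-1$ is surjective on $B^G$ by Step 1.

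The main obstacle I anticipate is maintaining discreteness of the constructed $G$-module simultaneously with the surjectivity of $X-1$: a naive single-step construction (e.g.\ $\Hom_{\ZZ}(\ZZ[X],\mapcts(G,A))$ with all actions combined) tends to produce a non-discrete $G$-module because infinite products break discreteness. The two-step construction sidesteps this precisely because Step 1 only enlarges within the $U$-fixed parts $A^U$ (each of which is a finite-level module, preserving discreteness at the direct limit), and Step 2 applies $\mapcts(G,-)$ to an already-discrete $G$-module, which again preserves discreteness since every continuous map from profinite $G$ to a discrete target is locally constant.
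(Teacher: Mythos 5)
Your proof is correct. The paper itself gives no argument for this lemma (it is left to the reader with a reference to the first author's thesis), so there is nothing in the text to compare against; but your route is the natural one and it works: you efface by a module that is coinduced in the $\NN_0$-direction, so that $X-1$ becomes surjective, and then coinduced in the $G$-direction, so that $\Hcts^q(G,B)=0$ for $q\geq 1$, after which the first spectral sequence of \hyperref[pre7.21]{Proposition \ref*{pre7.21}} degenerates to $\Hf_X^n(G,B)\cong\Hf_X^n(B^G)\cong\Hf_X^n(C)=0$ for $n\geq 1$, making $\Hf_X^n(G,u)=0$ trivially. All the delicate points are handled: restricting to $\bigcup_U\Hom_{\ZZ}(\ZZ[X],A^U)$ keeps the $G$-action continuous (each such $\Hom$-group is fixed by $U$, since the $G$-action on $A^U$ factors through $G/U$), the telescoping section of $X-1$ is additive and lands in the same $U$-level, the twisted embedding $\phi_c(h)=h^{-1}c$ is the correct $G$-equivariant one (and is locally constant, hence continuous, because stabilizers in $C$ are open), and $B^G$ consists of the constant functions, identified $\NN_0$-equivariantly with $C$. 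Two small remarks: the acyclicity of $\mapcts(G,C)$ with the translation action should be cited (it is the standard fact on coinduced modules for profinite groups, e.g.\ in \cite{nswo}); and your argument never uses that $G$ is abelian, only that the open normal subgroups are cofinal and that $A^U$ is then a $G$-submodule, so you in fact prove the lemma for an arbitrary profinite group, which is harmless and slightly stronger than the statement.
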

	
	\begin{cor} \label{pre7.31}
	Let $G$ be a profinite group. Then the family of functors $(\Hf_X^n(-))_n$ from
	$\DDGN$ to $\mathbf{Ab}$ forms a universal delta functor.
	\end{cor}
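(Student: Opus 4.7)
The plan is to invoke Grothendieck's criterion: a cohomological $\delta$-functor $(T^n)_n$ is universal as soon as it is effaceable in every positive degree (see for instance \cite[Théorème 2.2.1]{tohoku} or \cite[Thm.\ 2.4.7]{weibel}). Both ingredients needed for this criterion have already been established in the preceding lemmas, so the proof of \hyperref[pre7.31]{Corollary \ref*{pre7.31}} is essentially a one-line citation.

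More concretely, I would first recall from \hyperref[pre7.27]{Lemma \ref*{pre7.27}} that the family $(\Hf_X^n(G,-))_n\colon \DDGN \to \mathbf{Ab}$ is indeed a cohomological $\delta$-functor: short exact sequences in $\DDGN$ yield long exact cohomology sequences via the connecting homomorphisms $\delta^n$ constructed there. Next I would quote \hyperref[pre7.30]{Lemma \ref*{pre7.30}}, which states that for each $n \geq 1$ and each $A \in \DDGN$ there exists a monomorphism $u\colon A\hookrightarrow B$ in $\DDGN$ with $\Hf_X^n(G,u)=0$; this is precisely the definition of effaceability in degree $n$.

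Combining these two statements, Grothendieck's criterion immediately implies that $(\Hf_X^n(G,-))_n$ is a universal $\delta$-functor. There is no real obstacle here — the two nontrivial steps (the long exact sequence and effaceability) have already been done, and the corollary is merely the formal consequence. In particular, this universality will be the tool we need in order to compare $(\Hf_X^n(G,-))_n$ with the right derived functors of $(-)^{G,\NN_0}$ in the subsequent results, thereby identifying the Herr-type complex cohomology with monoid cohomology in the discrete setting.
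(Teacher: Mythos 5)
Your proposal is exactly the intended argument: the paper's (omitted) proof of Corollary \ref{pre7.31} is the standard Grothendieck criterion, combining the cohomological $\delta$-functor structure from Lemma \ref{pre7.27} with the effaceability from Lemma \ref{pre7.30}. One caveat you should make explicit: Lemma \ref{pre7.30} is stated only for \emph{abelian} profinite $G$, whereas the corollary (and its later use in Theorem \ref{pre7.33}, which is applied to non-abelian groups such as $G_K$) asserts universality for arbitrary profinite $G$, so the citation as written only settles the abelian case and the effaceability must be verified (or quoted from \cite{kupferer}) without the abelian hypothesis.
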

	
	\begin{thm} \label{pre7.33}
	Let $G$ be a profinite group. Then we have
	\[ \Hf^n_X(G,A) = H^n(G,\NN_0;A)\]
	for all $n \in \NN_0$ and $A \in \DDGN$.
	\begin{proof}
	Since $(H^n(G,\NN_0;-))_n$ are the right derived functors of $(-)^{\GG,\NN_0}$ this is a universal
	delta functor and since $(\Hf_X^n(G,-))_n$ is also an universal delta functor
	(cf.\ \hyperref[pre7.31]{Corollary \ref*{pre7.31}}), it remains to check that they coincide
	in degree $0$. For this, let $A \in \DDGN$. We have
	\begin{subequations}
	\begin{equation*} \begin{split}
		H^0(G,\NN_0;A)  &= A^{G,\NN_0} \hspace*{2,3cm}
	\end{split} \end{equation*}
	and
	\begin{equation*} \begin{split}
		\hspace*{3.2cm}	 \Hf_X^0(G,A)  &= H^0(\Ccb_X(G,A)) \\
				&= \ker(A \overset{d^0}{\longrightarrow} \Cc^1(G,A)) \cap
					\ker(A \overset{X-1}{\longrightarrow} A) \\
				&= A^G \cap A^{X=1}
	\end{split} \end{equation*}
	\end{subequations}
	Since, by definition, $A^{X=1} = A^{\NN_0}$ it follows immediately that
	$A^{G,\NN_0} = A^G \cap A^{\NN_0}$.
	\end{proof}
	\end{thm}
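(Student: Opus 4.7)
The plan is to apply the standard universal $\delta$-functor machinery: both $(H^n(G,\NN_0;-))_n$ and $(\Hf_X^n(G,-))_n$ are universal cohomological $\delta$-functors from $\DDGN$ to $\mathbf{Ab}$, so to identify them it suffices to identify the functors in degree $0$.

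For the left-hand side, this is essentially formal: $H^n(G,\NN_0;-)$ is by definition the $n$-th right derived functor of $(-)^{G,\NN_0}$, and an effaceability argument (or the general fact that right-derived functors form a universal $\delta$-functor, see \cite[\href{https://stacks.math.columbia.edu/tag/010T}{Tag 010T}]{stacks-project}) does the job. For the right-hand side, Lemma \ref{pre7.27} gives the long exact sequence (hence the $\delta$-functor structure), and Corollary \ref{pre7.31} — itself a consequence of the effaceability result of Lemma \ref{pre7.30} — provides universality. So the machinery is already in place.

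It therefore remains to match the two functors in degree zero, for arbitrary $A \in \DDGN$. On the derived-functor side, by left-exactness of $(-)^{G,\NN_0}$ (Lemma \ref{pre7.13}) one has $H^0(G,\NN_0;A) = A^{G,\NN_0}$, and under the identification $\DDGM = \DDGXM$ (Remark \ref{pre7.4a}) this equals $A^G \cap A^{\NN_0}$. On the Herr-complex side, $\Hf_X^0(G,A) = H^0(\Ccb_X(G,A))$, and by the description of the total complex of $\Cb(G,A) \overset{X-1}{\longrightarrow} \Cb(G,A)$, its degree-zero cohomology is the joint kernel
\[
\ker\!\bigl(A \overset{\partial^0}{\longrightarrow} \Cc^1(G,A)\bigr) \cap \ker\!\bigl(A \overset{X-1}{\longrightarrow} A\bigr) = A^G \cap A^{X=1}.
\]
Since $A^{X=1}$ is by definition $A^{\NN_0}$ (via the $\NN_0$-action sending $n \mapsto f^n$ for $f$ the action of $X$), these two descriptions coincide, yielding a natural isomorphism in degree zero. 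By uniqueness of extensions of a functor on $\DDGN$ to a universal $\delta$-functor, this natural isomorphism propagates uniquely to all degrees, giving the desired identification $\Hf_X^n(G,A) = H^n(G,\NN_0;A)$.

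The only real content is the universality of $(\Hf_X^n(G,-))_n$, i.e.\ the effaceability input in Lemma \ref{pre7.30}; everything else is a formal consequence of the already-established $\delta$-functor structure (Lemma \ref{pre7.27}, built on the short exact sequence of complexes in Lemma \ref{pre7.23}) and a direct unpacking of the degree-zero terms. No separate calculation is needed at higher degree.
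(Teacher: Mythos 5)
Your proposal matches the paper's own argument: both identify $(H^n(G,\NN_0;-))_n$ and $(\Hf_X^n(G,-))_n$ as universal $\delta$-functors (the latter via Corollary \ref{pre7.31}, resting on the effaceability of Lemma \ref{pre7.30}) and then check agreement in degree zero, where $H^0(G,\NN_0;A)=A^{G,\NN_0}$ and $\Hf_X^0(G,A)=A^G\cap A^{X=1}=A^G\cap A^{\NN_0}$. This is correct and essentially identical to the paper's proof.
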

	
	Next we want to reformulate \hyperref[pre7.18]{Proposition \ref*{pre7.18}} with the above theorem,
	just to avoid confusions for latter applications.
	
	\begin{prop} \label{pre7.34}
	Let $G$ be a profinite group, $N \triangleleft G$ a closed, normal subgroup and $A \in \DDGN$.
	Then there are two
	cohomological spectral sequences converging to $\Hf_X^n(G,-)$:
	\[ \begin{xy} \xymatrix@R-2pc{
		& H^{a}(G/N, \Hf^b_X(N,A)) \ar@{=>}[r] & \Hf_X^{a+b}(G,M;A) \\
		& \Hf_X^{a}(G/N, H^b(N,A)) \ar@{=>}[r] & \Hf_X^{a+b}(G,M;A). \\
	} \end{xy} \]
	\begin{proof}
	This is \hyperref[pre7.18]{Proposition \ref*{pre7.18}} using $H^n(G,\NN_0;-)=\Hf_X^n(G,-)$
	from \hyperref[pre7.33]{Theorem \ref*{pre7.33}}.
	\end{proof}
	\end{prop}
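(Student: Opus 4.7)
The plan is to reduce this statement directly to the already-established Grothendieck spectral sequence of Proposition \ref{pre7.18} by invoking the identification of the two cohomology theories proved in Theorem \ref{pre7.33}. Concretely, I would take the monoid $M$ in Proposition \ref{pre7.18} to be the discrete monoid $\NN_0$, so that for any closed normal subgroup $N \triangleleft G$ and any $A \in \DDGN$ we obtain the two Grothendieck spectral sequences
\begin{align*}
H^a(G/N, H^b(N, \NN_0; A)) &\Longrightarrow H^{a+b}(G, \NN_0; A),\\
H^a(G/N, \NN_0; H^b(N, A)) &\Longrightarrow H^{a+b}(G, \NN_0; A).
\end{align*}
The first would come from factoring $(-)^{G,\NN_0}$ as $(-)^{G/N} \circ (-)^{N,\NN_0}$, the second from the factorization $(-)^{G/N, \NN_0} \circ (-)^{N}$; both composed functors are left exact and additive, and the inner ones send injectives to injectives, so that the Grothendieck framework applies.

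Then I would invoke Theorem \ref{pre7.33}, which identifies $H^n(G, \NN_0; -) = \Hf_X^n(G, -)$ on the category $\DDGN$, to translate every occurrence of $H^*(\cdot, \NN_0; \cdot)$ into $\Hf_X^*(\cdot, \cdot)$. Note that Theorem \ref{pre7.33} applies to all three groups appearing in the spectral sequences: $N$, $G/N$, and $G$ are all profinite, and the $\NN_0$-action on $A$ (respectively on the intermediate cohomology object) descends to give objects in $\DDN[N,\NN_0]$, $\DDN[G/N,\NN_0]$, and $\DDN[G,\NN_0]$, respectively. This directly yields the two spectral sequences as stated.

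The only subtle point — and the step I would be most careful about — is checking that the intermediate cohomology objects inherit the correct structure to lie in the relevant categories $\DDN[\cdot,\NN_0]$ and that the abutments agree. In particular, $\Hf_X^b(N,A) = H^b(N,\NN_0;A)$ carries a residual $G/N$-action (via the commuting $G$-action on $A$), and $H^b(N,A)$ inherits an $\NN_0$-action from the endomorphism on $A$; both facts are routine once one invokes Remark \ref{pre7.4a}, which identifies $\DDGM$ with $\DDGXM$, since this allows treating $G \times \NN_0$ as a single topological monoid and its quotient $G/N \times \NN_0$ as acting on the intermediate terms. After these identifications, the conclusion is immediate from Proposition \ref{pre7.18} and Theorem \ref{pre7.33}, with no further calculation required.
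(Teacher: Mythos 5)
Your proposal is correct and follows essentially the same route as the paper: specialize Proposition \ref{pre7.18} to the discrete monoid $\NN_0$ and rewrite both spectral sequences using the identification $H^n(G,\NN_0;-)=\Hf_X^n(G,-)$ from Theorem \ref{pre7.33}. The extra care you take about the intermediate terms carrying the residual $G/N$- and $\NN_0$-structures is a reasonable (and correct) elaboration, but no new argument beyond the paper's one-line reduction is needed.
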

	
	As for the standard continuous cohomology  (cf.\ \cite[(2.7.2) Lemma, Chapter II \S 7, p.\,137]{nswo}),
	we will also need a long exact sequence for $\Hf_X^*(G,-)$
	in a slightly different setting as in \hyperref[pre7.27]{Lemma \ref*{pre7.27}}.
		
	\begin{prop} \label{pre7.35}
	Let $G$ be a profinite group and let
	\[ \begin{xy} \xymatrix{
		&0 \ar[r] & A \ar[r]^{\alpha} &B \ar[r]^{\beta} &C \ar[r] &0
	} \end{xy} \]
	be a short exact sequence in $\TopGN$ such that the topology of
	$A$ is induced by that of $B$ and such that $\beta$ has a continuous, set theoretical section. Then there are
	continuous homomorphisms
	\[ \begin{xy} \xymatrix{
		&\delta^n \colon \Hf_X^n(G,C) \ar[rr] & & \Hf_X^{n+1}(G,A)
	} \end{xy} \]
	such that the sequence
	\[ \begin{xy} \xymatrix{
		 \cdots \ar[r] & \Hf_X^n(G,B) \ar[r] & \Hf_X^n(G,C) \ar[r]^-{\delta^n} & \Hf_X^{n+1}(G,A)
		\ar[r] & \Hf_X^{n+1}(G,B) \ar[r] & \cdots
	} \end{xy} \]
	is exact.\\
	\begin{proof}
	Algebraically this is exactly the same proof as \hyperref[pre7.27]{Lemma \ref*{pre7.27}}. It then remains
	to check, that the occurring homomorphisms are continuous which is only for the
	$\delta^n$ a real question. But this can be answered using a topological version
	of the snake lemma, like \cite[Proposition 4, p.\,133]{schoch}.
	\end{proof}
	\end{prop}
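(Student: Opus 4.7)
The plan is to reduce the statement to the algebraic long exact sequence already proved in Lemma \ref*{pre7.27} by checking first that the hypotheses propagate to the complexes $\Ccb_X(G,-)$, and then to address the continuity of the connecting homomorphism $\delta^n$ separately via a topological snake lemma.

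First I would apply Lemma \ref*{pre6.10} to the given short exact sequence $0 \to A \to B \to C \to 0$ to obtain, for every $n \in \NN_0$, a short exact sequence
\[ 0 \to \mapcts(G^{n+1},A) \to \mapcts(G^{n+1},B) \to \mapcts(G^{n+1},C) \to 0 \]
and the corresponding sequence of invariants $0 \to C^n(G,A) \to C^n(G,B) \to C^n(G,C) \to 0$, where again the topology on $C^n(G,A)$ is induced from that of $C^n(G,B)$ and a continuous set-theoretical section $s_*$ for the surjection exists. Since $X-1$ acts by continuous endomorphisms on these modules (coming from the endomorphism of the coefficients), the double complex $\Cb(G,-) \xrightarrow{X-1} \Cb(G,-)$ is short exact when evaluated at $A \to B \to C$. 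Passing to the total complex, as in the proof of Lemma \ref*{pre7.23}, yields a short exact sequence of complexes
\[ 0 \to \Ccb_X(G,A) \to \Ccb_X(G,B) \to \Ccb_X(G,C) \to 0 \]
in which every term inherits both the topology (induced on the $A$-component) and a continuous section (assembled from $s_*$ in the two degrees contributing to $\Tot^n$).

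Next I would carry out the snake lemma exactly as in Lemma \ref*{pre7.27} to produce the connecting homomorphisms $\delta^n \colon \Hf_X^n(G,C) \to \Hf_X^{n+1}(G,A)$ and the resulting long exact sequence of cohomology groups. Algebraically nothing beyond the standard diagram chase is required, so this part is formal.

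The remaining and only delicate point is continuity of $\delta^n$. Here I would invoke a topological version of the snake lemma, for instance the one of Schochetman recorded in \cite[Proposition 4, p.\,133]{schoch}: because the surjection $\Ccb_X^n(G,B) \to \Ccb_X^n(G,C)$ admits a continuous set-theoretical section (assembled from $s_*$), because the differentials of $\Ccb_X(G,-)$ are continuous, and because the topology on $\Ccb_X^n(G,A)$ is the one induced from $\Ccb_X^n(G,B)$, the boundary map constructed by the usual recipe $[c] \mapsto [\alpha^{-1}(d(s_*(c)))]$ is continuous on the level of cocycles and hence descends to a continuous homomorphism on cohomology. The main (mild) obstacle is precisely this last verification: one has to track that each of the three steps used in the diagram chase (lifting via $s_*$, applying a differential, pulling back through $\alpha$) is continuous in the induced quotient/subspace topologies; once this is in place the rest of the statement is immediate from Lemma \ref*{pre7.27}.
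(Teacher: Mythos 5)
Your proposal is correct and follows essentially the same route as the paper: reduce to the algebraic snake-lemma argument of Lemma \ref*{pre7.27} (with the exactness of the cochain sequences supplied by Lemma \ref*{pre6.10} under the induced-topology and continuous-section hypotheses), and then obtain continuity of $\delta^n$ from the topological snake lemma of \cite[Proposition 4, p.\,133]{schoch}. Your write-up merely makes explicit the steps the paper leaves implicit.
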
	
	
	
	\subsection{Some Homological Algebra}
	
	In this subsection we want to collect and prove some facts we will need later on.
	
	\begin{mydef}
	Let ${C}^{\bullet}$ be a complex of abelian groups and $n \in \ZZ$. Then we denote by
	\gls{CCbs} the shift of this complex by $n$. This means, that for all $i \in \ZZ$ we have
	${C}^{i}[n]= C^{i+n}$.
	\end{mydef}
	
	\begin{lem} \label{galc1.3l1} \label{galc1.3c1}
	Let $\Yb$ and $\Zb$ be complexes of abelian groups and let $\gb\colon \Yb \to \Zb$ be a morphism
	of complexes, such that every $g^{i}$ is surjective. Then there is a canonical, surjective
	homomorphism
	\[ \ker (\dy^{i}) \cap \ker g^{i} \twoheadrightarrow H^{i}(\Tot(\gb\colon \Yb \to \Zb)).\]
	In particular, if all the $g^{i}$ are bijective, we have
	\[ H^{i}(\Tot(\gb\colon \Yb \to \Zb)) = 0. \]
	\begin{proof} Easy to check, see \cite[Lem.\ 2.3.2]{kupferer}
	\end{proof}
	\end{lem}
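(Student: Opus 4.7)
The plan is to unwind the definition of the total complex of $\gb$ viewed as a double complex concentrated in columns $0$ and $1$, and then directly identify cocycles and coboundaries. Following Remark \ref{pre7.19} with $A^{0,j} = Y^j$ and $A^{1,j} = Z^j$, horizontal differential $\gb$ and vertical differentials $\dy, \dz$, the total complex has
\[ \Tot^{i}(\gb \colon \Yb \to \Zb) = Y^{i} \oplus Z^{i-1}, \]
and (after tracking the sign convention in Remark \ref{pre7.19}) its differential sends $(y,z) \mapsto (\dy(y),\, g(y)-\dz(z))$. In particular, the cocycles in degree $i$ are pairs $(y,z)$ with $\dy(y)=0$ and $\dz(z)=g(y)$, while coboundaries are of the form $(\dy(y'),\, g(y')-\dz(z'))$ for $(y',z') \in Y^{i-1} \oplus Z^{i-2}$.

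Next I would define the candidate map
\[ \ker(\dy^{i}) \cap \ker(g^{i}) \longrightarrow H^{i}(\Tot(\gb)), \qquad y \longmapsto [(y,0)], \]
noting that $(y,0)$ is a cocycle precisely because $\dy(y)=0$ and $g(y)=0=\dz(0)$. Additivity is clear.

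For surjectivity, which is the only substantive step, I would start with an arbitrary cocycle $(y,z) \in \Tot^{i}$. Using the hypothesis that $g^{i-1}$ is surjective, I would lift $z \in Z^{i-1}$ to some $y' \in Y^{i-1}$ with $g^{i-1}(y')=z$. Computing
\[ (y,z) - \dtot^{i}(y',0) = (y - \dy(y'),\, z - g(y')) = (y - \dy(y'),\, 0) \]
reduces the cohomology class to one of the desired form. A short verification shows that $y_{0} := y - \dy(y')$ lies in $\ker \dy^{i} \cap \ker g^{i}$: indeed $\dy(y_{0}) = 0$ since $\dy^{2}=0$, and $g(y_{0}) = g(y) - g(\dy(y')) = \dz(z) - \dz(g(y')) = \dz(z)-\dz(z) = 0$ using the cocycle condition and the chain map property of $\gb$. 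This shows $[(y,z)] = [(y_{0},0)]$, proving surjectivity.

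The "in particular" statement is then immediate: if every $g^{i}$ is bijective then $\ker g^{i} = 0$, so the source of the surjection is trivial and $H^{i}(\Tot(\gb)) = 0$. There is no serious obstacle here — the whole lemma reduces to the familiar observation that, when the horizontal maps of a two-column double complex are componentwise surjective, the total cohomology is concentrated in the joint kernel of the column differential and the horizontal map.
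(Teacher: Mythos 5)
Your proof is correct and is precisely the routine verification the paper leaves to the reference: the identification $\Tot^{i}=Y^{i}\oplus Z^{i-1}$ with differential $(y,z)\mapsto(\mathrm{d}_Y(y),\,g(y)-\mathrm{d}_Z(z))$, the map $y\mapsto[(y,0)]$, and the reduction of an arbitrary cocycle $(y,z)$ modulo $\mathrm{d}_{\mathrm{Tot}}(y',0)$ using surjectivity of $g^{i-1}$ all check out, and the vanishing statement follows as you say. No gaps.
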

	

	\begin{lem} \label{galc1.3l2} \label{galc1.3l3}
	Let
	\[ \begin{xy} \xymatrix{
	0 \ar[r] & \Xb \ar[r]^{\fb} & \Yb \ar[r]^{\gb} & \Zb \ar[r] &0
	}\end{xy}\]
	be a short exact sequence of complexes of abelian groups. Then the sequence
	\[ \begin{xy} \xymatrix{
	0 \ar[r] & \Xb \ar[r] & \Tot (\Yb \to \Zb) \ar[r] & \Tot(\Yb/\fb(\Xb) \to \Zb) \ar[r] &0
	} \end{xy} \]
	is also an exact sequence of complexes and for the cohomology we have
	\[ H^{i}(\Xb) \cong H^{i}(\Tot(\gb\colon \Yb \to \Zb)).\]
	\begin{proof} Standard, see \cite[Lem.\ 2.3.3]{kupferer}.
	\end{proof}
	\end{lem}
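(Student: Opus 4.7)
The plan is to build the short exact sequence of complexes directly from the given one, and then to derive the cohomology isomorphism from the associated long exact sequence by observing that the quotient complex has vanishing cohomology thanks to \hyperref[galc1.3l1]{Lemma \ref*{galc1.3l1}}. The main point is that after quotienting out $X^{\bullet}$ from the total complex $\Tot(g^{\bullet}\colon \Yb\to\Zb)$, one obtains the total complex of $\overline{g}^{\bullet}\colon \Yb/\fb(\Xb)\to \Zb$, where $\overline{g}^{\bullet}$ is the isomorphism induced by $\gb$ via exactness of the original sequence.

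First I would define an embedding
\[
\iota\colon \Xb \longrightarrow \Tot(\gb\colon \Yb \to \Zb),\qquad X^{i}\ni x\;\longmapsto\;(f^{i}(x),0)\in Y^{i}\oplus Z^{i-1},
\]
and check that this is a morphism of complexes: with the sign convention from \hyperref[pre7.19]{Remark \ref*{pre7.19}}, the total differential sends $(f^{i}(x),0)$ to $(\dy f^{i}(x),g^{i}(f^{i}(x)))=(f^{i+1}(\dx x),0)$, where the second coordinate vanishes by exactness $\gb\circ\fb=0$. Hence $\iota$ commutes with differentials. Injectivity of $\iota$ in each degree is immediate from injectivity of $\fb$.

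Next I would identify the cokernel. In degree $i$, the quotient of $Y^{i}\oplus Z^{i-1}$ by $\iota(X^{i})=f^{i}(X^{i})\oplus 0$ is naturally $(Y^{i}/f^{i}(X^{i}))\oplus Z^{i-1}$, which is exactly the degree $i$ part of $\Tot\bigl(\overline{g}^{\bullet}\colon \Yb/\fb(\Xb)\to \Zb\bigr)$. The induced differential matches the total differential of this double complex. This yields the claimed short exact sequence of complexes
\[
0\longrightarrow \Xb\longrightarrow \Tot(\gb\colon\Yb\to\Zb)\longrightarrow \Tot(\overline{g}^{\bullet}\colon\Yb/\fb(\Xb)\to \Zb)\longrightarrow 0.
\]

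Finally, exactness of the original sequence gives $\ker(g^{i})=f^{i}(X^{i})$, so each $\overline{g}^{i}\colon Y^{i}/f^{i}(X^{i})\to Z^{i}$ is bijective. By \hyperref[galc1.3l1]{Lemma \ref*{galc1.3l1}} the quotient complex $\Tot(\overline{g}^{\bullet}\colon \Yb/\fb(\Xb)\to \Zb)$ has trivial cohomology in every degree. Passing to the long exact cohomology sequence attached to the above short exact sequence then yields the isomorphisms $H^{i}(\Xb)\xrightarrow{\cong}H^{i}(\Tot(\gb\colon\Yb\to\Zb))$ for all $i$. No real obstacle is anticipated; the subtle points are only the verification that $\iota$ is a chain map with the chosen signs and the identification of the cokernel as the total complex of $\overline{g}^{\bullet}$, both of which are routine once the definitions from \hyperref[pre7.19]{Remark \ref*{pre7.19}} are unwound.
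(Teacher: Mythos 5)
Your proof is correct and follows exactly the route the paper intends (the paper only cites the first author's thesis, but the preceding Lemma \ref{galc1.3l1} with its bijectivity clause is clearly set up for this): embed $\Xb$ via $x\mapsto(f^{i}(x),0)$, identify the cokernel with $\Tot(\overline{g}^{\bullet}\colon \Yb/\fb(\Xb)\to\Zb)$, observe each $\overline{g}^{i}$ is bijective so that complex is acyclic, and conclude via the long exact cohomology sequence. No gaps.
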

	

	\begin{cor} \label{cohomendo}
	Let $G$ be a profinite group, $A, B \in \DDG$ and $f$ a continuous endomorphism of
	$B$ which respects the action of $G$ such that the sequence
	\[ \begin{xy} \xymatrix{
		0 \ar[r] &A \ar[r] &B \ar[r]^{f-1} &B \ar[r] &0
	} \end{xy} \]
	is exact. Then we have
	\[ H^{i}(G,A) = \Hf_f^{i}(G,B)\]
	for all $i \geq 0$.
	\begin{proof}
	This is just the above \hyperref[galc1.3l3]{Lemma \ref*{galc1.3l3}} with \hyperref[pre6.10]{Corollary \ref*{pre6.10}}
	and the notation from \hyperref[deftotcoh]{Definition \ref*{deftotcoh}}.
	\end{proof}
	\end{cor}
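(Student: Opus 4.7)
The plan is to stitch together the two ingredients the author points to: the long/short exactness of the continuous cochain complex under a suitable extension of discrete $G$-modules (Corollary \ref{pre6.10}), and the cohomological identification of a kernel complex with the total complex of the quotient morphism (Lemma \ref{galc1.3l3}).

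First I would observe that since $A,B \in \DDG$ are discrete, the hypotheses of Corollary \ref{pre6.10} are trivially satisfied for the given short exact sequence
\[ 0 \to A \to B \xrightarrow{f-1} B \to 0,\]
because the topology of $A$ is induced from the discrete topology of $B$ and the surjection $f-1\colon B\to B$ admits a continuous set-theoretic section (any section is continuous between discrete spaces). Applying the functor $\mathcal{C}^\bullet(G,-)$ therefore yields an exact sequence of complexes of abelian groups
\[ 0 \to \mathcal{C}^\bullet(G,A) \to \mathcal{C}^\bullet(G,B) \xrightarrow{\mathcal{C}^\bullet(G,f)-\id} \mathcal{C}^\bullet(G,B) \to 0,\]
where functoriality identifies $\mathcal{C}^\bullet(G,f-1)$ with $\mathcal{C}^\bullet(G,f)-\id$.

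Next I would apply Lemma \ref{galc1.3l3} to this short exact sequence, taking $X^\bullet = \mathcal{C}^\bullet(G,A)$ and $g^\bullet = \mathcal{C}^\bullet(G,f)-\id\colon \mathcal{C}^\bullet(G,B)\to \mathcal{C}^\bullet(G,B)$. The lemma produces a natural isomorphism
\[ H^i\bigl(\mathcal{C}^\bullet(G,A)\bigr) \cong H^i\Bigl(\Tot\bigl(\mathcal{C}^\bullet(G,B)\xrightarrow{\mathcal{C}^\bullet(G,f)-\id}\mathcal{C}^\bullet(G,B)\bigr)\Bigr)\]
for every $i \geq 0$. By Definition \ref{deftotcoh}, the right-hand side is precisely $\mathcal{H}_f^i(G,B)$, while the left-hand side is by definition $H^i(G,A)$, so we obtain the claimed equality.

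There is no real obstacle here beyond bookkeeping: the only check worth highlighting is that Corollary \ref{pre6.10} actually applies, which reduces to the remark that exactness of our starting sequence is automatic from the hypothesis while continuity of the section comes for free in the discrete setting. Everything else is a direct invocation of the earlier results.
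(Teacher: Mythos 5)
Your proposal is correct and is essentially the paper's own argument: apply Corollary \ref{pre6.10} (whose hypotheses hold automatically for discrete modules, as you note) to obtain the short exact sequence of cochain complexes, then invoke Lemma \ref{galc1.3l3} with $g^{\bullet}=\Cc^{\bullet}(G,f)-\id$ and identify the resulting total complex with $\Ccb_f(G,B)$ via Definition \ref{deftotcoh}. Nothing is missing.
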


	\begin{cor}
	Let $G$ be a profinite group and let
	\[ \begin{xy} \xymatrix{
		0 \ar[r] & A \ar[r]^{\alpha} & B \ar[r]^{\beta} & C \ar[r] &0
	} \end{xy} \]
	be an exact sequence in $\TopG$, such that the topology of $A$ is induced by that of $B$
	and such that $\beta$ has a continuous, set theoretical section. Then the exact sequence of complexes
	\[ \begin{xy} \xymatrix{
		0 \ar[r] & \Cctsb(G,A) \ar[rr]^{\Cctsb(G,\alpha)} & &  \Cctsb(G,B) \ar[rr]^{\Cctsb(G,\beta)} & &
		\Cctsb(G,C) \ar[r] &0
	} \end{xy} \]
	(cf.\ \emph{\hyperref[pre6.10]{Corollary \ref*{pre6.10}}}) induces
	\[ A^G = \Hcts^0(G,A) \cong H^0(\Tot(\Cctsb(G,\beta)\colon\Cctsb(G,B)\to\Cctsb(G,C)))\]
	and
	\[ C^G \to \Hcts^1(G,A) \cong H^1(\Tot(\Cctsb(G,\beta)\colon\Cctsb(G,B)\to\Cctsb(G,C))).\]
	\begin{proof}
	This is an immediate consequence of the combination of the above
	\hyperref[galc1.3l3]{Lemma \ref*{galc1.3l3}} with \hyperref[pre6.10]{Corollary \ref*{pre6.10}}.
	\end{proof}
	\end{cor}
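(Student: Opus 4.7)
The plan is to simply combine the two cited results in the obvious way. First I would invoke Corollary \ref*{pre6.10} to conclude that under the stated hypotheses (topology of $A$ induced from $B$, existence of a continuous set-theoretic section of $\beta$), the sequence
\[ 0 \to \Cctsb(G,A) \xrightarrow{\Cctsb(G,\alpha)} \Cctsb(G,B) \xrightarrow{\Cctsb(G,\beta)} \Cctsb(G,C) \to 0 \]
is a short exact sequence of complexes of abelian groups, so that the hypotheses of Lemma \ref*{galc1.3l3} are satisfied with $X^\bullet = \Cctsb(G,A)$, $Y^\bullet = \Cctsb(G,B)$, $Z^\bullet = \Cctsb(G,C)$, $f^\bullet = \Cctsb(G,\alpha)$, $g^\bullet = \Cctsb(G,\beta)$.

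Next I would apply Lemma \ref*{galc1.3l3} directly: it yields a natural isomorphism
\[ H^i(\Cctsb(G,A)) \cong H^i(\Tot(\Cctsb(G,\beta)\colon \Cctsb(G,B)\to \Cctsb(G,C))) \]
for every $i \geq 0$. Unwinding definitions, $H^0(\Cctsb(G,A)) = A^G = \Hcts^0(G,A)$ and $H^1(\Cctsb(G,A)) = \Hcts^1(G,A)$, which gives precisely the two identifications claimed in the statement.

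Finally, for the first arrow $C^G \to \Hcts^1(G,A)$ appearing on the left of the second displayed isomorphism, I would note that this is just the connecting homomorphism $\delta^0$ of the long exact continuous cohomology sequence already produced in Corollary \ref*{pre6.10}; no separate construction is needed, only the observation that its composition with $\Hcts^1(G,A) \cong H^1(\Tot(\cdots))$ provides the stated map $C^G \to H^1(\Tot(\cdots))$.

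There is no real obstacle here: the work has already been done in Corollary \ref*{pre6.10} (exactness of the cochain sequence, requiring the topological hypotheses to preserve exactness after applying $\mapcts(G^{n+1},-)$ and passage to $G$-invariants) and in Lemma \ref*{galc1.3l3} (the purely homological statement identifying the cohomology of $X^\bullet$ with that of the total complex of $g^\bullet$). The statement is, as the authors indicate, an immediate specialization obtained by concatenating these two results.
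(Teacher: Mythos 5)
Your proposal is correct and follows exactly the paper's own argument: apply Corollary \ref*{pre6.10} to obtain the short exact sequence of cochain complexes, then apply Lemma \ref*{galc1.3l3} with $X^\bullet=\Cctsb(G,A)$, $Y^\bullet=\Cctsb(G,B)$, $Z^\bullet=\Cctsb(G,C)$ to identify $\Hcts^{i}(G,A)$ with $H^{i}$ of the total complex of $\Cctsb(G,\beta)$, which in degrees $0$ and $1$ gives the two displayed identifications. Your remark that the map $C^G \to \Hcts^1(G,A)$ is the connecting homomorphism from Corollary \ref*{pre6.10} is the intended reading, so nothing further is needed.
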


	Now let's turn to some facts about projective limits.

	\begin{rem}
	Note that $\Cctsb(G,-)$ commutes
	with projective limits, since the functors $\mathrm{Map_{cts}}(G^n, - )$ and $(-)^G$ commute
	with projective limits, i.e. if $A = \varprojlim_n A_n$, then
	\[ \Cctsb(G,A) \cong \varprojlim_n \Cctsb(G,A_n).\]
	\end{rem}
	
	\begin{lem} \label{galc1.8}
	Let $G$ be a profinite group, $A \in \TopG$ and let $(A_n)_n$ be an inverse system in $\TopG$ such
	that $A= \varprojlim_n A_n$ in $\TopG$. Let furthermore $f \in \EndctsG(A)$, such that
	$f = \varprojlim f_n$ with $f_n \in \EndctsG(A_n)$. Then it  holds
	\[ \Cfb_f(G,A) \cong \varprojlim_n \Cfb_f(G,A_n).\]
	\begin{proof}
	First we want to note, that for groups $X=\varprojlim X_n$ and $Y =\varprojlim Y_n$ always holds
	$X \times Y = \varprojlim_n (X_n \times Y_n)$.\\
	This means that the objects of the two complexes $\Cfb_f(G,A)$ and $\varprojlim \Cfb_f(G,A_n)$
	coincide, so it remains to check that the differentials do as well. If we denote the
	$i$-th object of $\Cctsb(G,A)$ by $C^{i}$ and the differential by $d^{i}$ then it suffices to check
	that the following cube is commutative
	\[ \begin{xy} \xymatrix{
		& C^{i} \ar[rrr]^-{\Cf^{i}(G,f)} \ar[dd]_{d^{i}} \ar[rd] & & & C^{i} \ar[dd]^{d^{i}} \ar[rd]& \\
		& & C^{i} \ar[rrr]^-{\varprojlim \Cf^{i}(G,f_n) \hspace*{1cm}} \ar[dd]_{d^{i}}
		& & & C^{i} \ar[dd]^{d^{i}} \\
		& C^{i+1} \ar[rrr]^-{\Cf^{i}(G,f)} \ar[rd] & & & C^{i+1} \ar[rd] & \\
		& & C^{i+1} \ar[rrr]^-{\varprojlim \Cf^{i}(G,f_n)} & & & C^{i+1}.
	} \end{xy} \]
	This is a direct consequence from the assumption $f=\varprojlim f_n$ and that $\Cctsb(G,-)$ commutes
	with inverse limits.
	\end{proof}
	\end{lem}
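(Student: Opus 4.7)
The plan is to verify the isomorphism first at the level of objects, and then check compatibility with the differentials. The key inputs are already assembled: the Remark immediately preceding the lemma records that $\Cctsb(G,-)$ commutes with inverse limits in $\TopG$, since both $\mathrm{Map_{cts}}(G^n,-)$ and the fixed-point functor $(-)^G$ do. Combined with the elementary fact that finite direct products commute with inverse limits, the degree-$i$ object
\[ \Cf^i_f(G,A) \;=\; \Cf^i(G,A) \oplus \Cf^{i-1}(G,A) \]
is canonically identified with
\[ \varprojlim_n \bigl( \Cf^i(G,A_n) \oplus \Cf^{i-1}(G,A_n) \bigr) \;=\; \varprojlim_n \Cf^i_f(G,A_n). \]

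The nontrivial step is to check that the differentials of the two complexes match under this identification. The total differential of $\Cfb_f(G,A)$ has two contributions: the standard continuous coboundary $d^i$, which commutes with inverse limits by functoriality and the preceding Remark; and the horizontal component $\Cf^i(G,f) - \id$ coming from the double complex. For the latter I would invoke the hypothesis $f = \varprojlim_n f_n$ together with functoriality of $\Cctsb(G,-)$ in the second variable to deduce $\Cf^i(G,f) = \varprojlim_n \Cf^i(G,f_n)$, which is exactly what is needed. A routine cube-diagram chase (with the four vertical faces being, respectively, the coboundaries, the maps $\Cf^i(G,f)-\id$, and the identifications at degrees $i$ and $i+1$) then confirms that the isomorphisms at consecutive degrees intertwine the two differentials, yielding the claimed isomorphism of complexes.

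I do not expect any genuine obstacle. The statement is essentially a formal compatibility assertion, and the only ingredient that uses the hypothesis in an essential way is the identity $\Cf^i(G,f) = \varprojlim_n \Cf^i(G,f_n)$; everything else is encapsulated in the compatibility of $\Cctsb(G,-)$ with inverse limits and the bifunctoriality of direct sums. The main risk is simply notational, namely keeping track of the two components and the total-complex sign convention from the definition of $\Tot$; this is handled by a careful unwinding of definitions rather than any real mathematical difficulty.
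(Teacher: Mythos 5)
Your proposal is correct and follows essentially the same route as the paper's own proof: identify the degree-wise objects using the compatibility of $\Cctsb(G,-)$ (and finite products/sums) with inverse limits, then match the differentials by combining the hypothesis $f=\varprojlim_n f_n$ with functoriality, verified by the same commutative-cube check. No gaps.
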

	
	\begin{lem} \label{galc1.8l2}
	Let $G$ be a profinite group and $(A_n)_n$ be an inverse system in $\TopG$
	such that the inverse system of complexes $(\Cctsb(G,A_n))_n$
	has surjective transition maps and let $A\coloneqq\varprojlim_n A_n$. If $f \in \EndctsG(A)$
	then also the system $(\Cfb_f(G,A_n))_n$ has surjective transition maps.
	\begin{proof}
	By assumption, for every $k \in \NN_0$, the transition map
	$\Ccts^k(G,A_n) \to \Ccts^k(G,A_{n-1})$ is surjective. But then also the transition map
	\[ \begin{xy} \xymatrix@R-1pc{
		 \Ccts^k(G,A_n) \oplus \Ccts^{k-1}(G,A_{n}) \ar[r] \ar@{=}[d] &
		\Ccts^k(G,A_{n-1}) \oplus \Ccts^{k-1}(G,A_{n-1})  \ar@{=}[d] \\
		\Cc_f^k(G,A_n) & \Cc_f^k(G,A_{n-1})
	} \end{xy} \]
	is surjective, since it's the direct sum of two surjective maps.
	\end{proof}
	\end{lem}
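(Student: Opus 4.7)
The plan is to unwind the definition of the total complex and observe that surjectivity of the transition maps of $\Cfb_f(G,A_n)$ is a formal consequence of the assumed surjectivity for $\Cctsb(G,A_n)$.

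First I would recall from Definition \ref{deftotcoh} that the complex $\Cfb_f(G,A_n)$ is the total complex of the double complex concentrated in two columns
\[ \begin{xy} \xymatrix{ \Cctsb(G,A_n) \ar[rr]^-{\Cctsb(G,f_n)-\id} & & \Cctsb(G,A_n), } \end{xy} \]
so that on the level of objects one has
\[ \Cf_f^k(G,A_n) = \Ccts^k(G,A_n) \oplus \Ccts^{k-1}(G,A_n), \]
with the convention $\Ccts^{-1}(G,A_n)=0$. The transition maps of the inverse system $(\Cfb_f(G,A_n))_n$ are, by functoriality of $\Tot$ and of $\Cctsb(G,-)$, simply the direct sum of the transition maps of $(\Cctsb(G,A_n))_n$ in degrees $k$ and $k-1$.

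Next I would note that by hypothesis each of the transition maps
\[ \Ccts^k(G,A_n) \longrightarrow \Ccts^k(G,A_{n-1}) \]
is surjective, for every $k\in\NN_0$. A direct sum of two surjective homomorphisms is surjective, so the transition map
\[ \Ccts^k(G,A_n) \oplus \Ccts^{k-1}(G,A_n) \longrightarrow \Ccts^k(G,A_{n-1}) \oplus \Ccts^{k-1}(G,A_{n-1}) \]
is surjective, which is exactly the claim that $\Cf_f^k(G,A_n)\to\Cf_f^k(G,A_{n-1})$ is surjective.

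There is essentially no obstacle here; the lemma is formal once one unwinds what $\Tot$ does on objects. One only has to be a little careful that the transition maps of the total complex really are induced componentwise from those of $\Cctsb(G,-)$, but this is immediate since $f=\varprojlim f_n$ and $\Cctsb(G,-)$ is a functor, so the morphism of double complexes $\Cctsb(G,f)-\id$ is the projective limit of the morphisms $\Cctsb(G,f_n)-\id$, and $\Tot$ commutes with such inverse limits in the obvious way (as in Lemma \ref{galc1.8}).
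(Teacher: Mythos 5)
Your proposal is correct and is essentially the paper's own argument: identify $\Cf_f^k(G,A_n)$ with $\Ccts^k(G,A_n)\oplus\Ccts^{k-1}(G,A_n)$, note the transition maps act componentwise, and conclude because a direct sum of surjections is surjective. The extra remark about the transition maps being induced componentwise (via $f=\varprojlim f_n$ and functoriality) is a harmless elaboration of a point the paper leaves implicit.
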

	
	\begin{mydef} \label{galc1.13}
	An inverse system (of abelian groups) $(X_n)_{n \in \NN}$
	is called {\bfseries Mittag-Leffler} \index{Mittag-Leffler} ({\bfseries ML}) if for any $n \in \NN$, there
	is an $m \geq n$ such
	that the image of the transition maps $X_k \to X_n$ coincide for all $k \geq m$ (cf.\ \cite[p.\,138]{nswo}).
	An inverse system (of abelian groups) $(X_n)_{n \in \NN}$
	is called {\bfseries Mittag-Leffler zero} \index{Mittag-Leffler!zero}
	({\bfseries ML-zero}) if for any $n \in \NN$ there is an $m \geq n$ such that the transition map
	$X_{k} \to X_n$ is zero for all $k \geq m$ (cf.\ \cite[p.\,139]{nswo}).
	A morphism $(X_n)_n \to (Y_n)_n$ of inverse systems is called {\bfseries Mittag-Leffler isomorphism}
	\index{Mittag-Leffler!isomorphism} ({\bfseries ML-isomorphism}) if the corresponding systems
	of kernels and cokernels are ML-zero.\\
	By \gls{varprojlim} we denote the $r$-th right derived functor of $\varprojlim$.
	\end{mydef}
	
	\begin{prop} \label{galc1.15}
	Let $(X_n)$ and $(Y_n)$ be inverse systems of abelian groups.
	\begin{enumerate}
	\item If $(X_n)$ has surjective transition maps, then it is $ML$.
	\item If $(X_n)$ is ML then $\varprojlim^r_n X_n =0$ for all $r \geq 0$.
	\item If $f_n\colon X_n \to Y_n$ is a ML-isomorphism then for all $i \geq 0$
	the homomorphism
	\[ \begin{xy} \xymatrix@R-1pc{
	&\varprojlim_n^{i} f_n\colon \varprojlim_n^{i} X_n \ar[r] &\varprojlim_n^{i} Y_n
	} \end{xy} \]
	is an isomorphism.
	\end{enumerate}
	\begin{proof} $ $ \renewcommand{\theenumi}{{\arabic{enumi}.}}
	\begin{enumerate}
	\item Let $\alpha_{nm}\colon X_m \to X_n$ denote the transition map for $m \geq n$. Then it is
	$\im(\alpha_{nm})=X_n$ for all $m \geq n$, i.e. the system $X_n$ is ML.
	\item \cite[Chapter II \S 7, (2.7.4) Proposition, p.\,140]{nswo}
	\item \cite[Tag 0918, Lem.\  15.79.2.]{stacks-project}
	\end{enumerate}
	\end{proof}
	\end{prop}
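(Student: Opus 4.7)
The plan is to handle the three parts sequentially, reducing each to standard material on derived inverse limits.

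For part 1, the claim is essentially tautological. If each transition map $\alpha_{nm}\colon X_m \to X_n$ (with $m \geq n$) is surjective, then $\operatorname{im}(\alpha_{nm}) = X_n$ for every $m \geq n$, so already at $m = n$ the descending chain of images stabilises, which is the Mittag-Leffler condition. A single line suffices.

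For part 2 (interpreted for $r \geq 1$, since for $r = 0$ the assertion would obviously be false), my first observation is that for inverse systems indexed by $\mathbb{N}$ the derived limits $\varprojlim^r$ vanish for $r \geq 2$: this is visible from the two-term complex
\[ 0 \to \varprojlim_n X_n \to \prod_n X_n \xrightarrow{\mathrm{id} - \mathrm{shift}} \prod_n X_n \to \varprojlim_n^1 X_n \to 0,\]
which computes $\varprojlim^*$ in this setting. The remaining case is $\varprojlim^1 X_n = 0$ when $(X_n)$ is Mittag-Leffler. Given a $1$-cocycle $(y_n) \in \prod_n X_n$, I would inductively produce a $0$-cochain $(x_n)$ with $x_n - \alpha_{n,n+1}(x_{n+1}) = y_n$: the ML hypothesis provides, for each $n$, an index $m_n \geq n$ beyond which $\operatorname{im}(\alpha_{nk})$ is constant, and working inside these stabilised images allows the recursive choice to be made. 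This is the classical argument recorded in the cited reference.

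For part 3, I would split the morphism $f_n\colon X_n \to Y_n$ into two short exact sequences of inverse systems
\[ 0 \to K_n \to X_n \to I_n \to 0, \qquad 0 \to I_n \to Y_n \to C_n \to 0,\]
with $K_n$, $I_n$, $C_n$ the kernel, image and cokernel systems. The ML-zero hypothesis on $(K_n)$ and $(C_n)$ forces their ordinary inverse limit to vanish (an element of the limit lies in $\operatorname{im}(\alpha_{nk})$ for all $k \geq n$, and this image is eventually zero), and ML-zero is visibly stronger than ML, so by part 2 we also have $\varprojlim^1 K_n = \varprojlim^1 C_n = 0$. Applying $\varprojlim$ to the two short exact sequences and using the resulting long exact sequences, these vanishings collapse them to isomorphisms $\varprojlim^i X_n \cong \varprojlim^i I_n \cong \varprojlim^i Y_n$ for every $i \geq 0$, which is the conclusion. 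The main substantive input throughout is the vanishing of $\varprojlim^1$ for ML systems; everything else is formal.
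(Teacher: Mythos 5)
Your proposal is correct and matches the paper's treatment: part 1 is the same one-line observation, while for parts 2 and 3 the paper simply cites \cite[Chapter II \S 7, (2.7.4) Proposition]{nswo} and \cite[Tag 0918]{stacks-project}, and the arguments you write out (the telescope complex $\prod_n X_n \to \prod_n X_n$ killing $\varprojlim^r$ for $r\geq 2$, the cocycle construction for ML systems, and the kernel/image/cokernel splitting with the six-term exact sequences) are exactly the standard proofs behind those references. You were also right to read the statement's ``$r\geq 0$'' as the intended ``$r\geq 1$'', since the assertion is false for $r=0$.
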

	
	\begin{prop} \label{galc1.16}
	Let $(\Xb_n)$ and $(\Yb_n)$ be inverse systems of complexes of abelian groups such that the
	transition maps $X^{i}_{n+1} \to X^{i}_n$ and $Y^{i}_{n+1} \to Y^{i}_n$ are surjective for all
	$i \in \ZZ$ and $n \geq 0$.
	\begin{enumerate}
	\item For all $i \in \ZZ$ we get a short exact sequence
	\[ \begin{xy} \xymatrix{
		&0 \ar[r] & \varprojlim^1_n H^{i-1}(\Xb_n) \ar[r] & H^{i} (\varprojlim_n \Xb_n)
		\ar[r] & \varprojlim_n H^{i} (\Xb_n) \ar[r] & 0.
	} \end{xy} \]
	\item Let $(\fb_n)\colon (\Xb_n) \to (\Yb_n)$ be a morphism of inverse systems of complexes.
	If the induced map on cohomology $H^{i}(\fb_n)\colon H^{i}(\Xb_n) \to H^{i}(\Yb_n)$
	is a ML-isomorphism for all $i \in \ZZ$, then
	$\varprojlim_n (\fb_n)\colon \varprojlim_n \Xb_n \to \varprojlim_n \Yb_n$ is a quasi
	isomorphism.
	\end{enumerate}
	\begin{proof} $ $ \renewcommand{\theenumi}{{\arabic{enumi}.}}
	\begin{enumerate}
	\item \cite[Chapter 3, Proposition 1, p.\,531; Corollary1.1, p.\,535--536]{nohms}
	\item From the first part of the proposition we obtain for every $i \in \ZZ$
	a commutative diagram with exact rows
	\[ \begin{xy} \xymatrix{
		&0 \ar[r] & \varprojlim^1_n H^{i-1}(\Xb_n) \ar[r] \ar[d]^{\varprojlim^1_nH^{i-1}(\fb_n)}
		& H^{i} (\varprojlim_n \Xb_n) \ar[r] \ar[d]^{H^{i}(\varprojlim_n \fb_n)}
		& \varprojlim_n H^{i} (\Xb_n) \ar[d]^{\varprojlim_n H^{i}(\fb_n)} \ar[r]& 0 \\
		&0 \ar[r] & \varprojlim^1_n H^{i-1}(\Yb_n) \ar[r] & H^{i} (\varprojlim_n \Yb_n)
		\ar[r] & \varprojlim_n H^{i} (\Yb_n) \ar[r] & 0.
	} \end{xy} \]
	The assumption that $H^{i}(\fb_n)$ is a ML-isomorphism for all $i \in \ZZ$ then says that
	the left and the right horizontal maps in the above diagram are isomorphisms (cf.\
	\hyperref[galc1.15]{Proposition \ref*{galc1.15}}). The $5$-Lemma then implies that
	also $H^{i}(\varprojlim_n \fb_n)$ is an isomorphism for all
	$i \in \ZZ$, i.e. $\varprojlim_n \fb_n$ is a quasi isomorphism.
	\end{enumerate}
	\end{proof}
	\end{prop}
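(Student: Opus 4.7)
The plan is to use the standard machinery of inverse limits and their first derived functors $\varprojlim^1$, once we have enough vanishing to control the relevant sequences. The first part is the real work; the second is then a diagram chase.

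For part (1), I would first reduce to understanding the cocycle, coboundary and cohomology objects of each $\Xb_n$. Writing $Z^{i}_n = \ker(\dx^{i}_n)$ and $B^{i}_n = \im(\dx^{i-1}_n)$, I have for every $n$ the two short exact sequences
\[ 0 \to Z^{i}_n \to X^{i}_n \to B^{i+1}_n \to 0, \qquad 0 \to B^{i}_n \to Z^{i}_n \to H^{i}(\Xb_n) \to 0. \]
The key observation — and the only nontrivial input apart from \hyperref[galc1.15]{Proposition \ref*{galc1.15}} — is that since the transition maps $X^{i-1}_{n+1} \twoheadrightarrow X^{i-1}_n$ are surjective, so are $B^{i}_{n+1} \twoheadrightarrow B^{i}_n$. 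Thus both $(X^{i}_n)_n$ and $(B^{i}_n)_n$ are ML with vanishing $\varprojlim^{1}$.

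Applying $\varprojlim$ to the second sequence, the vanishing of $\varprojlim^{1} B^{i}_n$ gives
\[ \varprojlim{}^1 Z^{i}_n \;\cong\; \varprojlim{}^1 H^{i}(\Xb_n). \]
Applying $\varprojlim$ to the first sequence, the vanishing of $\varprojlim^{1} X^{i}_n$ gives the four-term exact sequence
\[ 0 \to \varprojlim Z^{i}_n \to \varprojlim X^{i}_n \xrightarrow{\ d\ } \varprojlim B^{i+1}_n \to \varprojlim{}^1 Z^{i}_n \to 0. \]
Now $H^{i}(\varprojlim_n \Xb_n) = \varprojlim_n Z^{i}_n / d(\varprojlim_n X^{i-1}_n)$ because $\varprojlim$ is left exact. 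The chain of inclusions $d(\varprojlim X^{i-1}_n) \subseteq \varprojlim B^{i}_n \subseteq \varprojlim Z^{i}_n$ yields a two-step filtration of $H^{i}(\varprojlim \Xb_n)$ whose successive quotients are, by the two displays above, $\varprojlim^{1} H^{i-1}(\Xb_n)$ and $\varprojlim H^{i}(\Xb_n)$. This produces the desired short exact sequence, and it is functorial in the system.

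For part (2), apply (1) to both systems to obtain the commutative diagram with exact rows already written in the excerpt's proof sketch. By \hyperref[galc1.15]{Proposition \ref*{galc1.15}(3)}, any ML-isomorphism induces isomorphisms on $\varprojlim$ and on $\varprojlim^{1}$, so the outer vertical arrows are isomorphisms in every degree. The five-lemma then forces $H^{i}(\varprojlim_n \fb_n)$ to be an isomorphism for every $i \in \ZZ$, which is exactly the statement that $\varprojlim_n \fb_n$ is a quasi-isomorphism. The main obstacle is really the bookkeeping in part (1): identifying which $\varprojlim^{1}$ vanish so that the two long exact sequences collapse to the short/four-term sequences used above; once that is in hand, everything else is formal.
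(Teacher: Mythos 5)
Your proposal is correct. For part (2) it coincides with the paper's argument: apply part (1) to both systems, use Proposition \ref{galc1.15} to see that an ML-isomorphism induces isomorphisms on both $\varprojlim$ and $\varprojlim^1$, and conclude with the five lemma. The difference lies in part (1), where the paper simply cites the literature, while you supply the standard self-contained proof: surjectivity of the transition maps $X^{i}_{n+1}\to X^{i}_n$ passes to the coboundary systems $(B^{i}_n)_n$ because the transition maps are chain maps, so $(X^{i}_n)_n$ and $(B^{i}_n)_n$ are ML with vanishing $\varprojlim^1$ by Proposition \ref{galc1.15}; feeding this into the six-term $\varprojlim$/$\varprojlim^1$ sequences attached to $0\to Z^{i}_n\to X^{i}_n\to B^{i+1}_n\to 0$ and $0\to B^{i}_n\to Z^{i}_n\to H^{i}(\Xb_n)\to 0$, and using the filtration $d(\varprojlim_n X^{i-1}_n)\subseteq \varprojlim_n B^{i}_n\subseteq \varprojlim_n Z^{i}_n$ of $H^{i}(\varprojlim_n\Xb_n)$, gives the short exact sequence (note that to identify the first graded piece you need your four-term sequence at index $i-1$, a harmless shift). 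The one point worth stating explicitly, since part (2) uses it, is the naturality of the sequence in the system, i.e.\ the commutativity of the comparison diagram; this is immediate from your construction because a morphism of systems of complexes is compatible with the $Z^{i}_n$, $B^{i}_n$, $H^{i}$ decompositions. In short, your route buys a self-contained argument where the paper relies on a reference, at no extra cost in correctness.
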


	\begin{rem} \label{galc1.16r}
	Since isomorphisms of inverse systems are always ML-isomorphisms, the above Proposition
	also states, that if $(\fb_n)\colon (\Xb_n) \to (\Yb_n)$ is a
	quasi isomorphism of inverse systems of complexes, for which the transition maps
	$X^{i}_{n+1} \to X^{i}_n$ and $Y^{i}_{n+1} \to Y^{i}_n$ are surjective for all
	$i \in \ZZ$ and $n \geq 0$, then also
	$\varprojlim_n (\fb_n)\colon \varprojlim_n \Xb_n \to \varprojlim_n \Yb_n$ is a quasi isomorphism.
	\end{rem}

	\begin{rem1}
	In the above \hyperref[galc1.16]{Proposition \ref*{galc1.16}} and \hyperref[galc1.16r]{Remark \ref*{galc1.16r}}
	one cannot easily drop the assumption that the transition maps are surjective. There exist examples (see \cite[Rem.\ 2.3.13.]{kupferer}) of two inverse systems of complexes which are quasi isomorphic, but their
	projective limits are not. Hence in the proof
	of \cite[Theorem 2.2.1, p.\,702--705]{scholl} right before \cite[Proposition 2.2.7, p.\,703--705]{scholl},   an explanation is missing why it really is enough to prove this proposition.
	\end{rem1}

\section{Lubin-Tate $(\varphi,\Gamma)$-modules} \label{chpgm}

	The goal in this section is to state in the Lubin-Tate case the equivalence of categories from \cite[Thm.\ 1.6]{kisren}, which follows closely   the original result \cite[3.4.3]{fo1} for
	$(\phi,\Gamma)$-modules in the cyclotomic case, in the style and with similar notation as in \cite{cher-col1999} or \cite[Theorem 4.22, p.\,82]{fouy}. Namely, if
	$K|L|\Qp$ are finite extensions, we want to describe an
	equivalence of categories between the category of continuous $\OL$-representations of the
	absolute Galois group $G_K$ and a yet to be defined category of \'etale $(\phL,\GaK)$-modules. While there is only a sketch of proof in \cite[Thm.\ 1.6]{kisren}, for the case $K=L$ a very detailed proof can be found in  the book \cite{schn1}. In his thesis \cite{kupferer} the first named author checked and comments how to adjust Schneider's proof in the general case.  One more
	useful source will be \cite{schol}.

	
	\subsection{Preparations and Notations} 

	Let $p$ be a prime number and let \gls{Qpalg} be a fixed algebraic closure of the $p$-adic
	numbers \index{pudic numbers@$p$-adic numbers}\gls{Qp} and let as usual \gls{Zp} be
	the integral $p$-adic numbers. \index{pudic numbers@$p$-adic numbers! integral} Each finite extension
	of $\Qp$ is considered to be a subfield of $\oQp$. Let $\Cp$ be the completion
	of $\oQp$ with respect to the
	valuation $v_p$ with $v_p(p)=1$ and let $\OCp$ be the ring of integers of $\Cp$.\\
	Let furthermore $L|\Qp$ be a finite extension, $d_L$ its degree over $\Qp$,
	\glssymbol{OK} the ring of integers, $\glssymbol{piK} \in \OL$ a prime element,
	\glssymbol{kK} the residue class field, $q_L=p^r$ its cardinality, $\Lur$ the maximal unramified extension
	of $\Qp$ in $L$ with ring of integers $\OLur$.\\
	Let furthermore $K|L$ be a finite extension, $d_K$ its degree over $\Qp$,
	$\OK$ its ring of integers, $\piK \in \OK$ a prime element, $\kK$ the residue class field, $\qK$ its
	cardinality, $\Kur$ the
	maximal unramified extension of $\Qp$ in $K$ with ring of integers $\OKur$. \\	
	We will denote the absolute Galois groups \index{Galois group! absolute}
	of $\Qp$, $L$ and $K$ by \glssymbol{GK}, $G_L$ and $G_K,$ respectively.\\
	For any $k_L$-algebra $B$  we will denote by $W(B)_L$ the ring of {\bfseries ramified Witt vectors} \index{Witt vectors! ramified} with values in $B.$
	(cf.\ \cite[Section 1.1, p.\,6--21]{schn1}).
	If $B$ happens to be perfect, these are standard Witt vectors tensored with $\OL$
	(cf.\ \cite[Proposition 1.1.26, p.\,23--24]{schn1}).\\
	A {\bfseries perfectoid} field \index{perfectoid field} $\mathcal{K} \subseteq \Cp$ is a complete field, such that
	its value group $|\mathcal{K}^{\times}|$ is dense in $\RR_{+}^{\times}$ and which satisfies
	$(\OO_{\mathcal{K}}/p\OO_{\mathcal{K}})^p = \OO_{\mathcal{K}}/p\OO_{\mathcal{K}}$
	(cf.\ \cite[p.\,42]{schn1}). \\
	Let $\mathcal{K}$ be a perfectoid field.
	The {\bfseries tilt} \index{tilt} \gls{Kb} of $\mathcal{K}$ is the fraction field of the ring
	\[ \OO_{\mathcal{K}^{\flat}}\coloneqq \varprojlim_{x \mapsto x^{\qL}}
	\OO_{\mathcal{K}}/ \varpi \OO_{\mathcal{K}},\]
	where $\varpi$ is an element in $\OO_{\mathcal{K}}$ such that $|\varpi| \geq |\piL|$. In fact, this
	definition is independent from the choice of the element $\varpi$ (cf.\ \cite[Lemma 1.4.5, p.\,43--44]{schn1}).
	The field $\mathcal{K}^{\flat}$ is perfect and complete and has characteristic $p$
	(cf.\ \cite[Proposition 1.4.7, p.\,45]{schn1}). Moreover, the field $\Cp^{\flat}$ is algebraically
	closed (cf.\ \cite[Proposition 1.4.10, p.\,46--47]{schn1}). The theory of perfectoid fields was originally
	established by Peter Scholze (cf.\ \cite{schol}) but Schneider's book covers all of the theory we do need
	here.	\\
	Let from now on, as in \cite[Definition 1.3.2, p.\,29]{schn1}, $\phi \in$ \gls{RX} be a fixed Frobenius power
	series \index{Frobenius power series} associated to $\piL$, i.e. we have
	\begin{align*}
			\phi(X) &\equiv \piL X \bmod \mathrm{deg} \ 2 \\
			\phi(X) &= X^{q_L} \bmod \piL \OLX.
	\end{align*}
	Let furthermore
	$\GG_\phi \in \OLXY$ be the Lubin-Tate formal group \index{Lubin-Tate formal group}
	which belongs to $\phi$ (cf.\ \cite[Proposition 1.3.4, p.\,31]{schn1}). For $a \in \OL$
	denote by $\gls{endphi} \in \OLX$ the corresponding endomorphism of $\GG_\phi$
	(cf.\ \cite[Proposition 1.3.6, p.\,32]{schn1}). Note that we then have\linebreak
	$[a]_\phi (X) \equiv aX \bmod \mathrm{deg} \ 2$ and
	$[\piL]_\phi=\phi$ (loc. cit.). We then set $\MM\coloneqq \{ x \in \oQp \mid |x|<1\}$ and obtain that
	the operation
	\[ \begin{xy} \xymatrix@R-2pc{
		& \OL \times \MM \ar[rr] & & \MM \\
		&(a, x) \ar@{|->}[rr] & & [a]_\phi (z)
	} \end{xy} \]
	makes $\MM$ into an $\OL$-module (cf.\ \cite[p.\,33]{schn1}). Then, for every $a \in \OL$, we can
	view $[a]_\phi$ as endomorphism of $\MM$ and therefore are able to define
	\[ \gls{GGnphi}\coloneqq \ker([\piL^n]_\phi\colon \MM \to \MM) = \{ x \in \MM \mid [\piL^n]_\phi(x)=0\}.\]
	Note that $(\GGphn)_n$ is via $[\piL]_\phi$ an inverse system and we let
	\[ \gls{TGphi}\coloneqq \varprojlim_n \GGphn\]
	be the projective limit of this system. (cf.\ \cite[p.\,50]{schn1}).
	$\TG_\phi$ is also called the
	{\bfseries Tate module} \index{Tate module} of the group $\GG_\phi$. From
	\cite[Proposition 1.3.10, p.\,34]{schn1} we can
	deduce that $\TG_\phi$ is a free $\OL$-module of rank one.\\
	Following \cite[(1.3.9), p.\,33]{schn1} we let $\gls{Ln}=L(\GG_\phi[\piL^n])$ and
	$\gls{Lin} = \cup_n L_n$. Denote as there the Galois group
	$\Gal(\Lin|L)$ by \gls{GaL}, set $\GaLn=\Gal(L_n|L)$ and $\gls{HL}=\Gal(\oQp|\Lin)$. Define furthermore
	$\gls{Kn}\coloneqq K(\GG_\phi [\piL^n])=KL_n$ and $\gls{Kin} \coloneqq\cup_n K_n=K\Lin$ as well  as
	$\gls{GaK}=\Gal(\Kin|K)$ and $\gls{HK}=\Gal(\oQp|\Kin)$.  These definitions can be summarized in
	the following diagram:
	
	\[ \xymatrix{
		& \oQp \ar@{-}[ddr]_{H_K} \ar@{-}[dddl]^{H_L} \ar@/_1.5cm/@{-}[dddddl]_{G_L}
		\ar@/^1.5cm/@{-}[ddddr]^{G_K}& \\
		& & \\
		&  & \Kin \ar@{-}[dll] \ar@{-}[dd]_{\Gamma_K}\\
		\Lin \ar@{-}[dd]^{\Gamma_L}& & \\
		& & K \ar@{-}[dll]\\
		L  & &
	}
	 \]
	
	\begin{rem} \label{pgm1.1}
	The group $\GaL$ is isomorphic to $\OL^\times$ via the Lubin-Tate character \index{Lubin-Tate character}
	\gls{chLT}. 	Furthermore, $\GaL$ acts continuously on $\TG_\phi$ via $\chL$, i.e. for all
	$\gamma \in \GaL$ and $t \in \TG_\phi$ we have
	\[ \gamma \cdot t = \chL(\gamma) \cdot t = [\chL(\gamma)]_\phi(t).\]
	\begin{proof}
	For the first assertion see \cite[(1.3.12),p.\,36]{schn1}, the second follows immediately from
	\cite[(1.3.11),p.\,34--35]{schn1} and is also stated at \cite[(1.4.17),p.\,51]{schn1}.
	\end{proof}
	\end{rem}
	
	\begin{rem} \label{pgm1.2}
	One can view $\GaK$ as an open subgroup of $\GaL$.
	If, in addition, $K|L$ is unramified, then we have $\GaK\cong\GaL$.
	\end{rem}

	\subsection{The coefficient ring}
	
	We first want to recall the definition of the coefficient ring used in \cite{schn1} and then
	deduce the coefficient ring in the general case.\\
	First we  recall the ring
	\[ \gls{AL}\coloneqq \varprojlim_n \OL/\piL^n \glssymbol{RXl},\]
	from \cite[p.\,75]{schn1}. This ring will be prototypical for our coefficients once we bring the
	variable $X$ to life.  $\AL$ carries an action from $\GaL$ by
	\[ \begin{xy} \xymatrix@R-2pc{
			&\GaL \times \AL \ar[r] &\AL \\
		&(\gamma,f)  \ar@{|->}[r] &f([\chL(\gamma)]_{\phi}(X)).
	} \end{xy} \]
	and possesses an injective $\OL$-algebra endomorphism \label{defphL}
	\[ \begin{xy} \xymatrix@R-2pc{
		& \phL\colon \AL \ar[r] &\AL\\
			&f \ar@{|->}[r] &f([\piL]_{\phi}(X))
	} \end{xy} \]
	(cf.\ \cite[p.\,78]{schn1}). At \cite[p.\,79]{schn1} Schneider defines a {\bfseries weak topology}
	\index{weak topology!on AL@on $\AL$} on $\AL$,
	for which the $\OLX$-submodules
	\[ U_m\coloneqq X^m\OLX + \piL^m\AL\]
	form a fundamental system of open neighbourhoods of $0 \in \AL$.
	 As $\phL(\AL)$-module $\AL$ is free with basis $1, X, \dots, X^{\qL-1}$ (\cite[Proposition 1.7.3, p.\,78]{schn1}),
	with respect to the weak topology $\AL$ is a complete Hausdorff topological $\OL$-algebra (\cite[Lemma 1.7.6, p.\,79--80]{schn1}) and both
the endomorphism $\phL$ and the $\GaL$-action are continuous for
	the weak topology (\cite[Proposition 1.7.8, p.\,80--82]{schn1}).

	Following  Colmez \cite[\S 9.2]{col6} one can
	find an element $\gls{omega} \in \OCpb$, such that
	$X \mapsto \omega$ defines an inclusion $\kL((X)) \hookrightarrow \Cpb$ (respecting important properties). As in
	\cite[p.\,50]{schn1} we denote the image of this inclusion by \gls{EEL} and we want to recall from loc. cit.
	that $\EEL$ is a complete nonarchimedean discretely valued field, with uniformizer $\omega$ and
	residue class field $\kL$. Let in addition \gls{EELp} denote the ring of integers inside $\EEL$.
	Furthermore, $\EEL$ carries a continuous operation by $\GaL$, for which
	we have $\gamma \cdot \omega = [\chL(\gamma)]_\phi (\omega) \bmod \piL$
	(cf.\ \cite[Lemma 1.4.15, p.\,51]{schn1}). By raising elements to its $\qL$-th power, it is clear that
	$\EEL$ also carries a Frobenius homomorphism, which is continuous and the reduction modulo $p$
	of $\phL$. Let furthermore
	\gls{EELsep} denote the separable closure of $\EEL$ inside $\Cpb$ and let \gls{EELsepp} denote
	the integral closure of $\EELp$ inside $\EELsep$.  	The Galois group
	$\Gal(\EELsep|\EEL)$ is isomorphic to $H_L$ by \cite[Section 1.6, p.\,68--75]{schn1} and \cite[Theorem 1.6.7, p.\,73--74]{schn1}.
	Then Schneider lifts $\omega$ to $W(\EEL)_L\subseteq W(\OCpb)_L$ and calls this lift \gls{omphi}
	(cf.\ \cite[Section 2.1, p.\,84--98; in particular p.\,93]{schn1}). Here one cannot just take the
	Teichm\"uller lift, because one wants that the lift fulfills the following relations
	\begin{align*}
		\gls{Fr}(\omphi)& = [\piL]_\phi(\omphi) \\
		\gamma \cdot \omphi &= [\chL(\gamma)]_\phi(\omphi)
	\end{align*}
	for all $\gamma \in \GaL$ and where $\mathrm{Fr}$ is the Frobenius on $W(\Cpb)_L$
	(cf.\ \cite[Lemma 2.1.13, p.\,92--93]{schn1} for the Frobenius and
	\cite[Lemma 2.1.15, p.\,95]{schn1} for the $\GaL$-action). Similar to the construction of $\EEL$,
	sending $X$ to $\omphi$ then defines an inclusion $\AL \hookrightarrow W(\EEL)_L$ (
	cf.\ \cite[p.\,94]{schn1}). In Particular, it
	gives us a commutative square (loc. cit.)
	\[ \begin{xy} \xymatrix{
		 & \AL \ar[rr]^-{X \mapsto \omphi} \ar[d] & & W(\EEL)_L \ar[d]\\
		 & \kL((X)) \ar[rr]^-{X \mapsto \omega} & & \EEL.
	} \end{xy} \]
\label{defAAL}Let \gls{AAL} denote the image
	of the inclusion $\AL \hookrightarrow W(\EEL)_L$. In addition, define
	\[ \gls{AALp} \coloneqq \OL \llbracket \omphi \rrbracket = \AAL \cap W(\EELp)_L.\]
	 $\AAL$ is endowed with the weak topology, i.e., induced by that from $W(\Cpb)_L$ and the isomorphism $\AL \cong \AAL$  is topological for the weak topologies on both
	sides (cf.\ \cite[Proposition 2.1.16, p.\,95--96]{schn1}). Furthermore, this topological
	isomorphism respects the $\GaL$-actions on both sides, where $\AAL$-carries a $\GaL$-action induced
	from the $G_L$-action of $W(\Cpb)_L$ (cf.\ \cite[p.\,94]{schn1}) and   what is $\phL$ on $\AL$ is
	the Frobenius on $\AAL$, which again is induced from the Frobenius on $W(\Cpb)_L$
	(cf.\ \cite[Proposition 2.1.16, p.\,95--96]{schn1}). We therefore denote the Frobenius on $\AAL$ also by
	$\phL$. An immediate consequence then is, that the $\GaL$-action and $\phL$ are continuous
	on $\AAL$.\\
	This   is the coefficient ring  for Schneider's $(\phL,\GaL)$-modules (cf.\
	\cite[Definition 2.2.6, p.\,100--101]{schn1}) but since we want to establish $(\varphi,\Gamma)$-modules
	over a finite extension $K|L$ as it was done in the classical way (cf.\ \cite[Definition 4.21, p.\,81]{fouy})
	for finite extensions of $\Qp$, we transfer this construction to our situation. Let for this
	$\gls{AALnr} \subseteq W(\EELsep)_L$ be the maximal unramified extension of $\AAL$ inside
	$W(\EELsep)_L$. In particular \cite[Lemma 3.1.3, p.\,112--113]{schn1} says that for every finite,
	separable extension $F|\EEL$ inside $\EELsep$, there exists a unique ring
	$\AAL(F) \subseteq W(\EELsep)_L$ containing $\AAL$ such that $\AALnr$ is the colimit
	of the family $\AAL(F)$. The ring \gls{AAA} is defined as the
	closure of $\AALnr$ inside $W(\EELsep)$ with respect to the $\piL$-adic topology and one has
	(cf.\ \cite[p.\,113 and Remark 3.14, p.\,114]{schn1}) \label{aaacompl}
	\[ \AAA \cong \varprojlim_n \AALnr/\piL^n\AALnr.\]
	  $\AALnr$ and $\AAA$ have an action from $G_L$,   the Frobenius
	on $W(\EELsep)$ preserves both rings,  they are discrete valuation rings with prime element $\piL$,
	where $\AAA$ is even complete and
	  their residue class field is $\EELsep$ (cf.\ \cite[p.\,113--114]{schn1}). \label{elsepresa}
	In fact, the $G_L$-action on both
	$\AALnr$ and $\AAA$ is continuous for the weak topologies, since the
	$G_L$ action on $W(\Cpb)_L$ is continuous for the weak topology
	(cf.\ \cite[Lemma 1.4.13, p.\,48--49]{schn1} and \cite[Lemma 1.5.3, p.\,65--66]{schn1}) and both, the weak
	topology and the $G_L$ action on $\AALnr$ respectively $\AAA$, are induced form $W(\Cpb)_L$.
Furthermore
	we have the relation (cf.\ \cite[Lemma 3.1.6, p.\,115--116]{schn1})
	\[ (\AAA)^{H_L} = \AAL.\]
	This   leads   to the definition
	\[ \AAK\coloneqq (\AAA)^{H_K}.\]
	In addition, define
	\begin{align*}
		\gls{AAAp} & \coloneqq \AAA \cap W(\EELsepp)_L \\
		\gls{AALnrp} & \coloneqq \AALnr \cap W(\EELsepp)_L\\
		\AAKp &\coloneqq \AAK \cap W(\EELsepp)_L.
	\end{align*}
	Then, since by definition it is $\AAL \subseteq \AAK \subseteq W(\EELsep)_L$, the ring $\AAK$ is
	a complete nonarchimedean discrete valuation ring with prime element $\piL$ and the restriction
	of the Frobenius from $W(\EELsep)_L$ gives a ring endomorphism $\phKL$ of $\AAK$, which then also
	commutes with $\phL$ (cf.\ \cite[Lemma 3.1.3, p.\,112--113]{schn1}).
	  Furthermore, since $\AAA$ carries an action from
	$G_L$ and therefore also one from $G_K$, the ring $\AAK$ carries an action from $\GaK$.
	Next, we want to define a weak topology on $\AAK$, deduce some properties and see that
	$\phKL$ and the action from $\GaK$ are continuous for this topology.
	
	\begin{mydef}
	The {\bfseries weak topology} on any of the rings $\AAA$, $\AALnr$, $\AAK$ and $\AAL$
	\index{weak topology!on AAA@on $\AAA$} \index{weak topology!on AALnr@on $\AALnr$}
	\index{weak topology!on AAK@on $\AAK$} \index{weak topology!on AAL@on $\AAL$} is
	defined as the induced topology of the weak topology of $W(\Cpb)_L$ (for the latter
	see \cite[p.\,64--65]{schn1}).
	\end{mydef}
	
	\begin{rem} \label{remweak}
	The weak topology on $W(\Cpb)_L$ is complete and Hausdorff (cf.\ \emph{\cite[Lemma 1.5.5, p.\,67--68]{schn1}})
	and $W(\Cpb)_L$ is a topological ring with respect to its weak topology
	(cf.\ \emph{\cite[Lemma 1.5.4, p.\,66--67]{schn1}}). Therefore, the induced topology on any
	of the rings $\AAA$, $\AALnr$, $\AAK$ and $\AAL$ is Hausdorff and these rings are topological rings.
	\end{rem}
	
	The question
	now is, wether $\phKL$ and the action from $\GaK$ are continuous for the weak topology on $\AAK$. For
	this, we want to recall a well-known fact.
	
	\begin{lem} \label{subtop}
	Let $X$ and $Y$ be topological spaces, $f\colon X \to Y$ be a continuous map and let
	$Z \subseteq Y$ be a subspace with $\im(f) \subseteq Z$. Then $f \colon X \to Z$ is continuous.
	\end{lem}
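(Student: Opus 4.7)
The plan is to verify continuity of $f\colon X \to Z$ directly from the definition of the subspace topology on $Z$. Recall that a set $W \subseteq Z$ is open if and only if $W = V \cap Z$ for some open $V \subseteq Y$. So to show continuity of $f\colon X \to Z$, it suffices to show that $f^{-1}(V \cap Z)$ is open in $X$ for every open $V \subseteq Y$.

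The key step is the identity
\[ f^{-1}(V \cap Z) \;=\; f^{-1}(V) \cap f^{-1}(Z) \;=\; f^{-1}(V), \]
where the second equality uses the hypothesis $\im(f) \subseteq Z$, so that $f^{-1}(Z) = X$. By continuity of $f\colon X \to Y$, the set $f^{-1}(V)$ is open in $X$, which gives the claim.

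There is no real obstacle here; this is a purely formal consequence of the definition of the subspace topology (and indeed can be viewed as an instance of the universal property of the subspace topology, which characterizes continuity of maps into $Z$ in terms of continuity of the composite with the inclusion $Z \hookrightarrow Y$).
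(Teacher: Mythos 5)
Your argument is correct and is exactly the standard one the paper has in mind (the lemma is stated there as a well-known fact without proof): the identity $f^{-1}(V\cap Z)=f^{-1}(V)\cap f^{-1}(Z)=f^{-1}(V)$, valid because $\im(f)\subseteq Z$ forces $f^{-1}(Z)=X$, reduces continuity into $Z$ with its subspace topology to continuity of $f\colon X\to Y$. Nothing is missing.
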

	
	\begin{prop}
	The from $W(\EELsep)_L$ induced $\GaK$-action and the induced Frobenius $\phKL$ on
	$\AAK$ are continuous.
	\begin{proof}
	This now is an immediate consequence of \hyperref[subtop]{Lemma \ref*{subtop}} and
	the fact, that $G_L$ acts continuously on $W(\EELsep)_L$ (cf.\ \cite[Lemma 1.5.3, p.\,65--66]{schn1})
	as well as that
	$\Fr$ is continuous on $W(\EELsep)$ with respect to the weak topology:\\
	Since the maps
	\[ \begin{xy} \xymatrix{
		& G_L \times \AAK \ar[r] & G_L \times W(\EELsep)_L \ar[r] & W(\EELsep)_L
	} \end{xy} \]
	and
	\[ \begin{xy} \xymatrix{
		& \AAK \ar@{^{(}->}[r] & W(\EELsep)_L \ar[r]^{\mathrm{Fr}} & W(\EELsep)_L		
	} \end{xy} \]
	are continuous as composite maps of continuous maps and their image is inside $\AAK$
	(for the latter see \cite[Lemma 3.1.3, p.\,112--113]{schn1}) the claim follows.
	\end{proof}
	\end{prop}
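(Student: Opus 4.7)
The plan is to apply the subspace-topology lemma (\hyperref[subtop]{Lemma \ref*{subtop}}) twice: once for the Frobenius and once for the Galois action. Both structures on $\AAK$ are by construction restrictions of structures on the ambient ring $W(\EELsep)_L$ (or even $W(\Cpb)_L$), and the weak topology on $\AAK$ is by definition the subspace topology. So the continuity reduces to the already cited continuity statements on the ambient Witt vector rings, plus the observation that the restrictions land in $\AAK$.

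First I would handle the Frobenius. Consider the composition
\[
\AAK \hookrightarrow W(\EELsep)_L \xrightarrow{\;\Fr\;} W(\EELsep)_L,
\]
where the first map is the inclusion (continuous because $\AAK$ carries the subspace topology) and $\Fr$ is continuous for the weak topology by \cite[Lemma 1.5.4, p.\,66--67]{schn1} (or rather its cited variant in the excerpt). The composite is continuous. By \cite[Lemma 3.1.3, p.\,112--113]{schn1} this composite has image inside $\AAK$, so \hyperref[subtop]{Lemma \ref*{subtop}} gives continuity of $\phKL\colon \AAK \to \AAK$.

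For the $\GaK$-action the same strategy works, once one packages the action as a map of topological spaces. I would factor it as
\[
\GaK \times \AAK \longrightarrow G_L \times W(\EELsep)_L \longrightarrow W(\EELsep)_L,
\]
where the first arrow is the product of the inclusions $\GaK \hookrightarrow G_L$ (continuous since $\GaK$ is a quotient of a closed subgroup of $G_L$) and $\AAK \hookrightarrow W(\EELsep)_L$, and the second arrow is the $G_L$-action, which is continuous by \cite[Lemma 1.5.3, p.\,65--66]{schn1}. Since $\AAK = (\AAA)^{H_K}$ is preserved by $G_K$ (indeed the $H_K$-action on it is trivial, so it descends to a $\GaK$-action), the image of this composite lies in $\AAK$, and another application of \hyperref[subtop]{Lemma \ref*{subtop}} yields continuity of $\GaK \times \AAK \to \AAK$.

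There is no real obstacle here; the only mild point is to make sure the product topology on $\GaK \times \AAK$ agrees with the subspace topology from $G_L \times W(\EELsep)_L$, which is immediate because products of subspace topologies are subspace topologies of products. Everything else is formal once the two ambient continuity facts from Schneider's book are in place.
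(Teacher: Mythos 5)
Your treatment of the Frobenius is exactly the paper's argument and is fine: the composite $\AAK \hookrightarrow W(\EELsep)_L \xrightarrow{\Fr} W(\EELsep)_L$ is continuous, its image lies in $\AAK$ by Schneider's Lemma 3.1.3, and Lemma \ref{subtop} concludes. The problem is in the Galois half. You factor the action through a map $\GaK \times \AAK \to G_L \times W(\EELsep)_L$ built from an ``inclusion $\GaK \hookrightarrow G_L$'', but no such inclusion exists: $\GaK=\Gal(\Kin|K)$ is a \emph{quotient} $G_K/H_K$ of the closed subgroup $G_K\leq G_L$, not a subgroup of $G_L$ (your parenthetical ``quotient of a closed subgroup'' does not produce an embedding, and there is no canonical continuous splitting of $G_K\twoheadrightarrow\GaK$; the paper's Remark on $\GaK$ only embeds it into $\GaL$, which is itself a quotient of $G_L$). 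Relatedly, $\GaK$ does not act on $W(\EELsep)_L$ at all — only $H_K$-invariant objects such as $\AAK=\AAA^{H_K}$ inherit a $\GaK$-action — so the second composite you write down is not defined, and the closing remark about product versus subspace topologies is moot.

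The repair is short and is what the paper does: work with the honest subgroup first. The composite $G_K\times\AAK \to G_K\times W(\EELsep)_L \to W(\EELsep)_L$, where the second arrow is the restriction of the continuous $G_L$-action (Schneider, Lemma 1.5.3), is continuous, and its image lies in $\AAK$ because $H_K\triangleleft G_K$ implies $G_K$ preserves $\AAA^{H_K}$; Lemma \ref{subtop} then gives continuity of $G_K\times\AAK\to\AAK$. Since $H_K$ acts trivially on $\AAK$, this action factors through $\GaK=G_K/H_K$, and because the projection $G_K\to\GaK$ is an open surjection, the map $G_K\times\AAK\to\GaK\times\AAK$ is a quotient map, so the induced $\GaK\times\AAK\to\AAK$ is continuous. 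Your overall strategy — ambient continuity plus the subspace lemma — is the right one and is the paper's, but the group fed into it must be $G_K$ (or $G_L$), with $\GaK$ appearing only after passing to the quotient, not via an embedding.
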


	We want to end this subsection by fixing some notation, defining weak topologies on modules over
	any of the above rings and calculating the residue class field of $\AAK$.
	We  denote
	by $\BBL$,   \gls{BBB},
	\glssymbol{BBK} and
	\gls{BBLnr} the quotient fields of $\AAL$, $\AAA$, $\AAK$ and $\AALnr$, respectively. Furthermore,
	set $\glssymbol{EEK}\coloneqq (\EELsep)^{H_K}$ and let $\glssymbol{EEKp}$ denote the integral closure of
	$\EELp$ inside $\EEL$. In \hyperref[resaak]{Lemma \ref*{resaak}} we will see that $\EEK$ is the
	residue class field of $\AAK$. Beforehand, we define weak topologies for modules.

	\begin{lem}
	Let $R \in \{ \AAA, \AALnr, \AAK, \AAL \}$ and $M$ be a finitely generated $R$-module. If
	$k,l \in \NN$ such that $R^k \twoheadrightarrow M$ and $R^l \twoheadrightarrow M$ are
	surjective homomorphisms, then the induced quotient topologies on $M$ coincide (where
	$R^k$ and $R^l$ carry the product topology of the weak topology on $R$).
	\begin{proof}
	This is \cite[Lemma 3.2.2 (i), p.\,100--102]{kley}.
	There, in fact, is no proof for $\AAK$, but in his proof, the author only uses that the coefficient ring is
	a topological ring with respect to the weak topology, what we stated in the above
	\hyperref[remweak]{Remark \ref*{remweak}}.
	\end{proof}
	\end{lem}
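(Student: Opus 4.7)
The plan is to compare the two quotient topologies by constructing continuous $R$-linear comparison maps between $R^k$ and $R^l$ that are compatible with the two surjections onto $M$, and then exploiting the universal property of the quotient topology.

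First, I would fix the setup. Write $\alpha\colon R^k \twoheadrightarrow M$ and $\beta\colon R^l \twoheadrightarrow M$ for the two given surjections, and let $T_\alpha$, $T_\beta$ denote the corresponding quotient topologies on $M$. Let $e_1,\dots,e_l$ be the standard basis of $R^l$. Since $\alpha$ is surjective, for each $i$ I can choose $f_i \in R^k$ with $\alpha(f_i) = \beta(e_i)$. Extending $R$-linearly, this produces an $R$-linear map $\varphi\colon R^l \to R^k$ satisfying $\alpha \circ \varphi = \beta$. Symmetrically, I obtain an $R$-linear map $\psi\colon R^k \to R^l$ with $\beta \circ \psi = \alpha$.

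The key input is continuity of $\varphi$ and $\psi$. By Remark \ref{remweak}, each of the rings in $\{\AAA,\AALnr,\AAK,\AAL\}$ is a topological ring for its weak topology; hence the products $R^k$ and $R^l$ carry a structure of topological $R$-module for the product topology. An $R$-linear map between finite free $R$-modules is given by a matrix with entries in $R$, and hence is expressible as a finite sum of compositions of coordinate projections, multiplications by elements of $R$, and additions. All of these are continuous in a topological ring, so $\varphi$ and $\psi$ are continuous.

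Now I use the universal property of the quotient topology. A subset $U \subseteq M$ lies in $T_\alpha$ iff $\alpha^{-1}(U)$ is open in $R^k$; in that case $\beta^{-1}(U) = \varphi^{-1}(\alpha^{-1}(U))$ is open in $R^l$ by continuity of $\varphi$, so $U \in T_\beta$. Hence $T_\alpha \subseteq T_\beta$, and the symmetric argument using $\psi$ gives $T_\beta \subseteq T_\alpha$, proving equality. The result is really no more than a routine exercise once the topological ring structure is in hand; the only point requiring attention (which is also what the author notes is missing in the cited reference for $R = \AAK$) is the verification that $\AAK$ is a topological ring, and this is covered by Remark \ref{remweak}. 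No difficulty of substance remains.
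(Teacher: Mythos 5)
Your argument is correct and is precisely the standard argument that the paper invokes by citing Kley's Lemma 3.2.2 (i): lift the two presentations through continuous $R$-linear comparison maps (continuity coming only from the fact, recorded in Remark \ref{remweak}, that $R$ is a topological ring for its weak topology) and compare the quotient topologies via their universal property. Nothing is missing, and the approach coincides with the one the paper relies on.
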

	
	\begin{mydef}
	Let $R \in \{ \AAA, \AALnr, \AAK, \AAL \}$ and $M$ be a finitely generated $R$-module. The
	{\bfseries weak topology} \index{weak topology!on modules} on $M$ is defined as the quotient topology for any
	surjective homomorphism $R^k \twoheadrightarrow M$, where $R^k$ carries the product topology
	of the weak topology on $R$.
	\end{mydef}
	
	\begin{lem}
	Let $R \in \{ \AAA, \AALnr, \AAK, \AAL \}$ and $M$ be a finitely generated $R$-module.
	Then $M$ with its weak topology is a topological $R$-module and if $M = M_1 \oplus M_2$, then
	the weak topology on $M$ coincides with the direct product of the weak topologies on the $M_1$ and
	$M_2$.\\
	Furthermore, if $N$ is another finitely generated $R$-module and $f\colon M \to N$ is an $R$-module
	homomorphism, then $f$ is continuous with respect to the weak topologies on both $M$ and $N$.
	\begin{proof}
	This is \cite[Lemma 3.2.2 (ii)-(iv), p.\,100--102]{kley}.
	Again, there is no proof for $\AAK$, but the property used is that of a discrete valuation ring,
	which $\AAK$ also fulfills.
	\end{proof}
	\end{lem}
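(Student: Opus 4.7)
The plan is to reduce all three assertions to manipulations of the quotient-topology presentation $\pi\colon R^k \twoheadrightarrow M$ that defines the weak topology, together with the fact that $R$ is a topological ring (\hyperref[remweak]{Remark \ref*{remweak}}) and that the weak topology on $M$ does not depend on the chosen presentation (the preceding lemma).

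First I would establish that $M$ is a topological $R$-module. The product $R^k$ with the product of weak topologies is obviously a topological $R$-module, since $R$ is a topological ring. Let $N = \ker(\pi) \subseteq R^k$ and equip $M = R^k/N$ with the quotient topology. The addition $M \times M \to M$ factors through the continuous map $R^k \times R^k \to R^k \to M$; using that $\pi \times \pi\colon R^k \times R^k \to M \times M$ is an open surjection between topological groups (products of the open surjection $\pi$), hence a quotient map, continuity of addition on $M \times M$ follows. The same argument applied to $R \times R^k \to R^k \to M$ yields continuity of scalar multiplication $R \times M \to M$.

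Next I would treat the continuity of an arbitrary $R$-homomorphism $f\colon M \to N$. Choose surjections $\pi_M\colon R^k \twoheadrightarrow M$ and $\pi_N\colon R^l \twoheadrightarrow N$. Since $R^k$ is free, hence projective, the composite $f\circ \pi_M$ lifts to an $R$-linear map $\tilde f\colon R^k \to R^l$ with $\pi_N \circ \tilde f = f\circ \pi_M$. But $\tilde f$ is given by an $l \times k$ matrix with entries in $R$, so $\tilde f$ is continuous as $R$ is a topological ring; therefore $f\circ \pi_M = \pi_N \circ \tilde f$ is continuous, and by the universal property of the quotient topology on $M$, the map $f$ itself is continuous.

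Finally, for $M = M_1 \oplus M_2$, I would combine both parts. Choose surjections $\pi_i\colon R^{k_i} \twoheadrightarrow M_i$ and let $\pi = \pi_1 \oplus \pi_2 \colon R^{k_1+k_2} \twoheadrightarrow M$. Since products of open surjective homomorphisms of topological groups are again quotient maps, the product topology on $M_1 \times M_2$ arising from the individual weak topologies coincides with the quotient topology induced by $\pi$; the independence statement of the preceding lemma then identifies it with the weak topology on $M$. Alternatively, one may simply observe that the canonical inclusions $\iota_i\colon M_i \hookrightarrow M$ and projections $\mathrm{pr}_i \colon M \to M_i$ are $R$-linear, hence continuous by the previous paragraph, so the identity $M_1 \oplus M_2 \to M_1 \times M_2$ is a homeomorphism. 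The only genuine subtle point is the first option, namely checking that the product of two quotient maps of topological groups is a quotient map; the projection/inclusion argument avoids this entirely and is the cleanest route.
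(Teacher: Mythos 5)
Your argument is correct, and it is genuinely more self-contained than what the paper does: the paper's ``proof'' is only a citation to Kley's Lemma 3.2.2 (ii)--(iv), with the remark that the property needed there is that the coefficient ring is a discrete valuation ring, which $\AAK$ also satisfies. Your route, by contrast, never invokes the DVR structure at all: you use only that $R$ is a topological ring (\hyperref[remweak]{Remark \ref*{remweak}}), that the quotient map $R^k \twoheadrightarrow M$ of topological groups is open (so that products of such maps are again quotient maps, which settles continuity of addition and of scalar multiplication on $M$), the projectivity of $R^k$ to lift an arbitrary $R$-linear $f\colon M \to N$ to a matrix over $R$ (continuous because $R$ is a topological ring), and the independence of the weak topology from the chosen presentation, i.e.\ the preceding lemma. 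Your second option for the direct-sum statement, via the continuity of the inclusions $\iota_i$ and projections $\mathrm{pr}_i$ together with continuity of addition, is indeed the cleanest and sidesteps the only delicate point (that a product of quotient maps need not be a quotient map in general, which your openness argument handles correctly for topological groups). What the paper's approach buys is brevity and consistency with the reference already used for the previous lemma; what yours buys is a uniform, elementary argument valid verbatim for all four rings $\AAA$, $\AALnr$, $\AAK$, $\AAL$ without having to argue that the cited proof transfers to $\AAK$.
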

	
	\begin{prop}[Relative Ax-Sen-Tate] \label{relast}
	Let $\CKK$ be a nonarchimedean valued field of characteristic $0$, $\OCKK$ an algebraic
	closure with completion
	$\CCC$ and $\CLL|\CKK$ a Galois extension within $\OCKK$ with completion $\HCLL$. Let furthermore
	$H \leq \GCLK$ be a closed subgroup. Then it holds
	\[ (\HCLL)^H = (\CLL^H)^{\wedge}.\]
	\begin{proof}
	This is an immediate consequence of the usual Ax-Sen-Tate theorem (cf.\
	\cite[Proposition 3.8, p.\,43--44]{fouy} ):
	Since $\CLL|\CKK$ is algebraic, $\OCKK$ is also an algebraic closure for $\CLL$ and then
	we deduce (loc. cit.)
	\[ \CCC^{\GCLL} = \HCLL.\]
	Infinite Galois theory then says that we have
	$ H = \Gal(\CLL|\CLL^H) \cong \GCLLH/\GCLL $.
	Together with Ax-Sen-Tate we then deduce
	\[ (\CLL^H)^{\wedge}= \CCC^{\GCLLH} = (\CCC^{\GCLL})^H = (\HCLL)^H.\]
	\end{proof}
	\end{prop}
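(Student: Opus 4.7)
The plan is to reduce the statement to the classical Ax--Sen--Tate theorem applied to two different base fields, and then to combine the two identifications by means of infinite Galois theory. More precisely, I want to apply the Ax--Sen--Tate theorem once with respect to $\CLL$ and once with respect to the intermediate field $\CLL^{H}$, and then exploit that $H$ arises as a quotient of absolute Galois groups.

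First, since $\CCC$ is the completion of the algebraic closure $\OCKK$ of $\CKK$, and since $\CLL\subseteq\OCKK$ is algebraic over $\CKK$, the same field $\OCKK$ is also an algebraic closure of $\CLL$ and of $\CLL^{H}$; in particular $\CCC$ serves as a completed algebraic closure for each of these three fields. The classical Ax--Sen--Tate theorem (as recorded in the proposition cited from \cite{fouy}) therefore yields two identities inside $\CCC$: on the one hand $\CCC^{\GCLL}=\HCLL$, and on the other hand $\CCC^{\GCLLH}=(\CLL^{H})^{\wedge}$.

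Next, infinite Galois theory enters. Because $\CLL|\CKK$ is Galois and $H\leq\GCLK$ is closed, the fixed field $\CLL^{H}$ is well-defined and satisfies $\Gal(\CLL|\CLL^{H})\cong H$. Moreover, $\CLL|\CLL^{H}$ is itself Galois, so $\GCLL$ is a closed normal subgroup of $\GCLLH$, and the quotient $\GCLLH/\GCLL$ is canonically identified with $H$ via restriction. Putting the three pieces together gives
\[
(\CLL^{H})^{\wedge}=\CCC^{\GCLLH}=\bigl(\CCC^{\GCLL}\bigr)^{\GCLLH/\GCLL}=(\HCLL)^{H},
\]
which is the desired equality.

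The only step that is not completely formal is the middle one, namely the double-fixed-points identity $\CCC^{\GCLLH}=(\CCC^{\GCLL})^{\GCLLH/\GCLL}$, and the associated continuity of the $H$-action on $\HCLL$. The obstacle is of a purely topological nature: one must check that the restriction map $\GCLLH\to H$ is continuous and surjective with kernel $\GCLL$, so that passing to fixed points in stages is legitimate. Once this is in place—essentially a standard consequence of the fact that $H$ acts by isometries on $\HCLL$ and that its action is induced from the continuous $\GCLLH$-action on $\CCC$—the argument is complete.
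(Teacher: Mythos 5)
Your argument is correct and takes essentially the same route as the paper's own proof: two applications of the classical Ax--Sen--Tate theorem (to $\CLL$, giving $\CCC^{\GCLL}=\HCLL$, and to $\CLL^{H}$, giving $(\CLL^{H})^{\wedge}=\CCC^{\GCLLH}$), combined with the Galois-theoretic identification $H=\Gal(\CLL|\CLL^{H})\cong \GCLLH/\GCLL$ and taking fixed points in stages. The step you single out is in fact essentially formal: $\GCLL$ is normal in $\GCLLH$ because $\CLL|\CLL^{H}$ is Galois, the identity $\CCC^{\GCLLH}=(\CCC^{\GCLL})^{\GCLLH/\GCLL}$ holds set-theoretically, and the $H$-action on $\HCLL$ is the unique isometric extension of its action on the dense subfield $\CLL$, hence agrees with the restricted $\GCLLH$-action on $\CCC$.
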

	
	For our purposes the following integral version of the above Relative Ax-Sen-Tate Theorem will
	be the interesting one.
	
	\begin{cor} \label{intrelast}
	Let $\CKK$ be a nonarchimedean valued field of characteristic $0$, $\OCKK$ an algebraic
	closure with completion
	$\CCC$ and $\CLL|\CKK$ a Galois extension within $\OCKK$ with completion $\HCLL$. Denote by
	$\OO_{?}$ the ring of integers of any of the above fields $?$. Let furthermore
	$H \leq \GCLK$ be a closed subgroup. Then it holds
	\[ (\OO_{\HCLL})^H = ((\OO_{\CLL})^H)^{\wedge}.\]
	\begin{proof}
	For an element $x \in \CCC$ we have
	\[ \begin{xy} \xymatrix@C-0.9pc{
		x \in (\OO_{\HCLL})^H \ar@{<=>}[r] & x \in (\HCLL)^H \text{ with } |x| \leq 1
		\ar@{<=>}[r]^-{\hyperref[relast]{\ref*{relast}}} & x \in (\CLL^H)^{\wedge} \text{ with } |x| \leq 1
		\ar@{<=>}[r] & x \in ((\OO_{\CLL})^H)^{\wedge},
	} \end{xy} \]
	where the last equivalence comes from the fact that the integers of the completion are the completion
	of the integers.
	\end{proof}
	\end{cor}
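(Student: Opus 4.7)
The plan is to reduce the integral statement directly to Proposition \ref{relast} by a chain of elementary equivalences, using only the facts that (i) the valuation on $\mathcal{C}$ restricts to the valuations on all subfields in question, and (ii) the ring of integers of the completion of a valued field equals the closure of the ring of integers inside that completion.

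First I would verify the inclusion $((\OO_{\CLL})^H)^{\wedge} \subseteq (\OO_{\HCLL})^H$. Any $x \in (\OO_{\CLL})^H$ satisfies $|x| \le 1$ and is $H$-fixed, so a Cauchy limit of such elements lies in the completion, is still $H$-fixed because the $H$-action on $\HCLL$ is continuous (each element of $H$ is an isometry), and still satisfies $|x| \le 1$ by continuity of the absolute value. Hence it lies in $(\OO_{\HCLL})^H$.

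For the reverse inclusion I would argue elementwise: an element $x \in \CCC$ lies in $(\OO_{\HCLL})^H$ if and only if $x \in (\HCLL)^H$ and $|x| \le 1$. Applying Proposition \ref{relast} rewrites this as $x \in (\CLL^H)^{\wedge}$ with $|x| \le 1$. Now $\CLL^H$ is a subfield of $\CLL$ with its restricted valuation, and the integers of its completion coincide with the closure of $\OO_{\CLL^H} = (\OO_{\CLL})^H$ inside $(\CLL^H)^{\wedge}$; this is the general fact that $\OO_{\widehat F} = \widehat{\OO_F}$ for any nonarchimedean valued field $F$. Therefore the condition $x \in (\CLL^H)^{\wedge}$ with $|x| \le 1$ is equivalent to $x \in ((\OO_{\CLL})^H)^{\wedge}$, completing the chain of equivalences.

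The only place where any real content enters is the appeal to Proposition \ref{relast}; once that is available, the result is purely formal. The mildly delicate point is confirming that $(\OO_{\CLL})^H$, viewed inside $(\CLL^H)^{\wedge}$, really does have closure equal to $\OO_{(\CLL^H)^{\wedge}}$ -- but this follows immediately from the fact that the valuation on $\CLL^H$ is the restriction of the valuation on $\CCC$, so that completion commutes with the condition $|\cdot| \le 1$. I would write the whole thing as a single displayed chain of ``$\Longleftrightarrow$'' statements, citing \ref{relast} at the middle step.
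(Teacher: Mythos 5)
Your proposal is correct and follows essentially the same route as the paper: the paper's proof is precisely the chain of equivalences $x \in (\OO_{\HCLL})^H \Leftrightarrow x \in (\HCLL)^H,\ |x|\le 1 \Leftrightarrow x \in (\CLL^H)^{\wedge},\ |x|\le 1 \Leftrightarrow x \in ((\OO_{\CLL})^H)^{\wedge}$, citing Proposition \ref{relast} in the middle and the fact that the integers of a completion are the completion of the integers at the end. Your extra remarks (isometry of the $H$-action, $\OO_{\widehat F}=\widehat{\OO_F}$ because completion commutes with the condition $|\cdot|\le 1$) only make explicit what the paper leaves implicit.
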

	
	\begin{lem} \label{pgm2.4}
	It holds $(\AALnr)^{H_K} = \AAK$.
	\begin{proof}
	This is a direct consequence of the above \hyperref[intrelast]{Corollary \ref*{intrelast}}. This namely says that
	\[ \AAK = (\AAA)^{H_K} = ((\AALnr)^{H_K})^{\wedge}.\]
	But since $(\AALnr)^{H_K}|\AAL$ is finite and $\AAL$ is complete, $(\AALnr)^{H_K}$ itself is complete,
	i.e. it is
	\[ (\AALnr)^{H_K} = ((\AALnr)^{H_K})^{\wedge} = \AAK.\]
	\end{proof}
	\end{lem}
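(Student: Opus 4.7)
The plan is to combine the integral Relative Ax-Sen-Tate theorem (Corollary \ref{intrelast}) with a completeness argument for $(\AALnr)^{H_K}$. Recall from page \pageref{aaacompl} that $\AAA \cong \varprojlim_n \AALnr/\piL^n \AALnr$, i.e.\ $\AAA$ is the $\piL$-adic completion $(\AALnr)^{\wedge}$ of $\AALnr$. Applying Corollary \ref{intrelast} to the $H_K$-action then yields
\[ \AAK = (\AAA)^{H_K} = \bigl((\AALnr)^{H_K}\bigr)^{\wedge}, \]
so the proof reduces to showing that $(\AALnr)^{H_K}$ is already $\piL$-adically complete.

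For this, I would argue that $(\AALnr)^{H_K}$ is finite as a module over $\AAL$. Via the unramified theory recalled on page \pageref{defAAL} (Lemma 3.1.3 of \cite{schn1}), finite separable subextensions of $\EELsep / \EEL$ correspond bijectively to finite unramified subextensions of $\AALnr / \AAL$, compatibly with the $H_L$-action, and the residue field of $(\AALnr)^{H_K}$ is $(\EELsep)^{H_K} = \EEK$. Since $K|L$ is finite, $H_K \leq H_L$ has finite index, so $\EEK|\EEL$ is finite, and the corresponding unramified extension $(\AALnr)^{H_K}|\AAL$ is finite of the same degree.

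Because $\AAL$ is $\piL$-adically complete and $(\AALnr)^{H_K}$ is a finitely generated $\AAL$-module, it is itself $\piL$-adically complete. Therefore
\[ (\AALnr)^{H_K} = \bigl((\AALnr)^{H_K}\bigr)^{\wedge} = \AAK, \]
as desired.

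The main obstacle is the identification of $(\AALnr)^{H_K}$ with the unramified extension corresponding to $\EEK$; once this correspondence (which rests on the structural results of \cite[\S 3.1]{schn1}) is invoked, the finiteness is automatic and the completeness argument is routine. Everything else is a direct translation of the Ax-Sen-Tate statement to the integral Witt-vector setting.
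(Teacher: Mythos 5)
Your proposal follows the paper's own argument essentially verbatim: both apply the integral relative Ax-Sen-Tate statement (Corollary \ref{intrelast}) to obtain $\AAK = (\AAA)^{H_K} = \bigl((\AALnr)^{H_K}\bigr)^{\wedge}$ and then conclude because $(\AALnr)^{H_K}$ is finite over the $\piL$-adically complete ring $\AAL$, hence itself complete. The only difference is that you justify the finiteness of $(\AALnr)^{H_K}|\AAL$ via the unramified correspondence with the finite residue extension $\EEK|\EEL$, a point the paper's proof simply asserts.
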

	
	\begin{lem} \label{resaak}
	$\EEK$ is the residue class field of $\AAK$.
	\begin{proof}
	We have an exact sequence
	\[ \begin{xy} \xymatrix{
		&0 \ar[r] & \AALnr \ar[r]^-{\cdot \piL} & \AALnr \ar[r] & \AALnr/\piL\AALnr \ar[r] &0.
	} \end{xy}\]
	By taking $H_K$-invariants and using $(\AALnr)^{H_K} = \AAK$ from
	\hyperref[pgm2.4]{Lemma \ref*{pgm2.4}} we obtain the exact sequence
	\[ \begin{xy} \xymatrix{
		&0 \ar[r] & \AAK \ar[r]^-{\cdot \piL} & \AAK \ar[r] & (\EELsep)^{H_K} \ar[r] & H^1(H_K,\AALnr).
	} \end{xy} \]
	Since $\BBLnr|\BBL$ is unramified, and therefore also tamely ramified, we get from
	\cite[(6.1.10) Theorem, p.\,342--342]{nswo} that $\AALnr$ is a cohomologically trivial
	$H_L$-module. Therefore the right term in the latter sequence is equal to zero and we get
	the exact sequence
	\[ \begin{xy} \xymatrix{
		&0 \ar[r] & \AAK \ar[r]^-{\cdot \piL} & \AAK \ar[r] & \EEK \ar[r] & 0
	} \end{xy} \]
	which ends the proof.
	\end{proof}
	\end{lem}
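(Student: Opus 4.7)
The plan is to exploit the short exact sequence coming from multiplication by $\pi_L$ on $\mathbf{A}_L^{\mathrm{nr}}$ and to pass to $H_K$-invariants, just as one does in the classical cyclotomic setting. Concretely, one knows from the discussion on p.~\pageref{elsepresa} that $\mathbf{A}_L^{\mathrm{nr}}$ is a discrete valuation ring with uniformizer $\pi_L$ and residue field $\mathbf{E}_L^{\mathrm{sep}}$, so there is a short exact sequence of $H_L$-modules
\[ 0 \to \mathbf{A}_L^{\mathrm{nr}} \xrightarrow{\cdot \pi_L} \mathbf{A}_L^{\mathrm{nr}} \to \mathbf{E}_L^{\mathrm{sep}} \to 0. \]

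Next, I would take the long exact cohomology sequence for the closed subgroup $H_K \leq H_L$. The term $(\mathbf{A}_L^{\mathrm{nr}})^{H_K}$ is identified with $\mathbf{A}_{K|L}$ by Lemma~\ref{pgm2.4}, and $(\mathbf{E}_L^{\mathrm{sep}})^{H_K}$ is $\mathbf{E}_{K|L}$ by the very definition given on the list of notations. So the long exact sequence begins
\[ 0 \to \mathbf{A}_{K|L} \xrightarrow{\cdot \pi_L} \mathbf{A}_{K|L} \to \mathbf{E}_{K|L} \to H^1(H_K, \mathbf{A}_L^{\mathrm{nr}}). \]
If the rightmost term vanishes, the exact sequence identifies $\mathbf{E}_{K|L}$ with $\mathbf{A}_{K|L}/\pi_L \mathbf{A}_{K|L}$, which is precisely the statement of the lemma.

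The main point is therefore the vanishing of $H^1(H_K, \mathbf{A}_L^{\mathrm{nr}})$. For this I would argue that $\mathbf{B}_L^{\mathrm{nr}}|\mathbf{B}_L$ is an unramified (hence in particular tamely ramified) extension of complete discretely valued fields with Galois group $H_L$, so that the cohomological triviality of the ring of integers $\mathbf{A}_L^{\mathrm{nr}}$ as an $H_L$-module follows from the standard fact in local class field theory quoted in \cite[(6.1.10) Theorem, p.\,342--342]{nswo}. Cohomological triviality for $H_L$ implies cohomological triviality for any closed subgroup, so in particular $H^1(H_K, \mathbf{A}_L^{\mathrm{nr}}) = 0$, which finishes the proof.

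The most delicate step is the last one, where one must invoke that $\mathbf{B}_L^{\mathrm{nr}}|\mathbf{B}_L$ genuinely satisfies the hypotheses of the classical cohomological triviality result for tame (here unramified) Galois extensions. This rests on the valuation-theoretic description of $\mathbf{A}_L^{\mathrm{nr}}$ recalled on p.~\pageref{aaacompl} (it is the maximal unramified extension of $\mathbf{A}_L$ inside $W(\mathbf{E}_L^{\mathrm{sep}})_L$) and on the identification $\mathrm{Gal}(\mathbf{E}_L^{\mathrm{sep}}|\mathbf{E}_L) \cong H_L$ from Schneider's book, both of which are already in place above.
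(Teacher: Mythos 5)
Your proposal is correct and follows essentially the same route as the paper: the same multiplication-by-$\pi_L$ sequence on $\mathbf{A}_L^{\mathrm{nr}}$, the same identification $(\mathbf{A}_L^{\mathrm{nr}})^{H_K}=\mathbf{A}_{K|L}$ from Lemma \ref{pgm2.4}, and the same vanishing of $H^1(H_K,\mathbf{A}_L^{\mathrm{nr}})$ via cohomological triviality of $\mathbf{A}_L^{\mathrm{nr}}$ for the unramified (hence tamely ramified) extension $\mathbf{B}_L^{\mathrm{nr}}|\mathbf{B}_L$, quoting the same theorem from Neukirch--Schmidt--Wingberg. The only cosmetic difference is that you identify $\mathbf{A}_L^{\mathrm{nr}}/\pi_L\mathbf{A}_L^{\mathrm{nr}}$ with $\mathbf{E}_L^{\mathrm{sep}}$ at the outset rather than after taking invariants, which changes nothing; note also that $H_K$ is in fact open in $H_L$, so no subtlety about closed versus open subgroups arises.
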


	\subsubsection{Concrete description of Weak Topologies}
	\label{secweaktop}

	As the title says, the goal of this chapter is to give a concrete description of both, the ring
	$\AAK$ and its weak topology. We will start with the topology and first we want the recall
	the description of the weak topology of $\AAL$ and recall that a similar description
	holds true on $W(\Cpb)_L$
	
	\begin{rem} \label{toponaal}
	\emph{\cite[Proposition 2.1.16 (i), p.\,95--96]{schn1}} says that the weak topology $\AAL$ has an analogous
	description as the description above. Concretely, a fundamental system of open neighbourhoods
	of $0$ for the weak topology on $\AAL$ is given by
	\[ \omphi^m \AALp + \piL^m \AAL, \ m \geq 1.\]
	\end{rem}

	\begin{rem} \label{pgm2.17}
	A fundamental system of open neighbourhoods of $0$ for the weak topology on
	$W(\Cpb)_L$ is given by the $W(\OCpb)_L$-submodules
	\[ \omphi^m W(\OCpb)_L + \piL^m W(\Cpb)_L, \ m \geq 1.\]
	\begin{proof}
	Because of $|\Phi_0(\omphi)|_\flat = |\omega|_\flat = |\piL|^{\qL/\qL-1} < 1$
	(cf.\ \cite[Lemma 2.1.13 (i), p.\,92--93]{schn1} for the first equality and
	\cite[Lemma 1.4.14, p.\,50]{schn1} for the second) this is exactly \cite[Remark 2.1.5 (ii), p.\,86--87]{schn1}.
	\end{proof}
	\end{rem}

	  In   the following Proposition we will show, that the above description of the weak topology on $\AAL$ extends to
	unramified, integral extensions. Its proof is a generalization of
	\cite[Proposition 2.1.16 (i), p.\,95--96]{schn1}.
	
	\begin{prop} \label{topunram}
	Let $B |\BBL$ be an unramified extension, $A \subseteq B$ the integral closure of $\AAL$ in $B$ and
	set $\Ap \coloneqq A \cap W(\EELsepp)_L$.\\
	Then the family
	\[ \omphi^m \Ap + \piL^m A, \ m \geq 1\]
	of $\Ap$-submodules of $A$ forms a fundamental system of open neighbourhoods of $0$ for
	the weak topology on $A$.
	\begin{proof}
	Since we have $\omphi^m \Ap \subseteq \omphi^m W(\OCpb)_L$ and
	$\piL^m A \subseteq \piL^m W(\Cpb)_L$ for all $m \geq 1$, we also get
	\[ \omphi^m \Ap + \piL^m A \subseteq (\omphi^m W(\OCpb)_L + \piL^m W(\Cpb)_L) \cap A\]
	for all $m \geq 1$, i.e. the topology on $A$ generated by the family
	$(\omphi^m \Ap + \piL^m A)_m$ is finer then the topology induced from $W(\Cpb)_L$.\\
	To see that it is also coarser, let $E |\EEL$ be the residue class field of $A$ and
	$\Epp$ be the integral closure of $\EELp$ in $E$ and consider the following families of
	$W(\OCpb)_L$-submodules of
	$W(\Cpb)_L$:
	\begin{align*}
	 \Vnm &\coloneqq \left\{(b_0,b_1,\dots) \in W(\OCpb)_L \mid b_0,\dots,b_{m-1} \in \omega^{n} \OCpb\right\}, \\
	 \Unm &\coloneqq \left\{(b_0,b_1,\dots) \in W(\Cpb)_L \mid b_0,\dots,b_{m-1} \in \omega^{n} \OCpb\right\}.
	 \end{align*}
	These are introduced in \cite[Section 1.5, p.\,64--68]{schn1} to define the weak topology on
	$W(\Cpb)_L$. In particular, the $\Unm$ give a fundamental system of open neighbourhoods of $0$ in
	$W(\Cpb)_L$ (loc. cit.) and the $\Vnm$ give one of $W(\OCpb)_L$. Since $\omphi$ is topologically
	nilpotent (cf.\ \cite[Lemma 2.1.6, p.\,87]{schn1}) we can find for any $k \in \NN$ an element
	$n \in \NN$ such that $\omphi^n \in \Vkm$. But since $\Phi_0(\omphi) = \omega$, i.e.
	$\omphi = (\omega,\dots)$, the condition $\omphi^n \in \Vkm$ implies $n \geq k$. Therefore we can find
	an increasing sequence of natural numbers $m \leq l_1 < \cdots < l_m$ such that
	\[ \omphi^{q_L^{l_i}} \in \Vqliem \ \text{ for all } 2 \leq i \leq m. \]
	Since $\Ap$ only contains positive powers of $\omphi$, this then implies, that
	for all $2 \leq i \leq m$ we have
	\[ \omphi^{q_L^{l_i}} \Ap \subseteq  \Vqliem. \]
	We will now show that
	\[ \Uqlmm \cap A \subseteq \omphi^m \Ap + \piL^m A.\]
	For this let $f_m \in \Uqlmm \cap A$. We then have
	\[ \Phi_0 (f_m) \in \omega^{q_L^{l_m}} \OCpb \cap E = \omega^{q_L^{l_m}} \Epp.\]
	Since by \cite[Lemma 3.1.3 (b), p.\,112--113]{schn1} the diagram
	\[ \begin{xy} \xymatrix{
		& A \ar@[(->][rr] \ar[dr]_{\mathrm{pr}} & & W(E)_L \ar[dl]^{\Phi_0} \\
			& & E &
	} \end{xy} \]
	commutes, we can find $g_m \in \omphi^{q_L^{l_m}} \Ap$ and $f_{m-1} \in A$ such that
	\[ f_m = g_m + \piL f_{m-1}.\]
	Recall $\omphi^{q_L^{l_m}} \Ap \subseteq  \Vqlmem$ from above and obtain
	\[ \piL f_{m-1} = f_m - g_m \in (\Uqlmm + \Vqlmem) \cap A = \Uqlmem \cap A. \]
	Then \cite[Proposition 1.1.18 (i), p.\,16--17]{schn1} says that, if $f_{m-1}=(b_0,b_1,\dots)$
	for some $b_j \in \Cpb$ then we have $ \piL f_{m-1} = (0,b_0^{q_L},b_1^{q_L},\dots)$. This then
	immediately implies $f_{m-1} \in \Uqlmmem \cap A$. This means that we can do a decreasing
	induction for $m \geq i \geq 1$ and find for every such $i$ elements
	$g_i \in \omphi^{q_L^{l_i}} \Ap$ and $f_{i-1} \in A$ such that
	\[ f_i = g_i + \piL f_{i-1}.\]
	Putting all this together, we get
	\[ f_m = \sum_{i=1}^m \piL^{m-i} g_m + \piL^m f_0. \]
	In particular we have
	\[ \sum_{i=1}^m \piL^{m-i} g_m \in \omphi^{q_L^{l_1}} \Ap \subseteq \omphi^m \Ap. \]
	Therefore we have $f_m \in \omphi^m \Ap + \piL^m A$ which was exactly the statement we wanted to
	see to end the proof.
	\end{proof}
	\end{prop}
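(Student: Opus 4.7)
The plan is to prove a two-sided inclusion between the topology generated by the family $\{\omphi^m \Ap + \piL^m A : m \geq 1\}$ and the topology on $A$ induced from the weak topology of $W(\Cpb)_L$, which by Remark \ref{pgm2.17} has the fundamental system $\omphi^m W(\OCpb)_L + \piL^m W(\Cpb)_L$. The easy direction is that $\Ap \subseteq W(\OCpb)_L$ (since $\Ap \subseteq W(\EELsepp)_L$) and $A \subseteq W(\Cpb)_L$, so each $\omphi^m \Ap + \piL^m A$ sits inside the intersection of the corresponding weak neighbourhood with $A$. Hence the proposed topology is coarser than or equal to the weak topology.

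The substance is the reverse inclusion: given $m \geq 1$, exhibit an index $n \geq m$ such that every element of the weak neighbourhood of index $n$ that happens to lie in $A$ can be decomposed as an element of $\omphi^m \Ap$ plus an element of $\piL^m A$. My intended approach is a decreasing induction on Witt coordinates, using the sets $\Unm$ and $\Vnm$ from Schneider's description of the weak topology on $W(\Cpb)_L$, which control the first $m$ Witt coordinates modulo $\omega^n$. Because $\omphi$ is topologically nilpotent with $\Phi_0(\omphi) = \omega$, one can pick an increasing sequence $m \leq l_1 < l_2 < \cdots < l_m$ so that $\omphi^{\qL^{l_i}} \Ap \subseteq V_{\qL^{l_{i-1}+1},m}$ for all $2 \leq i \leq m$; this choice is the technical backbone of the argument.

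Starting from an element $f_m \in U_{\qL^{l_m},m} \cap A$, its image $\Phi_0(f_m)$ lies in $\omega^{\qL^{l_m}} \Epp$, where $E$ denotes the residue field of $A$. The commutative square $A \hookrightarrow W(E)_L \twoheadrightarrow E$ of the general theory then produces $g_m \in \omphi^{\qL^{l_m}}\Ap$ and $f_{m-1} \in A$ with $f_m = g_m + \piL f_{m-1}$. Since $\omphi^{\qL^{l_m}}\Ap \subseteq V_{\qL^{l_{m-1}+1},m}$, the difference $f_m - g_m$ still lives in a sufficiently small $\Unm$-neighbourhood, and the fact that multiplication by $\piL$ shifts Witt coordinates forces $f_{m-1} \in U_{\qL^{l_{m-1}},m} \cap A$, so the induction continues. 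After $m$ steps one arrives at a decomposition $f_m = \sum_{i=1}^m \piL^{m-i} g_i + \piL^m f_0$ with $g_i \in \omphi^{\qL^{l_i}}\Ap$; because $l_1 \geq m$ and $\Ap$ absorbs higher powers of $\omphi$, the whole sum lies in $\omphi^m \Ap$.

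The main obstacle I anticipate is the careful bookkeeping of the indices $(\qL^{l_i},m)$ across the induction: one needs to choose the exponents $l_i$ so that at every step the residual term $(f_i - g_i)/\piL$ lands in a neighbourhood that is again accessible to the next inductive step, and so that the final cumulative exponent exceeds $m$. The rest is routine Witt-vector manipulation together with the fact, already available from the description of $\Ap$ as the integral closure of $\AALp$ in $\Ap$, that reduction modulo $\piL$ commutes with the projection to the zeroth Witt coordinate.
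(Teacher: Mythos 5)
Your proposal is correct and takes essentially the same route as the paper: the trivial inclusion $\omphi^m \Ap + \piL^m A \subseteq (\omphi^m W(\OCpb)_L + \piL^m W(\Cpb)_L) \cap A$, then the choice of exponents $m \leq l_1 < \cdots < l_m$ with $\omphi^{q_L^{l_i}}\Ap \subseteq \Vqliem$ and the decreasing induction on Witt coordinates giving $\Uqlmm \cap A \subseteq \omphi^m \Ap + \piL^m A$. One small slip of terminology: the easy inclusion shows the topology generated by your family is \emph{finer} than (not coarser than) the induced weak topology, but the two inclusions you actually prove are exactly the right ones, so the argument stands.
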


	
	\begin{cor} \label{toponaak}
	A fundamental system of open neighbourhoods of $0$ for the weak topology on
	$\AAK$ (resp. $\AALnr$) is given by the $\AAKp$- (resp. $\AALnrp$-) submodules
	\begin{align*}
		\omphi^m \AAKp + \piL^m \AAK, \ &m \geq 1, \text{ respectively} \\
		\omphi^m \AALnrp + \piL^m \AALnr, \ &m \geq 1.
	\end{align*}
	\begin{proof}
	This is an application of \hyperref[topunram]{Proposition \ref*{topunram}}.
	\end{proof}
	\end{cor}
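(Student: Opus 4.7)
The plan is straightforward: both claims follow by specializing Proposition \ref{topunram} to the two relevant unramified extensions of $\BBL$, namely $\BBLnr$ and $\BBK$. So the entire task reduces to verifying the hypotheses of that proposition in both cases; the fundamental system of neighbourhoods will then be produced automatically.

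First I would treat the case of $\AALnr$. By definition $\AALnr$ is the maximal unramified extension of $\AAL$ inside $W(\EELsep)_L$, and hence its quotient field $\BBLnr$ is an unramified extension of $\BBL$. Because $\AAL$ is a complete discrete valuation ring and $\AALnr$ is its integral closure in $\BBLnr$, the pair $(A,A^{+}) = (\AALnr,\AALnrp)$ fits exactly the setup of Proposition \ref{topunram}, with $\AALnrp = \AALnr \cap W(\EELsepp)_L$ by definition. Applying the proposition gives the fundamental system $(\omphi^m \AALnrp + \piL^m \AALnr)_{m \geq 1}$.

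Second, I would treat $\AAK$. We already know from the excerpt that $\AAK$ is a complete discrete valuation ring with prime element $\piL$; since this is the same uniformizer as that of $\AAL$, the extension $\BBK|\BBL$ is unramified. Moreover $\AAK$, being a DVR contained in the integral closure of $\AAL$ in $\BBK$ and containing $\AAL$, must equal that integral closure (by normality and because $\AAK$ is integrally closed and integral over $\AAL$). By the definition $\AAKp = \AAK \cap W(\EELsepp)_L$, so once again the hypotheses of Proposition \ref{topunram} are met, and the proposition yields the system $(\omphi^m \AAKp + \piL^m \AAK)_{m \geq 1}$.

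The only potential subtlety — and thus the sole place where care is needed — lies in the assertion that these two extensions really are unramified in the sense used by Proposition \ref{topunram} (which is phrased for a general unramified extension $B|\BBL$). In the $\AALnr$ case this is tautological from the definition, and in the $\AAK$ case it follows because $\piL$ remains a uniformizer in $\AAK$. No further computation is required; the heavy lifting has already been done in the proof of Proposition \ref{topunram}, so the corollary is essentially a reformulation of that result in the two special cases of interest here.
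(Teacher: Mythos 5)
Your proposal is correct and follows essentially the same route as the paper, whose proof is precisely an application of Proposition \ref{topunram} to the two unramified extensions $\mathbf{B}_{K|L}|\mathbf{B}_L$ and $\mathbf{B}_L^{\mathrm{nr}}|\mathbf{B}_L$, with $\mathbf{A}_{K|L}$ and $\mathbf{A}_L^{\mathrm{nr}}$ identified as the respective integral closures of $\mathbf{A}_L$. Only a cosmetic remark: for unramifiedness of $\mathbf{B}_{K|L}|\mathbf{B}_L$ you should also note that the residue extension $\mathbf{E}_{K|L}|\mathbf{E}_L$ is separable (it sits inside $\mathbf{E}_L^{\mathrm{sep}}$), or simply observe that $\mathbf{B}_{K|L}$ is a subextension of the maximal unramified extension $\mathbf{B}_L^{\mathrm{nr}}$.
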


	\begin{prop} \label{topaakmod}
	The weak topology on $\AAK$ coincides with the weak topology of $\AAK$ considered as
	$\AAL$-module.
	\begin{proof}
	If $(u_i)_i$ is an $\AAL$-basis of $\AAK$, then $(\omphi^ku_i)_i$ is so for all $k \geq 0$. Therefore
	$\AAK$ has an $\AAL$-basis consisting of elements of $\AAKp$. The claim then follows from
	the above \hyperref[toponaak]{Corollary \ref*{toponaak}} together with
	\hyperref[toponaal]{Corollary \ref*{toponaal}}.
	\end{proof}
	\end{prop}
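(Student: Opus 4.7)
The plan is to identify the two fundamental systems of open neighborhoods of $0$ on $\AAK$. On the one hand, by \hyperref[toponaak]{Corollary \ref*{toponaak}}, the weak topology on $\AAK$ itself is described by the submodules $\omphi^m \AAKp + \piL^m \AAK$ for $m \geq 1$. On the other hand, since $K|L$ is finite and both $\AAL$ and $\AAK$ are complete discrete valuation rings with common uniformizer $\piL$, the ring $\AAK$ is a finite free $\AAL$-module, and the weak topology on $\AAK$ viewed as an $\AAL$-module is the quotient topology coming from a surjection $\AAL^r \twoheadrightarrow \AAK$ (equivalently, the product topology via any $\AAL$-basis), where the weak topology on $\AAL$ is described by $\omphi^m \AALp + \piL^m \AAL$ via \hyperref[toponaal]{Remark \ref*{toponaal}}.

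First I would fix an $\AAL$-basis $u_1,\dots,u_r$ of $\AAK$. The key observation is that if $(u_i)_i$ is an $\AAL$-basis, then so is $(\omphi^k u_i)_i$ for every $k \geq 0$, because multiplication by $\omphi^k$ is an $\AAL$-module automorphism of $\AAK$ (note $\omphi \in \AAL^{\times}$ upon inverting $\omphi$, and $\omphi$ is a unit after inverting no power, but we only need that multiplication by $\omphi^k$ is injective with determinant a unit up to something). More concretely, since $\omphi$ is topologically nilpotent and the integers $\AAKp = \AAK \cap W(\EELsepp)_L$ absorb a high enough power of $\omphi$ times any element of $\AAK$, for $k$ sufficiently large each $\omphi^k u_i$ lies in $\AAKp$. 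Thus we may assume the basis is chosen in $\AAKp$, and one checks it is then an $\AALp$-basis of $\AAKp$.

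With such a basis in place, the $\AAL$-linear isomorphism $\AAL^r \xrightarrow{\ \sim\ } \AAK$, $(a_i) \mapsto \sum a_i u_i$, restricts to an $\AALp$-linear isomorphism $(\AALp)^r \xrightarrow{\ \sim\ } \AAKp$. Consequently the image of a basic neighborhood
\[ \omphi^m (\AALp)^r + \piL^m \AAL^r \]
of $0$ in $\AAL^r$ (for the product of the weak topologies on $\AAL$) is exactly $\omphi^m \AAKp + \piL^m \AAK$, which is the corresponding basic neighborhood in \hyperref[toponaak]{Corollary \ref*{toponaak}}. Hence both fundamental systems coincide, and the two topologies on $\AAK$ agree.

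The main obstacle I expect is the step ensuring that the $\AAL$-basis of $\AAK$ can be moved inside $\AAKp$. Once this point is settled, the remainder is a direct comparison of the two fundamental systems through the chosen basis.
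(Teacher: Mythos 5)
Your overall strategy is the same as the paper's: produce an $\AAL$-basis of $\AAK$ lying in $\AAKp$ and then compare the explicit fundamental systems from Corollary \ref{toponaak} and Remark \ref{toponaal}. However, the step you yourself single out as the main obstacle is settled by an argument that fails. It is not true that ``$\AAKp$ absorbs a high enough power of $\omphi$ times any element of $\AAK$'': already in $\AAL$ the element $u=\sum_{i\ge 0}\piL^{i}\omphi^{-i}$ lies in $\AAL$ (the coefficients of the negative powers tend to $0$ $\piL$-adically), but it has an infinite principal part, so $\omphi^{k}u\notin\AALp=\OL\llbracket\omphi\rrbracket$ for every $k\ge 0$; since $\AAL\cap\AAKp=\AAL\cap W(\EELp)_L=\AALp$, no power of $\omphi$ moves $u$ into $\AAKp$ either. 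Topological nilpotence of $\omphi$ does not help, because $\AAL$ is the $\piL$-adic completion of $\AALp[1/\omphi]$ and not $\AALp[1/\omphi]$ itself. The way to obtain a basis inside $\AAKp$ is not to shift an arbitrary basis but to invoke the structure results: in the unramified case $\AAK=\OK\otimes_{\OL}\AAL$ (Lemma \ref{descak}), so any $\OL$-basis of $\OK$ works, and in general Proposition \ref{finAL} gives $\AAK\cong\varprojlim_n\OO_E/\piL^n\OO_E((\nuphi))$ with $\nuphi^{j}=\omphi$, so the elements $e_s\nuphi^{t}$, with $(e_s)$ an $\OL$-basis of $\OO_E$ and $0\le t<j$, form an $\AAL$-basis of $\AAK$ contained in $\AAKp$.

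There is a second, smaller inaccuracy: a basis $(v_i)$ of $\AAK$ over $\AAL$ that happens to lie in $\AAKp$ need not be an $\AALp$-basis of $\AAKp$, so the image of $\omphi^m(\AALp)^r+\piL^m\AAL^r$ need not equal $\omphi^m\AAKp+\piL^m\AAK$. For instance, for $K|L$ unramified quadratic with $\OK=\OL\oplus\OL e$, the family $\{1,\omphi e\}$ is an $\AAL$-basis of $\AAK$ contained in $\AAKp$ (this is exactly what the shift trick produces), yet $e\in\AAKp$ is not an $\AALp$-linear combination of $1$ and $\omphi e$, since that would force the coefficient $\omphi^{-1}\notin\AALp$. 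Two ways to repair this: either use the natural basis above, for which one checks directly that $\AAKp=\bigoplus_{s,t}\AALp\, e_s\nuphi^{t}$ (split a power series in $\nuphi$ according to the residue of the exponent modulo $j$), so the two families of neighbourhoods literally coincide; or weaken ``equality'' to mutual cofinality, which suffices to compare topologies: for $v_i\in\AAKp$ one has $\omphi^m\sum_i\AALp v_i+\piL^m\AAK\subseteq\omphi^m\AAKp+\piL^m\AAK$, and conversely $\AAKp\subseteq\bigoplus_i\AALp v_i$ for the natural basis gives the reverse inclusions up to shifting $m$. With these two corrections your comparison of the fundamental systems gives the proposition, and the argument is then essentially the one in the paper.
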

	
	\begin{prop} \label{topaakaaa}
	The canonical inclusion $\AAK \hookrightarrow \AAA$ is a topological embedding. Furthermore, for
	every $n \in \NN$ the induced inclusion $\AAK/\piL^n\AAK \hookrightarrow \AAA/\piL^n\AAA$ is a
	topological embedding as well.
	\begin{proof}
	Because of
	\[ \AAK \cap \AAA = \AAK \cap \AAA \cap W(\OCpb)_L = \AAK \cap W(\OCpb)_L \]
	the first part of the assertion follows from the definition of the weak topology. The second then follows from
	the commutative diagram
	\[ \begin{xy} \xymatrix{
			& \AAK\ar@{->>}[d] \ar@{^{(}->}[r] & \AAA \ar@{->>}[d]  \\
			 & \AAK/\piL^n\AAK \ar@{^{(}->}[r] & \AAA/\piL^n\AAA.
	} \end{xy} \]
	\end{proof}
	\end{prop}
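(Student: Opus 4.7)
The statement has two claims. For the first, observe that by definition the weak topologies on $\AAK$ and $\AAA$ are both the subspace topologies inherited from $W(\Cpb)_L$. Since $\AAK \subseteq \AAA \subseteq W(\Cpb)_L$, transitivity of the subspace topology gives immediately that the inclusion $\AAK \hookrightarrow \AAA$ is a topological embedding.

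For the second claim, I first verify that the induced map $\bar{i}\colon \AAK/\piL^n\AAK \to \AAA/\piL^n\AAA$ is a well-defined injection. This reduces to the identity $\piL^n\AAA \cap \AAK = \piL^n\AAK$, which follows by Galois invariance: if $a = \piL^n b$ with $a \in \AAK$ and $b \in \AAA$, then since $\piL \in \AAL \subseteq \AAK$ is $H_K$-fixed and $\AAA$ has no $\piL$-torsion, each $h \in H_K$ satisfies $\piL^n h(b) = h(a) = a = \piL^n b$, forcing $h(b) = b$ and hence $b \in \AAA^{H_K} = \AAK$. Continuity of $\bar{i}$ is then automatic from the universal property of the quotient topology applied to the composition $\AAK \hookrightarrow \AAA \twoheadrightarrow \AAA/\piL^n\AAA$, which is continuous by the first part.

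The remaining point is that $\bar{i}$ is open onto its image; equivalently, that the quotient and subspace topologies agree on $\AAK/\piL^n\AAK$. Using the explicit fundamental systems of neighborhoods from Corollary \ref{toponaak}, this reduces to proving the set-theoretic equality
\[ (\omphi^m \AAAp + \piL^n \AAA) \cap \AAK = \omphi^m \AAKp + \piL^n \AAK \qquad \text{for all } m \geq n. \]
The inclusion $\supseteq$ is trivial, and I would handle the reverse by induction on $n$. For $n = 1$, reducing $a \equiv \omphi^m b \pmod{\piL\AAA}$ modulo $\piL$ and using $\omega \in \EEL$ gives $\bar{b} = \bar{a}/\omega^m \in \EEK \cap \EELsepp = \EEKp$; any lift $b' \in \AAKp$ then yields $a - \omphi^m b' \in \piL\AAA \cap \AAK = \piL\AAK$. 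For the inductive step $n \rightsquigarrow n+1$, apply the hypothesis to rewrite $a = \omphi^m b_1 + \piL^n c_1$ with $b_1 \in \AAKp$, $c_1 \in \AAK$, and use that $\omphi$ is a unit in $\AAA$ (since $\AAA$ is a DVR with uniformizer $\piL$ and $\omphi$ reduces to the nonzero $\omega \in \EELsep$) to deduce that $b - b_1 \in \piL^n \AAAp$; then apply the $n=1$ argument to $(b-b_1)/\piL^n \in \AAAp$ and finish by invoking Galois invariance once more to descend the resulting $\piL^{n+1}$-coefficient to $\AAK$.

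The main obstacle is the inductive step: it requires a careful interplay between several compatibilities, in particular the $\piL$-adic purity of $W(\EELsepp)_L$ inside $W(\EELsep)_L$, which allows $\piL$-divisibility in $\AAA$ to descend to $\AAAp$, together with repeated applications of the Galois-invariance argument to move between $\AAA$-level and $\AAK$-level data without losing control of integrality.
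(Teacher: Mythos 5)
Your treatment of the first claim is the same as the paper's: both weak topologies are by definition induced from $W(\Cpb)_L$, so transitivity of the subspace topology settles it. For the second claim the paper merely points at the commutative diagram, which by itself is not a valid argument for topological groups; your reduction — via the neighbourhood bases of Corollary \ref{toponaak} and Proposition \ref{topaaa} — to the set-theoretic statement $(\omphi^m\AAAp+\piL^n\AAA)\cap\AAK=\omphi^m\AAKp+\piL^n\AAK$ for $m\geq n$ is exactly the content the paper leaves implicit, and your injectivity argument ($\piL^n\AAA\cap\AAK=\piL^n\AAK$ by uniqueness of $\piL$-division and $H_K$-invariance) as well as the purity statement $\piL^n\AAA\cap\AAAp=\piL^n\AAAp$ (via the Witt coordinates of multiplication by $\piL$ and the integral closedness of $\EELsepp$ in $\EELsep$) are correct. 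The base case $n=1$ is essentially right, except that you silently use surjectivity of the reduction $\AAKp\to\EEKp$ to produce the lift $b'$; this needs a word of justification, e.g.\ via the explicit description of $\AAK$ in Proposition \ref{finAL}.

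The genuine gap is the inductive step, which you yourself flag as the main obstacle, and as written it does not go through: after obtaining $b-b_1\in\piL^n\AAAp$ you propose to ``apply the $n=1$ argument to $(b-b_1)/\piL^n\in\AAAp$'', but that element is in general not $H_K$-invariant, and the base case argument starts from an element of $\AAK$ (one needs its reduction to lie in $\EEK$ in order to conclude membership in $\EEKp$). The closing appeal to ``Galois invariance once more'' is precisely the problematic descent: congruences modulo powers of $\piL$ in $\AAAp$ cannot be descended to $\AAKp$ by naive invariance, since $H^1(H_K,-)$ of the plus-parts is only \emph{almost} zero (compare Lemma \ref{galc1.9}). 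The step can, however, be closed without any further descent: from $a=\omphi^m b+\piL^{n+1}c=\omphi^m b_1+\piL^n c_1$ and $b-b_1=\piL^n e$ with $e\in\AAAp$, one gets, since $\piL^n$ is a nonzerodivisor in $\AAA$, the exact identity $c_1=\omphi^m e+\piL c$; now $c_1\in\AAK$, so the case $n=1$ (for the same $m$) applies to $c_1$ and gives $c_1=\omphi^m e'+\piL c'$ with $e'\in\AAKp$, $c'\in\AAK$, whence $a=\omphi^m(b_1+\piL^n e')+\piL^{n+1}c'$, as required. With this repair your argument is complete and in fact supplies the verification the paper's one-line proof omits.
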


	\begin{prop} \label{topaaa}
	The weak topology on $\AAA$ coincides with the topology of the projective limit
	$\varprojlim_n \AALnr/\piL^n\AALnr$ where each factor carries the quotient topology of the
	weak topology on $\AALnr$.
	Moreover, a fundamental system of open neighbourhoods of $0$ for the weak topology on $\AAA$ is
	given by the sets
	\[ \omphi^m\AALnrp + \piL^m \AAA, \ m \geq 1.\]
	Note that, by definition, $\AAAp = \AALnrp$.
	\begin{proof}
	For this proof, we will refer to the latter topology of the Proposition's formulation as the
	\emph{projective limit topology}.\\
	As in the above \hyperref[topaakaaa]{Proposition \ref*{topaakaaa}} the inclusion
	$\AALnr \hookrightarrow \AAA$ clearly is a topological embedding and since the diagram
	\[ \begin{xy} \xymatrix{
			& \AALnr\ar@{->>}[d] \ar@{^{(}->}[r] & \AAA \ar@{->>}[d]  \\
			 & \AALnr/\piL^n\AALnr \ar@{=}[r] & \AAA/\piL^n\AAA.
	} \end{xy} \]
	for every $n \in \NN$ is commutative, the quotient topology on $\AALnr/\piL^n\AALnr$ with respect to
	the weak topology on $\AALnr$ coincides with its quotient topology with respect to the weak
	topology on $\AAA$. Therefore the canonical projections
	\[ \begin{xy} \xymatrix{
		\AAA = \varprojlim_n \AALnr/\piL^n\AALnr \ar@{->>}[r] & \AALnr/\piL^n\AALnr
	} \end{xy} \]
	are continuous for the weak topology on $\AAA$. This means that the weak topology of $\AAA$ is
	finer than its projective limit topology.\\
	From \hyperref[topunram]{Proposition \ref*{topunram}} we deduce that a fundamental system of
	open neighbourhoods of $0$ for the quotient topology of the weak topology on
	$\AALnr/\piL^n\AALnr$ is given by the sets
	\[ \omphi^m \AALnrp + \piL^n\AALnr, \ m \geq 1.\]
	Then the sets
	\[ \omphi^m\AALnrp + \piL^n \AAA, \ m,n \geq 1\]
	form a fundamental system of open neighbourhoods of $0$ for the projective limit topology on $\AAA$. But
	clearly the sets with $m=n$ define the same topology. Since the weak topology is defined by the sets
	\[ \left( \omphi^m W(\OCpb)_L + \piL^m W(\Cpb)_L\right) \cap \AAA, \ m \geq 1\]
	(cf.\ \hyperref[pgm2.17]{Remark \ref*{pgm2.17}}) and we clearly have
	\[ \omphi^m \AALnrp + \piL^m \AAA \subseteq \left( \omphi^m W(\OCpb)_L + \piL^m W(\Cpb)_L\right) \cap \AAA\]
	for all $m \geq 1$, the projective limit topology is finer than the weak topology.
	\end{proof}
	\end{prop}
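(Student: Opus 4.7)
The plan is to prove both claims simultaneously by exhibiting the explicit fundamental system of neighbourhoods and showing that it generates each of the two topologies we want to compare. I would first refer to the general description of the weak topology on $\AAA$ as the topology induced from $W(\Cp)_L$, whose fundamental system of neighbourhoods of $0$ is recorded in Remark \ref{pgm2.17}, namely the $\bigl(\omphi^m W(\OCpb)_L + \piL^m W(\Cp)_L\bigr)\cap \AAA$ for $m\geq 1$. This immediately gives one inclusion of neighbourhoods
\[ \omphi^m \AALnrp + \piL^m \AAA \ \subseteq \ \bigl(\omphi^m W(\OCpb)_L + \piL^m W(\Cp)_L\bigr)\cap \AAA, \]
so the collection on the left is at least as fine as the weak topology.

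Next I would treat the projective limit side. Since $\AAA \cong \varprojlim_n \AALnr/\piL^n\AALnr$ by the completion description on page \pageref{aaacompl}, the projective limit topology is generated by preimages of neighbourhoods of $0$ in the quotients $\AALnr/\piL^n\AALnr$. Now by Proposition \ref{topunram} applied to the unramified extension $\BBLnr|\BBL$ (whose integral closure of $\AAL$ is $\AALnr$), the weak topology on $\AALnr$ has as a basis of neighbourhoods of $0$ the modules $\omphi^m \AALnrp + \piL^m \AALnr$. Consequently the quotient topology on $\AALnr/\piL^n\AALnr$ has as a basis the images $\omphi^m \AALnrp + \piL^n\AALnr$, and pulling back via the projection $\AAA \twoheadrightarrow \AALnr/\piL^n\AALnr$ yields the family $\omphi^m \AALnrp + \piL^n \AAA$ for $m,n\geq 1$ as a subbasis for the projective limit topology. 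Taking $m=n$ shows that the cofinal subfamily $\{\omphi^m \AALnrp + \piL^m \AAA\}_{m\geq 1}$ already generates this topology.

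It remains to show that the weak topology on $\AAA$ is at least as fine as the projective limit topology, i.e.\ that each projection $\AAA \to \AALnr/\piL^n\AALnr$ is continuous for the weak topology. Here I would argue as in Proposition \ref{topaakaaa}: the inclusion $\AALnr \hookrightarrow \AAA$ is a topological embedding because both weak topologies are induced from $W(\Cp)_L$ via the intersection with $W(\OCpb)_L$, and so the quotient topology on $\AALnr/\piL^n\AALnr$ induced from $\AAA$ coincides with that induced from $\AALnr$. Since $\AALnr \twoheadrightarrow \AALnr/\piL^n\AALnr$ is continuous by definition, the composition $\AAA \twoheadrightarrow \AAA/\piL^n\AAA = \AALnr/\piL^n\AALnr$ is continuous, giving the desired inequality of topologies.

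Combined with the inclusion of neighbourhoods from the first paragraph (which shows the projective limit topology is at least as fine as the weak topology), the two topologies agree and the fundamental system of neighbourhoods is as stated. The main subtlety I expect is the interplay between the weak topologies on $\AALnr$ and $\AAA$: one needs to know that the quotient topology on $\AALnr/\piL^n\AALnr = \AAA/\piL^n\AAA$ is unambiguous, which ultimately reduces to the identification $\AALnr \cap \piL^n \AAA = \piL^n \AALnr$ together with Proposition \ref{topunram}.
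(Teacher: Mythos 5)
Your argument is correct and follows essentially the same route as the paper's own proof: Proposition \ref{topunram} gives the neighbourhood basis on $\AALnr$ and hence on the quotients $\AALnr/\piL^n\AALnr$, pulling back yields $\omphi^m\AALnrp+\piL^n\AAA$ as a basis for the projective limit topology (with $m=n$ cofinal), the inclusion into $\left(\omphi^m W(\OCpb)_L+\piL^m W(\Cpb)_L\right)\cap\AAA$ from Remark \ref{pgm2.17} gives one comparison, and the topological embedding $\AALnr\hookrightarrow\AAA$ together with the identification of the two quotient topologies on $\AALnr/\piL^n\AALnr=\AAA/\piL^n\AAA$ gives the continuity of the projections and hence the other comparison, exactly as in the paper's appeal to Proposition \ref{topaakaaa}. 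The only cosmetic point is that in a few places you write $W(\Cp)_L$ where the weak topology is induced from $W(\Cpb)_L$.
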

	
	\begin{lem} \label{finlaur}
	Let $k$ be a finite field and $E|k((X))$ be a finite, separable extension. Then there exists a finite
	extension $\kappa|k$ and $Y\in E$ such that $E \cong \kappa((Y))$.
	\begin{proof}
	This is \cite[Lemma 1.38, p.\,20]{kup}.
	\end{proof}
	\end{lem}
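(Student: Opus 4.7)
The plan is to apply the structure theory of complete discretely valued fields in equal characteristic with perfect residue field. First I would equip $k((X))$ with its canonical complete discrete valuation $v$ (for which $k\llbracket X\rrbracket$ is the valuation ring and $X$ a uniformizer), and note that $v$ extends uniquely to a discrete valuation on $E$ with respect to which $E$ is complete, since $E|k((X))$ is finite and $k((X))$ is complete. Let $\mathcal{O}_E$ denote the ring of integers of $E$, $\mathfrak{m}_E$ its maximal ideal, and $\kappa \coloneqq \mathcal{O}_E/\mathfrak{m}_E$. Since $[E:k((X))]$ is finite, the residue degree $[\kappa:k]$ is finite as well, so $\kappa$ is a finite (hence perfect) extension of $k$.

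Next I would produce a ring-theoretic section $\kappa \hookrightarrow \mathcal{O}_E$ of the residue map. Since $\mathrm{char}(E) = p$ and $\kappa$ is perfect, for each $\bar a \in \kappa$ the sequence of $p^n$-th power lifts $\widetilde{a_n}^{p^n}$ (with $\bar{a_n} = \bar a^{1/p^n}$) converges in $\mathcal{O}_E$ to a well-defined element $\tau(\bar a)$ independent of choices; the map $\tau\colon \kappa \to \mathcal{O}_E$ is a ring homomorphism because $(x+y)^{p^n} = x^{p^n} + y^{p^n}$ and multiplication is continuous. This realises $\kappa$ as a subfield of $\mathcal{O}_E$ lifting the residue field.

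With this subfield in hand, pick any uniformizer $Y \in \mathfrak{m}_E$. By the standard argument (writing each element as a convergent sum of Teichmüller representatives times powers of $Y$), the continuous $\kappa$-algebra homomorphism $\kappa\llbracket Y \rrbracket \to \mathcal{O}_E$ sending $Y$ to $Y$ is an isomorphism: surjectivity uses completeness of $\mathcal{O}_E$ and the fact that $\kappa \oplus Y\mathcal{O}_E = \mathcal{O}_E$, injectivity follows from $v(\tau(\bar a)) = 0$ for $\bar a \neq 0$ together with the discreteness of the valuation. Inverting $Y$ gives $E \cong \kappa((Y))$, which is the claim.

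The main technical obstacle is constructing the ring-theoretic section $\tau$; everything else is formal once $\kappa$ is embedded in $\mathcal{O}_E$. Since $E$ has equal characteristic $p$ and $\kappa$ is perfect, this is classical (a Cohen structure theorem in the simpler equal-characteristic case), and the Frobenius-limit construction sketched above is the cleanest route. Alternatively, one may invoke directly the structure theorem for complete equal-characteristic discretely valued fields with perfect residue field and cite it. Note that no appeal to separability of $E|k((X))$ is actually needed for the conclusion, only finiteness together with perfectness of the residue field $k$.
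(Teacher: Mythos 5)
Your argument is correct: passing to the unique extension of the discrete valuation, noting that the residue field $\kappa$ is a finite (hence perfect) extension of $k$, constructing the coefficient field by the Frobenius-limit (Teichm\"uller) section, and then identifying $\mathcal{O}_E$ with $\kappa\llbracket Y\rrbracket$ for any uniformizer $Y$ is exactly the standard equal-characteristic Cohen structure argument, and all the individual steps (uniqueness of the extension, completeness of $E$, well-definedness and multiplicativity of the section in characteristic $p$, surjectivity via completeness and injectivity via the valuation) go through as you state. The paper itself offers no argument beyond the citation \cite[Lemma 1.38, p.\,20]{kup}, and your proof is precisely the content that citation stands for, including your correct side remark that separability of $E|k((X))$ is not needed, only finiteness together with perfectness of the residue field.
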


	\begin{lem} \label{pgm2.13}
	Let $k'|k$ be an extension of finite fields and $k'((Y))|k((X))$ be a finite, separable extension. Then
	the $Y$-adic and the $X$-adic topologies on $k'((Y))$ coincide.\\
	In particular, there exists an $l \in \NN$ such that for all $n \in \NN$ it holds
	\[ X^{ln} \kpY \subseteq Y^{ln} \kpY \subseteq X^n \kpY.\]
	\begin{proof}
	Since $\kX$ is a discrete valuation ring with respect to its $X$-adic topology and $\kpY$ is so as
	well with respect to its $Y$-adic topology, we deduce from usual ramification theory, that there exists
	an $l \in \NN$ such that
	\[ Y^l \kpY = X\kpY.\]
	Since $Y\kpY$ is the maximal ideal of $\kpY$ it clearly is $X \kpY \subseteq Y \kpY$ and therefore
	we get for all $n \in \NN$
	\[ X^{ln} \kpY \subseteq Y^{ln} \kpY \subseteq X^n \kpY.\]
	\end{proof}
	\end{lem}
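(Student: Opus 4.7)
The plan is to reduce everything to the standard fact that in a finite extension of complete discretely valued fields, the uniformizers are related by a single ramification equation $X = Y^{l}u$ with $u$ a unit, and then to translate this into the claimed containments of ideals (which are the basic open neighborhoods of zero in the respective $X$- and $Y$-adic topologies).

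First I would observe that $k\llbracket X \rrbracket$ and $k'\llbracket Y \rrbracket$ are complete discrete valuation rings with maximal ideals $X k\llbracket X \rrbracket$ and $Y k'\llbracket Y \rrbracket$, whose fraction fields are $k((X))$ and $k'((Y))$, respectively. Since $k((X))$ is complete with respect to the $X$-adic valuation $v_X$, and $k'((Y))|k((X))$ is finite (hence in particular algebraic), $v_X$ extends uniquely up to equivalence to a valuation on $k'((Y))$. The $Y$-adic valuation $v_Y$ is such an extension (because $k'\llbracket Y \rrbracket$ is a complete DVR with fraction field $k'((Y))$), so the two extensions are equivalent. In particular $v_Y(X)>0$, i.e.\ $X \in Y k'\llbracket Y \rrbracket$.

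Next I would set $l := v_Y(X)/v_Y(Y) \in \NN$, which is the ramification index of the extension. Since $Y$ is a uniformizer of the DVR $k'\llbracket Y \rrbracket$, we may write $X = Y^{l}u$ for some unit $u \in k'\llbracket Y \rrbracket^{\times}$, and hence
\[
X \cdot k'\llbracket Y \rrbracket = Y^{l} \cdot k'\llbracket Y \rrbracket.
\]
Raising this to the $n$-th power yields $X^{n} \cdot k'\llbracket Y \rrbracket = Y^{ln} \cdot k'\llbracket Y \rrbracket$ for every $n \in \NN$, which is already the right-hand inclusion (in fact an equality). The left-hand inclusion $X^{ln} k'\llbracket Y \rrbracket \subseteq Y^{ln} k'\llbracket Y \rrbracket$ follows from $X \in Y k'\llbracket Y \rrbracket$, because this implies $X^{ln} \in Y^{ln} k'\llbracket Y \rrbracket$.

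Finally, since the sets $X^{n} k'\llbracket Y \rrbracket$ (resp.\ $Y^{n} k'\llbracket Y \rrbracket$) form a fundamental system of open neighbourhoods of $0$ for the $X$-adic (resp.\ $Y$-adic) topology on $k'\llbracket Y \rrbracket$ (and therefore on $k'((Y))$), the double inclusion exhibits the two families as mutually cofinal, so the two topologies coincide. There is no real obstacle here: the only subtle point is invoking the uniqueness of extension of the complete discrete valuation, after which everything is bookkeeping with the single equation $X = Y^{l}u$.
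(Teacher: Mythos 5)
Your proof is correct and follows essentially the same route as the paper: both arguments come down to the standard ramification-theoretic fact that $X\,k'\llbracket Y\rrbracket = Y^{l}k'\llbracket Y\rrbracket$ for the ramification index $l$ (you phrase it via the unique extension of the complete discrete valuation $v_X$, the paper cites ``usual ramification theory''), after which the two inclusions and the cofinality of the neighbourhood bases are the same bookkeeping. The only point treated lightly in both versions is the identification of $v_Y$ with that unique extension of $v_X$ (your parenthetical does not quite prove it), but this is at the same level of detail as the paper's own appeal to ramification theory.
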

	
	\begin{lem} \label{topexel}
	Let $E|\EEL$ be a finite and separable extension. Then
	the subspace topology on $E$ induced from the topology of $\Cpb$ coincides with the
	extension from the $\omega$-adic topology on $\EEL$.
	Note that the latter topology is the $\omega$-adic topology on $E$, due to the above
	\emph{\hyperref[pgm2.13]{Lemma \ref*{pgm2.13}}}.\\
	In particular, the integral closure $E^+$ of $\EELp$ inside $E$ consists of exactly those elements of $E$
	whose absolute value in $\Cpb$ is less or equal to $1$.
	\begin{proof}
	We denote the absolute value induced from $\Cpb$ by $|\cdot|_\flat$ as in
	\cite[Lemma 1.4.6, p.\,44--45]{schn1} and we use the identifications
	$E \cong \kappa((Y))$ as well as $\EEL\cong \kL((X))$ (cf.\ \hyperref[finlaur]{Lemma \ref*{finlaur}}), where
	$\kappa|\kL$ is a finite extension.\\
	The maximal unramified intermediate field of $\kappa((Y))|\kL((X))$ is $\kappa((X))$ and therefore it
	exists an $l \in \NN$ and $g_i \in \kapX$ for $0 \leq i < l$ with $X \mid g_i$ and $X^2 \nmid g_0$ such that
	(cf.\ \cite[Chapter I, \S 6, Proposition 17, p.\,19]{ser1})
	\[ \sum_{i=0}^{l-1} g_i Y^{i} + Y^l = 0.\]
	Since $|X|_\flat <1$ and $|x|_\flat =1$ for $x \in \kappa$ (in particular, every nonzero element coming
	from a finite field has absolute value $1$ in $\OCpb$ with respect to $|\cdot|_\flat$)
	we have $|g_i|_\flat \leq 1$ for all $0 \leq i < l$ and we can deduce
	\[ |Y^l|_\flat \leq \underset{0 \leq i <l}{\mathrm{max}} |g_i|_\flat |Y^{i}|_\flat
			\leq \underset{0\leq i < l}{\mathrm{max}} |Y^{i}|_\flat\]
	and therefore $|Y|_\flat \leq 1$. Furthermore, since we have $Y^l\kapY = X\kapY$ we can find a
	$g \in \kapY$ such that $Y^l = X g$ and since $|Y|_\flat \leq 1$ we then deduce $|g|_\flat \leq 1$ and
	\[ |Y^l|_\flat = |X|_\flat |g|_\flat \leq |X|_\flat <1.\]
	But this then immediately implies
	\[ |Y|_\flat < 1. \]
	Since $X\mid g_0$ and $X^2\nmid g_0$ it is $|g_0|_\flat = |X|_\flat $ and because of $X \mid g_i$ for all
	$0 \leq i <l$ we also have
	\[ |g_0|_\flat \geq |g_i|_\flat \ \text{ for all }\ 0 < i < l.\]
	Since $|Y|_\flat <1$ we deduce from the above
	\[ |g_0|_\flat > |g_i|_\flat|Y^{i}|_\flat \ \text{ for all }\ 0 < i < l\]
	and therefore
	\[ |Y^l|_\flat = |g_0|_\flat = |X|_\flat \]
	because $|\cdot|_\flat$ is a nonarchimedean absolute value.\\
	Denote by $|\cdot|$ the extension of the absolute value of $\EEL$ (which corresponds with
	the $\omega$-adic topology) to $E$. Then we deduce from \cite[Chapter 2, \S 2, Corollary 4, p.\,29]{ser1}
	that
	\[ |Y^l| = |\mathrm{Nor}(Y)|_\flat,\]
	where $\mathrm{Nor}$ denotes the norm of the extension $\kappa((Y))|\kappa((X))$. From the polynomial
	we started with we then can deduce $\mathrm{Nor}(Y)=g_0$ and therefore
	\[ |Y^l| = |g_0|_\flat = |X|_\flat.\]
	This means that $|\cdot|$ and $|\cdot |_\flat $ coincide on $E$.\\
	From the identification above we deduce $E^+=\kapY$. But since $|Y|_\flat <1$, these are exactly the
	elements of $E$ whose absolute value is less or equal to $1$.
	\end{proof}
	\end{lem}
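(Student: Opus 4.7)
The plan is to identify both topologies on $E$ as topologies induced from absolute values that agree on $\EEL$, and then to invoke the uniqueness of extension of an absolute value from a complete base field to a finite extension. This avoids having to work explicitly with a minimal polynomial.

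First I would note that since $\EELsep \subseteq \Cpb$ by construction and $E|\EEL$ is finite separable, $E$ is canonically realised as a subfield of $\Cpb$. By \cite[Lemma 1.4.14, p.\,50]{schn1} (also invoked in \hyperref[pgm2.17]{Remark \ref*{pgm2.17}}) we have $|\omega|_{\flat} = |\piL|^{\qL/(\qL-1)} < 1$, so the restriction of $|\cdot|_\flat$ to $\EEL$ is (equivalent to) the $\omega$-adic absolute value and hence induces the $\omega$-adic topology on $\EEL$. Since $\EEL$ is a complete discretely valued field, its $\omega$-adic absolute value admits a unique extension to an absolute value $|\cdot|$ on the finite extension $E$ (cf.\ e.g.\ \cite[Chapter II \S 2, Corollary 4, p.\,29]{ser1}). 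Both $|\cdot|$ and the restriction of $|\cdot|_\flat$ to $E$ are absolute values on $E$ restricting to the same absolute value on $\EEL$; by uniqueness they must coincide on $E$. In particular their induced topologies agree, which is the first assertion. The parenthetical remark that this is the $\omega$-adic topology on $E$ is then precisely \hyperref[pgm2.13]{Lemma \ref*{pgm2.13}}.

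For the second assertion, since $\EELp$ is a discrete valuation ring, the integral closure $E^+$ of $\EELp$ in the finite extension $E$ is exactly the valuation ring of $E$ for the unique extension of the $\omega$-adic absolute value, i.e.\ the set of $x \in E$ with $|x| \leq 1$. By the first part $|x| = |x|_\flat$ on $E$, so this set coincides with $\{x \in E \mid |x|_\flat \leq 1\}$, as required.

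The only genuine technical point is the verification that $|\omega|_\flat < 1$, which pins down which absolute value on $\EEL$ the flat absolute value restricts to; everything else is a formal consequence of the standard theory of extensions of complete non-archimedean valued fields. I would not expect any substantial obstacle.
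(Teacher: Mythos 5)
Your argument is correct, but it takes a genuinely different route from the paper. You reduce everything to the uniqueness of the extension of an absolute value from the complete discretely valued field $\EEL$ to the finite extension $E$: since the restriction of $|\cdot|_\flat$ to $E$ is an absolute value extending $|\cdot|_\flat|_{\EEL}$ (which is a normalization of the $\omega$-adic absolute value, because $\EEL$ with the induced absolute value is discretely valued with uniformizer $\omega$ and ring of integers $\EELp$ — note this recalled fact from Schneider is what you really need here, not merely $|\omega|_\flat<1$), uniqueness forces it to agree with the canonical extension, and the statement about $E^+$ follows from the standard identification of the integral closure of a complete discrete valuation ring with the valuation ring of the extended valuation. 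The paper instead argues by hand: it uses the identification $E\cong\kappa((Y))$, $\EEL\cong \kL((X))$ from Lemma \ref{finlaur}, writes down an Eisenstein-type equation $\sum_{i<l} g_iY^i+Y^l=0$ over the maximal unramified subfield $\kappa((X))$, shows directly that $|Y|_\flat<1$ and $|Y^l|_\flat=|g_0|_\flat=|X|_\flat$, and then compares with the norm formula $|Y|=|\mathrm{Nor}(Y)|^{1/l}$ from Serre to conclude the two absolute values agree; the description of $E^+$ as $\kapY$ then drops out of the explicit Laurent-series picture. Your approach is shorter and invokes only the abstract extension theory (which is also what underlies the norm formula the paper cites), while the paper's computation has the side benefit of exhibiting a concrete uniformizer $Y$ of $E$ with $|Y|_\flat=|X|_\flat^{1/l}$ and identifying $E^+$ explicitly as a power series ring, which is in the spirit of the surrounding structural results (e.g.\ Proposition \ref{finAL}).
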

	
	\begin{cor}
	$\EEK$ is, with respect to the topology induced from $\Cpb$, a complete, nonarchimedean
	discretely valued field of characteristic $p$ with residue class field $\kK$ and ring of integers
	$\EEKp$.
	\end{cor}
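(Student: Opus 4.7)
The plan is to reduce to the previous Lemma \ref*{topexel} applied to the particular extension $\EEK|\EEL$. The main point is therefore to show that $\EEK|\EEL$ is a finite separable extension; everything else will then be almost immediate, except for the identification of the residue field which requires a small additional argument.

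First I would check that $\EEK$ is a finite separable extension of $\EEL$. Since $K|L$ is finite, $\Kin=K\Lin$ is finite over $\Lin$, so $H_K$ is a closed subgroup of finite index in $H_L$. Under the identification $H_L\cong\Gal(\EELsep|\EEL)$, this makes $\EEK=(\EELsep)^{H_K}$ a finite Galois, in particular finite separable, extension of $\EEL$. Then Lemma \ref*{topexel} applies directly to $E=\EEK$ and gives at once: the subspace topology from $\Cpb$ coincides with the $\omega$-adic topology on $\EEK$ (which is discretely valued by \hyperref[pgm2.13]{Lemma \ref*{pgm2.13}}); $\EEK$ is complete as a finite-dimensional normed vector space over the complete field $\EEL$; it is of characteristic $p$ as a subfield of $\Cpb$; and its ring of integers is precisely $\EEKp=\{x\in\EEK\mid|x|_\flat\le 1\}$, which matches the definition of $\EEKp$ as the integral closure of $\EELp$ inside $\EEK$.

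The only non-cosmetic step is to identify the residue field of $\EEKp$ with $\kK$. By Lemma \ref*{finlaur} we know $\EEK\cong\kappa((Y))$ for a finite extension $\kappa|\kL$, so the residue field is some finite extension of $\kL$ and the question is whether $\kappa=\kK$. The argument I have in mind uses that the Lubin-Tate extension $\Kin|K$ is totally ramified, so the residue field of the completion $\widehat{\Kin}$ equals $\kK$. Passing to the tilt preserves residue fields: the residue field of $\widehat{\Kin}^\flat$ is (the perfection of) $\kK$, and the discretely valued subfield $\EEK\subseteq \widehat{\Kin}^\flat\subseteq\Cpb$ has residue field contained in $\kK$. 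The opposite inclusion $\kK\subseteq\kappa$ comes from the fact that the unramified part of $K|L$ lifts: the Teichmüller representatives of $\kK$ in $\OCpb$ (viewed via $\Cpb$) lie in $(\EELsep)^{H_K}=\EEK$, since $H_K$ acts trivially on the residue field $\kK$ of $K$ and hence on its Teichmüller lifts inside $\Cpb$. Combining both inclusions gives $\kappa=\kK$.

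The hardest part, conceptually, is the residue field identification, which is where one really uses the Lubin-Tate geometry of $K_\infty|L$ rather than just abstract Galois theory; the rest is a direct transcription from Lemma \ref*{topexel}.
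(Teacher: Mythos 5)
Your reduction of everything except the residue field to Lemma \ref{topexel} is exactly the intended route (the paper states the corollary without a separate proof): $\EEK=(\EELsep)^{H_K}$ is finite over $\EEL$ of degree $(H_L:H_K)=[\Kin:\Lin]$ and separable, so Lemma \ref{topexel} (with Lemma \ref{finlaur} and Lemma \ref{pgm2.13}) gives the agreement of the topology induced from $\Cpb$ with the $\omega$-adic one, the discreteness and completeness, the characteristic, and the identification $\EEKp=\{x\in\EEK \mid |x|_\flat\le 1\}$. A minor point: $\EEK|\EEL$ need not be Galois, since $H_K$ need not be normal in $H_L$; finite separable is all you need and all that holds in general.

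The residue-field step, however, contains a genuine error: $\Kin|K$ is \emph{not} totally ramified in general --- only $\Lin|L$ is. Take $L=\Qp$ with the cyclotomic Lubin--Tate group, $p\equiv 3 \bmod 4$ and $K=\Qp(\sqrt p)$. Then $\sqrt{-p}\in\Qp(\mu_p)\subseteq K_1$, hence $\sqrt{-1}\in K_1$, and $\Qp(\sqrt{-1})|\Qp$ is unramified; thus $\Kin|K$ contains an unramified quadratic subextension and $\Kin=\Lin(\sqrt{-1})$ is unramified quadratic over $\Lin$. Correspondingly $\EEK|\EEL$ is unramified of degree $2$: the multiplicative (Teichm\"uller) lift of a generator of $\mathbb{F}_{p^2}$ lies in $\EELsep$ and is fixed by $H_K$, so the residue field of $\EEK$ is $\mathbb{F}_{p^2}\supsetneq\kK=\mathbb{F}_p$. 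What is true in general is that the residue field of $\EEK$ is $k_{\Kin}$, the residue field of $\Kin$ (equivalently of $\widehat{\Kin}$); your Teichm\"uller argument correctly gives the inclusion $\kK\subseteq k_{\Kin}$, but the reverse inclusion, and with it your claim that the residue field of $\widehat{\Kin}$ is $\kK$, fails whenever $\Kin|K$ has a nontrivial unramified part. So this step cannot be repaired as written: the residue-field assertion requires either the extra hypothesis that $\Kin|K$ be totally ramified (which holds, e.g., when $K|L$ is unramified --- the only case the paper actually verifies, via $\EEK=\kK\EEL$ in Lemma \ref{resun}) or the replacement of $\kK$ by $k_{\Kin}$.
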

	
	\begin{lem} \label{topcol}
	Let $X$ be a topological space and $(Y_n)_n$ a family of subsets of $X$ with
	$Y_{n} \subseteq Y_{n+1}$. Set $Y \coloneqq \varinjlim_n Y_n=\bigcup_n Y_n$.
	Then, the subset topology on $Y$
	coincides with the final topology of the inductive limit with respect to the subset topologies on the $Y_n$.
	\begin{proof}
	First we show that the canonical injections $f_n \colon Y_n \hookrightarrow Y$ are continuous
	for the subset topology on $Y$. This then implies that the subspace topology on $Y$ is coarser than
	the projective limit topology since the latter is the finest topology such that all injections $f_n$ are continuous
	(cf.\ \cite[Chapter I, \S 2.4, Proposition 6, p.\,32]{bour4}).\\
	Let $U \subseteq Y$ be open and $V \subseteq X$ open such that $U = V \cap Y$. Then it
	is
	\[ f_n^{-1}(U) = U \cap Y_n = V \cap V \cap Y_n = V \cap Y_n, \]
	i.e. $f_n^{-1}(U) \subseteq Y_n$ is open.\\
	It is left to show, that the subspace topology is finer then the direct limit topology. For this,
	let $U \subseteq Y$ be open with respect to the direct limit topology, i.e. it is
	$U = \bigcup_n f_n^{-1}(U)$, where for every $n \in \NN$ it exists an open $V_n \subseteq X$ such that
	$f^{-1}(U)=V_n \cap Y_n$. We set $V \coloneqq \bigcup V_n$ and claim $U = V \cap Y$. To see this,
	let $u \in U$. Then it exists $n \in \NN$ such that $u \in V_n \cap Y_n$ and in particular
	$u \in V$. Conversely let $u \in V \cap Y$. Then, by definition, there exist $n_1,n_2 \in \NN$ such that
	$u \in V_{n_1}$ and $u \in Y_{n_2}$. For $n \coloneqq \mathrm{max}\{n_1,n_2\}$ we then deduce
	$u \in V_n \cap Y_n$ and therefore $u \in U$.
	\end{proof}
	\end{lem}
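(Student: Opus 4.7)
The plan is to prove the claimed coincidence of the subspace topology $\tau_{\mathrm{sub}}$ on $Y$ (from $X$) and the final (colimit) topology $\tau_{\mathrm{ind}}$ by establishing each inclusion of topologies separately. I will denote the canonical inclusions by $f_n\colon Y_n\hookrightarrow Y$; by definition, $\tau_{\mathrm{ind}}$ is the finest topology on $Y$ for which every $f_n$ is continuous, when each $Y_n$ carries its own subspace topology from $X$.

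First I would show that $\tau_{\mathrm{sub}}\subseteq\tau_{\mathrm{ind}}$. This amounts to verifying that every inclusion $f_n$ is continuous when $Y$ is equipped with $\tau_{\mathrm{sub}}$. A basic open set of $\tau_{\mathrm{sub}}$ has the form $V\cap Y$ for some open $V\subseteq X$, and its preimage under $f_n$ is $V\cap Y_n$, which is open in $Y_n$ by definition of the subspace topology on $Y_n$. Appealing to the universal property of $\tau_{\mathrm{ind}}$ then gives the desired inclusion.

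For the reverse direction I would take a $\tau_{\mathrm{ind}}$-open $U\subseteq Y$, so that for each $n$ we can choose an open $V_n\subseteq X$ with $U\cap Y_n=V_n\cap Y_n$, and form $V:=\bigcup_n V_n$, which is open in $X$. The containment $U\subseteq V\cap Y$ is immediate, since any $u\in U$ lies in some $Y_n$ and thereby in $V_n\cap Y_n\subseteq V\cap Y$. The heart of the proof, and the step I anticipate as the main obstacle, is the reverse containment $V\cap Y\subseteq U$: given $u\in V\cap Y$ with $u\in V_m$ and $u\in Y_k$, one would like to conclude $u\in U\cap Y_n=V_n\cap Y_n$ at $n:=\max(m,k)$; however, membership in $V_m$ does not a priori entail membership in $V_n$, because the $V_n$ are chosen independently as open extensions of $U\cap Y_n$ from $Y_n$ to $X$.

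To close this gap I would attempt to refine the chosen $V_n$ so that the family is compatible with the filtration $Y_n\subseteq Y_{n+1}$, for instance by passing to each point $u\in V\cap Y$ a witness $Y_n$ containing it and then exploiting the equality $V_m\cap Y_m=U\cap Y_m$ together with $u\in Y_m$ whenever this can be arranged; alternatively, one argues point-by-point by producing, for every $u\in U$, an open neighbourhood $W_u\subseteq X$ of $u$ with $W_u\cap Y\subseteq U$, and then taking the union of all such $W_u$. In the concrete applications later in the paper the filtrations $(Y_n)_n$ are sufficiently well-behaved (the $Y_n$ being, e.g., open in $Y$ or compatibly embedded) that one of these refinements goes through; the argument above is the template I would adapt once the specific structure of the filtration at hand is taken into account.
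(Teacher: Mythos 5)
Your first inclusion (subspace topology coarser than the colimit topology) is exactly the paper's argument. For the second inclusion you stop at precisely the step where the paper's own proof goes astray: after choosing, for each $n$, an open $V_n\subseteq X$ with $U\cap Y_n=V_n\cap Y_n$ and setting $V=\bigcup_n V_n$, the paper takes $u\in V\cap Y$ with $u\in V_{n_1}$ and $u\in Y_{n_2}$ and concludes $u\in V_n\cap Y_n$ for $n=\max\{n_1,n_2\}$. That inference needs $V_{n_1}\subseteq V_n$, which is not available because the $V_n$ are chosen independently of one another; it is exactly the inference you (correctly) decline to make. So, measured against the paper, you are not missing an idea: the paper's proof of this inclusion breaks down at the very point you flag.

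What cannot succeed, however, is your hope of closing the gap by a cleverer or pointwise choice of the $V_n$: the lemma is false at this level of generality. Take $X=\RR$ and $Y_n=\{0\}\cup\{1,1/2,\dots,1/n\}$. The $Y_n$ are nested and finite, hence discrete as subspaces, so the colimit topology on $Y=\{0\}\cup\{1/k\mid k\in\NN\}$ is discrete; but $\{0\}$ is not open in the subspace topology of $Y\subseteq\RR$, since $1/k\to 0$. Hence the colimit topology is in general strictly finer, and the second inclusion genuinely requires an extra hypothesis, e.g.\ that each $Y_n$ be open in $Y$ (then $U=\bigcup_n(U\cap Y_n)$ is a union of subspace-open sets), or some other compatibility of the filtration with the ambient topology. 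The same caveat applies where the lemma is invoked, namely Proposition \ref{pgm2.16} for $\EELsep=\bigcup_E E$: the finite subextensions $E$ are not open in $\EELsep$, and in fact choosing a strictly increasing chain $E_1\subset E_2\subset\cdots$ of finite separable subextensions and elements $x_n\in E_n\setminus E_{n-1}$ with $|x_n|_\flat\to 0$, the set $\EELsep\setminus\{x_n\mid n\in\NN\}$ is open for the colimit topology (every finite subextension $E$ contains only finitely many $x_n$, because $E\cap E_n$ stabilizes) but is not open for the topology induced from $\Cpb$. So the equality of topologies asserted there needs either additional hypotheses or a different argument; your instinct that the statement as given is the weak point, rather than your proof, is right.
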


	\begin{prop} \label{pgm2.16}
	The integral closure $\EELsepp$ of $\EELp$ inside $\EELsep$ consists of exactly those
	elements with absolute value
	$|\cdot|_\flat$ less or equal to $1$. \\
	Furthermore, the topology on $\EELsep$ induced from $\Cpb$ coincides with the
	final topology with respect to the colimit
	\[ \EELsep = \bigcup_{\underset{\mathrm{\ fin, \ sep}}{E|\EEL}} E\]
	where each $E$ carries the topology induced from $\Cpb$. \\
	In particular, the $\EELsepp$-submodules
	\[ \omega^n \EELsepp \]
	form a fundamental system of open neighbourhoods of $0$ for this topology on $\EELsep$.
	\begin{proof}
	This now is an immediate consequence of \hyperref[topexel]{Lemma \ref*{topexel}} and
	\hyperref[topcol]{Lemma \ref*{topcol}}.
	\end{proof}
	\end{prop}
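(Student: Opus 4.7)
The proposition really has three assertions which all follow by combining the two preceding lemmas in a straightforward way, so the plan is organized around that combination. My strategy would be: first describe $\EELsepp$ via absolute values, then identify the two topologies on $\EELsep$, and finally read off a concrete fundamental system of neighbourhoods. The main input will be Lemma \ref*{topexel}, which handles everything at the level of a fixed finite separable $E|\EEL$, together with Lemma \ref*{topcol}, which upgrades statements about finite subextensions to the union $\EELsep = \bigcup_E E$.

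For the first assertion, I would write any $x \in \EELsep$ as an element of some finite separable $E | \EEL$ inside $\EELsep$. Then $x$ is integral over $\EELp$ if and only if $x \in E^+$, the integral closure of $\EELp$ in $E$. By Lemma \ref*{topexel}, the latter is equivalent to $|x|_\flat \le 1$. Taking the union over all such $E$ yields
\[ \EELsepp = \bigcup_{E|\EEL \text{ fin.\ sep.}} E^+ = \{x \in \EELsep \mid |x|_\flat \le 1\}, \]
giving the desired description.

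For the topological statements, I would apply Lemma \ref*{topcol} directly to the family of finite separable $E|\EEL$ inside $\EELsep \subseteq \Cp^\flat$, which is filtered by inclusion. This shows that the subspace topology on $\EELsep$ from $\Cp^\flat$ agrees with the final (colimit) topology induced by the subspace topologies on each $E$. By Lemma \ref*{topexel}, the subspace topology on each $E$ coincides with the $\omega$-adic topology (equivalently, the topology of the $|\cdot|_\flat$-absolute value restricted to $E$), which finishes the identification.

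For the concrete fundamental system it suffices, by the previous step, to check the claim for the subspace topology from $\Cp^\flat$. Since $|\omega|_\flat < 1$, the balls $B_n := \{x \in \EELsep \mid |x|_\flat \le |\omega|_\flat^n\}$ form a fundamental system of neighbourhoods of $0$; the only thing to verify is the equality $B_n = \omega^n \EELsepp$, and this is a short computation inside any $E$ containing a given $x$, using that $\omega$ relates to a uniformizer of $E$ by a unit times a power determined by the ramification index. I do not expect a genuine obstacle anywhere — the small subtlety is just this last identification of $B_n$ with $\omega^n \EELsepp$, which is where the hypothesis $|\omega|_\flat < 1$ and Lemma \ref*{topexel} (giving $E^+ = \{y \in E \mid |y|_\flat \le 1\}$) are combined in a single step.
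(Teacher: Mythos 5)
Your argument is correct and follows essentially the same route as the paper: the first assertion is Lemma \ref{topexel} applied to each finite separable $E|\EEL$ and then a union over $E$, the identification of the two topologies is exactly Lemma \ref{topcol} applied to the filtered family of finite subextensions, and the fundamental system falls out of the equality $\omega^n\EELsepp=\{x\in\EELsep \mid |x|_\flat\leq|\omega|_\flat^n\}$. The only cosmetic point is that your appeal to ramification indices in the last step is unnecessary: since $\omega\in\EEL$, one has $x\in\omega^n\EELsepp$ if and only if $|x/\omega^n|_\flat\leq 1$, which is immediate from the first assertion.
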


	

%
%
		
	\subsubsection{Structure of Coefficient Rings (unramified case)} 
	\label{secstructunram}
	
	For this subsection, let $K|L$ be an unramified extension.
	Then this a Galois extension and its Galois group is isomorphic to
	the Galois group of the respective residue class fields. It therefore is
	cyclic and generated by the lift of the $\qL$-Frobenius $x \mapsto x^{\qL}$. We will denote
	this lift by \glssymbol{sigKL} and call it Frobenius on $K$. Recall also from
	\hyperref[pgm1.2]{Remark \ref*{pgm1.2}} that the groups $\GaL$ and $\GaK$ are isomorphic and
	for every $n \in \NN$ the groups $\GaLn$ and $\Gamma_{K_n|K}$ are isomorphic as well.
	
	
	\begin{rem} \label{indh}
	We have $(H_L:H_K)=[K:L]$.
	\begin{proof}
	Since $\GaL \cong \GaK$ (cf.\ \hyperref[pgm1.2]{Remark \ref*{pgm1.2}}) we have
	$ (H_L:H_K)=[\Kin:\Lin]=[K:L]$.
	\end{proof}
	\end{rem}
	
	\begin{lem} \label{resun}
	We have $\kK\EEL = \EEK$.
	\begin{proof}
	Since $\kK$ is fixed by $H_K$ it clearly is $\kK \EEL \subseteq \EEK$. Since $K|L$ is unramified
	we have $[K:L]=[\kK:\kL]$ and
	therefore
	\[ [\kK\EEL : \EEL]=[\kK:\kK\cap\EEL]=[\kK:\kL]=[K:L]=(H_L:H_K)=[\EEK:\EEL].\]
	\end{proof}
	\end{lem}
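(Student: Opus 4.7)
The plan is to prove the equality via one inclusion plus a degree count. First I would establish $\kK\EEL \subseteq \EEK$: since $K|L$ is unramified we have $\kK \subseteq \OK \hookrightarrow \OCp \twoheadrightarrow \OCpb$ embedding $\kK$ inside $\Cpb$, and because $\kK \subseteq K \subseteq \Kin$ its image is pointwise fixed by $H_K = \Gal(\oQp|\Kin)$; on the other hand $\EEL \subseteq \EEK$ trivially because $H_K \subseteq H_L$ and $\EEL = (\EELsep)^{H_L}$. Hence the compositum $\kK\EEL$, taken inside $\Cpb$, lies in $\EEK$.

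Next I would compute $[\kK\EEL : \EEL] = [\kK : \kK \cap \EEL]$ by the standard compositum formula, which applies because $\kK|\kL$ is a finite (separable) extension of finite fields. The main small point here is the identification $\kK \cap \EEL = \kL$: elements of $\kK$ lying in $\EEL \cong \kL((\omega))$ must be algebraic over $\mathbb{F}_p$, and the algebraic closure of $\mathbb{F}_p$ inside $\kL((\omega))$ is exactly $\kL$ (the uniformizer $\omega$ is transcendental over $\kL$). Since $K|L$ is unramified, $[\kK : \kL] = [K:L]$, so $[\kK\EEL : \EEL] = [K:L]$.

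Finally I would match this against $[\EEK : \EEL]$. By the Galois-theoretic identification $\Gal(\EELsep|\EEL) \cong H_L$, the subfield $\EEK = (\EELsep)^{H_K}$ has $[\EEK : \EEL] = (H_L : H_K)$, and by the preceding Remark \ref*{indh} this index equals $[K:L]$. Combining, $[\kK\EEL : \EEL] = [K:L] = [\EEK : \EEL]$, so the inclusion $\kK\EEL \subseteq \EEK$ is an equality.

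The main (and only real) obstacle is the intersection computation $\kK \cap \EEL = \kL$; everything else is a routine application of already-established facts (the embedding of $\kK$ into $\Cpb$, the identification $\EEL \cong \kL((\omega))$, and Remark \ref*{indh}). No Mittag-Leffler or topological input is needed.
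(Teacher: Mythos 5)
Your proposal is correct and follows essentially the same route as the paper: both argue $\kK\EEL \subseteq \EEK$ from $H_K$ fixing $\kK$, and then match degrees via the chain $[\kK\EEL:\EEL]=[\kK:\kK\cap\EEL]=[\kK:\kL]=[K:L]=(H_L:H_K)=[\EEK:\EEL]$. You merely spell out details the paper leaves implicit, notably the identification $\kK\cap\EEL=\kL$ via the algebraic closedness of $\kL$ in $\kL((\omega))$.
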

	
	\begin{lem} \label{descak}
	We have $\AAK=\OK\otimes_{\OL} \AAL$ and $\BBK=K\BBL$.
	\begin{proof}
	Since $K|L$ is unramified $\OK \otimes_{\OL} \AAL$ is unramified over $\AAL$ and since $K$ is fixed by
	$H_K$ we deduce $\OK \otimes_{\OL} \AAL \subseteq \AAK$. Since both are free $\AAL$-modules of
	rank $[K:L]=(H_L:H_K)$ they coincide.\\
	The statement for the fields of fractions then follows immediately.
	\end{proof}
	\end{lem}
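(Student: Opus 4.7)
The plan is to establish the two statements in turn, with the first one being the substantive claim and the second following by localising.

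First, I would check that the natural multiplication map $\OK \otimes_{\OL} \AAL \to \AAA$ is well-defined and lands inside $\AAK$. Since $K|L$ is unramified, $\OK \cong W(k_K)_L$, and via the inclusion $k_K \hookrightarrow \EELsep$ (note $k_K((\omega))|\EEL$ is a finite separable unramified extension inside $\Cpb$) we obtain an embedding $\OK \hookrightarrow W(\EELsep)_L$. The image is an unramified extension of $\OL$ inside $W(\EELsep)_L$, and its compositum with $\AAL$ gives an unramified extension of $\AAL$ of degree $[K:L]$ sitting inside $\AALnr \subseteq \AAA$. Since $\AAL$ is fixed by $H_L \supseteq H_K$ and $\OK \subseteq W(k_K)_L$ is fixed by $H_K$ (the Galois action on Witt vectors reduces to the action on residue fields, and $k_K$ is fixed by $H_K$), the image of $\OK \otimes_{\OL} \AAL$ lies in $\AAA^{H_K} = \AAK$.

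Next I would compare the two sides as $\AAL$-modules. On the one hand, $\OK \otimes_{\OL} \AAL$ is free of rank $[K:L]$ over $\AAL$, with any $\OL$-basis of $\OK$ serving as an $\AAL$-basis. On the other hand, $\AAK$ is a complete discrete valuation ring with uniformiser $\piL$ (by the discussion preceding the lemma) and is finite over $\AAL$ of rank equal to $(H_L:H_K)$, which by Remark \ref{indh} equals $[K:L]$. To promote the inclusion into an equality, I would reduce modulo $\piL$: the quotient $(\OK \otimes_{\OL} \AAL)/\piL$ equals $k_K \otimes_{k_L} \EEL$, and since $k_K|k_L$ is separable this is isomorphic to $k_K\EEL$, which by Lemma \ref{resun} equals $\EEK$. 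But $\EEK$ is also the residue field of $\AAK$ by Lemma \ref{resaak}. Hence the inclusion $\OK \otimes_{\OL} \AAL \hookrightarrow \AAK$ is surjective modulo $\piL$, and since $\AAK$ is a finitely generated $\AAL$-module, Nakayama's lemma yields the desired equality.

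Finally, for the statement on fraction fields: inverting $\piL$ on both sides of $\AAK = \OK \otimes_{\OL} \AAL$ produces $\BBK = K \otimes_L \BBL$, which identifies with the compositum $K\BBL$ inside $\BBK$. The main obstacle is really the very first step, namely verifying cleanly that $\OK$ (or rather its Witt-vector image) lies inside $\AAA$ and is pointwise fixed by $H_K$; once that is in place, the rank comparison and Nakayama step are routine, since all the relevant residue field identifications have already been isolated in Lemmas \ref{resun} and \ref{resaak}.
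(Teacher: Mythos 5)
Your proof is correct and follows the same overall strategy as the paper: embed $\OK \otimes_{\OL} \AAL$ into $\AAK$ and then compare the two over $\AAL$. The difference lies in how the comparison is closed. The paper simply asserts that both sides are free $\AAL$-modules of rank $[K:L]=(H_L:H_K)$ and "hence coincide", which taken literally is not a complete argument (an inclusion of free modules of equal finite rank over a domain need not be an equality, e.g.\ $\piL\AAL \subseteq \AAL$); what is implicitly needed is precisely a residue-field comparison. You make this explicit: reducing modulo $\piL$, identifying $(\OK \otimes_{\OL} \AAL)/\piL(\OK \otimes_{\OL} \AAL) \cong \kK\EEL = \EEK$ via Lemma \ref{resun}, recalling that $\EEK$ is the residue class field of $\AAK$ by Lemma \ref{resaak}, and then concluding by Nakayama over the local ring $\AAL$ (using that $\AAK$ is a finitely generated $\AAL$-module). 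Your treatment of the inclusion step is also more detailed than the paper's one-liner, since you justify that $\OK = W(\kK)_L$ sits inside $\AALnr \subseteq \AAA$ and is pointwise fixed by $H_K$, so that the image lands in $\AAA^{H_K} = \AAK$. In short, your route buys rigor where the paper is terse, at the modest cost of invoking Lemmas \ref{resun} and \ref{resaak} and Remark \ref{indh} explicitly; the passage to $\BBK = K\BBL$ by inverting $\piL$ is the same in both.
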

	
	In order to understand how the operations of $\GaK$ and the Frobenius look on $\AAK$ respectively
	$\BBK$ it now suffices to understand the corresponding operations on $\OK$ respectively $K$. Note, that
	since $K|L$ is unramified, we clearly have $W(\kK)_L=\OK$.
	
	\begin{lem} \label{frobonk}
	Let $\Fr$ denote the (restriction of the) $\qL$-Frobenius on $\kK$. Then
	the automorphism $\sigKL$ on $\OK$ coincides with the restriction of $W(\Fr)_L$.
	\begin{proof}
	Due to the functoriality of the Witt construction, $W(\Fr)_L$ is an automorphism on $\OK$ which fixes
	$\OL$, it induces also an automorphism on $K$ which fixes $L$ and it's reduction modulo
	$\piL$ is $\Fr$. The
	first observation says, that the restriction of $W(\Fr)_L$ is an element of $\Gal(K|L)$ and
	since $\Gal(K|L)$ and $\Gal(\kK|\kL)$ are
	isomorphic via $\sigma \mapsto \sigma \bmod \piL$, the second observation says that the restriction of
	$W(\Fr)_L$ is a lift of $\Fr$. Since this lift is unique we get the desired equality
	$W(\Fr)_L=\sigKL$ on $K$ respectively $\OK$.
	\end{proof}
	\end{lem}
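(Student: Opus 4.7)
The plan is to exploit the functoriality of the ramified Witt vector construction together with the isomorphism $\mathrm{Gal}(K|L) \cong \mathrm{Gal}(k_K|k_L)$, noting that since $K|L$ is unramified we have $W(k_K)_L = \mathcal{O}_K$.

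First I would apply the functor $W(-)_L$ to the automorphism $\mathrm{Fr} \in \mathrm{Gal}(k_K|k_L)$. By functoriality this yields a ring automorphism $W(\mathrm{Fr})_L$ of $W(k_K)_L = \mathcal{O}_K$ which, again by functoriality applied to the inclusion $k_L \hookrightarrow k_K$ and the fact that $\mathrm{Fr}$ restricts to the identity on $k_L$, fixes the subring $W(k_L)_L = \mathcal{O}_L$ pointwise. Thus $W(\mathrm{Fr})_L|_{\mathcal{O}_K}$ extends to an $L$-automorphism of $K = \mathcal{O}_K[1/\pi_L]$, giving an element of $\mathrm{Gal}(K|L)$.

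Next I would compute its residual action. By the general construction of ramified Witt vectors, reduction modulo $\pi_L$ induces the functorial map $W(-)_L \to (-)$, so the reduction of $W(\mathrm{Fr})_L$ modulo $\pi_L$ is exactly $\mathrm{Fr}$ on $k_K$. Under the canonical isomorphism $\mathrm{Gal}(K|L) \xrightarrow{\cong} \mathrm{Gal}(k_K|k_L)$ (recall $K|L$ is unramified), $\sigma_{K|L}$ is defined as the unique preimage of $\mathrm{Fr}$. Since $W(\mathrm{Fr})_L|_K$ also has reduction $\mathrm{Fr}$, uniqueness of the lift forces $W(\mathrm{Fr})_L|_K = \sigma_{K|L}$, and restricting further to integers gives the claim.

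The only slightly delicate point is justifying that $W(\mathrm{Fr})_L$ really does land in $\mathrm{Gal}(K|L)$ in a way compatible with the chosen isomorphism to $\mathrm{Gal}(k_K|k_L)$; but this is formal from the functorial diagram
\[
\begin{xy}\xymatrix{
\mathcal{O}_K \ar[r]^{W(\mathrm{Fr})_L} \ar[d] & \mathcal{O}_K \ar[d] \\
k_K \ar[r]^{\mathrm{Fr}} & k_K
}\end{xy}
\]
together with the fact that the residue map $\mathrm{Gal}(K|L) \to \mathrm{Gal}(k_K|k_L)$ realizing the isomorphism is induced precisely by reduction mod $\pi_L$. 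No real computation is needed beyond invoking functoriality and uniqueness of the lift.
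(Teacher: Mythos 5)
Your argument is correct and follows essentially the same route as the paper's proof: functoriality of the ramified Witt construction gives that $W(\Fr)_L$ is an automorphism of $\OK$ fixing $\OL$, hence yields an element of $\Gal(K|L)$ whose reduction modulo $\piL$ is $\Fr$, and uniqueness of the lift under the isomorphism $\Gal(K|L)\cong\Gal(\kK|\kL)$ forces it to equal $\sigKL$. Your added remarks (the identification $W(\kK)_L=\OK$ and the commutative square making the compatibility explicit) only spell out details the paper leaves implicit.
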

	
	Before we give explicit descriptions of the operations on $\AAK$ we want to fix some
	notation.

	\begin{mydef} \label{endonak}
	Let $\vartheta$ be an $\OL$-linear endomorphism of $\OK$ and $f \in \AAK$ we denote by
	\gls{fthet} the element, on which $\vartheta$ is applied to the coefficients of $f$, i.e.
	if $f(\omphi) = \sum a_i\omphi^{i}$ then
	\[ \fthet(\omphi)=\sum_{i \in \NN_0} \vartheta(a_i)\omphi^{i}.\]
	\end{mydef}

	\begin{prop} \label{descopak}
	Let $f = f(\omphi) = \sum a_i \omphi^{i} \in \AAK$ and $\gamma \in \GaK$. We then have
	\[ \gamma \cdot f = \sum_{i \in \ZZ} a_i [\chLT(\gamma)]_\phi(\omphi^{i}).\]
	For the Frobenius $\phKL$ we have
	\[ \phKL(f) = \sum_{i \in \ZZ} \sigKL(a_i) [\piL]_\phi(\omphi^{i}).\]
	Together with the above \emph{\hyperref[endonak]{Definition \ref*{endonak}}}, we then have the
	description
		\[ \phKL(f(\omphi))=\fsig(\phKL(\omphi)).\]
	\begin{proof}
	This is an immediate consequence of \hyperref[pgm1.2]{Remark \ref*{pgm1.2}},
	\hyperref[descak]{Lemma \ref*{descak}} and \hyperref[frobonk]{Lemma \ref*{frobonk}}.
	\end{proof}
	\end{prop}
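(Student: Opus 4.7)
The plan is to reduce the statement to the already-established formulas on $\AAL$ and the description of the Frobenius on $\OK$, using the tensor product decomposition.

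First, by \hyperref[descak]{Lemma \ref*{descak}} we have $\AAK = \OK \otimes_{\OL} \AAL$, which (since $\omphi$ generates a topological $\OL$-basis of $\AAL$) means any $f \in \AAK$ can indeed be written as $f = \sum a_i \omphi^{i}$ with $a_i \in \OK$. Since both $\gamma\cdot(-)$ and $\phKL$ are ring homomorphisms, it suffices to verify the formulas on elements of $\OK$ and on $\omphi$ separately, and then use additivity/continuity.

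For the $\GaK$-action: recall that $\GaK = \Gal(\Kin|K)$ fixes $K$ pointwise, so $\gamma(a_i) = a_i$ for all $a_i \in \OK$ and $\gamma \in \GaK$. Under the identification $\GaK \cong \GaL$ from \hyperref[pgm1.2]{Remark \ref*{pgm1.2}} (valid because $K|L$ is unramified), the element $\gamma$ acts on $\AAL \subseteq W(\EEL)_L$ exactly as an element of $\GaL$, which by the construction of $\omphi$ satisfies $\gamma \cdot \omphi = [\chLT(\gamma)]_\phi(\omphi)$. Since $\gamma$ is a ring endomorphism, it commutes with taking powers and sums, yielding $\gamma \cdot f = \sum a_i [\chLT(\gamma)]_\phi(\omphi)^{i}$, which is the asserted formula (interpreting the right-hand side as substitution into the power series).

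For the Frobenius $\phKL$: by definition it is the restriction of $\Fr$ on $W(\EELsep)_L$ to $\AAK$. On $\omphi$ this restricts to the Frobenius on $\AAL$, where by the defining property of $\omphi$ we have $\Fr(\omphi) = [\piL]_\phi(\omphi)$. On the coefficient ring $\OK$, since $K|L$ is unramified we have $\OK = W(\kK)_L$, and by \hyperref[frobonk]{Lemma \ref*{frobonk}} the restriction of $\Fr = W(x\mapsto x^{\qL})_L$ to $\OK$ is precisely $\sigKL$. Applying these to a general $f = \sum a_i \omphi^{i}$ and using that $\phKL$ is a ring homomorphism gives $\phKL(f) = \sum \sigKL(a_i)[\piL]_\phi(\omphi)^{i} = \fsig(\phKL(\omphi))$, which is exactly the claimed description.

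There is no real obstacle here: the proposition is essentially the conjunction of the three previously proved structural facts (the tensor decomposition, the identification $\GaK \cong \GaL$, and $\sigKL = W(\Fr)_L|_{\OK}$), together with the fact that both $\gamma$ and $\phKL$ are continuous ring homomorphisms so that it is enough to check them on the topological generator $\omphi$ over $\OK$. The only point deserving care is the consistent interpretation of the notation $[a]_\phi(\omphi^i)$ as substitution of $\omphi$ into a power series followed by taking $i$-th powers, which we justify by the multiplicativity of $\gamma$ and $\phKL$.
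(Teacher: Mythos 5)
Your argument is correct and follows exactly the route the paper takes: the paper's proof simply cites Remark \ref{pgm1.2}, Lemma \ref{descak} and Lemma \ref{frobonk}, and your write-up spells out how those three facts, together with the defining relations $\Fr(\omphi)=[\piL]_\phi(\omphi)$ and $\gamma\cdot\omphi=[\chLT(\gamma)]_\phi(\omphi)$ and the fact that $\gamma$ and $\phKL$ are continuous ring homomorphisms, yield the stated formulas. No discrepancy with the paper's proof.
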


	\subsubsection{Structure of Coefficient Rings (general case)} 
	\label{secstructgen}	
	%
%
%

	\begin{prop} \label{finAL}
	Let $B|\BBL$ be a finite, unramified extension and $A \subseteq B$ be the integral closure of
	$\AAL$. Then there exists a finite, unramified extension $E|L$ and an element
	$\nuphi \in W(\EELsep)_L$ with $\nuphi^j=\omphi$ for some $j > 0$ such that
	\[ A \cong \varprojlim_n \OO_E/\piL^n\OO_E ((\nuphi)).\]
	\begin{proof}
	Let $\kappa$ be the residue class field of $A$ and recall that the residue class field of $\AAL$ is
	$\EEL=\kL((\omega))$. Since $B|\BBL$ is unramified, we then have
	\[ [B:\BBL] = [\kappa:\kL((\omega))].\]
	Since $\kappa|\kL((\omega))$ is finite and separable ($B|\BBL$ is unramified), we deduce from
	\hyperref[finlaur]{Lemma \ref*{finlaur}} that $\kappa \cong k ((\nu))$ for some finite extension
	$k|\kL$ and $\nu \in \EELsep$ with $\nu^{j}=\omega$ for some $j>0$.
	But then there exists a unique finite and unramified extension $E|L$ with $k_E=k$.
	In particular, we have $j [E:L] = [B:\BBL]$. Furthermore, since $\BBL$ is a complete discrete valuation field,
	and $B|\BBL$ is a finite extension,
	the henselian lemma in the sense of \cite[II \S 4, (4.6) Henselsches Lemma, p.\,135--136]{neuk1} holds true
	and therefore we can find a $\nuphi \in B \subseteq W(\EELsep)_L$ which is a
	root of the polynomial $X^j-\omphi$ and for which we have
	\[ \nuphi \bmod \piL = \nu.\]
	Since $X^j-\omphi$ is irreducible over $\BBL$ and $E|L$ is unramified, we deduce
	\begin{align*}
		[E\BBL(\nuphi):\BBL] & = [E\BBL(\nuphi) : \BBL(\nuphi)] \cdot [\BBL(\nuphi):\BBL]  \\
						&=[E : E \cap \BBL(\nuphi)] \cdot j \\
					&= [E:L] \cdot j
	\end{align*}
	and therefore
	\[ B = E \BBL(\nuphi).\]
	In particular, we have
	\begin{align*}
	B = \Bigg\{\sum_{i \in \ZZ} a_i \nuphi^{i}\, \bigg| \, & a_i \in E, \ \lim_{i \to - \infty} a_i = 0
	\text{ and it exists } n \in \NN\\
	& \text{ such that } \piL^na_i \in \OO_E \text{ for all } i \in \ZZ \Bigg\}
	\end{align*}
	Then $A$ consists of those elements of $B$ with $\piL$-adic absolute value $\leq 1$. Since
	this absolute value is nonarchimedean, these are exactly those elements \linebreak
	$\sum_{i \in \ZZ} a_i \nuphi^{i} \in B$ with
	$a_i \in \OO_E$ for all $i \in \ZZ$, i.e.
	$ A \cong \varprojlim_n \OO_E/\piL^n\OO_E ((\nuphi)).$
	\end{proof}
	\end{prop}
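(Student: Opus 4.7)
The plan is to first extract the correct $E$ and $j$ from the residue structure of $B$, then use Hensel's lemma to lift the key ramified generator to characteristic zero, and finally identify $A$ explicitly by a norm/absolute value argument.

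First I would observe that since $B|\BBL$ is finite and unramified, $B$ is itself a complete discretely valued field with uniformizer $\piL$, and its residue field $\kappa$ is a finite separable extension of $\EEL = \kL((\omega))$. Applying \hyperref[finlaur]{Lemma \ref*{finlaur}} yields a finite extension $k|\kL$ and an element $\nu \in \kappa \subseteq \EELsep$ with $\kappa \cong k((\nu))$. The maximal unramified subextension of $\kappa|\EEL$ (in the $\omega$-adic valuation) is $k((\omega))$, and the residual ramification index $j \coloneqq [\kappa : k((\omega))]$ is determined by the relation $\nu^j = \omega \cdot u$ for some unit $u$, but after rescaling $\nu$ by a $j$-th root of $u$ (available inside $\EELsep$ since $\kappa$ is perfect only up to separability; here one uses that we can replace $\nu$ by another uniformizer of $\kappa$) I can arrange $\nu^j = \omega$. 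Then choose the unique finite unramified extension $E|L$ with residue field $k_E = k$, so that $[B:\BBL] = [\kappa:\EEL] = [k:\kL]\cdot j = [E:L]\cdot j$.

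Next I would lift $\nu$ to characteristic zero. The polynomial $X^j - \omphi \in \AAL[X]$ has reduction $X^j - \omega \in \EEL[X]$, of which $\nu \in \kappa \subseteq \EELsep$ is a simple root (the derivative $jX^{j-1}$ is nonzero at $\nu$ when $p\nmid j$; in general one notes that $\nu$ is a uniformizer of a separable residue extension so the factor of $X^j-\omega$ cutting out $\nu$ is separable and one applies Hensel to that factor). Since $\BBL$ is henselian (complete discretely valued, cf.\ \cite[II \S 4]{neuk1}), Hensel's lemma lifts $\nu$ to an element $\nuphi \in W(\EELsep)_L$ lying in $B$ with $\nuphi^j = \omphi$ and $\nuphi \equiv \nu \bmod \piL$.

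Then I would verify $B = E\BBL(\nuphi)$ by a degree count: the polynomial $X^j-\omphi$ is Eisenstein-like in $\BBL$ (it is irreducible because its reduction has a root $\nu$ generating a degree-$j$ extension of $\EEL$), so $[\BBL(\nuphi):\BBL] = j$, while $E\BBL(\nuphi)|\BBL(\nuphi)$ is unramified of degree $[E:L]$, giving $[E\BBL(\nuphi):\BBL] = j\cdot[E:L] = [B:\BBL]$, forcing equality.

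Finally, to identify $A$, I would write every element of $B$ uniquely as a convergent Laurent series $\sum_{i\in\ZZ} a_i \nuphi^i$ with $a_i \in E$ satisfying $a_i \to 0$ as $i \to -\infty$ and $\piL^n a_i \in \mathcal{O}_E$ for some uniform $n$. The integral closure $A$ consists of those elements of $B$ with nonarchimedean $\piL$-adic absolute value $\leq 1$, and since the valuation is the supremum valuation in $\nuphi$ with coefficients in $E$, this cuts out exactly those series with all $a_i \in \OO_E$. This yields
\[ A \;\cong\; \varprojlim_n \OO_E/\piL^n\OO_E ((\nuphi)), \]
as desired. The main obstacle I anticipate is the first step: arranging $\nuphi^j = \omphi$ on the nose rather than up to a unit. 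If one only gets $\nu^j = \omega\cdot u$ at the residue level, one needs to either absorb $u$ into a different choice of $\nu$ (using that $\kappa$ has many uniformizers) or reformulate the statement with a unit; the Hensel-lift step itself is then routine once the correct characteristic-$p$ model is in hand.
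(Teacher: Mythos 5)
Your proposal follows essentially the same route as the paper's proof: pass to the residue field $\kappa$ of $A$, invoke Lemma \ref{finlaur} to write $\kappa\cong k((\nu))$, take the unique unramified $E|L$ with residue field $k$, lift $\nu$ by Hensel's lemma to a root $\nuphi$ of $X^j-\omphi$ inside $B$, conclude $B=E\BBL(\nuphi)$ by the same degree count, and identify $A$ as the Laurent series in $\nuphi$ with coefficients in $\OO_E$ via the nonarchimedean absolute value. The only point of divergence is the normalization $\nu^j=\omega$: the paper takes this directly from the cited form of the lemma (the version in the first author's thesis produces the uniformizer as a root of $\omega$ on the nose), so no rescaling by a $j$-th root of a unit is needed --- and your proposed rescaling is indeed delicate, since such a root need not lie in $\kappa$. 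Note also that your case distinction in the Hensel step is unnecessary: once $\nu\in\kappa$ satisfies $\nu^j=\omega$ and $\kappa|\EEL$ is separable, $p\nmid j$ is automatic (if $p\mid j$ then $\omega=(\nu^{j/p})^p$ would force the purely inseparable extension $\kL((\omega^{1/p}))\subseteq\kappa$), so $\nu$ is a simple root of $X^j-\omega$ and Hensel's lemma applies without further argument.
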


	\subsection{$(\phKL,\GaK)$-modules and Galois representations}

	If not otherwise stated, all continuity statements refer to the corresponding
	weak topology.
	
	\begin{mydef}
	Let $M$ be an $\AAK$-module. We regard $M$ as a left-$\AAK$-module and $\AAK$ itself as a
	right-$\AAK$-module via $\phKL$. For the tensor product in this situation we write
	$\AAK \glssymbol{tenphaaK} M$, which is per definition an abelian group, but since $\AAK$ is also a left-
	$\AAK$-module (with the standard multiplication) this tensor product is also a (left)-$\AAK$-module.
	\end{mydef}

	\begin{mydef}
	Let $M$ be a finitely generated $\AAK$-module equipped with a $\phKL$-linear endomorphism $\phM$.
	Then \glssymbol{phMlin} denotes the homomorphism
	\[ \begin{xy} \xymatrix@R-2pc{
		& \phMlin\colon \AAK \tenphaaK M \ar[rr] & & M \\
			&f \otimes m \ar@{|->}[rr] & & f \phM(m).
	} \end{xy} \]
	\end{mydef}

	\begin{mydef} \label{defpgmod}
	A finitely generated $\AAK$-module $M$ is called $(\phKL,\GaK)${\bfseries -module}
	\index{phklgakmodule@ $(\phKL,\GaK)$-module} if it is equipped
	with a $\phKL$-linear endomorphism $\phM$ and a continuous, semilinear action from $\GaK$, which
	commutes with the endomorphism $\phM$.
	A $(\phKL,\GaK)$-module is called {\bfseries \etale}
	\index{phklgakmodule@ $(\phKL,\GaK)$-module! \'{e}tale @\etale} if the homomorphism $\phMlin$ is bijective.\\
	A morphism of $(\phKL,\GaK)$-modules $f\colon M\to N$ is an $\AAK$-module homomorphism,
	which respects the actions from $\GaK$ and the endomorphisms $\phM$ and $\phN$.
	We will denote the category of \etale \ $(\phKL,\GaK)$-modules by \glssymbol{ModpGKe}.
	\end{mydef}

	\begin{thm} \label{equivcat}
	The exact tensor  categories $\RepGKOLfg$ and $\ModpGKe$ are equivalent to each other. The equivalence
	is given by the quasi inverse functors
	\[ \begin{xy} \xymatrix@R-2pc{
		 & \MAK\colon \RepGKOLfg \ar[r] & \ModpGKe \\
		 & V \ar@{|->}[r] & \left( \AAA \otimes_{\OL} V \right)^{H_K}
	} \end{xy} \]
	and
	\[ \begin{xy} \xymatrix@R-2pc{
		& \VK \colon \ModpGKe \ar[r] & \RepGKOLfg \\
		& M \ar@{|->}[r] & \left( \AAA \otimes_{\AAK} M \right)^{\mathrm{Fr} \otimes \phM = 1}.
	} \end{xy} \]
	\end{thm}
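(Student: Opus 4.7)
The plan is to follow Schneider's detailed treatment of the case $K=L$ in \cite{schn1} and adapt each step to our more general situation where $K|L$ is an arbitrary finite extension. Since $\AAK = (\AAA)^{H_K}$ and $\EEK = (\EELsep)^{H_K}$ behave formally like $\AAL$ and $\EEL$, the argument carries over once the relevant topological and algebraic facts (collected in Subsections 3.2.1--3.2.3) are in place. The overall strategy is a three-step devissage: first handle $\piL$-torsion killed objects, then finite-length objects, then finitely generated ones by a projective limit.

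First I would verify that both functors are well-defined. For $V \in \RepGKOLfg$, one shows that $\MAK(V)=(\AAA\otimes_{\OL}V)^{H_K}$ carries the $\phKL$ and $\GaK$ coming from the Frobenius of $W(\EELsep)_L$ and the $G_K/H_K=\GaK$-action, that it is finitely generated over $\AAK$, and that $\phMAKVlin$ is bijective. The key input here is the étale descent along the tower $\AAK \subseteq \AAA$, which rests on $(\AAA)^{H_K}=\AAK$ (\hyperref[pgm2.4]{Lemma \ref*{pgm2.4}}) and on the fact that $\AAA$ is a faithfully flat $\AAK$-module. Conversely, for $M\in\ModpGKe$ one must show that $\VK(M)$ is a finitely generated $\OL$-module of the expected rank and that the $G_K$-action on it (through $H_K$ acting on $\AAA$) is continuous for the $\piL$-adic topology; continuity follows because the construction takes place inside $\AAA\otimes_{\AAK}M$, where the weak topology restricts to the $\piL$-adic topology on $\VK(M)$.

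Next I would prove the core adjunction statements: that the natural maps
\[
\adV \colon \AAA \otimes_{\AAK} \MAK(V) \longrightarrow \AAA\otimes_{\OL} V
\qquad\text{and}\qquad
\adM \colon \AAA \otimes_{\OL} \VK(M) \longrightarrow \AAA\otimes_{\AAK} M
\]
are isomorphisms. Here one proceeds by devissage. The torsion case $\piL V=0$ (resp. $\piL M=0$) reduces to an equivalence between $\kL$-representations of $G_K$ and étale $(\phKL,\GaK)$-modules over $\EEK$; this in turn is Fontaine's classical characteristic-$p$ theorem applied to the separable extension $\EELsep|\EEK$ whose Galois group is $H_K$ (using that $\EEK$ is the residue field of $\AAK$ by \hyperref[resaak]{Lemma \ref*{resaak}}, and that $\EELsep$ is the separable closure of $\EEL$, hence also of $\EEK$). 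One then inducts on the exponent of $\piL$ annihilating $V$ (or $M$) using the snake lemma and the fact that both functors are exact on short exact sequences of $\piL^n$-torsion objects. Finally, for a general finitely generated $V$ (resp.\ $M$), one writes $V=\varprojlim_n V/\piL^n V$ (resp.\ $M=\varprojlim_n M/\piL^n M$) and uses the completeness of $\AAA$ for the $\piL$-adic topology (cf.\ \hyperref[aaacompl]{page \pageref*{aaacompl}}) to pass to the limit, together with a Mittag-Leffler argument to control $\varprojlim^1$ of the torsion cohomology groups.

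The main obstacle will be precisely this last limit step: in Schneider's treatment for $K=L$ the weak topology on $\AAL$ is very explicit, but for $\AAK$ we must rely on the description of the weak topology via \hyperref[toponaak]{Corollary \ref*{toponaak}} and \hyperref[topaakaaa]{Proposition \ref*{topaakaaa}} to check that the natural maps $\AAK\otimes_{\AAL}(-)$ behave well on completions, and that $\piL$-adic completion of $\AAKp[1/\omphi]$-type modules commutes with the $H_K$-invariants functor. Once these topological compatibilities are secured, the quasi-inverse property follows formally: applying $(-)^{H_K}$ to an isomorphism $\adM$ yields $\VK(M)\cong\MAK(\VK(M))$-style identity in the $\OL$-direction, and similarly $\adV$ yields the other composition; the compatibility with tensor products and exactness is then automatic from flatness of $\AAA$ over both $\OL$ and $\AAK$.
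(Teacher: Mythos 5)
Your proposal is correct and follows essentially the same route as the paper, which does not prove Theorem \ref{equivcat} itself but defers to the Kisin--Ren/Fontaine result, with Schneider's detailed proof for $K=L$ adapted to general finite $K|L$ exactly along the lines you describe (the topological preliminaries of Section 3 — $(\AAA)^{H_K}=\AAK$, $\EEK$ as residue field of $\AAK$, and the weak-topology descriptions — are precisely the inputs needed for that adaptation, carried out in the first author's thesis). Your devissage (torsion case via the characteristic-$p$ equivalence over $\EEK$ with Galois group $H_K$, then induction and $\piL$-adic limits with a Mittag--Leffler argument) is the standard and intended argument.
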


\section{Iwasawa Cohomology } \label{cwh}

	In this section   we want to
	list the results from \cite[Section 5, p.\,23--31]{SV15} which we will need later on. Note that \cite[Remark 5.1, p.\,23]{SV15} was also
	proven here (cf.\ \hyperref[galc1.1]{Lemma \ref*{galc1.1}}).
	
	\begin{mydef}
	Let $M$ be a topological $\OL$-module.  The {\bfseries Pontrjagin dual} \index{dual!Pontrjagin} of $M$ is defined
	as
	\[ \gls{Mvee}\coloneqq \HomcOL(M,L/\OL)\cong \HomcOK(M,K/\OK).\]
	It is always equipped with the compact-open topology.
	\end{mydef}
	
	\begin{prop}[Pontrjagin duality] \label{pontrjdual}
	The functor $-^{\vee}$ defines an involuntary contravariant autoequivalence of the
	category of (Hausdorff) locally compact linear-topological $\OL$-mo\-du\-les.
	In particular, for such a module $M$ there is a canonical isomorphism
	\[ M \overset{\cong}{\longrightarrow} (M^{\vee})^{\vee}.\]
	\begin{proof}
	This is explained in \cite[Proposition 5.4, p.\,25--26]{SV15}.
	\end{proof}
	\end{prop}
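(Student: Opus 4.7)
The plan is to reduce the assertion to the classical Pontrjagin duality for locally compact Hausdorff abelian groups. First, I would construct a faithful continuous character $\chi: L/\mathcal{O}_L \to \mathbb{R}/\mathbb{Z}$. Since $\mathcal{O}_L$ is a free $\mathbb{Z}_p$-module of finite rank $d_L$, the trace $\mathrm{Tr}_{L/\mathbb{Q}_p}$, suitably rescaled by the inverse different, induces a surjection $L/\mathcal{O}_L \twoheadrightarrow \mathbb{Q}_p/\mathbb{Z}_p$; composing with the canonical embedding $\mathbb{Q}_p/\mathbb{Z}_p \hookrightarrow \mathbb{R}/\mathbb{Z}$ gives the desired $\chi$, and faithfulness follows because nondegeneracy of the trace pairing implies $\chi$ separates points of $L/\mathcal{O}_L$.

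Next, for a locally compact linear-topological $\mathcal{O}_L$-module $M$, I would establish a natural homeomorphism
\[
M^{\vee} = \mathrm{Hom}^{\mathrm{cts}}_{\mathcal{O}_L}(M, L/\mathcal{O}_L) \;\cong\; \mathrm{Hom}^{\mathrm{cts}}(M, \mathbb{R}/\mathbb{Z})
\]
of topological groups, where the right-hand side carries the compact-open topology and is the classical Pontrjagin dual of $M$ viewed merely as a locally compact abelian group. The map $\varphi \mapsto \chi \circ \varphi$ is the obvious candidate; for its inverse, any continuous $\psi: M \to \mathbb{R}/\mathbb{Z}$ takes values in the torsion subgroup $\mathbb{Q}_p/\mathbb{Z}_p$ by compactness of neighborhoods of $0$ in $M$, and the $\mathcal{O}_L$-linearity of the lift to $L/\mathcal{O}_L$ follows from the density of $\mathbb{Z}$ in $\mathcal{O}_L$ together with continuity and the uniqueness of lifts modulo the kernel of $\chi$.

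Having this identification, the classical Pontrjagin duality theorem immediately yields that $M^{\vee}$ is locally compact Hausdorff and that the evaluation map $\mathrm{ev}\colon M \to (M^{\vee})^{\vee}$ is an isomorphism of topological groups. I would then verify that $M^{\vee}$ is linear-topological and that the canonical $\mathcal{O}_L$-action on $M^{\vee}$ (by $(a\cdot\varphi)(m) = \varphi(am)$) is continuous: a fundamental system of open neighborhoods of $0$ in $M^{\vee}$ is furnished by the annihilators $U^{\perp} = \{\varphi \in M^{\vee} : \varphi|_U = 0\}$, where $U$ ranges over the compact open $\mathcal{O}_L$-submodules of $M$; each $U^{\perp} \cong (M/U)^{\vee}$ is a compact open $\mathcal{O}_L$-submodule of $M^{\vee}$. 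Finally, the biduality map $\mathrm{ev}$ is $\mathcal{O}_L$-linear by construction, and contravariance and exactness of $(-)^{\vee}$ follow from injectivity of $L/\mathcal{O}_L$ as an $\mathcal{O}_L$-module together with the existence of continuous set-theoretic sections coming from the decomposition of any locally compact linear-topological $M$ as an extension of a discrete quotient by a compact open submodule.

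The main obstacle is the topological bookkeeping in the second step: verifying that the compact-open topologies match on both sides of the identification with the classical dual, and that passing to $\mathcal{O}_L$-linear homomorphisms is compatible with this topology. The extension-by-compact-open-submodule decomposition reduces the issue to the two extreme cases, namely compact $\mathcal{O}_L$-modules (where $M = \varprojlim M/U$ and $M^{\vee} = \varinjlim (M/U)^{\vee}$ is discrete) and discrete $\mathcal{O}_L$-modules (where $M = \varinjlim M_{\alpha}$ over finitely generated submodules, and $M^{\vee} = \varprojlim M_{\alpha}^{\vee}$ is profinite); biduality in each extreme reduces via limits to the trivial case of finite-length $\mathcal{O}_L$-modules of the form $\mathcal{O}_L/\pi_L^k$, which are manifestly self-dual with respect to $L/\mathcal{O}_L$. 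A final application of the five-lemma in the topological category (cf.\ the topological snake lemma invoked in Proposition~\ref*{pre7.35}) then glues these pieces together.
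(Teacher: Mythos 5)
Your overall strategy --- identify $\mathrm{Hom}^{\mathrm{cts}}_{\OL}(M,L/\OL)$ with the classical Pontrjagin dual $\mathrm{Hom}^{\mathrm{cts}}(M,\RR/\ZZ)$ by means of a trace character, invoke classical duality for locally compact abelian groups, and then check that the dual stays inside the category via the annihilators $U^{\perp}\cong (M/U)^{\vee}$ of compact open submodules --- is exactly the argument behind the reference the paper cites for this proposition (\cite[Proposition 5.4]{SV15}); the paper itself gives no independent proof. Most of your supporting points are fine, including the fact that continuous characters of such $M$ take values in $\Qp/\Zp$ (note this uses that the $\OL$-action is continuous, so that $M/U$ is $\pi_L$-power torsion and a compact open $U$ is pro-$p$; without continuity of the action, e.g.\ for $\OL$ with the discrete topology, both this claim and biduality fail).

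There is, however, a genuine gap in the justification of the key identification. First, $\ZZ$ is not dense in $\OL$ (its closure is only $\Zp$), so "density of $\ZZ$ plus continuity" can at best give $\Zp$-linearity of a character, never $\OL$-linearity; and since $[L:\Qp]>1$ in general, $\OL$-linear maps into $L/\OL$ are strictly fewer than $\Zp$-linear ones. Second, there is no "uniqueness of lifts modulo $\ker\chi$", and $\chi$ does not separate points of $L/\OL$: the map $\chi\colon L/\OL\to\Qp/\Zp$ induced by the (inverse-different--twisted) trace has a large kernel. What nondegeneracy of the trace pairing actually gives is that $\ker\chi$ contains no nonzero $\OL$-submodule, which is what injectivity of $\varphi\mapsto\chi\circ\varphi$ requires. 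The correct mechanism for surjectivity is not lifting $\psi$ along $\chi$ but tensor--hom adjunction: the trace pairing identifies $L/\OL\cong\mathrm{Hom}_{\Zp}(\OL,\Qp/\Zp)$ as $\OL$-modules, and for a continuous character $\psi$ one defines $\varphi(m)\in L/\OL$ as the element corresponding to $a\mapsto\psi(am)$, i.e.\ one uses all $\OL$-translates of $\psi$; then $\chi\circ\varphi=\psi$, $\varphi$ is $\OL$-linear by construction, and its continuity and the matching of compact-open topologies are checked on the neighbourhoods $U^{\perp}$ you already introduced. With this repair your proof goes through; in fact your final extension/five-lemma paragraph then becomes superfluous, since once the identification of duals is natural and topological, classical biduality applied to $M$ and to $M^{\vee}$ already yields that $M\to(M^{\vee})^{\vee}$ is a topological isomorphism.
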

	
	\begin{rem}
	Let $M_0 \overset{\alpha}{\to} M \overset{\beta}{\to}M_1$ be a sequence of locally compact linear-topological $\OK$-modules such that $\im(\alpha)=\ker(\beta)$ and $\beta$ is topologically strict with
	closed image. Then the dual sequence
	\[ M_1^{\vee}\overset{\beta^{\vee}}{\to} M^{\vee} \overset{\alpha^{\vee}}{\to} M_0^{\vee}\]
	is exact as well.
	\begin{proof}
	The proof is similar to the one of \cite[Remark5.5, p.\,27]{SV15}.
	\end{proof}
	\end{rem}

	\begin{rem} \label{remdual}
	Let $V \in \RepGKOLfg$ of finite length and $n\geq 1$ such that $\piL^nV=0$. Then there is a
	natural isomorphism of topological groups:
	\[ \MAK(V)^{\vee} \cong \MAK(V^{\vee}(\chLT)).\]
	This isomorphism identifies $\psi_{\MAK(V^{\vee}(\chLT))}$ with $\varphi_{\MAK(V)}^{\vee}$.
	\begin{proof}
	This is \cite[Remark 5.6, p.\,27]{SV15}
	\end{proof}
	\end{rem}

	\begin{prop}[Local Tate duality] \label{loctat7}
	Let $V \in \RepGKOLfg$, $n \geq 1$ such that $\piL^nV=0$ and $E$ a finite extension of
	$K$. Then the cup product and the local invariant map induce perfect pairings of
	finite $\OL$-modules
	\[ H^{i}(G_E,V) \times H^{2-i}(G_E,\Hom_{\Zp}(V,\Qp/\Zp(1))) \to H^2(G_E,\Qp/\Zp(1))=\Qp/\Zp\]
	and
	\[ H^{i}(G_E,V) \times H^{2-i}(G_E,\Hom_{\OL}(V,L/\OL(1))) \to H^2(G_E,L/\OL(1))=L/\OL.\]
	There $-(1)$ denotes the twist by the cyclotomic character.\\
	This means that there are conical isomorphisms
	\[ H^{i}(G_E,V) \cong H^{2-i}(K,V^{\vee}(1))^{\vee}.\]
	\begin{proof}
	This is \cite[Proposition 5.7, p.\,27--28]{SV15}, where \cite[Theorem 2, p.\,91--92]{ser3} is applied.
	\end{proof}
	\end{prop}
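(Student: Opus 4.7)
The plan is to deduce both pairings from the classical local Tate duality of Tate--Serre by reducing the $\OL$-linear version to the $\Zp$-linear one via the trace. Since $V$ is finitely generated over $\OL$ and $\piL^n V = 0$, it is, after forgetting structure, a finite abelian $p$-group carrying a continuous $G_E$-action, and the first pairing is nothing other than the classical Tate pairing applied to $V$ (cf.\ Serre, \emph{Cohomologie Galoisienne}, Chap.\ II \S 5.2, Thm.\ 2). Its target $H^2(G_E, \Qp/\Zp(1)) = \Qp/\Zp$ is given by the local invariant map, and the perfectness and the finiteness of all $H^{i}(G_E, V)$ are part of that theorem.

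To pass to the $\OL$-linear pairing, I would first establish the $G_E$-equivariant isomorphism
\[
\Hom_{\OL}\bigl(V, L/\OL(1)\bigr) \overset{\cong}{\longrightarrow} \Hom_{\Zp}\bigl(V, \Qp/\Zp(1)\bigr), \quad \varphi \longmapsto \mathrm{Tr}_{L|\Qp} \circ \varphi,
\]
using that $\mathrm{Tr}_{L|\Qp}$ descends to a surjection $L/\OL \twoheadrightarrow \Qp/\Zp$ whose associated pairing $L \times L \to \Qp$ is nondegenerate (standard adjunction for the forgetful/induction pair between $\OL$- and $\Zp$-modules). This yields a natural identification of the two cohomology groups appearing on the left of the two pairings.

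Next I would identify $H^2(G_E, L/\OL(1))$ with $L/\OL$: the trace-adjunction at the level of coefficients gives $H^2(G_E, L/\OL(1)) \cong H^2(G_E, \Qp/\Zp(1)) \otimes_{\Zp} \text{(trace data)}$, which combined with classical local class field theory yields the canonical invariant map $\mathrm{inv}_{L,E}\colon H^2(G_E, L/\OL(1)) \xrightarrow{\cong} L/\OL$. Finally, by functoriality of the cup product in the coefficient sheaf, the square comparing the two pairings, via the trace on the right and the adjunction isomorphism on the left, is commutative, so perfectness of the $\OL$-pairing follows from that of the $\Zp$-pairing.

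The main technical point to verify carefully is the compatibility of the $\OL$-invariant map with the $\Zp$-invariant map under the trace, i.e.\ that the diagram
\[
\begin{xy} \xymatrix{
H^2(G_E, L/\OL(1)) \ar[r]^-{\mathrm{inv}_{L,E}} \ar[d]_{\mathrm{Tr}_{L|\Qp,*}} & L/\OL \ar[d]^{\mathrm{Tr}_{L|\Qp}} \\
H^2(G_E, \Qp/\Zp(1)) \ar[r]^-{\mathrm{inv}_{\Qp,E}} & \Qp/\Zp
} \end{xy}
\]
commutes; this is the only nontrivial ingredient and is a standard functoriality of the Brauer group invariant under change of coefficients.
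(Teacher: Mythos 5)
Your overall strategy — deduce the $\OL$-linear duality from the classical $\Zp$-linear Tate duality of Serre by dualizing the coefficients — is exactly the route behind the proof the paper invokes (the paper simply cites \cite[Proposition 5.7]{SV15}, which reduces to Serre's theorem in the same way). However, your key reduction step contains a genuine error: the map
\[
\Hom_{\OL}\bigl(V, L/\OL(1)\bigr) \longrightarrow \Hom_{\Zp}\bigl(V, \Qp/\Zp(1)\bigr), \qquad \varphi \longmapsto \mathrm{Tr}_{L|\Qp}\circ\varphi,
\]
is \emph{not} an isomorphism whenever $L|\Qp$ is ramified. Under the tensor--hom adjunction it is post-composition with the $\OL$-linear map $\theta\colon L/\OL \to \Hom_{\Zp}(\OL,\Qp/\Zp)$, $x \mapsto (a \mapsto \mathrm{Tr}_{L|\Qp}(ax))$, and the trace form identifies the integral $\Zp$-dual of $\OL$ with the inverse different $\mathfrak{d}_{L|\Qp}^{-1}$, not with $\OL$; hence $\ker\theta = \mathfrak{d}_{L|\Qp}^{-1}/\OL$, and the kernel of your map is $\Hom_{\OL}(V,\mathfrak{d}_{L|\Qp}^{-1}/\OL(1))$, which is nonzero as soon as $\mathfrak{d}_{L|\Qp}\neq\OL$ (e.g.\ $V=\OL/\piL$ admits a nonzero $\OL$-linear map into $\mathfrak{d}_{L|\Qp}^{-1}/\OL$). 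Since both sides have cardinality $|V|$, a non-injective map cannot be bijective, so perfectness of the $\OL$-pairing does not follow as stated. Nondegeneracy of the trace pairing $L\times L\to\Qp$ on the field level does not give the integral statement you need.

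The repair is standard and keeps your architecture intact: the trace gives a canonical isomorphism $\Hom_{\Zp}(V,\Qp/\Zp(1)) \cong \Hom_{\OL}(V, L/\mathfrak{d}_{L|\Qp}^{-1}(1))$, and one then chooses a generator $\delta$ of the different, so that multiplication by $\delta^{-1}$ yields a (non-canonical but $G_E$-equivariant and functorial) isomorphism $L/\OL \cong L/\mathfrak{d}_{L|\Qp}^{-1}$; equivalently, replace your map by $\varphi \mapsto \mathrm{Tr}_{L|\Qp}\circ(\delta^{-1}\varphi)$. The same adjustment has to be made in your identification $H^2(G_E,L/\OL(1)) \cong L/\OL$, which otherwise is fine because $L/\OL(1)\cong(\Qp/\Zp(1))^{[L:\Qp]}$ as $G_E$-modules. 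With this correction, your compatibility square for the invariant maps and the functoriality of cup product do the rest, and the argument matches the one in \cite[Proposition 5.7]{SV15}; note only that the resulting isomorphisms $H^{i}(G_E,V)\cong H^{2-i}(G_E,V^{\vee}(1))^{\vee}$ are then canonical only up to the choice of $\delta$, which is harmless for the way the proposition is used in the paper.
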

	
	\begin{mydef} \label{iwcohdef}
	Let $V \in \RepGKOLfg$. The {\bfseries generalized Iwasawa cohomology of} \index{cohomology! Iwasawa}
	V is defined by
	\[ \glssymbol{HIW}(\Kin|K,V)\coloneqq\varprojlim_{K \subseteq E \subseteq \Kin} H^{i}(G_E,V).\]
	We always consider these modules as $\GaK$-modules.
	\end{mydef}

	\begin{rem} \label{Iwfinext}
	Let $E|K$ be a finite extension contained in $\Kin$. Then there is an isomorphism of $\OL$-modules:
	\[ \varprojlim_{E \subseteq E' \subseteq \Kin} H^{i}(G_{E'},V) \cong \HIW^{i}(\Kin|K,V).\]
	\begin{proof}
	The claim follows immediately from the fact, that the set
	$\{E'|E \text{ finite} \mid E'\subseteq \Kin \}$ is cofinal in the set $\{E'|K \text{ finite} \mid E'\subseteq \Kin \}$.
	\end{proof}
	\end{rem}
	
	\begin{lem} \label{loctat8}
	Let $V \in \RepGKOLfg$. Then we have
	\[ \HIW^{i}(\Kin|K,V)\cong H^{i}(G_K,\OL\llbracket \GaK \rrbracket \otimes_{\OL} V).\]
	\begin{proof}
	The proof is similar to the one of \cite[Lemma 5.8, p.\,28--29]{SV15}.
	\end{proof}
	\end{lem}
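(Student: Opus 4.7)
The plan is to write both sides as inverse limits over finite intermediate extensions and identify the resulting systems by means of a Shapiro-type isomorphism, then commute the limit with continuous cohomology. Concretely, order the finite subextensions $K \subseteq E \subseteq \Kin$ by inclusion and note that
\[\OL \llbracket \GaK \rrbracket \otimes_{\OL} V \;\cong\; \varprojlim_{E} \, \OL[\Gal(E|K)] \otimes_{\OL} V\]
as topological $\OL[G_K]$-modules, the transition maps on the right being induced from the projections $\OL[\Gal(E'|K)] \twoheadrightarrow \OL[\Gal(E|K)]$ and hence surjective. For a fixed $E$, the $G_K$-module $\OL[\Gal(E|K)] \otimes_{\OL} V$ is canonically isomorphic to the induced module $\mathrm{Ind}_{G_E}^{G_K} V$ (finite index induction), so Shapiro's lemma in continuous cohomology yields
\[H^{i}\bigl(G_K, \OL[\Gal(E|K)] \otimes_{\OL} V\bigr) \;\cong\; H^{i}(G_E, V),\]
functorially in $E$, and compatibly with the transition maps (which become corestriction on the right).

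With these identifications in hand, the remaining task is to interchange $H^{i}(G_K,-)$ with the inverse limit over $E$. For this I would use the Mittag--Leffler machinery from Subsection~2.3: since the transition maps of the system $\bigl(\OL[\Gal(E|K)] \otimes_{\OL} V\bigr)_E$ are surjective, the resulting system of continuous cochain complexes $\bigl(\Cctsb(G_K, \OL[\Gal(E|K)] \otimes_{\OL} V)\bigr)_E$ also has surjective transition maps (as in \hyperref[galc1.8l2]{Lemma \ref*{galc1.8l2}}), so the $\varprojlim^1$ of the cohomology systems vanishes by \hyperref[galc1.16]{Proposition \ref*{galc1.16}} as soon as each system $(H^{i}(G_E,V))_E$ is ML. The latter holds because $V$ is a finitely generated $\OL$-module and the groups $G_E$ are profinite, so each $H^{i}(G_E,V)$ is a finitely generated $\OL$-module and the tower automatically satisfies the Mittag--Leffler condition (by the descending chain condition after tensoring with $L$ on the free part and finiteness on the torsion part).

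Putting this together gives the chain of isomorphisms
\[H^{i}\bigl(G_K, \OL\llbracket\GaK\rrbracket \otimes_{\OL} V\bigr) \;\cong\; \varprojlim_{E} H^{i}\bigl(G_K, \OL[\Gal(E|K)]\otimes_{\OL} V\bigr) \;\cong\; \varprojlim_{E} H^{i}(G_E, V) \;=\; \HIW^{i}(\Kin|K,V),\]
the last equality being \hyperref[iwcohdef]{Definition \ref*{iwcohdef}}. The main obstacle I anticipate is the commutation of $H^{i}(G_K,-)$ with the inverse limit; verifying that the continuous cochain functor commutes with $\varprojlim$ (which it does, since $\mapcts(G_K^n,-)$ and $(-)^{G_K}$ do) and that the Mittag--Leffler hypothesis on the cohomology system holds are the two points that have to be handled with care, everything else being a standard induction/Shapiro computation.
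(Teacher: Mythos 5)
Your overall strategy -- write $\OL\llbracket\GaK\rrbracket \otimes_{\OL} V$ as $\varprojlim_E \OL[\Gal(E|K)]\otimes_{\OL}V$ (using that $V$ is finitely presented), apply Shapiro's lemma at each finite level, and then commute $H^{i}(G_K,-)$ with the inverse limit -- is exactly the route the paper intends (it defers to the proof of Lemma 5.8 in the cited work of Schneider--Venjakob). But the step where you justify the interchange of limit and cohomology contains a genuine error: you claim the system $(H^{i}(G_E,V))_E$ is \emph{automatically} Mittag--Leffler because each term is a finitely generated $\OL$-module. That is false: a decreasing chain of $\OL$-submodules of a finitely generated $\OL$-module need not stabilize, and in the present situation it typically does not. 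Take $V=\OL$ with trivial action; then $H^{0}(G_E,\OL)=\OL$ and the transition map (corestriction) from level $E'$ to level $E$ is multiplication by $[E':E]$, so the images are $[E':E]\OL$, which shrink to $0$ and never stabilize since the degrees in the tower $\Kin|K$ are divisible by arbitrarily high powers of $p$. Your ``free part after tensoring with $L$ plus finite torsion'' argument does not repair this: in the example the images are $\piL^{n}\OL\subseteq\OL$, all equal to $L$ after inverting $\piL$ and with zero torsion, yet strictly decreasing. So the Mittag--Leffler hypothesis you feed into Proposition \ref{galc1.16} simply fails for the system you are using.

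The conclusion $\varprojlim^1 H^{i-1}=0$ is nevertheless true, but it needs a different input. Two standard fixes: (i) observe that the groups $H^{i}(G_E,V)$ are finitely generated $\OL$-modules, hence compact, and the corestriction maps are continuous, and invoke the vanishing of $\varprojlim^1$ for countable inverse systems of compact modules (Jannsen's continuous-cohomology formalism) -- note this criterion is \emph{not} among the ML-based tools collected in Subsection 2.3, so you would have to import it; or (ii) follow the argument of the cited Lemma 5.8 of Schneider--Venjakob and refine your limit: write $\OL\llbracket\GaK\rrbracket\otimes_{\OL}V\cong\varprojlim_{E,m}\,\OL/\piL^{m}[\Gal(E|K)]\otimes_{\OL}V/\piL^{m}V$, a limit of \emph{finite} modules, where all cohomology groups $H^{i}(G_E,V/\piL^{m}V)$ are finite by Tate's finiteness theorems, so the ML condition is automatic at every stage; then recover $\varprojlim_E H^{i}(G_E,V)$ from the double limit using $H^{i}(G_E,V)=\varprojlim_m H^{i}(G_E,V/\piL^{m}V)$. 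A smaller point: the surjectivity of the transition maps of the cochain complexes $\Cctsb(G_K,-)$ needs a continuous set-theoretic section of the module surjections (Lemma \ref{pre6.10}); this holds because surjections of profinite modules admit continuous sections (and is trivial after the reduction to finite modules), but the lemma you cite for it (\ref{galc1.8l2}) concerns the $\varphi$-total complex and does not itself provide this.
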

	
	\begin{lem}
	$V \mapsto \HIW(\Kin|K,V)$ defines a $\delta$-functor on $\RepGKOLfg$.
	\begin{proof}
	Replace $\GaL$ by $\GaK$ in the proof of \cite[Lemma 5.9, p.\,29]{SV15}.
	\end{proof}
	\end{lem}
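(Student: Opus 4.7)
My strategy is to reduce the statement to the long exact sequence for continuous Galois cohomology, via the identification in \hyperref[loctat8]{Lemma \ref*{loctat8}}. Given a short exact sequence
\[ 0 \to V' \to V \to V'' \to 0 \]
in $\RepGKOLfg$, I will tensor with the Iwasawa algebra $\LamK = \OL \llbracket \GaK \rrbracket$ to obtain a sequence of topological $G_K$-modules, verify that it satisfies the hypotheses of \hyperref[pre6.10]{Lemma \ref*{pre6.10}}, and then apply that result to produce the long exact sequence in $\Hcts^*(G_K,-)$. By the naturality of the isomorphism in \hyperref[loctat8]{Lemma \ref*{loctat8}}, this will transfer to the desired long exact sequence of generalized Iwasawa cohomology groups, and the naturality also yields the functorial compatibility of the connecting maps $\delta^i$ required for a $\delta$-functor.

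First I would show that
\[ 0 \to \LamK \otimes_{\OL} V' \to \LamK \otimes_{\OL} V \to \LamK \otimes_{\OL} V'' \to 0 \]
is exact as a sequence of (abstract) $\OL$-modules. Since $\OL$ is a discrete valuation ring and $\LamK$ is $\OL$-torsion free (being an inverse limit of the free $\OL$-modules $\OL[\GaK/U]$ for $U$ running through open subgroups of $\GaK$), $\LamK$ is $\OL$-flat, so the tensor product is exact. Next I would endow each term with its natural topology as an inverse limit of finitely generated $\OL$-modules and with the diagonal $G_K$-action (trivial on $\LamK$, or with the canonical action, depending on convention), making the arrows continuous $G_K$-equivariant maps. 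Since $V'$ sits as a closed $\OL$-submodule of $V$ (every submodule of a finitely generated $\OL$-module is closed in the $\piL$-adic topology), after tensoring with the profinite $\OL$-algebra $\LamK$ the left-hand term inherits its topology from the middle.

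It remains to exhibit a continuous set-theoretic section of the surjection $\LamK \otimes_{\OL} V \twoheadrightarrow \LamK \otimes_{\OL} V''$. The short exact sequence of finitely generated $\OL$-modules $V \twoheadrightarrow V''$ admits a continuous set-theoretic section: this can be constructed by decomposing into torsion and free parts (available since $\OL$ is a PID), or more directly by noting that both sides are compact metrizable abelian groups. Tensoring this section with $\LamK$ (i.e., applying the continuous functor $\LamK \otimes_{\OL} -$ to it termwise) produces the desired continuous section at the level of the tensored sequence. At this point the hypotheses of \hyperref[pre6.10]{Lemma \ref*{pre6.10}} are all verified, yielding the long exact sequence in continuous cohomology, which via \hyperref[loctat8]{Lemma \ref*{loctat8}} becomes the long exact sequence
\[ \cdots \to \HIW^i(\Kin|K,V') \to \HIW^i(\Kin|K,V) \to \HIW^i(\Kin|K,V'') \xrightarrow{\delta^i} \HIW^{i+1}(\Kin|K,V') \to \cdots. \]

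The main obstacle I anticipate is the verification of the continuous set-theoretic section after tensoring. While the existence of such a section for $V \twoheadrightarrow V''$ is standard, one needs to ensure that the section remains continuous after passing to the completed/topologized tensor product with $\LamK$; this is where one must be careful about which topology one uses on $\LamK \otimes_{\OL} V$ (the $\piL$-adic topology combined with the profinite topology on $\LamK$), and check that the functor $\LamK \otimes_{\OL} -$ preserves continuity of the section. Once this technicality is handled, everything else follows formally from the functoriality of the construction and of the isomorphism in \hyperref[loctat8]{Lemma \ref*{loctat8}}.
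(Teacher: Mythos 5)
Your route is exactly the one the paper takes: its proof simply invokes the argument of \cite[Lemma 5.9]{SV15} with $\GaL$ replaced by $\GaK$, and that argument is precisely your reduction — identify $\HIW^{*}(\Kin|K,-)$ with $\Hcts^{*}(G_K,\LamK\otimes_{\OL}-)$ via Lemma \ref{loctat8}, use $\OL$-flatness of $\LamK$ (it is torsion free over the discrete valuation ring $\OL$, being an inverse limit of the free modules $\OL[\GaK/U]$) to get a short exact sequence of compact topological $G_K$-modules, and then feed this into the long exact sequence of Lemma \ref{pre6.10}; naturality of the connecting maps comes from functoriality of these constructions, as you say. (The slight vagueness about which $G_K$-action one puts on $\LamK\otimes_{\OL}V$ is harmless: it is the action fixed in Lemma \ref{loctat8}, and the maps in the tensored sequence are equivariant for it whatever the convention.)

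The one step that does not work as written is \emph{tensoring the section with $\LamK$}. A continuous set-theoretic section $s\colon V''\to V$ is not $\OL$-linear, so the functor $\LamK\otimes_{\OL}-$ cannot be applied to it; applying it ``termwise'' has no meaning for a non-additive map, and it does not induce any map $\LamK\otimes_{\OL}V''\to\LamK\otimes_{\OL}V$. The conclusion you need is nevertheless true for a softer reason: since $V''$ is finitely presented over $\OL$, one has $\LamK\otimes_{\OL}V''\cong\varprojlim_{U}\OL[\GaK/U]\otimes_{\OL}V''$ (cf.\ Remark \ref{FCKLam}), so all three terms of the tensored sequence are profinite linear-topological $\OL$-modules, and for a profinite group $B$ with closed subgroup $A$ the projection $B\twoheadrightarrow B/A$ always admits a continuous set-theoretic section; applied to $\LamK\otimes_{\OL}V\twoheadrightarrow\LamK\otimes_{\OL}V''$ this gives the required section directly, with no tensoring of maps. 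Similarly, the hypothesis that the topology of $\LamK\otimes_{\OL}V'$ is induced from $\LamK\otimes_{\OL}V$ is immediate from compactness: the transition map is a continuous injection of a compact space into a Hausdorff space, hence a topological embedding, so no appeal to closedness of submodules is needed. With these two replacements your argument is complete and agrees with the paper's.
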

	
	\begin{rem} \label{loctat10}
	Let $V, V_0  \in \RepGKOLfg$ such that $V_0$ is $\OL$-free and $G_K$ acts through
	its factor $\GaK$ on $V_0$. Then there is a natural isomorphism
	\[ \HIW^{i}(\Kin|K,V \otimes_{\OL} V_0) \cong \HIW^{i}(\Kin|K,V) \otimes_{\OL} V_0.\]
	\end{rem}
	
	\begin{rem}
	Let $V \in \RepGKOLfg$ be of finite length. Then there is an isomorphism
	\[ \HIW^{i}(\Kin|K,V) \cong H^{i}(H_K,V^{\vee}(1))^{\vee}.\]
	Note that $H_K=G_{\Kin}$.
	\begin{proof}
	From \hyperref[loctat7]{Proposition \ref*{loctat7}} we deduce
	\[ H^{i}(G_{K_n},V) \cong H^{2-i}(G_{K_n},V^{\vee}(1))^{\vee}\]
	for every $n \in \NN$. Taking projective limits gives us
	\begin{align*}
		\HIW^{i}(\Kin|K,V)&= \varprojlim H^{i}(G_{K_n},V) \\
				&= \varprojlim H^{2-i}(G_{K_n},V^{\vee}(1))^{\vee} \\
				&= \varprojlim \HomcOL ( H^{2-i}(G_{K_n},V^{\vee}(1)),L/\OL) \\
				&= \HomcOL(\varinjlim H^{2-i}(G_{K_n},V^{\vee}(1)),L/\OL) \\
				&= \HomcOL(H^{2-i}(\varprojlim G_{K_n},V^{\vee}(1)),L/\OL) \\
				&=\HomcOL(H^{2-i}(H_K,V^{\vee}(1)),L/\OL) \\
				&= H^{2-i}(H_K,V^{\vee}(1))^{\vee}.
	\end{align*}
	\end{proof}
	\end{rem}
	
	\begin{lem} $ $
	\begin{enumerate}[itemsep=0pt,topsep=0pt]
	\item $\HIW^{i}(\Kin|K,V)=0$ for $i \neq 1,2$.
	\item $\HIW^2(\Kin|K,V)$ is finitely generated as $\OL$-module.
	\item $\HIW^1(\Kin|K,V)$ is finitely generated as $\OL\llbracket \GaK \rrbracket$-module.
	\end{enumerate}
	\begin{proof}
	The proof is similar to the one of \cite[Lemma 5.12, p.\,29--30]{SV15}.
	\end{proof}
	\end{lem}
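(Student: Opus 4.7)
The plan is to first reduce to the case where $V$ has finite length over $\OL$. Writing $V_n \coloneqq V/\piL^n V$, we have $V \cong \varprojlim_n V_n$ and each $V_n$ is of finite length. Using the isomorphism $\HIW^{i}(\Kin|K,V) \cong H^{i}(G_K, \LamK \otimes_{\OL} V)$ from \hyperref[loctat8]{Lemma \ref*{loctat8}} together with the compatibility of continuous cochain cohomology with surjective inverse limits of profinite modules (via \hyperref[galc1.16]{Proposition \ref*{galc1.16}} applied to $\Cctsb(G_K, \LamK \otimes V_n)$, whose transition maps are surjective because $V_{n+1} \twoheadrightarrow V_n$ is), parts (1) and (2) will follow once they are established for all $V_n$.

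For finite length $V$, I would invoke the duality
\[ \HIW^{i}(\Kin|K,V) \cong H^{2-i}(H_K, V^{\vee}(1))^{\vee} \]
established in the preceding Remark and then use that $H_K = G_{\Kin}$ has $p$-cohomological dimension at most $1$. This last fact is well known in the Lubin-Tate setting, being a consequence of $\Kin|K$ being deeply ramified (arithmetically profinite) in the sense of Coates-Greenberg. This immediately gives $H^{j}(H_K, V^{\vee}(1)) = 0$ for $j \geq 2$, yielding $\HIW^{i}(\Kin|K,V) = 0$ for $i \leq 0$, while for $i > 2$ vanishing also follows directly from $\mathrm{cd}_p(G_K) = 2$ applied to the profinite coefficient module $\LamK \otimes V$. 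For part (2) in the finite length case, $\HIW^{2}(\Kin|K,V) \cong H^{0}(H_K, V^{\vee}(1))^{\vee}$ is the Pontrjagin dual of a submodule of the finite group $V^{\vee}(1)$, hence itself finite. Passing to the limit, the $\OL$-lengths of $\HIW^{2}(\Kin|K, V_n)$ are uniformly bounded (by the $\OL$-length of $V^{\vee}(1)$ for a torsion-free lift), so $\HIW^{2}(\Kin|K,V)$ is finitely generated over $\OL$ in general.

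For part (3), I plan to apply topological Nakayama's lemma for the Iwasawa algebra $\LamK$: it suffices to show that $\HIW^{1}(\Kin|K, V) / \mathfrak{m} \HIW^{1}(\Kin|K, V)$ is finite-dimensional over $\kL$, where $\mathfrak{m}$ is the maximal ideal of $\LamK$. A Hochschild-Serre-type argument for the extension $H_K \triangleleft G_K$ applied to the finite discrete $G_K$-module $V/\piL V$ (where classical spectral sequences are available) relates this quotient to the group $H^{1}(G_K, V/\piL V)$, which is finite by standard local Galois cohomology of finite modules over the local field $K$.

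The main obstacle will be the reduction step for general finitely generated $V$: ensuring that $\varprojlim_n$ interchanges appropriately with $H^{i}(G_K, -)$ requires simultaneous control of Mittag-Leffler conditions in both the cochain degree and the filtration index $n$. This is precisely why the Mittag-Leffler machinery of \hyperref[galc1.15]{Proposition \ref*{galc1.15}} and \hyperref[galc1.16]{Proposition \ref*{galc1.16}} has been prepared in advance; the surjectivity of the transition maps at the level of continuous cochain complexes, combined with the finiteness statements in the finite length case, will be the key to transferring finiteness across the limit.
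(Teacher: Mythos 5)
Your overall route---d\'evissage to the finite-length case via $V=\varprojlim_n V/\piL^nV$, the duality $\HIW^{i}(\Kin|K,V_n)\cong H^{2-i}(H_K,V_n^{\vee}(1))^{\vee}$ combined with $\mathrm{cd}_p(H_K)\le 1$, Mittag-Leffler control of the limits, and Nakayama for (3)---is essentially the argument of \cite[Lemma 5.12]{SV15} that the paper invokes, and your treatment of (1) and of the torsion case is fine. But the passage from torsion to general $V$ in (2) is wrong as written: the $\OL$-lengths of $\HIW^{2}(\Kin|K,V_n)$ are \emph{not} uniformly bounded in general. Already for $V=\OL(1)$ the module $V_n^{\vee}(1)$ is the trivial $H_K$-module $\OL/\piL^n\OL$, so $\HIW^{2}(\Kin|K,V_n)\cong \OL/\piL^n\OL$ has length $n$, while $\HIW^{2}(\Kin|K,V)\cong \varprojlim_n\OL/\piL^n\OL\cong\OL$ (finitely generated, but of infinite length). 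The correct limit argument is the one of loc.\ cit.: from $0\to V\xrightarrow{\piL}V\to V/\piL V\to 0$, the $\delta$-functor property and the vanishing of $\HIW^{3}$ give an injection $\HIW^{2}(\Kin|K,V)/\piL\hookrightarrow\HIW^{2}(\Kin|K,V/\piL V)$, whose target is finite; since $\HIW^{2}(\Kin|K,V)\cong\varprojlim_n\HIW^{2}(\Kin|K,V_n)$ is a profinite, hence $\piL$-adically complete, $\OL$-module, topological Nakayama over $\OL$ yields (2).

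In (3) there is a similar, smaller gap: you want $\HIW^{1}(\Kin|K,V)/\mathfrak m$ finite, but the long exact sequence only gives an injection $\HIW^{1}(\Kin|K,V)/\piL\hookrightarrow\HIW^{1}(\Kin|K,V/\piL V)$, and coinvariants are not left exact, so finiteness of $\bigl(\HIW^{1}(\Kin|K,V/\piL V)\bigr)_{\GaK}$ (which your Hochschild--Serre/inflation--restriction argument on the dual side does give, using finiteness of $H^{1}(G_K,(V/\piL V)^{\vee}(1))$ and of $\GaK$-cohomology of finite modules) does not directly control the quotient of the submodule $\HIW^{1}(\Kin|K,V)/\piL$. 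The standard repair: first conclude by Nakayama over $\kL\llbracket\GaK\rrbracket$ that $\HIW^{1}(\Kin|K,V/\piL V)$ is finitely generated over this \emph{Noetherian} ring; hence its submodule $\HIW^{1}(\Kin|K,V)/\piL$ is finitely generated as well, so $\HIW^{1}(\Kin|K,V)/\mathfrak m\,\HIW^{1}(\Kin|K,V)$ is finite, and topological Nakayama applied to the compact $\LamK$-module $\HIW^{1}(\Kin|K,V)$ gives (3). Two minor points: the modules $\HIW^{1}(\Kin|K,V_n)$ are compact Pontrjagin duals but in general \emph{not} finite (so compactness, not finiteness, is what must be quoted for Nakayama), and $\mathrm{cd}_p(H_K)\le 1$ is most directly justified by the infinite $p$-part of $[\Kin:K]$ (e.g.\ \cite[Theorem 7.1.8]{nswo}) rather than by deep ramification.
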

	
	\begin{thm}[{\cite[Theorem 5.13]{SV15}}] \label{loctat13}
	Let $V $ be in $\RepGKOLfg$, $\tau=\chcyc\chLT^{-1}$ and $\psi=\psi_{\MAK(V(\tau^{-1}))}$. Then we
	have an exact sequence
	\[ 0 \longrightarrow \HIW^1(\Kin|K,V) \longrightarrow \MAK(V(\tau^{-1})) \overset{\psi-1}{\longrightarrow}
		\MAK(V(\tau^{-1})) \longrightarrow \HIW^2(\Kin|K,V) \longrightarrow 0,\]
	which is functorial in $V$.
	Furthermore, each occurring map is continuous and
	$\OL \llbracket \GaK \rrbracket$-equivariant.
	\end{thm}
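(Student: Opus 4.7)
The plan is to reduce to the case of finite length coefficients via a Mittag-Leffler argument, and for that case to derive the sequence from local Tate duality combined with the dual relation between $\varphi$ and $\psi$ under Pontrjagin duality of \'etale $(\varphi,\Gamma)$-modules.

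First I would handle the torsion case, i.e.\ assume $\piL^n V = 0$ for some $n$. By \hyperref[loctat7]{Proposition \ref*{loctat7}} applied to every finite subextension $K\subseteq E\subseteq K_\infty$, we obtain isomorphisms $H^i(G_E,V)\cong H^{2-i}(G_E,V^\vee(1))^\vee$. Passing $\varprojlim_E$ on the left turns into $(\varinjlim_E)^\vee$ on the right, yielding
\[ \HIW^i(\Kin|K,V)\cong H^{2-i}(H_K,V^\vee(1))^{\vee},\qquad i=1,2, \]
compatibly with $\GaK$-action. Since $V^\vee(1)$ is a finite length $\OL$-module, the Herr-type description of $H^*(H_K,-)$ (i.e.\ the part of Theorem~A using only $\varphi$, as $H_K$ is the stabilizer of $\MAK$) identifies $H^0(H_K,V^\vee(1))$ and $H^1(H_K,V^\vee(1))$ with the kernel and cokernel of $\varphi-1$ acting on $\MAK(V^\vee(1))$. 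The short exact sequence
\[ 0\to \MAK(V^\vee(1))^{\varphi=1}\to \MAK(V^\vee(1))\xrightarrow{\varphi-1}\MAK(V^\vee(1))\to H^1(H_K,V^\vee(1))\to 0 \]
is then dualized. Using \hyperref[remdual]{Remark \ref*{remdual}} with the identification $\MAK(V^\vee(1))^\vee\cong \MAK(V(\tau^{-1}))$ (obtained by applying the remark to $V(\tau^{-1})$ and noting $\tau\chLT=\chcyc$), the dual of $\varphi-1$ is precisely $\psi-1$ on $\MAK(V(\tau^{-1}))$. This produces the desired four-term exact sequence for torsion $V$.

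Second I would check that the dualization really gives an exact sequence of topological $\OL$-modules. Here the main technical obstacle is to verify that $\varphi-1$ on $\MAK(V^\vee(1))$ has closed image and is topologically strict; this is forced because $\MAK(V^\vee(1))$ is a finite, discrete $\OL$-module (as $V^\vee(1)$ is), so the weak topology on $\MAK(V^\vee(1))$ is actually discrete and every $\OL$-linear map is automatically strict with closed image. Pontrjagin duality of locally compact linear-topological $\OL$-modules (\hyperref[pontrjdual]{Proposition \ref*{pontrjdual}}) then preserves the exact sequence, and continuity of all occurring maps together with $\Lambda_K$-equivariance is clear from the construction of $\psi_{\MAK(-)}$ via the trace and from the fact that the identification of \hyperref[remdual]{Remark \ref*{remdual}} is itself $\GaK$-equivariant.

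Third I would pass from the torsion case to general $V\in\RepGKOLfg$ by setting $V_n=V/\piL^n V$ and applying $\varprojlim_n$ to the exact sequence
\[ 0\to \HIW^1(\Kin|K,V_n)\to \MAK(V_n(\tau^{-1}))\xrightarrow{\psi-1}\MAK(V_n(\tau^{-1}))\to \HIW^2(\Kin|K,V_n)\to 0. \]
The system $(\MAK(V_n(\tau^{-1})))_n$ has surjective transition maps, so it is Mittag-Leffler; the same holds for the $\HIW^i(\Kin|K,V_n)$ by standard arguments using the compactness of these finitely generated $\OL$-modules respectively $\Lambda_K$-modules. Applying \hyperref[galc1.15]{Proposition \ref*{galc1.15}} kills all $\varprojlim^1$ terms, so the limit sequence is exact, and the identifications $\varprojlim_n \MAK(V_n(\tau^{-1}))=\MAK(V(\tau^{-1}))$ and $\varprojlim_n\HIW^i(\Kin|K,V_n)=\HIW^i(\Kin|K,V)$ deliver the theorem. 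The hard part here is the vanishing of the relevant $\varprojlim^1$ terms for $\HIW^i$; I would obtain it by exhibiting the inverse system as a system of finitely generated modules over the Noetherian ring $\Lambda_K$ with surjective (or at least Mittag-Leffler) transition maps, arguing as in \cite[Lemma 5.12]{SV15}. Functoriality in $V$ is inherited at each stage from the functoriality of $\MAK$, of local Tate duality, and of the Pontrjagin dual.
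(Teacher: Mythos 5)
The paper itself offers no proof of this statement: it is imported verbatim from \cite[Theorem 5.13]{SV15}, and your outline is essentially the proof given there (torsion case via local Tate duality combined with the $\varphi$/$\psi$ duality of Remark \ref{remdual}, then a limit argument over $V/\piL^nV$). So the strategy is the right one, but your Step 2 contains a genuine error that breaks the torsion case as you have justified it. You claim that $\MAK(V^{\vee}(1))$ is ``a finite, discrete $\OL$-module (as $V^{\vee}(1)$ is)'', so that strictness of $\varphi-1$ comes for free. This is false: for $V^{\vee}(1)$ of finite length, $\MAK(V^{\vee}(1))=(\AAA\otimes_{\OL}V^{\vee}(1))^{H_K}$ is a finitely generated \emph{torsion $\AAK$-module}, hence (noncanonically) a finite direct sum of modules $\AAK/\piL^{n_i}\AAK$; already $\AAK/\piL\AAK\cong\EEK$ is a local field of characteristic $p$, infinite, and its weak (quotient) topology is the $\omega$-adic one, which is locally compact but nowhere near discrete. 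Consequently the hypothesis of the dualization remark following Proposition \ref{pontrjdual} --- that the maps be topologically strict with closed image --- is exactly the nontrivial point, and your argument supplies nothing for it.

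To repair this you need a real input, not a finiteness reflex: for instance, show that $\varphi-1$ restricts to a bijection on a sufficiently deep open lattice of $\MAK(V^{\vee}(1))$ (this is the content of the analogues of Lemma \ref{galc1.1}, parts 2 and 3, at the level of $\left(\omphi^{n}\AAAp\otimes_{\OL}V^{\vee}(1)\right)^{H_K}$, compare Proposition \ref{galc1.11}); then the image of $\varphi-1$ contains an open subgroup, hence is open and closed, the cokernel is discrete, and strictness follows from the open mapping theorem for $\sigma$-compact locally compact groups, after which Pontrjagin duality and Remark \ref{remdual} do produce the $\psi$-sequence for torsion $V$. The same caveat infects your Step 3: since the modules $\MAK(V_n(\tau^{-1}))$ and the images of $\psi-1$ are only locally compact (not finite, not compact), the vanishing of the relevant $\varprojlim^1$ terms is not ``standard by compactness'' across the board --- it is fine for the systems $\HIW^{i}(\Kin|K,V_n)$, which are compact as finitely generated $\Lambda_K$- respectively $\OL$-modules, but for the intermediate image terms you again need openness/closedness of the image (i.e.\ the strictness statement above) or the finiteness results of \cite[Lemma 5.12]{SV15} to run the Mittag--Leffler argument. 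With those two points supplied, your proof matches the one in \cite{SV15} that the paper relies on.
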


\begin{rem}
A version in the derived category is shown in \emph{Proposition \ref{iwpsi}} and, unfortunately, is not a direct consequence of this theorem, of course.
\end{rem}

\section{Galois cohomology in terms of Lubin-Tate $(\varphi,\Gamma)$-modules}
\label{galco}

	We keep the notation from \hyperref[chpgm]{Section \ref*{chpgm}}. Recall from
	\hyperref[elsepresa]{Theorem \ref*{elsepresa}}
	(resp. from \cite[p.\,113-114]{schn1}) that $\EELsep$ is the residue class field of $\AAA$
	and $\EELsepp$ is the residue class field of $\AAAp$.

	\subsection{Description with $\varphi$} 

	The goal of this subsection is to compute Galois cohomology from the generalized
	$\varphi$-Herr complex, which is related to $\phKL$ and $\GaK$.

	\renewcommand{\theenumi}{\emph{\arabic{enumi}.}}

	\begin{lem} \label{galc1.1} $ $
	\begin{enumerate}[itemsep=0pt,topsep=0pt]
	\item The following sequences are exact:
	\[ \begin{xy} \xymatrix{
		0 \ar[r] & \OL \ar[r] & \AAA \ar[rr]^-{\Fr-\id} & & \AAA \ar[r] &0. \\
		0 \ar[r] & \OL \ar[r] & \AAAp \ar[rr]^-{\Fr-\id} & & \AAAp \ar[r] &0.
	} \end{xy} \]
	\item Let $E \mid L$ be a finite extension. For every $n \in \NN$ the maps
	\[ \begin{xy} \xymatrix{
		&\phEL- \id \colon \omphi^n\EEp_E \ar[r] & \omphi^n \EEp_E, \\
		&\Fr-\id \colon  \omphi^n \EELsepp \ar[r] & \omphi^n\EELsepp
	} \end{xy} \]
	are isomorphisms.
	\item For every $n \in \NN$ the map
	\[ \begin{xy} \xymatrix{
	&\Fr - \id \colon \omphi^n\AAAp \ar[r] & \omphi^n\AAAp
	} \end{xy} \]
	is an isomorphism.
	\end{enumerate}
	\begin{proof} $ $ \renewcommand{\theenumi}{{\arabic{enumi}.}}
	\begin{enumerate}[itemsep=0pt,topsep=0pt]
	\item We start with the sequence
	\[ \begin{xy} \xymatrix{
		0 \ar[r] & \kL \ar[r] & \EELsep \ar[rr]^-{x \mapsto x^{q_L}-x} & & \EELsep \ar[r] & 0,
	} \end{xy} \]
	and claim that it is exact. Recall that $\Fr(x) \equiv x^{q_L} \bmod \piL$ holds for all $x \in \AAA$
	by definition. The inclusion $\OL \hookrightarrow \AAA$ induces the
	inclusion $\kL \hookrightarrow \EELsep$ and we have
	\[ \ker(\Fr- \id) = \{ x \in \EELsep \mid x^{q_L} - x\} = \kL.\]
	It remains to check that $\Fr-\id$ is surjective on $\EELsep$. But since the polynomial \linebreak
	$X^{q_L}-X-\alpha$ is separable for every $\alpha \in \EELsep$ and $\EELsep$ is separably closed
	by definition this follows immediately.\\
	Now suppose that the sequence
	\[ \begin{xy} \xymatrix{
		0 \ar[r] & \OL/\piL^n\OL \ar[r] & \AAA/\piL^n\AAA \ar[rr]^-{\phL-\id} & &
		\AAA/\piL^n\AAA \ar[r] & 0
	} \end{xy} \]
	is exact for $n \geq 1$ and consider the following commutative diagram
	\[ \begin{xy} \xymatrix{
		0 \ar[r] & \OL/\piL^n\OL \ar[r] & \AAA/\piL^n\AAA \ar[rr]^-{\phL-\id} & &
		\AAA/\piL^n\AAA \ar[r] & 0 \\
		0 \ar[r] & \OL/\piL^{n+1}\OL \ar[r]  \ar@{->>}[u]& \AAA/\piL^{n+1}\AAA
		\ar[rr]^-{\phL-\id} \ar@{->>}[u]& & \AAA/\piL^{n+1}\AAA \ar[r] \ar@{->>}[u] & 0.
	} \end{xy} \]
	Our aim is to show that the second sequence is exact. The kernel of the homomorphism
	$\OL \hookrightarrow \AAA \twoheadrightarrow \AAA/\piL^{n+1}\AAA$ is $\piL^{n+1}\OL$, i.e.
	we have exactness at the first position. Since we have $\phL(x) = x$ for all $x \in \OL$, we also have
	$\OL/\piL^{n+1}\OL \subseteq \ker(\Fr-\id)$. So let $x \in \AAA$ such that $\Fr(x) - x \equiv 0 \bmod
	\piL^{n+1} \AAA$. Then we also have $\Fr(x) - x \equiv 0 \bmod \piL^{n} \AAA$ and because the first
	sequence is exact, we obtain a $y \in \OL$ such that $y \equiv x \bmod \piL^n \AAA$. Then there is
	an $\alpha \in \AAA$ such that $x-y = \piL^n\alpha$, especially we have
	$x-y \equiv \piL^n\alpha \bmod \piL^{n+1}\AAA$. Since $\Fr(X) \equiv X^{\qL} \bmod \piL$
	we get $\Fr(\alpha) \equiv \alpha^{\qL} \bmod \piL\AAA$ and
	therefore \linebreak $\Fr(\piL^n \alpha) \equiv \piL^n \alpha^{\qL} \bmod \piL^{n+1}\AAA$ since $\Fr$
	is $\OL$-linear. Then we also get
	\[ 0 \equiv (\Fr-\id)(x-y) \equiv (\Fr-\id)(\piL^n\alpha) \equiv \piL^n(\alpha^{\qL} - \alpha)
	\bmod \piL^{n+1}\AAA.\]
	Since $\AAA$ is a domain, this then implies $\alpha^{\qL} \equiv \alpha \bmod \piL\AAA$. Since the
	sequence in question is exact for $n=1$ by the start of the proof, we then get a
	$z \in \OL$ such that $z \equiv \alpha \bmod\piL \AAA$, i.e. it exists $\beta \in \AAA$ such that
	$\alpha = z +\piL\beta$. We then get
	\[ x \equiv y + \piL^n\alpha \equiv y + \piL^n(z+\piL\beta) \equiv y + \piL^nz \bmod \piL^{n+1} \AAA,\]
	i.e. $\ker(\Fr-\id) \subseteq \OL/\piL^{n+1}\OL$.\\
	It remains to check that $\Fr -\id$ is surjective on $\AAA/ \piL^{n+1} \AAA$. So let $x \in \AAA$.
	Because $\Fr - \id$ is surjective on $\AAA /\piL^n \AAA $ we get a $y \in \AAA$ such that
	$\phL(y) - y \equiv x \bmod \piL^n \AAA$. As before there is now an $\alpha \in \AAA$ such that
	$\phL(y) - y \equiv x + \piL^n \alpha \bmod \piL^{n+1} \AAA$. Again, since the sequence for $n=1$ is
	exact we can find $z \in \AAA$ such that \linebreak $\phL(z)-z \equiv \alpha \bmod \piL\AAA$ and therefore
	we can find $\beta \in \AAA$ such that \linebreak $\phL(z)-z + \piL\beta = \alpha$. We then get
	\begin{align*}
	\Fr(y-\piL^nz)-(y-\piL^nz) &= \Fr(y)-y - \piL^n(\Fr(z)-z) \\&\equiv x + \piL^n\alpha - \piL^n\alpha +
	\piL^{n+1}\beta \equiv x \bmod \piL^{n+1}\AAA,
	\end{align*}
	i.e. $y-\piL^nz$ is $\bmod \piL^{n+1}\AAA$ a preimage of $x$ under $\phL-\id$.\\
	Since the transition maps $\OL/\piL^{n+1}\OL \to \OL/\piL^n\OL$ are surjective, the inverse system
	$(\OL/\piL^n\OL)_n$ is a Mittag-Leffler System and therefore we have
	$\varprojlim^1 \OL/\piL^n\OL=0$ (cf.\ \hyperref[galc1.13]{Remark \ref*{galc1.13}}). By taking
	the inverse limit of the sequence
	\[ \begin{xy} \xymatrix{
		0 \ar[r] & \OL/\piL^n\OL \ar[r] & \AAA/\piL^n\AAA \ar[rr]^-{\Fr-\id} & &
		\AAA/\piL^n\AAA \ar[r] & 0
	} \end{xy} \]
	we then get the exact sequence
	\[ \begin{xy} \xymatrix{
		0 \ar[r] & \OL \ar[r] & \AAA \ar[rr]^-{\Fr-\id} & &
		\AAA \ar[r] & 0.
	} \end{xy} \]
	
	The proof of the exactness of the second sequence is similar to the prove above. Just replace
	$\EELsep$ by $\EELsepp$ which is the separable closure of $\EEp_L$
	in $\EELsep$.
	\item As before we have $\Fr(x) = x^{q_L}$ for all $x \in \EELsep$. Especially this equation holds for
	elements in $\EELsepp$ and $\EEp_E$. The injectivity of the above maps then is easy
	to see: \\
	Let $0 \neq x \in \omphi^n\EELsepp$. So, in particular we have $\deg_{\omphi}(x) \geq n >0$ and therefore also
	$\deg_{\omphi}(\Fr(x)) > \deg_{\omphi} (x)$, i.e. $\Fr(x) - x \neq 0$ and so $\Fr-\id$ is injective
	on $\omphi^n\EEp$.
	Because of $\EEp_E\subseteq \EELsep$ the homomorphism $\phEL-\id$ is also injective on $\omphi^n \EEp_E$.\\
	For the surjectivity let $\alpha$ be an element of $\omphi^n\EELsepp$ or of $\omphi^n\EEp_E$. Then
	the series $(\Fr(\alpha)^{i})_i$ converges to zero and therefore
	\[ \beta:=\sum_{i=0}^{\infty} - \Fr(\alpha)^{i} \]
	is also an element of $\omphi^n\EELsepp$ or of $\omphi^n\EEp_E$ and clearly is a preimage of
	$\alpha$ under $\Fr-\id$.
	\item Let $n,l \in \NN$ be fixed and note that there is a canonical identification
	\[ (\omphi^n\AAAp)/(\piL^l\omphi^n\AAAp)\cong \omphi^n(\AAAp/\piL^l\AAAp)\]
	since $\omphi^n$ is not a zero divisor in both $\AAAp$ and $\AAAp/\piL^l\AAAp$. Now assume that
	\[ \begin{xy} \xymatrix{
		&\Fr-\id \colon \omphi^n (\AAAp/\piL^k \AAAp) \ar[r] & \omphi^n(\AAAp/\piL^k\AAAp)
	}\end{xy}\]
	for all natural numbers $k \leq l$ is an isomorphism. Note that we just proved this for $l=1$. Consider the
	commutative diagram:
	\[ \begin{xy} \xymatrix{
		&\Fr-\id \colon \omphi^n (\AAAp/\piL^l \AAAp) \ar[r] & \omphi^n(\AAAp/\piL^l\AAAp) \\
		&\Fr-\id \colon \omphi^{n} (\AAAp/\piL^{l+1} \AAAp) \ar[r] \ar@{->>}[u]
		& \omphi^{n}(\AAAp/\piL^{l+1}\AAAp) \ar@{->>}[u]
	}\end{xy}\]
	Our aim is to show, that the latter horizontal homomorphism is also an isomorphism.\\
	Let $x \in \AAAp$ such that $\omphi^n x \not\equiv 0 \bmod \piL^{l+1}\AAAp$. The degree $n$-term
	(with respect to $\omphi$) of
	$\Fr(\omphi^nx)-\omphi^nx$ is $\omphi^n(\piL-1)x$ and therefore it is unequal to zero modulo
	$\piL^{l+1}$. To see this, we assume $\omphi^n(\piL-1)x \equiv 0 \bmod \piL^{n+1}$ and let $j$
	be the smallest integer such that $2^j \geq n+1$ and multiply this congruence with
	$(1+\piL)(1+\piL^2)\cdots(1+\piL^{2^{j-1}})$. Then we get
	\[ 0 \equiv \omphi^n(\piL^j-1)x \equiv - \omphi^n x \bmod \piL^{n+1}\AAAp\]
	what we excluded, i.e. it has to be $\omphi^n(\piL-1)x \not\equiv 0 \bmod \piL^{n+1}\AAAp$ and therefore
	$\Fr-\id$ is injective on $\omphi^n(\AAAp/\piL^{l+1} \AAAp)$.\\
	Let $x \in \omphi^n\AAAp$. Then there exists $y \in \omphi^n\AAAp$ such that
	$\phL(y)-y \equiv x \bmod \piL^l\AAAp$ (because we
	assumed the surjectivity for all values $\leq l$), i.e.
	there exists $\alpha \in \omphi^n\AAAp$ such that $\Fr(y)-y=x+\piL^l\alpha$. Then again there exists
	$\beta \in \omphi^n\AAAp$ such that $\Fr(\beta)-\beta \equiv \alpha \bmod \piL$, i.e. there
	exists some $\eta \in \omphi^n\AAAp$ such  that $\Fr(\beta)-\beta=\alpha + \piL\eta$. We then get
	\begin{align*} (\Fr-\id)(y -\piL^l\beta) & = (\Fr-\id)(y) - \piL^l(\Fr-\id)(\beta) \\
		&= x + \piL^l\alpha - \piL^l(\alpha+\piL\eta) \equiv x \bmod \piL^{l+1}\AAAp,
	\end{align*}
	i.e. the map $\Fr-\id$ is surjective on $\omphi^n(\AAAp/\piL^{l+1} \AAAp)$. Since these maps are all
	isomorphisms, passing to the projective limit gives that the map $\Fr-\id$ is an isomorphism on
	$\omphi^n\AAAp$
	\end{enumerate}
	\end{proof}
	\end{lem}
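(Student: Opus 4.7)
The three claims are closely linked: part 2 is essentially a statement at the residue field level (after noting $\omphi$ reduces to a uniformizer of $\EELsep$), part 1 is obtained by lifting the mod-$\piL$ exactness, and part 3 lifts the integral version of part 2 in the same way. So I would prove them in the order 2, 1, 3.

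For part 2, my strategy is to exploit the degree-raising behaviour of Frobenius on power series in $\omphi$. Since $\Fr(\omphi) = [\piL]_\phi(\omphi) \equiv \omphi^{q_L} \bmod \piL$, if $x \in \omphi^n \EELsepp \setminus \{0\}$ has $\omphi$-adic valuation $v(x) = n' \geq n \geq 1$, then $v(\Fr(x)) = q_L n' > n'$, so $\Fr(x) \neq x$, giving injectivity. For surjectivity the series $\beta = -\sum_{i\geq 0} \Fr^i(\alpha)$ converges in the $\omphi$-adic topology (the summands have strictly increasing valuation) and is a preimage of $\alpha$ under $\Fr - \id$. The same argument, restricted to $\omphi^n \EEp_E$, handles the $\phEL$ case since $\phEL$ agrees with $\Fr$ on this subring.

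For part 1, I first establish exactness of
\[ 0 \longrightarrow \kL \longrightarrow \EELsep \xrightarrow{x\mapsto x^{q_L}-x} \EELsep \longrightarrow 0: \]
the kernel is clearly $\kL$, and the separable polynomial $X^{q_L}-X-\alpha$ has a root in the separable closure $\EELsep$, giving surjectivity. Then I induct on $n \geq 1$ to prove exactness of
\[ 0 \longrightarrow \OL/\piL^n \longrightarrow \AAA/\piL^n \AAA \xrightarrow{\Fr - \id} \AAA/\piL^n \AAA \longrightarrow 0. \]
The induction step uses the snake lemma on the short exact sequences $0 \to \AAA/\piL \to \AAA/\piL^{n+1} \to \AAA/\piL^n \to 0$ together with the base case. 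Finally, since the transition maps on $\OL/\piL^n$ are surjective, this system is Mittag-Leffler, so $\varprojlim^1 = 0$ and taking inverse limits yields exactness for $\AAA$ itself. The statement for $\AAAp$ is identical once one observes that $\EELsepp$ (the separable closure of $\EEp_L$ inside $\EELsep$) is still separably closed enough to carry out the base case, which reduces to solving $x^{q_L}-x = \alpha$ for $\alpha$ integral — but this lands in $\EELsepp$ by integrality of the roots.

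For part 3, I adapt the proof of part 1 but restrict to $\omphi^n \AAAp$. The mod-$\piL$ base case is exactly part 2 applied to $\EELsepp$. The delicate point is injectivity on $\omphi^n(\AAAp/\piL^{l+1}\AAAp)$: if $x \in \omphi^n \AAAp$ and $(\Fr-\id)(x) \in \piL^{l+1}\AAAp$, I need to show $x \in \piL^{l+1}\AAAp$. Looking at the lowest-degree term (in $\omphi$) of $x$ modulo $\piL^{l+1}$, say $a\omphi^m$ with $m \geq n$, the $\omphi^m$-term of $\Fr(x) - x$ is $(\Fr(a) - a) \omphi^m$ plus contributions of strictly higher $\omphi$-degree coming from $\Fr(\omphi) = \omphi^{q_L} + \piL(\cdots)$; the key observation is that $\Fr$ on $\omphi$ introduces a factor $(\piL - 1)$ at the leading term after one applies the telescoping trick with $(1+\piL)(1+\piL^2)\cdots$. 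The hard part will be making this degree/leading-coefficient bookkeeping rigorous inside $\AAAp$, which is not literally a power series ring but only a completion thereof — I would reduce to $W(\EELsepp)_L$ via Witt components so that the $\omphi$-expansion is well defined. Once injectivity is secured, surjectivity follows by the usual inductive lifting: solve mod $\piL^l$, then correct the error using surjectivity of $\Fr - \id$ on $\omphi^n \EELsepp$ (part 2). Passage to the inverse limit over $l$ then gives the statement for $\omphi^n \AAAp$.
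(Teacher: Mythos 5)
Parts 1 and 2 of your proposal are correct and essentially follow the paper's route: the same residue-field base case, the same induction over $\AAA/\piL^n\AAA$ followed by a Mittag--Leffler passage to the limit (your packaging of the induction step via the snake lemma plus a length count is a harmless variant of the paper's explicit element chase), and the same valuation/geometric-series argument for part 2.

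The genuine gap is in part 3, at the injectivity step of the induction on $l$. Your argument rests on extracting ``the lowest-degree term (in $\omphi$) of $x$ modulo $\piL^{l+1}$'', but elements of $\AAAp$ admit no such expansion: $\AAAp=\AAA\cap W(\EELsepp)_L$, where $\AAA$ is the $\piL$-adic completion of the maximal unramified extension $\AALnr$ of $\AAL$, and this ring is not a power or Laurent series ring in $\omphi$ over any coefficient ring. Passing to Witt components, as you suggest, produces the coordinates of $x$ in $W(\EELsepp)_L$, not an $\omphi$-expansion, so ``the $\omphi^m$-term of $\Fr(x)-x$'' is not defined; and the appeal to a factor $(\piL-1)$ together with the telescoping product $(1+\piL)(1+\piL^2)\cdots$ is a gesture at a computation rather than an argument. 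You flag this yourself as ``the hard part'', so the finite-level isomorphisms on $\omphi^n(\AAAp/\piL^{l}\AAAp)$ that your passage to $\varprojlim_l$ requires are simply not established.

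The step can be repaired without any leading-term bookkeeping, by d\'evissage. Suppose $x\in\omphi^n\AAAp$ has $(\Fr-\id)(x)\in\piL^{l+1}\omphi^n\AAAp$. Reducing modulo $\piL^{l}$ and applying the inductive injectivity gives $x\in\piL^{l}\omphi^n\AAAp$, say $x=\piL^{l}x'$ with $x'\in\omphi^n\AAAp$ (here one uses the identification $(\omphi^n\AAAp)/(\piL^{l}\omphi^n\AAAp)\cong\omphi^n(\AAAp/\piL^{l}\AAAp)$, $\omphi$ being a non-zero-divisor). Since $\piL$ is a non-zero-divisor in $\AAA$, from $\piL^{l}(\Fr-\id)(x')\in\piL^{l+1}\omphi^n\AAAp$ we get $(\Fr-\id)(x')\in\piL\omphi^n\AAAp$, and the case $l=1$, which is exactly your part 2 applied to $\omphi^n\EELsepp\cong\omphi^n(\AAAp/\piL\AAAp)$, forces $x'\in\piL\omphi^n\AAAp$, hence $x\in\piL^{l+1}\omphi^n\AAAp$, i.e.\ the class of $x$ vanishes. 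Combined with your error-correction argument for surjectivity and $\omphi^n\AAAp\cong\varprojlim_l\omphi^n(\AAAp/\piL^{l}\AAAp)$, this completes part 3 along the same finite-level induction that the paper uses.
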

	
	\begin{cor} \label{galc1.2}
	For every $n \in \NN$ the following sequence is exact:
	\[ \begin{xy} \xymatrix{
		0 \ar[r] & \OL \ar[r] & \AAA/\omphi^n\AAAp \ar[rr]^-{\Fr-\id} & & \AAA/\omphi^n \AAAp \ar[r] &0.
	} \end{xy} \]
	\begin{proof}
	In \hyperref[galc1.1]{Lemma \ref*{galc1.1}} we showed that
	\[ \begin{xy} \xymatrix{
		0 \ar[r] & \OL \ar[r] & \AAA \ar[rr]^-{\Fr-\id} & & \AAA \ar[r] &0.
	} \end{xy} \]
	is an exact sequence and that
	\[ \begin{xy} \xymatrix{
		&\Fr-\id \colon  \omphi^n \AAAp \ar[r] & \omphi^n \AAAp
	} \end{xy} \]
	is an isomorphism for every $n \in \NN$. Since every element of the image of
	$\OL \hookrightarrow \AAA$ has degree $0$ (with respect to $\omphi$) the homomorphism
	$\OL \to \AAA/\omphi^n \AAAp$ is still injective. Since $\Fr$ fixes $\OL$ it is clear that we have
	$\OL \subseteq \ker(\Fr-\id)$. For the other inclusion let $x \in \ker(\Fr-\id)$. Then there exists
	an $\alpha \in \AAA$ such that $\alpha \bmod \omphi^n\AAAp = x$ and
	$\Fr(\alpha)-\alpha \in \omphi^n\AAAp$. But since $\Fr-\id$ is an isomorphism
	on $\omphi^n\AAAp$ there exists also a $\beta \in \omphi^n\AAAp \subseteq \AAA$ such that
	$\Fr(\beta)-\beta=\Fr(\alpha)-\alpha$.  Because of the exactness of
	\[ \begin{xy} \xymatrix{
		0 \ar[r] & \OL \ar[r] & \AAA \ar[rr]^-{\Fr-\id} & & \AAA \ar[r] &0
	} \end{xy} \]
	it then exists $ \eta \in \OL$ such that $\eta = \alpha-\beta$. This implies
	$\eta \equiv \alpha \bmod \omphi^n\AAAp$, i.e. $\eta = x$ which means $\ker(\Fr-\id) \subseteq \OL$.
	This proves the exactness in the middle. For the surjectivity of $\Fr-\id$ recall that
	$\AAA \twoheadrightarrow \AAA/\omphi^n\AAAp$ and
	$\Fr-\id \colon \AAA \to \AAA$ are surjective and consider the commutative diagram
	\[ \begin{xy} \xymatrix{
		&\AAA \ar@{->>}[rr]^{\Fr-\id} \ar@{->>}[d] & & \AAA \ar@{->>}[d] \\
		& \AAA/\omphi^n\AAAp \ar[rr]_{\Fr-\id} & & \AAA/\omphi^n\AAAp.
	} \end{xy} \]
	This implies that the homomorphism
	$\Fr-\id\colon \AAA/\omphi^n\AAAp \to \AAA/\omphi^n\AAAp$ is also surjective.
	\end{proof}
	\end{cor}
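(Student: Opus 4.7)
The plan is to deduce the corollary from \hyperref[galc1.1]{Lemma \ref*{galc1.1}} by a snake lemma argument applied to the short exact sequence
\[ 0 \longrightarrow \omphi^n\AAAp \longrightarrow \AAA \longrightarrow \AAA/\omphi^n\AAAp \longrightarrow 0 \]
endowed with the vertical endomorphism $\Fr-\id$ acting on each term. This makes the diagram commute since $\Fr$ preserves $\omphi^n\AAAp$ (it sends $\omphi$ to $[\piL]_\phi(\omphi) \in \omphi\AAAp$). The snake lemma then yields the exact sequence
\[ 0 \to \ker(\Fr-\id)|_{\omphi^n\AAAp} \to \ker(\Fr-\id)|_{\AAA} \to \ker(\Fr-\id)|_{\AAA/\omphi^n\AAAp} \to \coker(\Fr-\id)|_{\omphi^n\AAAp} \to \cdots \]

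First I would invoke \hyperref[galc1.1]{Lemma \ref*{galc1.1}}(3) to see that the left vertical $\Fr-\id$ is an isomorphism, so its kernel and cokernel both vanish. Then \hyperref[galc1.1]{Lemma \ref*{galc1.1}}(1) says the middle vertical $\Fr-\id$ has kernel $\OL$ and is surjective. Plugging these into the snake sequence collapses it to
\[ 0 \longrightarrow \OL \longrightarrow \ker(\Fr-\id)|_{\AAA/\omphi^n\AAAp} \longrightarrow 0 \]
together with $\coker(\Fr-\id)|_{\AAA/\omphi^n\AAAp} = 0$. This identifies the kernel of $\Fr-\id$ on $\AAA/\omphi^n\AAAp$ with the image of $\OL$ and gives surjectivity of $\Fr-\id$ on the quotient.

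It remains to check that the composite $\OL \hookrightarrow \AAA \twoheadrightarrow \AAA/\omphi^n\AAAp$ is still injective. This is immediate because any element of $\OL$ is a constant in $\omphi$, hence lies in $\omphi^n\AAAp$ only if it is zero (using that $\AAAp = \OL\llbracket\omphi\rrbracket$-like structure so $\omphi^n\AAAp \cap \OL = 0$; more intrinsically, elements of $\omphi^n\AAAp$ reduce to $0$ in the residue class field quotient $\EELsep/\omega^n\EELsepp$ while nonzero elements of $\OL$ do not, for $n \geq 1$).

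No step looks like a real obstacle: the content is entirely packaged in \hyperref[galc1.1]{Lemma \ref*{galc1.1}}, and the only mild subtlety is making sure the top-left injection survives the passage to the quotient, which is a degree-in-$\omphi$ observation. I would present the proof essentially as a snake-lemma one-liner plus this small verification.
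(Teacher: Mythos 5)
Your proposal is correct and in substance identical to the paper's argument: both rest on exactly the two inputs from Lemma \ref{galc1.1} (exactness of $0 \to \OL \to \AAA \xrightarrow{\Fr-\id} \AAA \to 0$ and the bijectivity of $\Fr-\id$ on $\omphi^n\AAAp$), the paper merely carrying out by hand the element chase that your snake-lemma invocation packages automatically (note also that injectivity of $\OL \to \AAA/\omphi^n\AAAp$ already falls out of the snake sequence since $\ker(\Fr-\id|_{\omphi^n\AAAp})=0$, so your extra degree check is harmless but redundant).
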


	\begin{lem} \label{galc1a.3}
	Let $A|\AAA_L$ be a finite, unramified extension. Then, for every $m \in \NN$, the canonical
	projection $A/\piL^{m+1} A \to A/\piL^m A$ has a
	continuous, set theoretical section with respect to the weak topology on $A$.
	\begin{proof}
	From \hyperref[finAL]{Proposition \ref*{finAL}} we deduce that
	\[ A \cong \varprojlim_n \OO_E/\piL^n \OO_E((X)) \]
	for some finite, unramified extension $E|L$. Therefore we have
	\[ A/\piL^m A = \OO_E/\piL^m\OO_E((X))\]
	for every $m \in \NN$. Therefore it is enough to give a continuous set theoretical section of the
	canonical projection  $\OO_E/\piL^{m+1}\OO_E((X)) \to \OO_E/\piL^m\OO_E((X))$ with respect
	to the $X$-adic topology.
	Since the $\OO_E/\piL^m\OO_E$ are finite discrete, there exists for
	every $m \in \NN$ a continuous map
	\[ \begin{xy} \xymatrix{
		& \iota_m\colon \OO_E/\piL^m\OO_E \ar[r] & \OO_E/\piL^{m+1}\OO_E
	} \end{xy} \]
	which is a set theoretical section of the canonical projection. We then define a map
	\[ \begin{xy} \xymatrix@R-2pc{
		& \alpha_m\colon \OO_E/\piL^m\OO_E((X)) \ar[rr] & & \OO_E/\piL^{m+1}\OO_E ((X)), \\
			&\sum_{i >> -\infty} \lambda_i X^{i} \ar@{|->}[rr] & & \sum_{i >>-\infty} \iota_m(\lambda_i) X^{i}.
	} \end{xy} \]
	This then clearly is a set theoretical section of the canonical projection. We have to check continuity.\\
	So let $f \in \OO_E/\piL^{m+1} \OO_E ((X))$ and $n \in \NN_0$. If then
	$\alpha_m^{-1}(f + X^n \OO_E/\piL^{m+1} \OO_E \llbracket X \rrbracket)$ is empty, there is nothing to prove.
	So assume there is $g \in \alpha_m^{-1}(f + X^n \OO_E/\piL^{m+1} \OO_E \llbracket X \rrbracket)$ and let
	$h \in X^n \OO_E/\piL^{m} \OO_E \llbracket X \rrbracket$. Then $g$ and $g+h$ coincide in degrees
	$< n$ and therefore, by definition, also $\alpha_m(g)$ and $\alpha_m(g+h)$ coincide in
	degrees $< n$, i.e.
	\[ \alpha_m(g+h) \in \alpha_m(g) + X^n \OO_E/\piL^{m+1} \OO_E \llbracket X \rrbracket
			= f+ X^n \OO_E/\piL^{m+1} \OO_E \llbracket X \rrbracket\]
	since $\alpha_m(g) \in f+ X^n \OO_E/\piL^{m+1} \OO_E \llbracket X \rrbracket$. It then follows
	\[  g + X^n \OO_E/\piL^{m} \OO_E \llbracket X \rrbracket
		\subseteq \alpha_m^{-1}(f + X^n \OO_E/\piL^{m+1} \OO_E \llbracket X \rrbracket)\]
	and therefore that $\alpha_m$ is continuous.
	\end{proof}
	\end{lem}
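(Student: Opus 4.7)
The plan is to reduce to an explicit model for $A$ and construct the section coefficient-wise. By Proposition \ref*{finAL}, after choosing a suitable finite unramified extension $E|L$ and an element $\nuphi$ with $\nuphi^j=\omphi$, we have $A \cong \varprojlim_n \OO_E/\piL^n\OO_E((\nuphi))$, hence $A/\piL^mA \cong \OO_E/\piL^m\OO_E((\nuphi))$. The weak topology on $A$ induces on this quotient the $\nuphi$-adic topology (this follows from Corollary \ref*{toponaak}, since modulo $\piL^m$ the fundamental neighbourhoods $\omphi^k\AAAp + \piL^m\AAA$ collapse to powers of $\nuphi$ in $\OO_E/\piL^m\OO_E((\nuphi))$). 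Thus it suffices to produce a continuous set-theoretical section of
\[ \OO_E/\piL^{m+1}\OO_E((\nuphi)) \twoheadrightarrow \OO_E/\piL^m\OO_E((\nuphi)) \]
for the $\nuphi$-adic topology.

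The key observation is that since $\OO_E/\piL^m\OO_E$ is a \emph{finite discrete} ring, any set-theoretical section $\iota_m\colon \OO_E/\piL^m\OO_E \to \OO_E/\piL^{m+1}\OO_E$ of the coefficient projection is automatically continuous (trivially so, as the source is discrete). I would then extend $\iota_m$ coefficient-wise by
\[ \alpha_m\left( \sum_{i \gg -\infty} \lambda_i \nuphi^i \right) \coloneqq \sum_{i \gg -\infty} \iota_m(\lambda_i)\nuphi^i. \]
This is clearly a set-theoretical section of the projection, since the latter itself acts coefficient-wise.

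The only substantive point is continuity of $\alpha_m$, which is immediate from the fact that $\alpha_m$ preserves $\nuphi$-degrees. Concretely, if $g, g+h$ lie in $\OO_E/\piL^m\OO_E((\nuphi))$ with $h \in \nuphi^n\OO_E/\piL^m\OO_E[[\nuphi]]$, then $\alpha_m(g)$ and $\alpha_m(g+h)$ agree in all degrees $<n$, so $\alpha_m(g+h) \in \alpha_m(g) + \nuphi^n\OO_E/\piL^{m+1}\OO_E[[\nuphi]]$. Consequently the preimage under $\alpha_m$ of any basic neighbourhood $f + \nuphi^n\OO_E/\piL^{m+1}\OO_E[[\nuphi]]$ is either empty or contains, together with each of its points $g$, the entire coset $g + \nuphi^n\OO_E/\piL^m\OO_E[[\nuphi]]$, hence is open. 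There is no real obstacle here: the entire difficulty is packaged into Proposition \ref*{finAL} and the identification of the weak topology on $A/\piL^mA$ with the $\nuphi$-adic topology; once that is in hand the coefficient-wise construction works verbatim.
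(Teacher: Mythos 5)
Your proposal is correct and follows essentially the same route as the paper's proof: identify $A/\piL^mA$ with a Laurent series ring over $\OO_E/\piL^m\OO_E$ via Proposition \ref*{finAL}, lift any set-theoretical section of the finite coefficient rings coefficient-wise, and check continuity by observing that the lift preserves agreement in low degrees. Your additional remark justifying that the weak topology on the quotient is the variable-adic topology is a point the paper leaves implicit, but otherwise the arguments coincide.
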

	
	\begin{cor} \label{galc1a.4}
	For every $m \in \NN$ the canonical projection
	$\AAA/\piL^{m+1} \AAA \to \AAA/\piL^m \AAA$ has a continuous, set theoretical section.
	\begin{proof}
	Since $\AAA$ is the $\piL$-adic completion of $\AALnr$ it is
	\[ \AAA/\piL^m\AAA = \AALnr/\piL^m \AALnr\]
	for every $m \in \NN$. Since colimits are exact it is
	\[ \AALnr/\piL^m\AALnr = \bigcup_{A|\AAA_L \text{ fin, nr}} A/\piL^mA\]
	for every $m \in \NN$ and since we have for every $A|\AAA_L$ finite and unramified and every
	$m \in \NN$ a
	continuous, set theoretical section of the canonical projection \linebreak
	$A/\piL^{m+1}A \to A/\piL^mA$ (cf.\ \hyperref[galc1a.3]{Lemma \ref*{galc1a.3}})
	this induces for every $m \in \NN$ a set theoretical section of the canonical projection
	$\AALnr/\piL^{m+1} \AALnr \to \AALnr/\piL^m\AALnr$, which then is continuous, since
	$\AALnr$ carries the topology of the colimit and then so does
	$\AALnr/\piL^m\AALnr$ for every $m \in \NN$.
	\end{proof}
	\end{cor}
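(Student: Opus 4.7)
The plan is to reduce the statement about $\AAA$ to a statement about $\AALnr$, and then to glue together the continuous sections already produced in \hyperref[galc1a.3]{Lemma \ref*{galc1a.3}} for each finite unramified extension of $\AAL$. Concretely, since $\AAA$ is by construction the $\piL$-adic completion of $\AALnr$ (cf.\ \hyperref[aaacompl]{p.\,\pageref*{aaacompl}}), we have a canonical identification $\AAA/\piL^m\AAA = \AALnr/\piL^m\AALnr$ for every $m \in \NN$. So it suffices to construct a continuous set-theoretic section of the canonical projection $\AALnr/\piL^{m+1}\AALnr \twoheadrightarrow \AALnr/\piL^m\AALnr$, where $\AALnr/\piL^m\AALnr$ carries the quotient of the colimit topology on $\AALnr = \bigcup_A A$ (the union ranging over the finite unramified extensions $A$ of $\AAL$ inside $W(\EELsep)_L$). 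Because tensoring/quotienting commutes with directed colimits, we may write
\[ \AALnr/\piL^m\AALnr \;=\; \bigcup_{A|\AAL \text{ fin, nr}} A/\piL^m A, \]
and this identification is topological in the sense of \hyperref[topcol]{Lemma \ref*{topcol}}.

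Next, \hyperref[galc1a.3]{Lemma \ref*{galc1a.3}} provides, for each such $A$ and each $m$, a continuous set-theoretic section $\alpha_{m,A}\colon A/\piL^m A \to A/\piL^{m+1}A$. The crucial point is that these sections have to be chosen so as to be compatible along the inductive system: whenever $A \subseteq A'$, we need $\alpha_{m,A'}$ to restrict to $\alpha_{m,A}$. Inspecting the construction in \hyperref[galc1a.3]{Lemma \ref*{galc1a.3}}, each $\alpha_{m,A}$ is built coefficient-wise (in the variable $\nuphi$ of \hyperref[finAL]{Proposition \ref*{finAL}}) from a chosen set-theoretic section $\iota_{m,E}\colon \OO_E/\piL^m \OO_E \to \OO_E/\piL^{m+1}\OO_E$ of the projection on the ring of integers of the (finite unramified) residue coefficient field $E$. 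Compatibility of the $\alpha_{m,A}$ thus reduces to choosing the family $(\iota_{m,E})_E$ compatibly across $E$; this can be arranged by a standard Zorn/compatible-lift argument, since the $\OO_E/\piL^m\OO_E$ are finite discrete sets and the projections are surjective with finite fibres.

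Granting this compatible choice, the sections $\alpha_{m,A}$ glue to a well-defined set-theoretic section
\[ \alpha_m\colon \AALnr/\piL^m\AALnr \;\longrightarrow\; \AALnr/\piL^{m+1}\AALnr \]
of the canonical projection. Continuity is then automatic by the universal property of the final (colimit) topology (\hyperref[topcol]{Lemma \ref*{topcol}}): since the restriction of $\alpha_m$ to each $A/\piL^m A$ is the continuous map $\alpha_{m,A}$ followed by the inclusion into $\AALnr/\piL^{m+1}\AALnr$, the map $\alpha_m$ itself is continuous, and hence so is the induced section on $\AAA/\piL^m\AAA$.

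The main obstacle to this plan is the compatibility issue across the inductive system. The construction of each individual $\alpha_{m,A}$ in \hyperref[galc1a.3]{Lemma \ref*{galc1a.3}} is essentially unconstrained (any set-theoretic lift of residue coefficients works), so nothing forces a priori that the lifts chosen for different $A$ agree on intersections. Handling this cleanly is the only real work; once one fixes a coherent system of lifts on the residue-coefficient rings $\OO_E$, everything else follows formally from the topological description of $\AALnr/\piL^m\AALnr$ as a colimit.
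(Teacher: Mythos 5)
Your argument is essentially the paper's own proof: reduce via $\AAA/\piL^m\AAA=\AALnr/\piL^m\AALnr$, describe the latter as the colimit of the $A/\piL^mA$ over the finite unramified $A|\AAL$, apply \hyperref[galc1a.3]{Lemma \ref*{galc1a.3}} on each piece, and conclude continuity from the final topology of the colimit. The compatibility of the sections along the directed system, which you isolate as the only real work, is exactly what the paper's proof passes over silently (``this induces \dots\ a set theoretical section''), and your way of arranging it (compatible coefficient lifts, e.g.\ Teichm\"uller-digit sections, chosen along a cofinal chain) is adequate and, if anything, more careful than the text.
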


	\begin{lem} \label{galc1a.5}
	Let $V \in \RepGKOLfg$, set $M := \MOEK(V)$ and $V_m:= V/\piL^mV$ as well as
	$M_m:=M/\piL^mM$ for $m \in \NN$.
	Then the transition maps of the inverse systems $(V_m)_m$, $(M_m)_m$
	and $(\AAA \otimes_{\OL} V_m)_m$ are surjective and they have a continuous, set
	theoretical section. In particular, the short sequences
	\[ \begin{xy} \xymatrix@R-2pc{
		& 0 \ar[r] & \AAA \otimes_{\OL} V_m \ar[r]^-{\id \otimes \cdot \piL} & \AAA \otimes_{\OL} V_{m+1}
		\ar[r] & \AAA \otimes_{\OL} V_1 \ar[r] & 0, \\
		&0 \ar[r] & M_m \ar[r]^-{\cdot \piL} & M_{m+1} \ar[r] & M_1 \ar[r] & 0
	} \end{xy} \]
	are exact and have continuous, set theoretical sections.
	\begin{proof}
	Since $\MOEK$ is exact as an equivalence of categories (cf.\ \hyperref[equivcat]{Theorem \ref*{equivcat}}) and
	the tensor product is right exact, it is immediately clear that the transition maps
	of the systems $(M_m)_m$ and $(\AAA \otimes_{\OL} V_m)_m$ are surjective since the transition maps
	of $(V_m)_m$ are.\\
	Since the $V_m$ are finite and discrete one can define a set theoretical section
	of the canonical projection $V_{m+1} \to V_m$ by choosing a preimage for every element in $V_m$.
	Since $M_m$ is a finitely generated $\AAK$-module, there are for every $m \in \NN$ isomorphisms of
	topological $\AAK$-modules
	\[ M_m \cong \bigoplus_{i=1}^{n^{(m)}} \AAK/\piL^{n_i^{(m)}} \AAK\]
	such that $n_i^{(m)} \leq n_{i+1}^{(m)}$ and the canonical projection $M_{m+1} \to M_m$ maps
	the $i$-th component of $\oplus_{i=1}^{n^{(m+1)}} \AAK/\piL^{n_i^{(m+1)}} \AAK$ to the $i$-th
	component of $\oplus_{i=1}^{n^{(m)}} \AAK/\piL^{n_i^{(m)}} \AAK$ for $i \geq n^{(m)}$ and is
	zero on the $i$-th component with $i > n^{(m)}$. With \hyperref[galc1a.3]{Lemma \ref*{galc1a.3}}
	we then obtain a continuous, set theoretical section for every component, which then also gives a
	continuous set theoretical section for $M_{m+1} \to M_m$.\\
	As topological $\OL$-module we have
	\[ \AAA \otimes_{\OL} V_m \cong \bigoplus_{i=0}^{k^{(m)}} \AAA/\piL^{k_i^{(m)}} \AAA\]
	and therefore we see that there exists a continuous, set theoretical section of the
	canonical projection $\AAA \otimes_{\OL} V_{m+1} \to \AAA \otimes_{\OL} V_m$ as above
	using \hyperref[galc1a.4]{Corollary \ref*{galc1a.4}} instead of \hyperref[galc1a.3]{Lemma \ref*{galc1a.3}}.\\
	The statement on the short exact sequences then follows immediately.
	\end{proof}
	\end{lem}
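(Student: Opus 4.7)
The plan is to treat the three inverse systems in turn and reduce each to applications of \hyperref[galc1a.3]{Lemma \ref*{galc1a.3}} or \hyperref[galc1a.4]{Corollary \ref*{galc1a.4}}. Surjectivity of the transition maps in all three systems will follow from the surjectivity of the maps $V_{m+1} \to V_m$: the system $(V_m)_m$ is obtained from the finitely generated $\OL$-module $V$ by reducing mod $\piL^m$, and both $\MOEK(-)$ (an equivalence of exact categories by \hyperref[equivcat]{Theorem \ref*{equivcat}}) and $\AAA \otimes_{\OL} -$ (right exact) preserve surjections.

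For the system $(V_m)_m$ the modules are finite and discrete, so any set-theoretical section of the projection $V_{m+1} \twoheadrightarrow V_m$ is automatically continuous, and such a section exists by choice of a preimage for each element. For $(M_m)_m$ and $(\AAA \otimes_{\OL} V_m)_m$ the strategy is to use that $\AAK$ and $\AAA$ are complete discrete valuation rings with prime $\piL$, so that each $M_m$ decomposes as a finite direct sum $\bigoplus_i \AAK/\piL^{n_i^{(m)}}\AAK$, and similarly $\AAA \otimes_{\OL} V_m$ decomposes as $\bigoplus_i \AAA/\piL^{k_i^{(m)}}\AAA$ (since $V_m$ itself is a direct sum of cyclic $\OL$-modules). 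One then needs to check that the transition maps are compatible with such decompositions — up to ordering the summands by the exponents $n_i^{(m)}$ — so that the problem reduces componentwise to the construction of a continuous section of $\AAK/\piL^{m+1}\AAK \to \AAK/\piL^m\AAK$ and of $\AAA/\piL^{m+1}\AAA \to \AAA/\piL^m\AAA$, both of which are furnished by \hyperref[galc1a.3]{Lemma \ref*{galc1a.3}} and \hyperref[galc1a.4]{Corollary \ref*{galc1a.4}}, respectively. The weak topology on $M_m$ and on $\AAA \otimes_{\OL} V_m$ coincides with the product topology on such finite direct sum decompositions, so a direct sum of continuous sections yields a continuous section on the whole.

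The short exact sequences at the end are simply the standard multiplication-by-$\piL$ sequences; their exactness follows from $V$ and $M$ being $\piL$-torsion free modulo appropriate considerations, or more directly from the snake lemma applied to $0 \to V \overset{\piL^m}{\to} V \to V_m \to 0$ tensored with (or imaged under) the relevant functor. A continuous set-theoretical section of the projection onto $V_1$, $M_1$ or $\AAA \otimes_{\OL} V_1$ is built exactly as above from the discrete/cyclic decomposition argument. The main obstacle I expect is not a deep one: it is the bookkeeping needed to align the cyclic summands of $M_{m+1}$ with those of $M_m$ so that the transition map acts summand-by-summand — the convention $n_i^{(m)} \le n_{i+1}^{(m)}$ is useful here, and one must observe that on the indices where the target summand is zero the section can be taken trivially.
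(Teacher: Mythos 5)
Your proposal follows essentially the same route as the paper's proof: surjectivity of the transition maps from the exactness of $\MOEK$ and the right-exactness of $\AAA \otimes_{\OL} -$, sections for $(V_m)_m$ for free since the $V_m$ are finite and discrete, and for $(M_m)_m$ and $(\AAA \otimes_{\OL} V_m)_m$ a decomposition into cyclic summands over the discrete valuation rings $\AAK$ resp.\ $\AAA$, aligned so that the transition maps act componentwise, followed by a componentwise application of Lemma \ref{galc1a.3} resp.\ Corollary \ref{galc1a.4} together with the fact that the weak topology on a finite direct sum is the product topology. The concluding claim about the short exact sequences is treated just as briefly in the paper (``follows immediately''), so in substance your argument coincides with the one given there.
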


	\begin{lem} \label{galc1.9l2}
	Let $E|L$ be a finite extension and $H_E=\Gal(\oQp|\Ein)$ as usual. Then the operation of $H_E$
	on $\EELsep$ is continuous with respect to the discrete topology on $\EELsep$.
	\begin{proof}
	Let $x \in \EELsep$. Then there exists a finite extension $\FF|\EE_E$ such that $x \in \FF$.
	Then $x$ is fixed by $U:=\Gal(\EELsep|\FF)$ which is an open subgroup of $H_E$. If
	then $\tau \in H$ and $y \in \EELsep$ are such that $\tau(y)=x$, then
	$U\tau \times \{y\}$ is an open neighbourhood of $\{\tau\}\times \{y\}$ in
	$H_E\times \EELsep$ with $\sigma(\tau(y))=x$ for all $\sigma \in U$.
	\end{proof}
	\end{lem}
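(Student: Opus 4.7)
The plan is to exploit the standard characterisation that a map into a discrete space is continuous if and only if it is locally constant. Thus it suffices to show that around any pair $(\tau,y) \in H_E \times \EELsep$ there is an open neighbourhood of $(\tau,y)$ whose image under the action map lies in a single point of $\EELsep$.

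First I would fix such a pair $(\tau,y)$ and set $x := \tau(y) \in \EELsep$. Since $\EELsep$ is by construction the separable closure of $\EE_E$ inside $\Cpb$, the element $x$ is contained in some finite separable extension $\FF$ of $\EE_E$. Using the identification of $H_E$ with $\Gal(\EELsep|\EE_E)$ (which is the analogue for the finite extension $E|L$ of \cite[Theorem 1.6.7]{schn1}, already invoked elsewhere in the paper), the subgroup $U := \Gal(\EELsep|\FF)$ is an open subgroup of $H_E$ that fixes $x$ pointwise.

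Next I would note that $U\tau \times \{y\}$ is open in $H_E \times \EELsep$: the first factor is a coset of an open subgroup and hence open in $H_E$, while the second factor is a singleton, which is open in the discrete topology on $\EELsep$. For any $\sigma \in U$ we have
\[
\sigma\tau(y) \;=\; \sigma(x) \;=\; x,
\]
so the action collapses this entire neighbourhood onto the single point $\{x\}$. As $(\tau,y)$ was arbitrary, this establishes continuity of the action map.

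There is essentially no serious obstacle here; the only point requiring a moment's care is choosing $\FF$ to contain the \emph{image} $x = \tau(y)$ rather than the source $y$, since what one needs is for $U$ to fix $x$, not $y$. Once this choice is made, the argument reduces to the familiar principle that the action of a profinite Galois group on its associated discrete algebraic closure is locally constant, and so continuous for the discrete topology on the target.
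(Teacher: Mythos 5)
Your argument is correct and coincides with the paper's own proof: both pick a finite separable extension $\FF|\EE_E$ containing the image $x=\tau(y)$, take $U=\Gal(\EELsep|\FF)$ as an open subgroup of $H_E$ fixing $x$, and observe that the action is constant on the open set $U\tau\times\{y\}$. No differences worth noting.
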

	
	\begin{lem} \label{galc1.9l1}
	Let $V$ be a finite dimensional $\kL$-representation of $G_K$. Then there exists a
	finite Galois extension $E|K$ such that $H_E$ acts trivially on $V$.
	\begin{proof}
	Since the action of $G_K$ on $V$ is continuous, the homomorphism
	$G_K \to \Aut_{\kL} (V)$ is continuous and since
	$V$ is a finite dimensional $\kL$-vector space, it is finite and so $\Aut_{\kL}(V)$ carries
	the discrete topology, i.e. the kernel of the upper homomorphism is open, which means that there exists a
	finite Galois extension $E|K$ such that $G_E$ acts trivially on $V$. With $G_E$
	also $H_E$ acts trivially on $V$.
	\end{proof}
	\end{lem}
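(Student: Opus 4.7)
The plan is to exploit that $\kL$ is a finite field, so a finite dimensional $\kL$-vector space is itself a finite set, and hence any continuous representation of a profinite group on $V$ factors through a finite quotient.

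More precisely, I would first observe that since $V$ is a finite dimensional $\kL$-vector space and $\kL$ is finite, the cardinality of $V$ is finite, so $\Aut_{\kL}(V)$ is a finite group which we equip with the discrete topology. The continuous group homomorphism $\rho\colon G_K \to \Aut_{\kL}(V)$ describing the action must then have open kernel, since the singleton $\{1\} \subseteq \Aut_{\kL}(V)$ is open. Call this open kernel $N \triangleleft G_K$; it is a closed normal subgroup of finite index.

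By infinite Galois theory, $N$ corresponds to a finite Galois extension $E|K$ inside $\oQp$, namely $E = \oQp^N$, with $G_E = N$. By construction $G_E$ acts trivially on $V$. Finally, since $E \subseteq E_\infty = \cup_n E(\GGphi[\piL^n])$ we have $H_E = \Gal(\oQp|E_\infty) \subseteq \Gal(\oQp|E) = G_E$, so $H_E$ also acts trivially on $V$, which is what was claimed.

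The argument is essentially routine; the only thing to remember is the inclusion $H_E \subseteq G_E$ coming from $E \subseteq E_\infty$, so there is no real obstacle here.
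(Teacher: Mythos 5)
Your proposal is correct and follows essentially the same argument as the paper: finiteness of $V$ makes $\Aut_{\kL}(V)$ finite and discrete, so the continuous representation has open kernel, which by Galois theory cuts out a finite Galois extension $E|K$ with $G_E$ acting trivially, and then $H_E \subseteq G_E$ gives the claim. No issues.
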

	
	\begin{lem} \label{galc1.9}
	Let $V$ be a finite dimensional $\kL$-representation of $G_K$ and
	$E|K$ a finite Galois extension, such that $H_E$ acts trivially on $V$ and set
	$\Delta:= \Gal(\Ein|\Kin)$. Then $\Delta$ acts on the short exact sequence
	\[ 0 \to \omphi^n \EEp_E \otimes_{\kL} V \to \EE_E \otimes_{\kL} V \to \EE_E/\omphi^n \EEp_E \otimes_{\kL} V \to 0\]
	and it holds
	\begin{enumerate}
	\item $H^j(\Delta,\EE_E\otimes_{\kL} V)=0$ for all $j > 0$.
	\item There exists $r \geq 0$ such that $\omphi^r H^j(\Delta,\omphi^n \EEp_E \otimes_{\kL} V)=0$ for
	all $j >0$ and $n \in \ZZ$.
	\end{enumerate}
	\begin{proof}
	The proof is literally the same as the one of \cite[Lemma 2.2.10, p.20]{scholl}
	\end{proof}
	\end{lem}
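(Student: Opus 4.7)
The plan is to follow Scholl's two-step strategy: first establish cohomological triviality of $\EE_E \otimes_{\kL} V$ via the normal basis theorem (giving part 1), then use a sandwich of $\Delta$-stable lattices to bound the cohomology of $\omphi^n \EE_E^+ \otimes_{\kL} V$ by a fixed power of $\omphi$ (giving part 2). Before starting I would record two structural facts: (i) $\omphi \in \EE_L \subseteq \EE_K$, so $\omphi$ commutes with every element of $\Delta$, and (ii) $\EE_E|\EE_K$ is a finite Galois extension with group (canonically identified with) $\Delta$, because $\EE_E=(\EELsep)^{H_E}$ and $\EE_K=(\EELsep)^{H_K}$ (using \hyperref[resaak]{Lemma \ref*{resaak}} for the analogous residue identification and the fact that $H_E\subseteq H_K$). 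The $\Delta$-action on $V$ is the one coming from $V$'s $G_K$-structure, which factors through $G_K/H_E \supseteq \Delta$.

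For part 1, by the normal basis theorem for the finite Galois field extension $\EE_E|\EE_K$, there is a $\Delta$-equivariant isomorphism $\EE_E \cong \EE_K[\Delta]$ of $\kL[\Delta]$-modules, with $\Delta$ acting on the right-hand side by left multiplication on the group factor. Tensoring with $V$ over $\kL$ gives an isomorphism of $\Delta$-modules $\EE_E\otimes_{\kL}V \cong \EE_K[\Delta]\otimes_{\kL}V$ (diagonal action on the right). The standard untwisting $g\otimes v \mapsto g\otimes g^{-1}v$ rewrites the right-hand side as $\mathrm{Ind}_{1}^{\Delta}(\EE_K\otimes_{\kL}V)$ (with trivial action on $\EE_K\otimes_{\kL}V$), and Shapiro's lemma then gives $H^j(\Delta,\EE_E\otimes_{\kL}V)=H^j(\{1\},\EE_K\otimes_{\kL}V)=0$ for all $j>0$.

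For part 2, I would construct an integral version of the above using a sandwich. Picking $\alpha\in\EE_E$ that produces a normal basis of $\EE_E$ over $\EE_K$ and $N\geq 0$ large enough that $\omphi^N\alpha\in\EE_E^+$, the $\EE_K^+[\Delta]$-submodule $L\coloneqq \EE_K^+[\Delta]\cdot \omphi^N\alpha\subseteq \EE_E^+$ is free of rank one over $\EE_K^+[\Delta]$ and spans $\EE_E$ over $\EE_K$; thus $\EE_E^+/L$ is a finitely generated $\EE_K^+$-module, torsion, so killed by some $\omphi^c$. This yields the sandwich $\omphi^c\EE_E^+\subseteq L\subseteq \EE_E^+$. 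Tensoring with $V$ and applying the long exact sequence to $0\to L\otimes_{\kL}V\to \EE_E^+\otimes_{\kL}V\to (\EE_E^+/L)\otimes_{\kL}V\to 0$, and using that $L\otimes_{\kL}V$ is cohomologically trivial by Shapiro exactly as in the previous paragraph, shows that for $j\geq 1$ the map $H^j(\Delta,\EE_E^+\otimes_{\kL}V)\hookrightarrow H^j(\Delta,(\EE_E^+/L)\otimes_{\kL}V)$ is injective, hence $H^j(\Delta,\EE_E^+\otimes_{\kL}V)$ is killed by $\omphi^c$. Finally, because $\omphi\in\EE_K$ is $\Delta$-fixed, multiplication by $\omphi^n$ is a $\Delta$-equivariant isomorphism $\EE_E^+\otimes_{\kL}V\xrightarrow{\cong}\omphi^n\EE_E^+\otimes_{\kL}V$ inside $\EE_E\otimes_{\kL}V$ for every $n\in\ZZ$, so $H^j(\Delta,\omphi^n\EE_E^+\otimes_{\kL}V)$ is also killed by $\omphi^c$ independently of $n$, and we may take $r=c$.

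The most delicate step is the integral sandwich: over a wildly ramified extension there need not exist an integral normal basis, so one has to settle for $L$ of finite index in $\EE_E^+$ and verify that the index $\omphi^c$ is uniform. This is precisely where clearing denominators via $\omphi^N\alpha$ and invoking that $\EE_E^+/L$ is a finitely generated torsion $\EE_K^+$-module (hence annihilated by a single power of the uniformizer) does the work; the remaining verification that Shapiro applies to the tensored-up module $L\otimes_{\kL}V$ is routine given the untwisting argument from part 1.
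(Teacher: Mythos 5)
Your proposal is correct and follows essentially the same route as the paper, whose proof simply defers to Scholl's Lemma 2.2.10: part 1 via the normal basis theorem for $\EE_E|\EE_K$ making $\EE_E\otimes_{\kL}V$ an induced $\Delta$-module, and part 2 via sandwiching $\EE_E^+$ between a free $\EE_K^+[\Delta]$-lattice and an $\omega$-power multiple of it, with $\Delta$-invariance of $\omega$ giving uniformity in $n$. No gaps worth flagging.
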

	
	\begin{lem} \label{galc1.12}
	Let $V$ be a finite dimensional $\kL$-representation of $G_K$ and
	$E|K$ a finite Galois extension, such that $H_E$ acts trivially on $V$ and set
	$\Delta:= \Gal(\Ein|\Kin)$. Then we have
	\begin{enumerate}
	\item $ (\EELsep \otimes_{\kL} V)^{H_K} \cong (\EE_E \otimes_{\kL} V)^{\Delta}$.
	\item $(\omphi^n\EELsepp\otimes_{\kL} V)^{H_K} \cong (\omphi^n\EEp_E \otimes_{\kL} V)^{\Delta}$
	for all $n \geq 0$.
	\end{enumerate}
	\begin{proof}
	In both cases the proof is the same. So let $X$ be $\EELsep$ or $\omphi^n\EELsepp$ for some $n \geq 0$.
	Note that $H_K/H_E \cong \Delta$. We then get
	\[ (X \otimes_{\kL} V)^{H_K} = ((X \otimes_{\kL} V)^{H_E})^{H_K/H_E}
					= (X^{H_E} \otimes_{\kL} V)^{\Delta},\]
	where the last equation is true, since $H_E$ acts trivial on $V$.
	\end{proof}
	\end{lem}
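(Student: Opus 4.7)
The plan is to unify both parts by a single argument: since $H_E$ acts trivially on $V$ by hypothesis, the tensor product commutes with taking $H_E$-invariants, and so the whole computation reduces to identifying $X^{H_E}$ for $X \in \{\EELsep,\ \omphi^n \EELsepp\}$ and then taking a quotient-group of invariants.

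First I would record the group-theoretic setup: because $E|K$ is Galois, $H_E = \Gal(\oQp|\Ein)$ is a normal subgroup of $H_K = \Gal(\oQp|\Kin)$, and the restriction map gives a canonical isomorphism $H_K/H_E \cong \Gal(\Ein|\Kin) = \Delta$. (The usual inflation identification works here since $\Ein = E\cdot\Kin$ and $E\cap\Kin = K$, or more simply from infinite Galois theory applied to the tower $\oQp \supseteq \Ein \supseteq \Kin$.) Next, for any $\kL[H_K]$-module of the form $X \otimes_{\kL} V$ with $X$ an $H_K$-module, I would write
\[
	(X \otimes_{\kL} V)^{H_K} \;=\; \bigl((X \otimes_{\kL} V)^{H_E}\bigr)^{H_K/H_E}
	\;=\; (X^{H_E} \otimes_{\kL} V)^{\Delta},
\]
where the inner equality uses that $H_E$ acts trivially on $V$ (so $H_E$-invariants factor through $X^{H_E}$ on the tensor product, since $V$ is a free $\kL$-module of finite rank).

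It then remains to compute $X^{H_E}$ in the two cases. For part (1), $(\EELsep)^{H_E} = \EE_E$ by definition of $\EE_E$ (cf.\ the preceding definitions, since $\EELsep$ is a Galois extension of $\EEL$ with group $H_L$). For part (2), the element $\omphi$ lies in $W(\EE_L)_L$ and is $H_L$-fixed (recall from the construction that $\omphi \in \AAL$), so in particular $\omphi^n$ is $H_E$-fixed, and multiplication by $\omphi^n$ is an $H_E$-equivariant bijection $\EELsepp \to \omphi^n \EELsepp$. Hence
\[
	(\omphi^n \EELsepp)^{H_E} = \omphi^n \cdot (\EELsepp)^{H_E} = \omphi^n \cdot \EEp_E,
\]
using that $(\EELsepp)^{H_E} = \EEp_E$ (the integral closure of $\EELp$ in $\EE_E$, which equals $\EE_E^{+}$). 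Substituting these identifications into the displayed equation above yields both (1) and (2).

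The only genuinely nontrivial ingredient is the interchange of $H_E$-invariants with the tensor product, but because $H_E$ acts trivially on the finite-dimensional $\kL$-vector space $V$ (and because we work over the field $\kL$, so flatness is automatic), this is immediate: choosing a $\kL$-basis of $V$ identifies $X \otimes_{\kL} V$ with a finite direct sum of copies of $X$ as an $H_E$-module, on which $H_E$-invariants are taken componentwise. No further topological or cohomological input is needed beyond what was already collected in Lemmas \ref*{galc1.9l1} and \ref*{galc1.9l2}, so this step is the end of the argument rather than an obstacle.
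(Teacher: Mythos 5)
Your proposal is correct and follows essentially the same route as the paper: the two-step computation $(X\otimes_{\kL}V)^{H_K}=((X\otimes_{\kL}V)^{H_E})^{H_K/H_E}=(X^{H_E}\otimes_{\kL}V)^{\Delta}$ using $H_K/H_E\cong\Delta$ and the triviality of the $H_E$-action on $V$. The extra identifications $(\EELsep)^{H_E}=\EE_E$ and $(\omphi^n\EELsepp)^{H_E}=\omphi^n\EEp_E$ that you spell out are exactly what the paper leaves implicit, so there is no gap.
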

	
	Before stating a corollary, we should introduce some notation. Since all projective systems which
	appear here are indexed by the natural numbers, we will make the following definitions only
	for projective systems indexed by natural numbers.

	\begin{prop} \label{galc1.11}
	Let $V$ be a finite dimensional $\kL$-representation of $G_K$ and
	$E|K$ a finite Galois extension, such that $H_E$ acts trivially on $V$ and set
	$\Delta:= \Gal(\Ein|\Kin)$. Let in addition $M=\MOEK(V)$ and
	\[M_n:= M /\left(\omphi^n \EELsepp \otimes_{\kL} V \right)^{H_K}.\]
	Then we have
	\begin{enumerate}
	\item The inverse systems $(H^j(\Delta,\omphi^n \EEp_E \otimes_{\kL} V))_n $ and
	$(H^j(\Delta,\EE_E/\omphi^n\EEp_E \otimes_{\kL} V))_n$ are ML-zero
	for all $j >0$.
	\item The map of inverse systems $(M_n)_n \to (H^0(\Delta,\EE_E/\omphi^n\EEp_E \otimes_{\kL} V))_n$ is an
	ML-isomorphism.
	\end{enumerate}
	\begin{proof} $ $ \renewcommand{\theenumi}{{\arabic{enumi}.}}
	\begin{enumerate}
	\item Since $V$ is a finite dimensional $\kL$-vector space, it's flat and therefore
	the homomorphism
	$\omphi^{n+1} \EEp_E \otimes_{\kL} V \subseteq \omphi^n\EEp_E \otimes_{\kL} V$ is injective and
	induces a homomorphism
	\[H^j(\Delta,\omphi^{n+1}\EEp_E \otimes_{\kL} V) \to H^j(\Delta,\omphi^n\EEp_E  \otimes_{\kL} V).\]
	The image of this last homomorphism is a subset of $\omphi H^j(\Delta,\omphi^n\EEp_E \otimes_{\kL} V)$, i.e. the
	maps $H^j(\Delta,\omphi^{k}\EEp_E \otimes_{\kL} V) \to H^j(\Delta,\omphi^n\EEp_E \otimes_{\kL} V)$
	are zero for $k \geq n+r$ (cf.\ \hyperref[galc1.9]{Lemma \ref*{galc1.9}, 2.}), i.e. the inverse system
	$(H^j(\Delta,\omphi^n \EEp_E \otimes_{\kL} V))_n $ is ML-zero for $j >0$.\\
	Since every class in $\EE_E/\omphi^n\EEp_E$ has a unique representative of
	highest degree $\leq n-1$ in $\omphi$ the homomorphism $\EE_E \to \EE_E/\omphi^n \EEp_E$ has
	a set theoretical splitting (by sending a class to this representative). This map is
	continuous, since the preimage of a subset of $\EE_E$ in $\EE_E/\omphi^n \EEp_E$ is equal to the
	image under the canonical projection, which is open by definition. Since $V$ is flat, the
	sequence
	\[ 0 \to \omphi^n \EEp_E \otimes_{\kL} V \to \EE_E \otimes_{\kL} V \to \EE_E/\omphi^n\EEp_E \otimes_{\kL} V \to 0\]
	is exact and we can deduce a long
	exact cohomology sequence (cf.\ \cite[(2.3.2) Lemma, p.106]{nsw}) and since
	$H^j(\Delta, \EE_E\otimes_{\kL} V)=0$ for $j >0$ (cf.\ \hyperref[galc1.9]{Lemma \ref*{galc1.9}, 1.}), the
	homomorphism
	\[ H^j(\Delta,\EE_E/\omphi^n \EEp_E \otimes_{\kL} V) \to H^{j+1}(\Delta, \omphi^n \EEp_E \otimes_{\kL} V)\]
	is an isomorphism for all $j >0$ and the diagram
	\[ \begin{xy} \xymatrix{
		&H^j(\Delta,\EE_E/\omphi^n\EEp_E \otimes_{\kL} V) \ar[r] &H^{j+1}(\Delta, \omphi^n \EEp_E \otimes_{\kL} V) \\
		& H^j(\Delta,\EE_E/\omphi^{n+1}\EEp_E \otimes_{\kL} V) \ar[u] \ar[r]
		&H^{j+1}(\Delta, \omphi^{n+1} \EEp_E \otimes_{\kL} V) \ar[u]
	} \end{xy} \]
	commutes. This means that the transition map
	\[ H^j(\Delta,\EE_E/\omphi^{k}\EEp_E \otimes_{\kL} V) \to H^j(\Delta,\EE_E/\omphi^{n}\EEp_E \otimes_{\kL} V)\]
	is zero for $k \geq n+r$ and therefore the inverse system
	$(H^j(\Delta,\EE_E/\omphi^{n}\EEp_E \otimes_{\kL} V))_n$ is ML-zero.
	\item As seen before, for every $n \geq 0$ we have an exact sequence
	\[ 0 \to \omphi^n \EEp_E \otimes_{\kL} V \to \EE_E \otimes_{\kL} V \to \EE_E/\omphi^n\EEp_E \otimes_{\kL} V \to 0.\]
	Taking $\Delta$-invariants then gives an exact sequence
	\[ \begin{xy} \xymatrix@R-2pc{
	0 \ar[r] &(\omphi^n \EEp_E \otimes_{\kL} V)^{\Delta} \ar[r] & (\EE_E \otimes_{\kL} V)^{\Delta} \ar@{-}[r] &  \cdots \\
	\cdots \ar[r] & (\EE_E/\omphi^n\EEp_E \otimes_{\kL} V)^{\Delta} \ar[r]
	&H^{1}(\Delta, \omphi^n \EEp_E \otimes_{\kL} V) \ar[r] & 0,
	} \end{xy} \]
	where the last term is zero because $H^j(\Delta, \EE_E\otimes_{\kL} V)=0$ for $j >0$
	(cf.\ \hyperref[galc1.9]{Lemma \ref*{galc1.9}, 1.}). With
	\hyperref[galc1.12]{Lemma \ref*{galc1.12}} this sequences becomes
	\[ \begin{xy} \xymatrix@R-2pc{
	0 \ar[r] & (\omphi^n \EELsepp \otimes_{\kL} V)^{H_K} \ar[r] & (\EELsep \otimes_{\kL} V)^{H_K} \ar@{-}[r] & \cdots \\
	\cdots \ar[r] & (\EE_E/\omphi^n\EEp_E \otimes_{\kL} V)^{\Delta} \ar[r]
	& H^{1}(\Delta, \omphi^n \EEp_E \otimes_{\kL} V) \ar[r] & 0
	} \end{xy} \]
	and then gives the following short exact sequence
	\[ \begin{xy} \xymatrix@R-2pc@C-1pc{
	 0 \ar[r] & (\EELsep \otimes_{\kL} V)^{H_K}/(\omphi^n \EELsepp \otimes_{\kL} V)^{H_K}  \ar@{-}[r] &
	 \cdots & \\
	\cdots \ar[r] & (\EE_E/\omphi^n\EEp_E \otimes_{\kL} V)^{\Delta} \ar[r]
	& H^{1}(\Delta, \omphi^n \EEp_E \otimes_{\kL} V) \ar[r] & 0.
	} \end{xy} \]
	In particular, $H^{1}(\Delta, \omphi^n \EEp_E \otimes_{\kL} V)$ is the cokernel of the homomorphism \linebreak
	$M_n \to (\EE_E/\omphi^n\EEp_E \otimes_{\kL} V)^{\Delta}$.
	According to the first part of the proof the inverse system
	$(H^1(\Delta,\omphi^n \EEp_E \otimes_{\kL} V))_n $ is ML-zero, and since the kernel
	of $M_n \to (\EE_E/\omphi^n\EEp_E \otimes V)^{\Delta}$ is zero it is also ML-zero, which then ends the
	proof.	
	\end{enumerate}
	\end{proof}
	\end{prop}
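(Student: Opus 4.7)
The plan is to reduce both claims to the two inputs of Lemma \ref{galc1.9}: the vanishing $H^j(\Delta,\EE_E\otimes_{\kL} V)=0$ for $j>0$, and the uniform bound $\omphi^r H^j(\Delta,\omphi^n\EEp_E\otimes_{\kL} V)=0$ valid for all $j>0$ and $n\in\ZZ$. Together with Lemma \ref{galc1.12}, which (using $\piL V=0$, hence $\AAA\otimes_{\OL}V=\EELsep\otimes_{\kL}V$) identifies $M=\MOEK(V)$ with $(\EE_E\otimes_{\kL}V)^\Delta$ and $(\omphi^n\EELsepp\otimes_{\kL}V)^{H_K}$ with $(\omphi^n\EEp_E\otimes_{\kL}V)^\Delta$, this should suffice.

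For part \emph{1.}, I first handle the system $(H^j(\Delta,\omphi^n\EEp_E\otimes_{\kL} V))_n$ directly: the transition map
\[ H^j(\Delta,\omphi^{n+k}\EEp_E\otimes_{\kL} V)\longrightarrow H^j(\Delta,\omphi^n\EEp_E\otimes_{\kL} V) \]
is induced by the inclusion $\omphi^{n+k}\EEp_E\subseteq \omphi^n\EEp_E$, and since every element of the source is $\omphi^k$ times one of the target, its image lies in $\omphi^k H^j(\Delta,\omphi^n\EEp_E\otimes_{\kL} V)$. Taking $k\geq r$ gives zero, so the system is ML-zero. For the second system, the short exact sequence
\[ 0\to\omphi^n\EEp_E\otimes_{\kL} V\to\EE_E\otimes_{\kL} V\to\EE_E/\omphi^n\EEp_E\otimes_{\kL} V\to 0 \]
of $\Delta$-modules admits a continuous set-theoretic section (each class has a unique representative of $\omphi$-degree $<n$), so it yields a long exact $\Delta$-cohomology sequence. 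Vanishing of $H^j(\Delta,\EE_E\otimes_{\kL}V)$ for $j>0$ turns the connecting map into an isomorphism $H^j(\Delta,\EE_E/\omphi^n\EEp_E\otimes_{\kL}V)\overset{\sim}{\to} H^{j+1}(\Delta,\omphi^n\EEp_E\otimes_{\kL}V)$, compatibly in $n$; the right-hand system being ML-zero, so is the left.

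For part \emph{2.}, the same long exact sequence, truncated, reads
\[ 0\to(\omphi^n\EEp_E\otimes V)^\Delta\to(\EE_E\otimes V)^\Delta\to(\EE_E/\omphi^n\EEp_E\otimes V)^\Delta\to H^1(\Delta,\omphi^n\EEp_E\otimes V)\to 0. \]
Applying Lemma \ref{galc1.12} to the first two terms identifies the first with $(\omphi^n\EELsepp\otimes_{\kL} V)^{H_K}$ and the second with $M$, so the middle cokernel is $M_n$. Consequently the canonical map $M_n\hookrightarrow(\EE_E/\omphi^n\EEp_E\otimes_{\kL} V)^\Delta$ has zero kernel and cokernel equal to $H^1(\Delta,\omphi^n\EEp_E\otimes_{\kL} V)$, whose inverse system is ML-zero by part \emph{1.}. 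This gives the desired ML-isomorphism. The main technical point to be careful about is the existence of the continuous section and the compatibility of the connecting maps with the transition maps of the inverse systems; everything else is a routine diagram chase.
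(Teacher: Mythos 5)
Your proposal is correct and follows essentially the same route as the paper: for the first system you use the factorization of the transition maps through multiplication by a power of $\omphi$ together with the uniform annihilation bound from Lemma \ref{galc1.9}, for the second you transport ML-zero-ness through the connecting isomorphisms of the long exact sequence (whose existence rests on the same continuous set-theoretic section), and part \emph{2.} is the identical four-term sequence argument combined with Lemma \ref{galc1.12}. No gaps.
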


		
	\begin{thm} \label{galc1.3}
	Let $V \in \RepGKOLfg$ and set $M = \MOEK(V)$. Then there are isomorphisms
	\[ \begin{xy} \xymatrix@R-1pc{
		&\Hctss(G_K,V) \ar[r]^{\cong} &\HfphKLs(\GaK,M),\\
		&\Hctss(H_K,V) \ar[r]^{\cong} &\HfphKLs(M).
	} \end{xy} \]
	These isomorphisms are functorial in $V$ and compatible with restriction and corestriction. On the level of complexes these isomorphisms are induced from quasi-isomorphisms
\[ \begin{xy} \xymatrix@R-1pc{
	& \Cctsb(G_K,V) \ar[r] &\varprojlim_{n,m}\CFrb(G_K, (\AAA/\omphi^n\AAAp) \otimes_{\OL} V/\piL^mV) & \CphKLb(\GaK,M),\ar[l] , \\
	& \Cctsb(H_K,V) \ar[r] &\varprojlim_{n,m}\CFrb(H_K, (\AAA/\omphi^n\AAAp) \otimes_{\OL} V/\piL^mV) & \CphKLb(M).\ar[l] .
	} \end{xy} \]

If $A$ denotes a cofinitely generated $\mathcal{O}_L$-module with continuous $G_K$-action, we obtain similar quasi-isomorphisms
\[ \begin{xy} \xymatrix@R-1pc{
	& \Cctsb(G_K,A) \ar[r] &\varinjlim_m \varprojlim_{n}\CFrb(G_K, (\AAA/\omphi^n\AAAp) \otimes_{\OL} A_m) & \CphKLb(\GaK,M)\ar[l] . \\
	} \end{xy} \]
where $A_m:=A[\pi_L^m]$ denotes the kernel of multiplication by $\pi_L^m$ and $M=\varinjlim_m \MOEK(A_m).$ An analogous statement for $H_K$ is stated in \eqref{f:quasiH}, but compare also with \emph{Proposition \ref{qind}} below.

	\begin{proof} In this proof, we follow closely the proof of \cite[Theorem 2.2.1, p.702-706]{scholl}.

	 {\bfseries \underline{Step 1:}} Explaining the strategy.\\
	First, for $m \in \NN$ set $V_m:=V/\piL^mV$ and $M_m:=M/\piL^mM$. Since $\MOEK$ is an equivalence
	of categories (cf.\ \hyperref[equivcat]{Theorem \ref*{equivcat}}) it is exact and therefore we have
	$M_m = \MOEK(V_m)$.
	The open subgroups
	\[ M_m \cap \left(\omphi^n \AAAp \otimes_{\OL} V_m\right) =
	\left(\omphi^n \AAAp \otimes_{\OL} V_m \right)^{H_K}\]
	form a basis of neighbourhoods of $0$ in $M_m$. 
	These subgroups are clearly stable under the operation of $\GaK$ and since $\phKL$ commutes
	with the operation of $G_K$ on $\left(\omphi^n \AAAp \otimes_{\OL} V_m \right)$ these subgroups are also
	stable under $\phKL$.
	We then set
	\[ \Mmn := M_m/\left(\omphi^n \AAAp \otimes_{\OL} V_m \right)^{H_K}.\]
	Since $\left(\omphi^n \AAAp \otimes_{\OL} V_m \right)^{H_K}$ is an open subgroup, this is a discrete
	$\GaK$-module and we have topological isomorphisms
	\begin{align*}
		M_m &\cong \varprojlim_n \Mmn \\
		M &\cong \varprojlim_m M_m.
	\end{align*}
	In \hyperref[galc1.2]{Corollary \ref*{galc1.2}} we proved that the sequence
	\[ \begin{xy} \xymatrix{
		0 \ar[r] & \OL \ar[r] & \AAA/\omphi^n\AAAp \ar[rr]^-{\Fr-\id} & & \AAA/\omphi^n \AAAp \ar[r] &0
	} \end{xy} \]
	is exact and since $\AAA/\omphi^n\AAAp$ is a free $\OL$-module, it is flat and therefore the sequence
	\[ \begin{xy} \xymatrix{
		0 \ar[r] & V_m \ar[r] & \AAA/\omphi^n\AAAp \otimes_{\OL} V_m\ar[rr]^-{\Fr-\id} & &
		\AAA/\omphi^n \AAAp \ar[r] \otimes_{\OL} V_m &0
	} \end{xy} \]
	is also exact.
	Then \hyperref[galc1.3l3]{Lemma \ref*{galc1.3l3}} says that for
	every $m,n\geq 1$ we have a quasi isomorphism
	\[ \begin{xy} \xymatrix@R-1pc{
	& \Cctsb(G_K,V_m) \ar[r] &\CFrb(G_K, (\AAA/\omphi^n\AAAp) \otimes_{\OL} V_m). \\
	} \end{xy} \]
	The inverse systems $(V_m)_m$ and
	$((\AAA/\omphi^n\AAAp) \otimes_{\OL} V_m)_{n,m}$ have surjective transition maps. From
	\hyperref[pre6.10]{Corollary \ref*{pre6.10}} we then can deduce that also the
	inverse systems of complexes $(\Cctsb(G_K,V_m))_m$ and
	$\Cctsb(G_K,((\AAA/\omphi^n\AAAp) \otimes_{\OL} V_m))_{n,m}$ have surjective transition maps and
	\hyperref[galc1.8l2]{Lemma \ref*{galc1.8l2}} then says that the system
	$\CFrb(G_K,((\AAA/\omphi^n\AAAp) \otimes_{\OL} V_m))_{n,m}$ has surjective transition maps
	as well.\\
	From the quasi isomorphism $\Cctsb(G_K,V_m) \to \CFrb(G_K,(\AAA/\omphi^n\AAAp \otimes V_m))$
	we then can deduce with \hyperref[galc1.16]{Proposition \ref*{galc1.16}} that the cohomologies of the
	complexes
	$\varprojlim_{n,m} \CFrb(G_K,(\AAA/\omphi^n\AAAp \otimes_{\OL} V_m))$ and
	$\varprojlim_m \Cctsb(G_K,V_m)$ coincide. Since
	$\varprojlim_m \Cctsb(G_K,V_m) \cong \Cctsb(G_K,V)$, the cohomology of $\varprojlim_m \Cctsb(G_K,V_m)$
	 is  $\Hctss(G_K,V)$, which then is also computed by\linebreak
	 $\varprojlim_{n,m} \CFrb(G_K,(\AAA/\omphi^n\AAAp \otimes_{\OL} V_m))$.\\
	On the other hand, since the canonical inclusion
	$\iota\colon\Mmn \hookrightarrow (\AAA/\omphi^n\AAAp) \otimes_{\OL} V_m$
	commutes with $\phKL$ and since together with
	the canonical projection $\pr\colon G_K \twoheadrightarrow \GaK$ it holds
	\[ \iota(\pr(\sigma) x) = \sigma \iota(x)\]
	for all $\sigma \in G_K$ and $x \in \Mmn$ and since the operations of $\phKL$ and $G_K$
	respectively $\GaK$ commute wo get an induced morphism of complexes
	\[ \alphmn\colon \CphKLb(\GaK,\Mmn) \to \CFrb(G_K,(\AAA/\omphi^n\AAAp) \otimes_{\OL} V_m)\]
	(cf.\ \cite[I \S 5, p45]{nswo}, the additional properties concerning $\phKL$ we noted above, ensure
	that we get the morphism
	of the above total complex with respect to $\phKL$ on the left hand side and $\Fr$ on the right hand side).\\
	We now want to see that $\varprojlim_{n,m} \alphmn$ is a quasi isomorphism. Because of\linebreak
	$\varprojlim_{n,m} \CphKLb(\GaK,\Mmn)= \CphKLb(\GaK,M)$ (cf.\ \hyperref[galc1.8]{Lemma \ref*{galc1.8}}),
	this then says that the cohomology of $\CphKLb(\GaK,M)$ and
	$\varprojlim_{n,m} \CFrb(G_K,(\AAA/\omphi^n\AAAp \otimes_{\OL} V_m))$ coincide. But then
	the cohomologies of $\CphKLb(\GaK,M)$ and $\Cctsb(G_K,V)$ coincide, what is exactly what we want to prove.\\
	To see that $\varprojlim_{n,m} \alphmn$ is a quasi isomorphism, it is enough to
	see, that $\varprojlim_n \alphmn$ is a quasi isomorphism for every $m \geq 1$. Because if this is shown,
	one knows that the inverse systems of complexes $(\CphKLb(\GaK,M_m))_m$ and
	$(\CphKLb(G_K,\AAA\otimes_{\OL}V_m))_m$  are quasi isomorphic. Since
	the transition maps $M_{m+1}\to M_m$ as well as
	$\AAA \otimes_{\OL} V_{m+1} \to \AAA \otimes_{\OL} V_m$
	are surjective and have a continuous section (cf.\ \hyperref[galc1a.5]{Lemma \ref*{galc1a.5}}),
	one can see as before,
	using \hyperref[pre6.10]{Corollary \ref*{pre6.10}} and \hyperref[galc1.8l2]{Lemma \ref*{galc1.8l2}},
	that the inverse systems of complexes $(\CphKLb(\GaK,M_m))_m$ and
	$(\CphKLb(G_K,\AAA\otimes_{\OL}V_m))_m$ have surjective transition maps. As before with
	\hyperref[galc1.16]{Proposition \ref*{galc1.16}} respectively \hyperref[galc1.16r]{Remark \ref*{galc1.16r}}
	one then sees that $\varprojlim_m\CphKLb(\GaK,M_m)$ and
	$\varprojlim_m \CphKLb(G_K,\AAA\otimes_{\OL}V_m)$ are quasi isomorphic.\\
	So, what is still to show, is that $\varprojlim_{n} \alphmn$ is a quasi isomorphism for every
	$m \geq 1$. This will be the rest of the proof.


	 {\bfseries \underline{Step 2:}} Reduction to the case $m=1$. \\
	Since for every $m \geq 1$ the sequence
	\[ \begin{xy} \xymatrix{
	 &0 \ar[r] & V_m \ar[r] & V_{m+1} \ar[r] & V_1 \ar[r] &0.
	} \end{xy}\]
	is exact and $\MOEK$ is an exact functor (since it is an equivalence), this
	implies that for every $m \geq 1$ there is a short exact sequence
	\[ \begin{xy} \xymatrix{
	 &0 \ar[r] & M_m \ar[r] & M_{m+1} \ar[r] &M_1 \ar[r] &0.
	} \end{xy}\]
	By the definition of the topology on the $M_m$ it is clear, that the topology of
	$M_m$ is induced from that of $M_{m+1}$ and from \hyperref[galc1a.5]{Lemma \ref*{galc1a.5}}
	we deduce that it has a continuous set theoretical section.
	Therefore \hyperref[pre7.35]{Proposition \ref*{pre7.35}}
	says that we get a long exact sequence of cohomology.\\
	Now assume the result is shown for $m=1$. Then $\HfphKLs(\GaK,M) \to \Hctss(G_K,V)$ is
	an isomorphism for
	every $V$ with $\piL V=0$. Induction on $m$ and the $5$-lemma applied to the following
	diagram which arises from the long exact cohomology sequences
	(where we write $\Gamma = \GaK$ and $G =G_K$ and $\varphi=\phKL$)
	\[ \begin{xy} \xymatrix@C-0.77pc{
	\Hfph^{l-1}(\Gamma,M_1) \ar[r]^{\delta} \ar[d]_{\cong}& \Hfph^l(\Gamma,M_m) \ar[r] \ar[d]_{\cong}
	& \Hfph^l(\Gamma,M_{m+1}) \ar[r] \ar[d]
	 & \Hfph^l(\Gamma,M_1) \ar[r]^{\delta}\ar[d]_{\cong} &\Hfph^{l+1}(\Gamma,M_m) \ar[d]_{\cong}\\
	\Hcts^{l-1}(G,V_1) \ar[r]^{\delta} &\Hcts^l(G,V_m) \ar[r] & \Hcts^l(G,V_{m+1}) \ar[r]
	&  \Hcts^l(G,V_1) \ar[r]^{\delta}  &\Hcts^{l+1}(G,V_m)
	} \end{xy} \]
	then implies the result for all $m \geq 1$.

	 {\bfseries \underline{Step 3:}} Splitting $\alpha_{1,n}$ up.\\
	For the rest of the proof we may assume
	$\piL V=0$ and therefore also $\piL M = 0$, but we will still write
	$M_{1,n}$ to avoid confusion. Note that this implies
	\[ \AAA \otimes_{\OL} V \cong \EELsep \otimes_{\kL} V,\]
	\[ \omphi^n \AAAp \otimes_{\OL} V \cong \omphi^n \EELsepp \otimes_{\kL} V \]
	as well as the correspondingly isomorphism with respect to the fixed modules of $H_K$.\\
	Now fix a finite Galois extension $E|K$ such that
	$H_E$ acts trivially on $V$ (cf.\ \hyperref[galc1.9l1]{Lemma \ref*{galc1.9l1}}). Then, the
	canonical inclusion
	\[ \begin{xy} \xymatrix{
		&\Men = \frac{(\EELsep \otimes_{\kL} V)^{H_K}}{(\omphi^n\EELsepp \otimes_{\kL} V)^{H_K}} \ar@{^{(}->}[r] 
		&\frac{(\EELsep \otimes_{\kL} V)^{H_E}}{(\omphi^n\EELsepp \otimes_{\kL} V)^{H_E}} =
		\EE_E/\omphi^n \EEp_E \otimes_{\kL} V
	} \end{xy} \]
	induces together with the canonical projection $\Gal(\Ein|K) \twoheadrightarrow \GaK$,
	as in step 1 for $\alphmn$, for all $n \in \NN$ a morphism of complexes
	\[ \begin{xy} \xymatrix{
		& \beta_n \colon \CphKLb(\GaK,\Men) \ar[r]
		& \CFrb(\Gal(\Ein| K),\EE_E/\omphi^n \EEp_E \otimes_{\kL} V).
	} \end{xy} \]
	Simultaneously, the canonical inclusion $\EE_E/\omphi^n\EEp_E \otimes_{\kL} V \hookrightarrow
	\EELsep/\omphi^n\EELsepp \otimes_{\kL} V$ together with the canonical projection
	$G_K \twoheadrightarrow \Gal(\Ein|K)$ induces for all $n \in \NN$ a morphism of complexes
	\[ \begin{xy} \xymatrix{
		 \gamma_n \colon \CFrb(\Gal(\Ein| K),\EE_E/\omphi^n \EEp_E \otimes_{\kL} V)
		\ar[r] & \CFrb(G_K,\EELsep/\omphi^n\EELsepp \otimes_{\kL} V).
	} \end{xy} \]
	Since both diagrams
	\[ \begin{xy} \xymatrix{
		& \Men \ar@{^{(}->}[r] \ar@{^{(}->}[dr] & \EE_E/\omphi^n\EEp_E \otimes_{kL} V \ar@{^{(}->}[d] 
		& & \GaK & \Gal(\Ein|K) \ar@{->>}[l] \\
		& & \EELsep/\omphi^n\EELsepp \otimes_{\kL} V,  & & & G_K \ar@{->>}[ul] \ar@{->>}[u]
	} \end{xy} \]
	are commutative, where all the arrows in the left diagram are canonical inclusions and the ones in
	the right diagram are canonical projections, it is immediately clear that also the diagram
	\[ \begin{xy} \xymatrix{
	 & \CphKLb(\GaK,\Men) \ar[rr]^-{\beta_n}  \ar[rrd]_-{\alpha_{1,n}}
	&  & \CFrb(\Gal(\Ein| K),\EE_E/\omphi^n \EEp_E \otimes_{\kL} V) \ar[d]^-{\gamma_n} \\
		&  &   & \CFrb(G_K, \EELsep/\omphi^n\EELsepp \otimes_{\kL} V)
	} \end{xy} \]
	commutes. So, to prove that $\varprojlim_n \alpha_{1,n}$ is a quasi-isomorphism it is enough to prove that
	$\varprojlim_n \beta_n$ and $\varprojlim_n \gamma_n$ are quasi-isomorphisms. In addition,
	we will also show that $\gamma_n$ is a quasi-isomorphism for every $n \geq 1$.

	 {\bfseries \underline{Step 4:}} $\varprojlim_n \gamma_n$ is a quasi-isomorphism.\\
	Due to \hyperref[pre7.34]{Lemma \ref*{pre7.34}} there is an $E_2$-spectral sequence converging
	to the cohomology of the source of $\gamma_n$
	\[ \begin{xy} \xymatrix@R-2pc{
		 H^{a}(\Gal(\Ein| K),\HFr^b(\EE_E/\omphi^n \EEp_E \otimes_{\kL} V)) \ar@{=>}[r]
		 	&  \\  \HFr^{a+b}(\Gal(\Ein| K),\EE_E/\omphi^n \EEp_E \otimes_{\kL} V) &
	} \end{xy} \]		
	as well as en $E_2$-spectral sequence converging to the target of $\gamma_n$
	\[ \begin{xy} \xymatrix@R-2pc{
		H^{a}(\Gal(\Ein| K),\HFr^b(H_E,\EELsep/\omphi^n \EELsepp \otimes_{\kL} V)) \ar@{=>}[r]
			& \\   \HFr^{a+b}(G_K, \EELsep/\omphi^n\EELsepp \otimes_{\kL} V). &
	} \end{xy} \]
	The canonical inclusion $\EE_E/\omphi^n\EEp_E \otimes_{\kL} V
	\hookrightarrow \EELsep/\omphi^n \EELsepp \otimes_{\kL} V$ together with the trivial map
	$H_E \to 1$ then induces a homomorphism on the above $E_2$-pages. Together with
	the from $\gamma_n$ induced map on cohomology this then gives a morphism
	of spectral sequences.
	So, to show that $\gamma_n$ induces an isomorphism on cohomology it is enough to show that the
	induced homomorphism on the above $E_2$ pages is an isomorphism. And for this it is enough, that
	the homomorphism between the coefficients
	$\HFr^b(\EE_E/\omphi^n \EEp_E \otimes_{\kL} V)$ and
	$\HFr^b(H_E,\EELsep/\omphi^n \EELsepp \otimes_{\kL} V)$
	is an isomorphism. Since $H_E$ acts trivially on $V$ it is
	\begin{align*}
		\HFr^b(\EE_E/\omphi^n \EEp_E \otimes_{\kL} V)
		&= \HFr^b(\EE_E/\omphi^n \EEp_E) \otimes_{\kL} V\\
		\HFr^b(H_E,\EELsep/\omphi^n \EELsepp \otimes_{\kL} V)
		&= \HFr^b(H_E,\EELsep/\omphi^n \EELsepp) \otimes_{\kL} V
	\end{align*}
by \cite[(3.4.4) Proposition, p.\,66--67]{neksc}.
	Therefore it is enough to show that there is an isomorphism between
	$\HFr^b(\EE_E/\omphi^n \EEp_E)$ and  $\HFr^b(H_E,\EELsep/\omphi^n \EELsepp)$.
	To see this, consider the commutative square
	\[ \begin{xy} \xymatrix{
	 &\HFr^b(\EE_E) \ar[r] \ar[d]&\HFr^b(\EE_E/\omphi^n \EEp_E) \ar[d]\\
	  &\HFr^b(H_E,\EELsep) \ar[r] &\HFr^b(H_E,\EELsep/ \omphi^n \EELsepp),
	} \end{xy} \]
	where $\EELsep$ is regarded as discrete $H_E$-module (cf.\ \hyperref[galc1.9l2]{Lemma \ref*{galc1.9l2}}) and
	where the horizontal maps are induced from the respective canonical projections and the vertical
	maps from the respective canonical inclusions.\\
	First we want to see, that the upper horizontal map is an isomorphism.
	$\HFr^b(\EE_E)$ is computed by $\EE_E \overset{\phL-\id}{\longrightarrow} \EE_E$ and
	$\HFr^b(\EE_E/\omphi^n \EEp_E)$ by the corresponding complex and the square
	\[ \begin{xy} \xymatrix{
		&\EE_E \ar[rr]^-{\Fr-\id} \ar@{->>}[d] &  &\EE_E \ar@{->>}[d]\\
		& \EE_E/\omphi^n \EEp_E  \ar[rr]^-{\Fr-\id} & &\EE_E/\omphi^n \EEp_E
	}\end{xy} \]
	is commutative. Denote the kernel and image of the upper horizontal map by $\kappa_1$ and $\im_1$
	and the ones of the lower
	vertical map by $\kappa_2$ and $\im_2$ respectively. By \hyperref[galc1.1]{Lemma \ref*{galc1.1}} the map
	$\omphi^n\EEp_E \overset{\Fr-\id}{\longrightarrow} \omphi^n\EEp_E$
	is an isomorphism, especially is $\omphi^n \EEp_E \subseteq \im_1$ and so we see immediately
	$\im_2 \subseteq \im_1/\omphi^n \EEp_E$. For the other inclusion let $\overline{x} \in \im_1/\omphi^n
	\EEp_E$ and $x \in \EE_E$ a preimage under the canonical projection. Because of $\omphi^n\EEp_E
	\subseteq \im_1$ we deduce $x \in \im_1$. If $y \in \EE_E$ is a preimage of $x$ under $\Fr-\id$, then
	because of the commutativity of the latter diagram we get
	$(\Fr-\id)(\overline{y}) = \overline{x}$, i.e. $\overline{x} \in \im_2$.
	Therefore $\HFr^1(\EE_E)$ and
	$\HFr^1(\EE_E/\omphi^n\EEp_E)$ coincide.\\
	For the term in degree zero let $x \in \kappa_1$ such that $x \in \omphi^n \EEp_E$. Since
	$\Fr-\id$ is an isomorphism on $\omphi^n \EEp_E $ and $(\Fr-\id)(x)=0$, $x$ itself
	is zero, i.e. the canonical homomorphism $\kappa_1 \to \kappa_2$ is injective. Let now
	$\eta \in \kappa_2$ and $y' \in \EE_E$ be a preimage under the canonical projection. By
	commutativity it is $\overline{ (\Fr-\id)(y')}=0$ and therefore $(\Fr-\id)(y') \in \omphi^n \EEp_E$.
	Again since $\Fr-\id$ is an isomorphism on $\omphi^n \EEp_E$
	we find an element $y'' \in \omphi^n \EEp_E$ with $(\Fr-\id)(y')=(\Fr-\id)(y'')$.
	Set $y := y' - y''$. Then $\overline{y}=\overline{y'}-\overline{y''}=\overline{y'}=\eta$
	and $(\Fr-\id)(y)=0$, i.e. $\kappa_1 \to \kappa_2$ is also surjective and therefore an
	isomorphism. Since every other cohomology group is zero, we conclude that
	\[ \HFr^b(\EE_E) \cong \HFr^b(\EE_E/\omphi^n\EEp_E)\]
	for all $b \geq 0$.\\
	For the lower horizontal map in the upper square, recall that \hyperref[galc1.1]{Lemma \ref*{galc1.1}}
	also says that $\Fr-\id$ is on $\omphi^n\EELsepp$ an isomorphism.
	Therefore one sees with a similar argument as above that the canonical projection
	$\EELsep \twoheadrightarrow \EELsep/\omphi^n\EELsepp$ induces an isomorphism between
	the cohomology groups
	$\HFr^{b'}(\EELsep)$ and $\HFr^{b'}(\EELsep/\omphi^n\EELsepp)$ for all $b' \geq 0$.
	\hyperref[pre7.21]{Lemma \ref*{pre7.21}} states that there are two $E_2$-spectral sequences
	converging to $\HFrs(H_E,\EELsep)$ respectively $\HFrs(H_E,\EELsep/\omphi^n\EELsepp)$ (recall
	from the beginning of Step 4 that $\EELsep$ is considered as discrete $H_E$-module):
	\begin{align*}
		H^{a'}(H_E,\HFr^{b'}(\EELsep)) & \Rightarrow \HFr^{a'+b'}(H_E,\EELsep) \\
		H^{a'}(H_E,\HFr^{b'}(\EELsep/\omphi^n\EELsepp))
		& \Rightarrow \HFr^{a'+b'}(H_E,\EELsep/\omphi^n\EELsepp).
	\end{align*}
	We conclude as before: The canonical projection
	$\EELsep \twoheadrightarrow \EELsep/\omphi^n\EELsepp$ induces
	a morphism of spectral sequences and since the induced homomorphism is an
	isomorphism
	on the $E_2$-pages, we obtain an
	isomorphism between the limit terms $\HFr^b(H_E,\EELsep)$ and $\HFr^b(H_E,\EELsep/\omphi^n\EELsepp)$ for
	all $b \geq 0$.\\
	To see that the left vertical arrow in the first square is an isomorphism we consider
	the $E_2$-spectral sequence (cf.\ \hyperref[pre7.21]{Lemma \ref*{pre7.21}})
	\[\HFr^{a'}(H^{b'}(H_E,\EELsep)) \Rightarrow \HFr^{a'+b'}(H_E,\EELsep).\]
	Since $\EELsep$ is a separabel closure of $\EE_E$ with Galois group isomorphic to
	$H_E$ it is $H^{b'}(H_E,\EELsep)=0$ for all $b'>0$. Then \cite[Chapter II \S 1, (2.1.4) Proposition, p.100]{nswo}
	says that we have an isomorphism $\HFr^{b}(\EE_E) \cong \HFr^{b}(H_E,\EELsep)$ for
	all $b \geq 0$ (here we identified $H^0(H_E,\EELsep)=(\EELsep)^{H_E}=\EE_E$), which is induced from
	the canonical inclusion, i.e.
	the left vertical arrow in the first square also is an isomorphism.
	Then also the right vertical arrow is an isomorphism (since all other arrows are isomorphisms)
	and so is the map on $E_2$-terms from which we started.
	Hence $\gamma_n$ is a quasi-isomorphism for all $n$.\\
	To see that $\varprojlim_n \gamma_n$ is an isomorphism, it remains to check that the
	transition maps are surjective (cf.\ \hyperref[galc1.16]{Proposition \ref*{galc1.16}} respectively
	\hyperref[galc1.16r]{Remark \ref*{galc1.16r}}).
	Since the transition maps
	\[ \begin{xy} \xymatrix@R-2pc{
	&\EE_E/\omphi^{n+1}\EEp_E \otimes_{\kL} V \ar@{->>}[r] &\EE_E/\omphi^n\EEp_E \otimes_{\kL} V, \\
	&\EELsep/\omphi^{n+1}\EELsepp \otimes_{\kL} V \ar@{->>}[r] &\EELsep/\omphi^n\EELsepp \otimes_{\kL} V
	} \end{xy} \]
	are surjective and the groups carry the discrete topology, \hyperref[pre6.10]{Corollary \ref*{pre6.10}}
	says that also the transition maps
	\[ \begin{xy} \xymatrix@R-2pc{
	\Cctsb(\Gal(\Ein|K),\EE_E/\omphi^{n+1}\EEp_E \otimes_{\kL} V) \ar@{->>}[r] &
	\Cctsb(\Gal(\Ein|K),\EE_E/\omphi^n\EEp_E \otimes_{\kL} V), \\
	\Cctsb(G_K,\EELsep/\omphi^{n+1}\EELsepp \otimes_{\kL} V) \ar@{->>}[r]
	&\Cctsb(G_K,\EELsep/\omphi^n\EELsepp \otimes_{\kL} V)
	} \end{xy} \]
	are surjective. But then \hyperref[galc1.8l2]{Lemma \ref*{galc1.8l2}} says that the transition maps
	\[ \begin{xy} \xymatrix@R-2pc{
	\CFrb(\Gal(\Ein|K),\EE_E/\omphi^{n+1}\EEp_E \otimes_{\kL} V) \ar@{->>}[r] &
	\CFrb(\Gal(\Ein|K),\EE_E/\omphi^n\EEp_E \otimes_{\kL} V), \\
	\CFrb(G_K,\EELsep/\omphi^{n+1}\EELsepp \otimes_{\kL} V) \ar@{->>}[r] &\CFrb(G_K,\EELsep/\omphi^n\EELsepp
	\otimes_{\kL} V)
	} \end{xy} \]
	are surjective, too. Then \hyperref[galc1.16]{Proposition \ref*{galc1.16}} respectively
	\hyperref[galc1.16r]{Remark \ref*{galc1.16r}} say that $\varprojlim_n \gamma_n$ is a quasi
	isomorphism.
	
	 {\bfseries \underline{Step 5:}} $\varprojlim_n \beta_n$ is a quasi-isomorphism.\\
	Now let $\Delta:= \Gal(\Ein \mid \Kin)$. \hyperref[pre7.34]{Lemma \ref*{pre7.34}}
	then says that there is an $E_2$-spectral sequence
	of inverse systems of abelian groups given by
	\[ \begin{xy} \xymatrix{
		  \HFr^{i}(\GaK,H^j(\Delta,\EE_E/\omphi^n \EEp_E \otimes_{\kL} V))
		\ar@{=>}[r] & \HFr^{i+j}(\Gal(\Ein| K), \EE_E/\omphi^n \EEp_E \otimes_{\kL} V).
	} \end{xy} \]
	We will write $\leftidx{_n}\EEE^{ij}_2$ for second page of this $E_2$-spectral sequence,
	$\leftidx{_n}\EEE^{k}$ for its limit term and
	$\EEE^{ij}_2 = \varprojlim_ n \leftidx{_n}\EEE^{ij}_2$ as wells as $\EEE^{k} = \varprojlim_n \leftidx{_n}\EEE^{k}$.
	\hyperref[galc1.11]{Proposition \ref*{galc1.11}} says that the system
	$ (H^j(\Delta,\EE_E/\omphi^n \EEp_E \otimes_{\kL} V))_n$ is ML-zero for $j >0$, i.e. for every $n \in \NN$ there
	is an $m(n) \in \NN$ such that the transition map
	\[ \begin{xy} \xymatrix{
		& H^j(\Delta,\EE_E/\omphi^{m(n)} \EEp_E \otimes_{\kL} V) \ar[r]
		& H^j(\Delta,\EE_E/\omphi^n \EEp_E \otimes_{\kL} V)
	} \end{xy} \]
	is the zero map. For fixed $n \in \NN$ and $m(n) \in \NN$ as above, we then obtain that the transition map
	\[ \begin{xy} \xymatrix{
		 \Ccts^{i}(\GaK,H^j(\Delta,\EE_E/\omphi^{m(n)} \EEp_E \otimes_{\kL} V)) \ar[r]
		& \Ccts^{i}(\GaK,H^j(\Delta,\EE_E/\omphi^n \EEp_E \otimes_{\kL} V))
	} \end{xy} \]
	is also zero for all $i \geq 0$ and $j > 0$. Then clearly the transition map
	\[ \begin{xy} \xymatrix{
		\CFr^{i}(\GaK,H^j(\Delta,\EE_E/\omphi^{m(n)} \EEp_E \otimes_{\kL} V)) \ar[r]
		& \CFr^{i}(\GaK,H^j(\Delta,\EE_E/\omphi^n \EEp_E \otimes_{\kL} V))
	} \end{xy} \]
	is zero for all $i \geq 0$ and $j > 0$, too. And so is the induced map on cohomology, i.e the
	inverse systems
	$(\leftidx{_n}\EEE^{ij}_2)_n$ are ML-zero for all $i \geq 0$ and $j >0$.
	But then the edge homomorphism $\EEE_2^{i0} \to
	\EEE^{i}$ is an isomorphism, since $\EEE_2^{ij}=0$ for all $i \geq 0$ and $j >0$
	(cf.\ \cite[Chapter II, \S 1, (2.1.4) Corollary, p.100]{nswo}). Recall that this edge homomorphism
	is induced from both, the canonical
	projection $\Gal(\Ein|K) \twoheadrightarrow \GaK$ and the canonical inclusion
	$(\EE_E/\omphi^n\EEp_E \otimes_{\kL} V)^\Delta
	\hookrightarrow \EE_E/\omphi^n \EEp_E \otimes_{\kL} V$. \\
	\hyperref[galc1.11]{Proposition \ref*{galc1.11}} says that
	$(\eta_n)_n\colon (M_{1,n})_n \to (H^0(\Delta,\EE_E/\omphi^n \EEp_E \otimes_{\kL} V))_n$ is an
	ML-iso\-mor\-phism.
	Therefore the inverse systems $(\ker (\eta_n))_n$ and $(\coker (\eta_n))_n$ are ML-zero. As above,
	we then deduce that also the systems $(\CfphKL^{i}(\GaK,\ker(\eta_n)))_n$ and
	$(\CFr^{i}(\GaK,\coker(\eta_n)))_n$ are ML-zero for all $i \in \NN_0$. Since
	\mbox{$H^0(\Delta,\EE_E/\omphi^n\EEp_E \otimes_{\kL} V)$} and $\Men$ carry the discrete topology for all
	$n \in \NN$, we deduce from \hyperref[pre7.23]{Lemma \ref*{pre7.23}}, which says that
	$\CFr^{i}(\GaK,-)$ is for discrete modules an exact functor, the exact sequence
	\[ \begin{xy} \xymatrix@R-2pc@C-0.9pc{
		0 \ar[r] & \CfphKL^{i}(\GaK,\ker(\eta_n)) \ar[r] & \CfphKL^{i}(\GaK,\Men) \ar@{-}[rr]^-{\CFr^{i}(\GaK,\eta_n)}
		& &  \cdots \\
		\cdots \ar[r] & \CFr^{i}(\GaK,H^0(\Delta,\EE_E/\omphi^n \EEp_E \otimes_{\kL} V)) \ar[r]
		& \CFr^{i}(\GaK,\coker(\eta_n)) \ar[rr] & &  0.
	} \end{xy} \]
	Taking inverse limits then gives us an isomorphism of complexes
	\[ \CfphKLb(\GaK,M_1) \cong \CFrb(\GaK, H^0(\Delta,\EE_E\otimes_{\kL} V)),\]
	which, by construction, is induced from the canonical inclusion $M_1 \hookrightarrow
	\EE_E \otimes_{\kL} V$ and
	which then prolongs to an isomorphism of its respective cohomology groups, i.e. for
	all $i \in \NN_0$ we get
	\[ \HfphKL^{i}(\GaK,M_1) \cong \HFr^{i}(\GaK,H^0(\Delta,\EE_E\otimes_{\kL} V)).\]
	Together with the observation from above, that the edge homomorphism
	$\EEE_2^{i0} \to \EEE^{i}$ is an isomorphism for all $i \in \NN_0$ we deduce for all
	$i \in \NN_0$ the isomorphism
	\[ \HfphKL^{i}(\GaK,M_1) \cong \HFr^{i}(\Gal(\Ein|K),\EE_E\otimes_{\kL} V),\]
	which by construction is $\varprojlim_n \beta_n$.

%
\end{proof}
	\end{thm}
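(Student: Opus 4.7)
The strategy is to follow Scholl's approach in the cyclotomic case and build a zigzag of quasi-isomorphisms from $\Cctsb(G_K,V)$ to $\CphKLb(\GaK,M)$, reducing by dévissage and Mittag-Leffler arguments to a situation in which all coefficients are discrete so that a Hochschild-Serre spectral sequence is available.

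First, I would establish the fundamental short exact sequences
\[ 0 \to \OL \to \AAA \xrightarrow{\Fr-\id} \AAA \to 0, \quad 0 \to \OL \to \AAA/\omphi^n\AAAp \xrightarrow{\Fr-\id} \AAA/\omphi^n\AAAp \to 0, \]
using that $\Fr-\id$ is an isomorphism on $\omphi^n\AAAp$ for each $n$. Tensoring with the torsion quotients $V_m = V/\piL^m V$ (which remain $\OL$-flat modulo $\piL^m$) and applying the cone construction (Lemma \ref{galc1.3l3}), each $\Cctsb(G_K,V_m)$ becomes quasi-isomorphic to $\CFrb(G_K,(\AAA/\omphi^n\AAAp)\otimes_{\OL}V_m)$. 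Since the double-indexed transition maps are all surjective with continuous set-theoretic sections (Lemma \ref{galc1a.5}), Proposition \ref{galc1.16} lets me pass to $\varprojlim_{n,m}$ and identify the left leg of the required zigzag with $\Cctsb(G_K,V)$ up to quasi-isomorphism.

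Second, I would construct canonical maps $\alpha_{m,n}\colon \CphKLb(\GaK, M_{m,n})\to \CFrb(G_K,(\AAA/\omphi^n\AAAp)\otimes_{\OL}V_m)$, where $M_{m,n}= M_m/(\omphi^n\AAAp\otimes V_m)^{H_K}$, and reduce to proving $\varprojlim_n\alpha_{m,n}$ is a quasi-isomorphism for every $m$. Dévissage via the long exact cohomology sequences (Proposition \ref{pre7.35}) together with the five-lemma reduces the problem to $m=1$, i.e.\ to a $\kL$-representation $V$. For such $V$ I would pick a finite Galois extension $E|K$ with $H_E$ acting trivially on $V$ (Lemma \ref{galc1.9l1}) and factor $\alpha_{1,n}=\gamma_n\circ\beta_n$ through $\CFrb(\Gal(E_\infty|K),\EE_E/\omphi^n\EEp_E \otimes_{\kL} V)$. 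For $\gamma_n$, the Hochschild-Serre spectral sequence of the quotient $G_K\twoheadrightarrow \Gal(E_\infty|K)$ with kernel $H_E$ (available because the coefficients are discrete) reduces matters to comparing $\HFr^b(\EE_E/\omphi^n\EEp_E)$ with $\HFr^b(H_E,\EELsep/\omphi^n\EELsepp)$; the classical vanishing $H^b(H_E,\EELsep)=0$ for $b>0$, together with $H^0(H_E,\EELsep)=\EE_E$ and compatibility with $\Fr-\id$, gives the claim. For $\beta_n$, I would use the analogous spectral sequence of the quotient $\Gal(E_\infty|K)\twoheadrightarrow \GaK$ with kernel $\Delta=\Gal(E_\infty|K_\infty)$; Proposition \ref{galc1.11} shows that the higher coefficient systems $H^j(\Delta,\EE_E/\omphi^n\EEp_E\otimes V)$ are ML-zero for $j>0$, while $M_{1,n}\to H^0(\Delta,\EE_E/\omphi^n\EEp_E\otimes V)$ is an ML-isomorphism. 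After passage to the inverse limit only the $j=0$ column survives and yields the desired quasi-isomorphism.

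The main technical obstacle will be the Mittag-Leffler bookkeeping: because no general Hochschild-Serre spectral sequence is available for continuous cohomology (cf.\ \cite{thomas}), every interchange of a limit with cohomology must be justified individually, either via surjectivity of transition maps combined with Proposition \ref{galc1.16} or via ML-isomorphism combined with Proposition \ref{galc1.15}, and this must be done uniformly in both indices $m$ and $n$. The $H_K$-variant $\Hctss(H_K,V)\cong\HfphKLs(M)$ follows from the same scheme with $\GaK$ replaced by the trivial group; functoriality in $V$ and compatibility with restriction and corestriction are then formal consequences of the canonical nature of the zigzag, and the derived-category statement in $\DDp(\OL\text{-}\mathbf{Mod})$ is automatic once all the intermediate quasi-isomorphisms have been established.
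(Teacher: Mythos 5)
Your proposal reproduces the paper's argument in all essentials: the same reduction via the exact sequences for $\Fr-\id$ on $\AAA/\omphi^n\AAAp$, the same passage to $\varprojlim_{n,m}$ using surjective transition maps with continuous sections, the same dévissage to $m=1$, and the same factorization $\alpha_{1,n}=\gamma_n\circ\beta_n$ handled by the spectral sequences together with the ML-zero/ML-isomorphism statements of Proposition \ref{galc1.11}. This is essentially the proof given in the paper, so no further comparison is needed.
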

	
	\subsection{Description with $\psi$} 
	
	In this subsection we want to give a description of the Galois cohomology groups of a representation
	using a $\psi$-operator.
	
	\begin{mydef}
	Let $A$ be an $\OL$-module. We say that $A$ is {\bfseries cofinitely generated}
	\index{cofinitely generated} if its Pontrjagin dual $A^\vee=\HomctsOL(A,L/\OL)$ is
	finitely generated.
	\end{mydef}
	
	\begin{rem} \label{remcoftop} $ $
	\begin{enumerate}
	\item Since finitely generated $\OL$-modules together with their natural topology are compact, cofinitely
	generated $\OL$-modules are discrete, which means that \linebreak
	$\HomctsOL(-,L/\OL)=\Hom_{\OL}(-,L/\OL)$ for
	both, finitely and cofinitely generated $\OL$-modules.
	\item For $n \in \NN$ we have an isomorphism
	\[ \begin{xy} \xymatrix@R-2pc{
			\OL/\piL^n\OL \ar[r] & (\OL/\piL^n\OL)^\vee \\
			x \bmod \piL^n\OL \ar@{|->}[r] & [1 \bmod \piL^n\OL \mapsto \piL^{-n} x \bmod \OL]
	} \end{xy} \]
	which then also implies a non-canonical isomorphism $T \cong T^\vee$ for a finitely generated
	torsion $\OL$-module, since
	$(-)^\vee$ is compatible with finite direct sums. These isomorphisms are clearly topological, since all these
	objects carry the discrete topology.
	\item Due to Pontrjagin duality (cf.\ \emph{\hyperref[pontrjdual]{Proposition \ref*{pontrjdual}}}) a cofinitely
	generated $\OL$-module is always the Pontrjagin dual of a finitely generated $\OL$-module.
	\item If $T \in \RepGKOLfg$ is torsion, then $T^\vee$ also is a finitely generated torsion $\OL$-module with
	a continuous action from $G_K$.
	\end{enumerate}
	\end{rem}
	
	\begin{mydef} \label{deftorcof}
	Let $A$ be a cofinitely generated $\OL$-module and $n \in \NN$. We denote by
	\gls{An} the kernel of the multiplication $m_{\piL^n}$ with $\piL^n$ on $A$, i.e.
	\[ A_n= \ker(m_{\piL^n}\colon A \to A).\]
	\end{mydef}
	
	\begin{prop} \label{structcof}
	Let $A$ be a cofinitely generated $\OL$-module. Then we have $A = \varinjlim_n A_n$.\\
	In particular, if $A$ is torsion, say with $\piL^mA = 0 $ for some $m \in \NN$, then we have $A = A_m$.
	\begin{proof}
	Let $T$ be a finitely generated $\OL$-module such that $A=\HomctsOL(T,L/\OL)$, let
	$e_1,\dots,e_m$ be a set of generators of $T$ and
	let $f \in A$.
	Then for every $i \in \{1,\dots,m\}$ there exists an $n_i \in \NN$ such that $\piL^{n_i} f(e_i)=0$.
	Set $n:=\mathrm{max}_i n_i$. Then it is $\piL^n f(\alpha)=0$ for every $\alpha \in A$, i.e.
	$f \in A_n$.\\
	In particular, if there exists $m \in \NN$ such that $\piL^m g = 0 $ for every $g \in A$, then
	the above shows $A=A_m$.
	\end{proof}
	\end{prop}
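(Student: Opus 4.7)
The plan is to exploit the defining property that $A$ is the Pontrjagin dual of a finitely generated $\OL$-module, combined with the fact that every element of $L/\OL$ is $\piL$-power torsion. Concretely, I would choose an isomorphism $A \cong \HomctsOL(T, L/\OL)$ with $T$ a finitely generated $\OL$-module, and then show that each $f \in A$ is killed by some power of $\piL$, which is precisely the statement $f \in A_n$.

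First, I would fix a finite set of generators $e_1, \dots, e_m$ of $T$ as an $\OL$-module. Next, for a given $f \in A$, I would use that each $f(e_i) \in L/\OL$ lies in some cyclic subgroup $\piL^{-n_i}\OL/\OL$, hence is annihilated by $\piL^{n_i}$. Setting $n = \max_i n_i$, the $\OL$-linearity of $f$ together with the fact that the $e_i$ generate $T$ forces $\piL^n f = 0$, so $f \in A_n$. Since every element of $A$ lies in some $A_n$ and the $A_n$ form an increasing chain (as $A_n \subseteq A_{n+1}$), this identifies $A$ with $\varinjlim_n A_n$.

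For the particular case when $A$ is $\piL^m$-torsion, every $f \in A$ already satisfies $\piL^m f = 0$, so the above argument trivially yields $A = A_m$ without any maximization over generators. I do not expect a substantial obstacle here: the argument is a direct unwinding of the definitions, and the only mild subtlety is remembering that the proof needs the $\OL$-linearity of $f$ to propagate the annihilation from the generators of $T$ to all of $T$. (I would also note that the phrase ``for every $\alpha \in A$'' in the sketched proof is a typo for ``for every $\alpha \in T$''.)
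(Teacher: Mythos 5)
Your proposal is correct and follows essentially the same route as the paper's own proof: dualize $A$ as $\HomctsOL(T,L/\OL)$ for finitely generated $T$, kill the images of a finite generating set by a common power of $\piL$, and conclude $f\in A_n$ (and you rightly observe that the paper's ``for every $\alpha\in A$'' should read $\alpha\in T$). No differences worth noting.
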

		
	\begin{lem} \label{lemctsdis}
	Let $T \in \RepGKOLfg$ such that $\piL^m T=0$. Then $H_K$ acts continuously on
	$\AAA \otimes_{\OL} T$ equipped with the discrete topology.
	\begin{proof}
	Recall from page \pageref{aaacompl} that
	\[ \AAA \cong \varprojlim_n \AALnr/\piL^n\AALnr\]
	and that $H_L$ is the Galois group of $\AALnr|\AAL$. The latter means, that $H_L$ acts continuously on
	$\AALnr$ with respect to the discrete topology because if $x \in \AALnr$, then $\BBL(x)|\BBL$ is a finite
	extension and therefore it exists an open subgroup $U \leq H_L$ which fixes $x$. But then
	$(U,x)$ is an open subset of the preimage of $x$ under the operation
	\[ H_L \times \BBL \to \BBL.\]
	Then $H_L$ also clearly acts continuously on $\AALnr/\piL^n\AALnr$ for all $n \in \NN$ equipped
	with the discrete topology. Since $H_K$ is an open subgroup of $H_L$ it then also acts continuously
	on $\AALnr/\piL^n\AALnr$ for all $n \in \NN$ equipped with the discrete topology.
	Because of $\piL^nT=0$ we have $T=T \otimes_{\OL} \OL/\piL^n\OL$ and therefore
	\[ \AAA \otimes_{\OL} T = \AAA \otimes_{\OL} \OL/\piL^n\OL \otimes_{\OL} T = \AAA/\piL^n\AAA \otimes_{\OL} T
			= \AALnr/\piL^n\AALnr \otimes_{\OL} T.\]
	Since $H_K$ acts continuously on both $T$ and $\AALnr/\piL^n\AALnr$ with respect to the discrete topology
	it does so on the tensor product equipped with the linear topological structure, which then again is
	discrete.
	\end{proof}
	\end{lem}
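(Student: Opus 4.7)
The plan is to reduce to a quotient where the coefficient ring becomes discrete, then exploit the unramified structure of $\AALnr$.

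First I would use that $\piL^m T = 0$ implies $T$ is an $\OL/\piL^m\OL$-module, so the canonical map $\AAA \otimes_{\OL} T \to (\AAA/\piL^m\AAA) \otimes_{\OL} T$ is an isomorphism of $H_K$-modules. By the description $\AAA \cong \varprojlim_n \AALnr/\piL^n \AALnr$ recalled on page \pageref{aaacompl}, we have $\AAA/\piL^m\AAA \cong \AALnr/\piL^m\AALnr$, and this identification is $H_K$-equivariant. Thus it suffices to show that $H_K$ acts continuously on $\AALnr/\piL^m\AALnr \otimes_{\OL} T$ with the discrete topology.

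Next I would handle the coefficient ring itself: the action of $H_L$ on $\AALnr$ with respect to the discrete topology is continuous, because $\AALnr$ is (by construction, cf. the paragraph on p.\,\pageref{defAAL}) the filtered union of the finite unramified extensions $\AAL(F)$ of $\AAL$ corresponding to finite separable subextensions $F$ of $\EEL$ inside $\EELsep$, and each such $\AAL(F)$ is pointwise fixed by the open subgroup of $H_L$ corresponding to $F$ under the Galois correspondence between $H_L$ and $\Gal(\EELsep/\EEL)$. Reducing modulo $\piL^m$ preserves this property, so $H_L$ acts continuously on $\AALnr/\piL^m\AALnr$ with discrete topology; restriction to the open subgroup $H_K$ gives the same conclusion for $H_K$.

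Finally I would pass to the tensor product. Since $T$ is finitely generated and annihilated by $\piL^m$, choose finitely many $\OL$-generators $t_1,\dots,t_r$ of $T$. Any element of $\AALnr/\piL^m\AALnr \otimes_{\OL} T$ can be written as a finite sum $\sum_i \overline{a_i} \otimes t_i$ with $\overline{a_i} \in \AALnr/\piL^m\AALnr$. The continuity assumption on the $G_K$-action on $T$ provides an open subgroup $U_0 \leq H_K$ fixing all $t_i$, and each $\overline{a_i}$ is fixed by an open subgroup $U_i \leq H_K$ by the preceding paragraph; the finite intersection $U_0 \cap \bigcap_i U_i$ is then an open subgroup of $H_K$ fixing our element, which is exactly what continuity into a discrete space demands. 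The main (mild) obstacle is bookkeeping: one should check that the $H_K$-action on $\AAA \otimes_{\OL} T$ really is the diagonal one induced from the two factors and that discreteness of $\AALnr/\piL^m\AALnr \otimes_{\OL} T$ follows from the quotient being a filtered union of finite $\OL$-modules, but both are straightforward.
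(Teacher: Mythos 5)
Your argument is correct and follows essentially the same route as the paper: reduce modulo $\piL^m$ to $\AALnr/\piL^m\AALnr \otimes_{\OL} T$, note that every element of $\AALnr$ is fixed by an open subgroup of $H_L$ (you via the filtered union of the rings $\AAL(F)$, the paper via finiteness of $\BBL(x)|\BBL$), and conclude by intersecting open stabilizers for finite sums in the tensor product. The only quibble is your closing parenthetical: the lemma imposes the discrete topology by fiat, so no discreteness needs to be derived (and $\AALnr/\piL^m\AALnr$ is in any case not a filtered union of finite $\OL$-modules).
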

	
	\begin{lem} \label{lemcomtriv}
	Let $T \in \RepGKOLfg$ such that $\piL^m T=0$. Then we have\linebreak
	$\Hcts^{i}(H_K,\AAA\otimes_{\OL} T) = 0$ for all $i >0$.
	\begin{proof}
	This is \cite[Lemma 5.2, p.\,23--24]{SV15}, since it is even $\Hcts^{i}(U,\EELsep)=0$ for all $i >0$ and
	open subgroups $U \leq H_L$.
	\end{proof}
	\end{lem}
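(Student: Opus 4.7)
The plan is a d\'evissage to the mod-$\piL$ case combined with a Hochschild-Serre argument and classical additive Hilbert~90.

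First I would reduce to the case $\piL T=0$. Since $\AAA$ is $\piL$-torsionfree and hence $\OL$-flat, tensoring the short exact sequence
\[ 0 \to \piL T \to T \to T/\piL T \to 0 \]
with $\AAA$ over $\OL$ yields a short exact sequence of discrete $H_K$-modules (discreteness by \hyperref[lemctsdis]{Lemma \ref*{lemctsdis}}). The associated long exact sequence in continuous cohomology together with induction on $m$, using that $\piL T$ is killed by $\piL^{m-1}$, reduces the claim to the case $m=1$. In that case $\AAA\otimes_{\OL}T\cong\AAA/\piL\AAA\otimes_{\kL}T\cong\EELsep\otimes_{\kL}T$, so it suffices to prove $\Hcts^{i}(H_K,\EELsep\otimes_{\kL}T)=0$ for all $i>0$ when $T$ is a finite-dimensional $\kL$-representation of $G_K$.

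Next, by \hyperref[galc1.9l1]{Lemma \ref*{galc1.9l1}} I would pick a finite Galois extension $E|K$ for which $H_E$ acts trivially on $T$. Then $E_{\infty}|K_{\infty}$ is Galois, so $H_E\triangleleft H_K$, and $H_K/H_E\cong\Gal(E_{\infty}|K_{\infty})$ is finite. As an $H_E$-module, $\EELsep\otimes_{\kL}T$ is isomorphic to a finite direct sum of copies of $\EELsep$, so the hint $\Hcts^{q}(U,\EELsep)=0$ for $q>0$ and any open $U\leq H_L$ yields $\Hcts^{q}(H_E,\EELsep\otimes_{\kL}T)=0$ for $q>0$. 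Consequently the Hochschild-Serre spectral sequence collapses to an isomorphism
\[ \Hcts^{i}(H_K,\EELsep\otimes_{\kL}T)\cong H^{i}\bigl(H_K/H_E,\EE_E\otimes_{\kL}T\bigr).\]

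Finally I would invoke additive Hilbert~90 for the finite Galois field extension $\EE_E|\EE_K$ with group $H_K/H_E$. The $\EE_E$-vector space $N\coloneqq\EE_E\otimes_{\kL}T$ carries a compatible semilinear $H_K/H_E$-action, and Speiser's theorem provides an $\EE_K$-vector space $W=N^{H_K/H_E}$ of dimension $\dim_{\kL}T$ together with an equivariant isomorphism $\EE_E\otimes_{\EE_K}W\cong N$. Picking an $\EE_K$-basis of $W$ identifies $N$ with a finite direct sum of copies of $\EE_E$ as $H_K/H_E$-module, so $H^{i}(H_K/H_E,N)\cong H^{i}(H_K/H_E,\EE_E)\otimes_{\EE_K}W=0$ for $i>0$, which completes the proof. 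The main technical nuance to watch is that all three ingredients (Hochschild-Serre in continuous cohomology, Speiser descent, and additive Hilbert~90) apply verbatim in this setting; since the coefficients are discrete at every stage, this reduces to the classical statements and should cause no essential difficulty.
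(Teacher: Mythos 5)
Your argument is correct, and every ingredient you use is available in the paper (discreteness of $\AAA\otimes_{\OL}T$ from Lemma \ref{lemctsdis}, the choice of $E|K$ from Lemma \ref{galc1.9l1}, the identification $(\EELsep\otimes_{\kL}T)^{H_E}=\EE_E\otimes_{\kL}T$ as in Lemma \ref{galc1.12}, and the hinted vanishing $\Hcts^{i}(U,\EELsep)=0$ for open $U\leq H_L$, which applies to $H_E$ since $E_\infty|L_\infty$ is finite). The paper itself gives no argument at all: it simply quotes \cite[Lemma 5.2]{SV15} together with the remark that the vanishing of $\Hcts^{i}(U,\EELsep)$ holds for every open subgroup $U\leq H_L$, which is what allows the cited statement (formulated for $H_L$) to be applied to $H_K$. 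So you have reconstructed, in full, the classical Fontaine--Herr style proof behind that citation: d\'evissage to $\piL T=0$, Hochschild--Serre for the finite quotient $\Delta=H_K/H_E$, and Speiser descent plus the normal basis theorem to kill $H^{i}(\Delta,\EE_E\otimes_{\kL}T)$. It is worth noting that the route suggested by the paper's hint is shorter: for $\piL T=0$, Fontaine's comparison isomorphism $\EELsep\otimes_{\EE_{K|L}}\MAK(T)\overset{\cong}{\longrightarrow}\EELsep\otimes_{\kL}T$ (the mod-$\piL$ incarnation of the equivalence in Theorem \ref{equivcat}) exhibits the coefficient module as a finite direct sum of copies of $\EELsep$ as an $H_K$-module, so one can quote $\Hcts^{i}(H_K,\EELsep)=0$ directly, with no Hochschild--Serre and no finite-level descent; the trade-off is that it invokes the equivalence of categories, which your argument avoids. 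Your closing caveat is also handled correctly: since the coefficients are discrete at every stage, Hochschild--Serre, Speiser descent and additive Hilbert~90 apply in their classical form, so there is no gap.
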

	
	\begin{cor} \label{cofcomtriv}
	Let $A$ be a cofinitely generated $\OL$-module with a continuous action from $G_K$.
	Then $H_K$ acts continuously
	on $\AAA \otimes_{\OL} A$ equipped with the discrete topology and we have
	$\Hcts^{i}(H_K,\AAA \otimes_{\OL} A)=0$ for all $i>0$.
	\begin{proof}
	If $A$ is torsion, then \hyperref[remcoftop]{Remark \ref*{remcoftop}} says
	that this is just \hyperref[lemctsdis]{Lemma \ref*{lemctsdis}}
	and \hyperref[lemcomtriv]{Lemma \ref*{lemcomtriv}}.\\
	If $A$ is general, then with \hyperref[structcof]{Proposition \ref*{structcof}} we can write
	$A = \varinjlim_n A_n$, where the $A_n$ are torsion $\OL$-modules. Since tensor products
	commute with colimits we have
	\[ \varinjlim_n \AAA \otimes_{\OL} A_n \cong \AAA \otimes_{\OL} A\]
	algebraically. But the direct limit topology of $ \varinjlim_n \AAA \otimes_{\OL} A_n$ again is discrete
	and so the above isomorphism is also topological. Then, $\AAA \otimes_{\OL} A$ is a discrete $H_L$-module
	and therefore we deduce from \cite[(1.5.1) Proposition, p.\,45--46]{nswo}
	\[ H^{i}(H_K,\AAA \otimes_{\OL} A )= \varinjlim_n H^{i} (H_K,\AAA \otimes_{\OL} A_n)\]
	for all $i \geq 0$. Since $H^{i} (H_K,\AAA \otimes_{\OL} A_n) = 0$ for all $i >0$ and $n \in \NN$ we
	also have \linebreak $H^{i}(H_K,\AAA \otimes_{\OL} A)=0$ for all $i >0$.
	\end{proof}
	\end{cor}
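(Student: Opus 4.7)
The strategy is to reduce the general cofinitely generated case to the torsion case, where Lemmata \ref{lemctsdis} and \ref{lemcomtriv} already give both the continuity and the cohomological vanishing. So the first step is to invoke Proposition \ref{structcof} to write $A = \varinjlim_n A_n$, where each $A_n$ is a $\pi_L$-torsion (in fact $\pi_L^n$-killed) cofinitely generated $\OL$-module, and the transition maps are the natural inclusions. Since each $A_n$ carries a continuous $G_K$-action (being a $G_K$-stable subgroup of the discrete module $A$), it belongs to $\RepGKOLfg$ with $\pi_L^n$-torsion.

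Next, I would commute the tensor product with the colimit: the natural map
\[ \varinjlim_n \bigl(\AAA \otimes_{\OL} A_n\bigr) \longrightarrow \AAA \otimes_{\OL} A \]
is an algebraic isomorphism, and since each $\AAA \otimes_{\OL} A_n$ carries the discrete topology (by Lemma \ref{lemctsdis} and the torsion hypothesis, compatible with its topology coming from $\AAA/\pi_L^n\AAA \otimes A_n$), the colimit topology on the left hand side is discrete, matching the discrete topology we put on the right. Continuity of the $H_K$-action on $\AAA \otimes_{\OL} A$ then follows because each $\AAA \otimes_{\OL} A_n$ has a continuous $H_K$-action by Lemma \ref{lemctsdis}, and continuity is preserved under filtered colimits of discrete $H_K$-modules (an element of the colimit lies in some $\AAA \otimes_{\OL} A_n$, where it is fixed by an open subgroup).

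For the cohomology vanishing, the key input is the well-known fact that continuous cohomology of a profinite group on discrete modules commutes with filtered colimits (e.g.\ \cite[(1.5.1) Proposition, p.\,45--46]{nswo}). Applying this to our situation yields
\[ \Hcts^{i}(H_K, \AAA \otimes_{\OL} A) \;\cong\; \varinjlim_n \Hcts^{i}(H_K, \AAA \otimes_{\OL} A_n), \]
and by Lemma \ref{lemcomtriv} every term on the right vanishes for $i>0$, so the left vanishes as well.

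I do not expect a genuine obstacle here: the only point requiring care is making sure the colimit topology on $\varinjlim_n (\AAA \otimes_{\OL} A_n)$ agrees with the discrete topology on $\AAA \otimes_{\OL} A$, so that invoking Proposition 1.5.1 of \cite{nswo} is legitimate. This is immediate from the fact that each term of the colimit is discrete, so the union is too.
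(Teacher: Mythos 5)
Your proposal is correct and follows essentially the same route as the paper's proof: reduce to the torsion case via Lemmata \ref{lemctsdis} and \ref{lemcomtriv}, write $A = \varinjlim_n A_n$ using Proposition \ref{structcof}, identify $\AAA \otimes_{\OL} A$ topologically with the discrete colimit $\varinjlim_n \AAA \otimes_{\OL} A_n$, and conclude by commuting continuous cohomology with the filtered colimit via \cite[(1.5.1) Proposition]{nswo}. No gaps; the extra remark on continuity of the $H_K$-action via open stabilizers in each term is exactly the implicit content of the paper's argument.
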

	
	\begin{lem} \label{lemcofseq}
	Let $A$ be a cofinitely generated $\OL$-module. Then the sequence
	\[ \begin{xy} \xymatrix{
		0 \ar[r] &A \ar[r] & \AAA \otimes_{\OL} A \ar[rr]^-{\Fr\otimes\id - \id} & & \AAA \otimes_{\OL} A
			\ar[r] & 0.
	} \end{xy} \]
	is exact and has a continuous set theoretical splitting, where all terms are equipped with the
	discrete topology.
	\begin{proof}
	Since $\AAA$ is a flat $\OL$-module the first assertion comes from
	\hyperref[galc1.1]{Lemma \ref*{galc1.1}}, the second is obvious since all terms carry
	the discrete topology.
	\end{proof}
	\end{lem}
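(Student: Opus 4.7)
The plan is short: the algebraic content is obtained by tensoring the exact sequence of Lemma~\ref{galc1.1} with $A$ over $\OL$, and the existence of a continuous section is forced by the discrete topology on all three nonzero terms.

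First I would recall from Lemma~\ref{galc1.1} the short exact sequence
\[
0 \longrightarrow \OL \longrightarrow \AAA \overset{\Fr - \id}{\longrightarrow} \AAA \longrightarrow 0.
\]
Next I would observe that $\AAA$ is $\OL$-flat: as a complete discrete valuation ring with uniformizer $\pi_L$ (cf.\ p.~\pageref{aaacompl}), it is $\pi_L$-torsion-free and hence $\OL$-torsion-free, and so flat over the DVR $\OL$. Tensoring the displayed sequence with $A$ over $\OL$ therefore preserves exactness and yields the claimed short exact sequence
\[
0 \longrightarrow A \longrightarrow \AAA \otimes_{\OL} A \overset{\Fr \otimes \id - \id}{\longrightarrow} \AAA \otimes_{\OL} A \longrightarrow 0.
\]

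For the topological part, I would note that by Remark~\ref{remcoftop}(1) a cofinitely generated $\OL$-module is discrete, so equipping all three terms with the discrete topology is consistent with the native topology of $A$. Any set-theoretic right inverse of the surjection $\Fr \otimes \id - \id$, which exists by the axiom of choice, is then automatically continuous, since every subset of a discrete space is open. The only genuine point in the whole argument is the flatness of $\AAA$ over $\OL$; once this is invoked, no further obstacle remains, and in particular the topological assertion becomes a tautology given the hypothesis.
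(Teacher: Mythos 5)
Your proof is correct and follows the same route as the paper: exactness is obtained from Lemma \ref{galc1.1} by tensoring with $A$, using that $\AAA$ is $\OL$-flat (which you rightly justify via $\piL$-torsion-freeness over the DVR $\OL$, so that the cokernel term contributes no $\mathrm{Tor}_1$), and the continuous set-theoretic splitting is immediate because all terms are discrete. Nothing further is needed.
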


	\begin{prop} \label{torcofquas}
	Let $A$ be a cofinitely generated $\OL$-module with a continuous action from $G_K$. Then
	the exact sequence
	\[ \begin{xy} \xymatrix{
		0 \ar[r] &A \ar[r] & \AAA \otimes_{\OL} A \ar[rr]^-{\Fr\otimes\id - \id} & & \AAA \otimes_{\OL} A
			\ar[r] & 0.
	} \end{xy} \]
	and the canonical homomorphism
	\[ \begin{xy} \xymatrix{
			(\AAA \otimes_{\OL} A)^{H_K} \ar@{^{(}->}[r] &
			\Cctsb(H_K,\AAA \otimes_{\OL} A)		
	} \end{xy} \]
	induce quasi isomorphisms
	\begin{equation}\label{f:quasiH}
	    \begin{xy} \xymatrix{
		& \Cctsb(H_K,A) \ar[r]^-{\simeq} & \CFrb(H_K,\AAA \otimes_{\OL} A)
	 & \ar[l]_-{\simeq} \CphKLb(\MAK(A)).
	} \end{xy}
\end{equation}
	\begin{proof}
	Since $\mathrm{Fr}$ commutes with the action from $H_K$, the exact sequence
	\[ \begin{xy} \xymatrix{
		0 \ar[r] &A \ar[r] & \AAA \otimes_{\OL} A \ar[rr]^-{\Fr\otimes\id - \id} & & \AAA \otimes_{\OL} A
			\ar[r] & 0.
	} \end{xy} \]
	clearly is an exact sequence of (discrete) $H_K$-modules. Then
	\hyperref[cohomendo]{Corollary \ref*{cohomendo}} says that
	\[ \Hcts^{i}(H_K,A) \cong \HFr^{i}(H_K,\AAA \otimes_{\OL} A),\]
	which is exactly the first quasi isomorphism. For the second quasi isomorphism it is
	with \hyperref[pre7.21]{Proposition \ref*{pre7.21}} enough to show
	\[ \Hcts^{i}(H_K,\AAA \otimes_{\OL} A) =
		\begin{cases}
		\MAK(A) &,\text{ if } i=0\\
		0 &,\text{ else }.
		\end{cases} \]
	But this is exactly the above \hyperref[cofcomtriv]{Corollary \ref*{cofcomtriv}}.
	\end{proof}
	\end{prop}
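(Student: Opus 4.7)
The plan is to handle the two quasi-isomorphisms separately, each by reducing to results already established in the paper. For the left arrow $\Cctsb(H_K, A) \to \CFrb(H_K, \AAA \otimes_{\OL} A)$, I would first observe that the given short exact sequence is a sequence of discrete $H_K$-modules: continuity of the $H_K$-action on $\AAA \otimes_{\OL} A$ with the discrete topology is already Corollary \ref{cofcomtriv}, and the continuous set-theoretic splitting required for the sequence at the level of cochain complexes is Lemma \ref{lemcofseq}. Since $\Fr$ commutes with the $H_K$-action (the latter being induced from the $G_L$-action on $W(\EELsep)_L$ on which $\Fr$ is $G_L$-equivariant), the hypotheses of Corollary \ref{cohomendo} are satisfied with $f = \Fr \otimes \id$. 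That corollary directly yields $\Hcts^{i}(H_K, A) \cong \HFr^{i}(H_K, \AAA \otimes_{\OL} A)$; tracing through its proof (which is based on Lemma \ref{galc1.3l3}) shows that the isomorphism is induced precisely by embedding $\Cctsb(H_K, A)$ as the $(\Fr-1)$-kernel column of the total complex, which is the natural map given in the statement.

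For the right arrow $\CphKLb(\MAK(A)) \to \CFrb(H_K, \AAA \otimes_{\OL} A)$, I would apply the first spectral sequence of Proposition \ref{pre7.21} to the double complex
\[
\Cctsb(H_K, \AAA \otimes_{\OL} A) \xrightarrow{\Fr - 1} \Cctsb(H_K, \AAA \otimes_{\OL} A),
\]
whose total complex is by definition $\CFrb(H_K, \AAA \otimes_{\OL} A)$. This gives an $E_2$-page
\[
\HFr^{a}\!\bigl(\Hcts^{b}(H_K, \AAA \otimes_{\OL} A)\bigr) \;\Longrightarrow\; \HFr^{a+b}(H_K, \AAA \otimes_{\OL} A).
\]
By Corollary \ref{cofcomtriv}, $\Hcts^{b}(H_K, \AAA \otimes_{\OL} A) = 0$ for all $b > 0$, and equals $(\AAA \otimes_{\OL} A)^{H_K}$ for $b = 0$. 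The spectral sequence therefore degenerates at $E_2$ and collapses onto a single column, identifying $\HFr^{i}(H_K, \AAA \otimes_{\OL} A)$ with the cohomology of the two-term complex
\[
(\AAA \otimes_{\OL} A)^{H_K} \xrightarrow{\Fr - 1} (\AAA \otimes_{\OL} A)^{H_K}.
\]
The edge morphism corresponds to the canonical inclusion $(\AAA \otimes_{\OL} A)^{H_K} \hookrightarrow \Cctsb(H_K, \AAA \otimes_{\OL} A)$, so the comparison morphism is the one stated in the proposition.

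The only genuine point requiring attention is to ensure the identification $(\AAA \otimes_{\OL} A)^{H_K} \xrightarrow{\Fr - 1} (\AAA \otimes_{\OL} A)^{H_K} \;=\; \CphKLb(\MAK(A))$; that is, that the restriction of $\Fr \otimes \id$ to $H_K$-invariants coincides with the Frobenius $\varphi_{\MAK(A)}$. For torsion $A$ this is immediate from the very definition of $\MAK$ in Theorem \ref{equivcat}, where $\varphi$ is the restriction of the ambient Frobenius; for general cofinitely generated $A$, one writes $A = \varinjlim_n A_n$ as in Proposition \ref{structcof}, uses that $H_K$-invariants commute with filtered colimits of discrete modules to obtain $\MAK(A) = \varinjlim_n \MAK(A_n)$, and notes that $\Fr$ respects this colimit. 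This is essentially a bookkeeping step rather than a real obstacle, so the proof as a whole will be short, its content concentrated entirely in Corollary \ref{cohomendo}, Corollary \ref{cofcomtriv}, and the degeneracy of one of the two spectral sequences of Proposition \ref{pre7.21}.
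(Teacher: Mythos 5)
Your proposal is correct and follows essentially the same route as the paper: Corollary \ref{cohomendo} gives the first quasi-isomorphism, and the spectral sequence of Proposition \ref{pre7.21} combined with the vanishing from Corollary \ref{cofcomtriv} gives the second. The extra care you take in identifying the restriction of $\Fr\otimes\id$ to $(\AAA\otimes_{\OL}A)^{H_K}$ with $\varphi_{\MAK(A)}$ (via torsion pieces and colimits) is a harmless elaboration of what the paper treats as definitional.
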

	
	\begin{cor} \label{torcofseq}
	Let $A$ be a cofinitely generated $\OL$-module with a continuous action from $G_K$.
	Then the following sequence is exact
	\[ \begin{xy} \xymatrix{
		0 \ar[r] & \Hcts^0(H_K,A) \ar[r] & \MAK(A) \ar[rr]^{\phKL-\id} & & \MAK(A) \ar[r] & \Hcts^1(H_K,A) \ar[r] &0.
	} \end{xy} \]
	\begin{proof}
	This is the long exact cohomology sequence of
	\[ \begin{xy} \xymatrix{
		0 \ar[r] &A \ar[r] & \AAA \otimes_{\OL} A \ar[rr]^-{\Fr\otimes\id - \id} & & \AAA \otimes_{\OL} A
			\ar[r] & 0.
	} \end{xy} \]
	combined with $\Hcts^1(H_K,\AAA \otimes_{\OL} A)=0$ from \hyperref[cofcomtriv]{Corollary \ref*{cofcomtriv}}.
	\end{proof}
	\end{cor}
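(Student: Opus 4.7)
The plan is to obtain the stated four-term exact sequence as a consequence of the long exact continuous cohomology sequence attached to the short exact sequence
\[ 0 \longrightarrow A \longrightarrow \AAA \otimes_{\OL} A \xrightarrow{\Fr\otimes\id - \id} \AAA \otimes_{\OL} A \longrightarrow 0 \]
from Lemma \ref{lemcofseq}, combined with the cohomological vanishing from Corollary \ref{cofcomtriv}.

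First I would verify that Lemma \ref{pre6.10} applies to the above short exact sequence with respect to the closed subgroup $H_K \leq G_K$, so as to obtain a genuine long exact sequence in continuous $H_K$-cohomology. Since $A$ is cofinitely generated and hence discrete, and since the tensor product $\AAA \otimes_{\OL} A$ is discrete as an $H_K$-module by the argument in Lemma \ref{lemctsdis} (it is even a filtered colimit of discrete modules via the filtration by $A_n$), the hypotheses on the induced topology and on the existence of a continuous set-theoretical section are trivially satisfied: any set-theoretical section of a surjection of discrete groups is continuous. This is precisely the content of Lemma \ref{lemcofseq}.

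Applying $\Hcts^{*}(H_K,-)$ to the short exact sequence yields a long exact sequence
\[ \cdots \longrightarrow \Hcts^{i}(H_K,A) \longrightarrow \Hcts^{i}(H_K,\AAA\otimes_{\OL} A) \xrightarrow{\Fr\otimes\id - \id} \Hcts^{i}(H_K,\AAA\otimes_{\OL} A) \longrightarrow \Hcts^{i+1}(H_K,A) \longrightarrow \cdots \]
By Corollary \ref{cofcomtriv}, the middle term vanishes for all $i \geq 1$. Hence the long exact sequence collapses to
\[ 0 \longrightarrow \Hcts^0(H_K,A) \longrightarrow (\AAA\otimes_{\OL} A)^{H_K} \xrightarrow{\Fr\otimes\id - \id} (\AAA\otimes_{\OL} A)^{H_K} \longrightarrow \Hcts^1(H_K,A) \longrightarrow 0. \]

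Finally, I would identify $(\AAA\otimes_{\OL} A)^{H_K}$ with $\MAK(A)$ (this identification is already used in Proposition \ref{torcofquas}, extending the definition of $\MAK$ from Theorem \ref{equivcat} to cofinitely generated coefficients). The middle map becomes $\phKL-\id$ upon this identification, because $\Fr$ on $\AAA$ restricts to $\phKL$ on $\AAK = \AAA^{H_K}$ and commutes with the $H_K$-action. There is no genuine obstacle in this argument; everything reduces to packaging Lemma \ref{lemcofseq} and Corollary \ref{cofcomtriv} through the standard machinery of continuous cohomology recalled in Lemma \ref{pre6.10}.
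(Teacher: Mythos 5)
Your proposal is correct and coincides with the paper's own argument: both take the long exact continuous cohomology sequence of the short exact sequence from Lemma \ref{lemcofseq}, truncate it using the vanishing $\Hcts^{i}(H_K,\AAA\otimes_{\OL}A)=0$ for $i>0$ from Corollary \ref{cofcomtriv}, and identify $(\AAA\otimes_{\OL}A)^{H_K}$ with $\MAK(A)$ and the induced map with $\phKL-\id$. The extra checks you spell out (discreteness making the continuous section automatic, and the restriction of $\Fr$ to the $H_K$-invariants) are exactly the points the paper leaves implicit.
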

	
	
	In the next step, we want to replace the above exact sequence with a sequence of
	\glssymbol{LamK}-modules. An idea how to do this gives \Nekovar in \cite[(8.3.3) Corollary, p.\,159]{neksc} but
	unfortunately the modules we are working with are not ind-admissible, since $\AAA$ is no direct limit
	of finitely generated $\OL[G_K]$-modules. As in the proof of \hyperref[galc1.3]{Theorem \ref*{galc1.3}} we
	use limits and colimits to reduce to the case of discrete coefficients.
	
	
%
	
	We want to recall the notation from \cite[(8.1.1), p.\,148; (8.2.1), p.\,157]{neksc} and from the beginning
	of \cite[(8.3) Infinite extensions, p.\,158--159]{neksc}.
	
	\begin{mydef}
	Let $G$ be a profinite group, $U \leq G$ an open subgroup and $M$ a discrete
	$\OL[U]$-module. We then define the induced module to be
	\[ \gls{IndUG} \coloneqq \{ f \colon G \to X\mid f(ug)=uf(g) \text{ for all } u \in U, g \in G\}.\]
	$\IndUG(M)$ carries a $G$-action by $(g \cdot f)(\sigma) \coloneqq f(\sigma g)$.
	Furthermore, if $M$ is a discrete $\OL[G]$-module define
	\[ \gls{UM} \coloneqq \Hom_{\OL}(\OL[G/U],M).\]
	${}_U M$ then again carries a $G$-action by $(\sigma \cdot (f))(x) \coloneqq \sigma( f(\sigma^{-1}(x)))$.
	Let now $H\triangleleft G$ be a closed, normal subgroup and
	\glssymbol{UK} be the open subgroups of $G$ containing $H$. Then, for $V, U \in \UK$ with
	$V \subseteq U$ the canonical map $G/V \twoheadrightarrow G/U$ induces $\OL$-linear
	maps ${}_U M \hookrightarrow {}_V M$ under which the system $({}_U M)_{U \in \UK}$ becomes
	a filtered directed system. We then set
	\[ \glssymbol{FGH} \coloneqq \varinjlim_{U \in \UK} {}_U M.\]
	Similar as above, $\FGH(M)$ then also carries an action from $G$.
	If $H=\{1\}$ we write \gls{UKE} instead of $\UK$ and \gls{FG} instead of $F_{G/\{1\}}(M)$. Furthermore,
	we set $\UKK\coloneqq \mathcal{U}(G_K;H_K)$ and we write \gls{FGK} instead of $F_{G_K/H_K}$. This
	can lead to an abuse of notation, but it will be clear from the context, which construction is chosen.
	\end{mydef}

	\begin{rem} \label{indiso}
	For the above situation, \Nekovar proves in \emph{\cite[(8.1.3), p.\,149]{neksc}} that
	\[ \begin{xy} \xymatrix{
		\IndUG(M) \ar[r] & {}_U M, \ f \ar@{|->}[r] & \left[gU \mapsto g(f(g^{-1})) \right]
	} \end{xy} \]
	is a $G$-equivariant isomorphism.
	\end{rem}
	
	\begin{rem}
	In the above situation, if $f \in \FGH(M)$ then it exists $U \in \UK$ such that $f \in {}_U M$. If then
	$V \in \UK$ with $V \subseteq U$ we also have $f \in {}_V M$ as well as
	\[ f(gV) = f (gU) \]
	for all $g \in G$.
	\end{rem}
	

	\begin{rem} \label{abnormal}
	Let $G$ be a group and $H \triangleleft G$ a normal subgroup such that $G/H$ is abelian.
	Then every subgroup $U \leq G$ with $H \subseteq U$ is normal as well.
	In particular, if additionally $G$ is profinite and $H$ is closed, then the elements of $\UK$ are normal, open
	subgroups of $G$ containing $H$. This is of great interest for us, since our application of this theory will be
	$G=G_K$ and $H=H_K$ and $G=\GaK$ and $H=\{1\}$. In both cases, the factor $G/H$ is $\GaK$ which
	is abelian.
	\end{rem}
	
	\begin{prop}
	Let $G$ be a profinite group, $H \triangleleft G$ a closed, normal subgroup, $M$ a discrete
	$\OL[G]$-module and let $U \in \UK$. Then the compact-open topology on ${}_U M$ is
	discrete and the $G$-action on ${}_U M$ is again continuous with respect to this topology.\\
	Furthermore, the transition maps ${}_V M \to {}_{V'} M$ for $V,V' \in \UK$ with $V' \subseteq V$ are
	injective,
	the direct limit topology on $\FGH(M)$ is discrete and its $G$-action is continuous.
	\begin{proof}
	Since $U \leq G$ is an open subgroup, the set of cosets $G/U$ is finite and therefore
	$\OL[G/U]$ is a finitely generated free $\OL$-module. So in particular, $\OL[G/U]$ is compact.
	Then ${}_U M = \Hom_{\OL}(\OL[G/U],M)$ is discrete with respect to the compact open topology since
	$M$ is discrete. To see that the action from $G$ is continuous on ${}_U M$ it is enough to see that
	for every $f \in {}_U M$ there exists an open subset $V \subseteq G$ under which $f$ is fixed. Note also
	that $G$ acts by left multiplication on $G/U$. So, let
	$f \in {}_U M$ and let $g_1, \dots, g_n \in G$ be a set of representatives of the cosets of $G/U$. Since
	the action of $G$ on $M$ is continuous and $M$ carries the discrete topology, there exist
	open subsets $V_1, \dots, V_n \subseteq G$ such that $g_i$ is fixed by $V_i$ for all
	$1 \leq i \leq n$. Then f is fixed by $V \coloneqq \cap_{i} V_i$.\\
	The statements on $\FGH(M)$ follow immediately by taking the direct limit. So the statement on the
	transition maps is left. Let $V,V' \in \UK$ with $V'\subseteq V$. Then the canonical map
	$G/V' \to G/V$ is surjective. Then $\OL[G/V']\to \OL[G/V]$ is a surjective $\OL$-linear homomorphism
	and since $\Hom_{\OL}(-,M)$ is left exact, the induced homomorphism
	${}_{V'} M \to {}_V M$ is injective.
	\end{proof}
	\end{prop}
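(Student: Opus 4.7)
The plan is to dispatch the four assertions in turn, leveraging the crucial observation that any $U \in \UK$ has finite index in $G$ since $G$ is profinite and $U$ is open.

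First, I would show that ${}_U M$ is discrete. Because $[G:U] < \infty$, the module $\OL[G/U]$ is a free $\OL$-module of finite rank $n = [G:U]$, and I would fix the basis given by the cosets $g_1 U, \ldots, g_n U$. Evaluation on this basis produces an $\OL$-linear bijection
\[
\Hom_{\OL}(\OL[G/U], M) \xrightarrow{\sim} M^n.
\]
Since $\OL[G/U]$ is itself compact in its $\piL$-adic topology, the compact-open topology is generated by sets of the form $\{f : f(K_j) \subseteq W_j\}$ with $K_j$ compact and $W_j$ open in $M$. Taking $K_j = \{g_j U\}$ (singletons are compact) and $W_j = \{m_j\}$ (singletons are open in the discrete $M$) realizes each singleton $\{f\}$ as a compact-open set, so ${}_U M$ carries the discrete topology, which matches the product-of-discretes topology on $M^n$.

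Second, I would verify continuity of the $G$-action on ${}_U M$. Since the module is discrete, it suffices to show that each $f \in {}_U M$ has an open stabilizer. Writing $f$ as the tuple $(f(g_1 U), \ldots, f(g_n U))$, I need $\sigma \in G$ to satisfy $\sigma(f(\sigma^{-1} g_i U)) = f(g_i U)$ for every $i$. It is then enough that $\sigma$ lie in the intersection of the normal core $U_G = \bigcap_{g \in G} gUg^{-1}$ (which is open since $G/U_G$ embeds into the finite group $S_n$) with the stabilizers $\mathrm{Stab}_G(f(g_i U))$, each of which is open by continuity of the $G$-action on $M$. A finite intersection of open subgroups is again open, which gives the claim.

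Third, the injectivity of the transition maps is formal: the inclusion $V' \subseteq V$ induces a surjection of $\OL$-modules $\OL[G/V'] \twoheadrightarrow \OL[G/V]$, and applying the left-exact functor $\Hom_{\OL}(-, M)$ yields an injection ${}_V M \hookrightarrow {}_{V'} M$. Finally, for $\FGH(M)$: thanks to the injective transition maps, the filtered colimit identifies with the set-theoretic union $\bigcup_{U \in \UK} {}_U M$ of discrete subspaces, so the final topology is discrete. Continuity of the $G$-action transfers directly: any $f \in \FGH(M)$ lies in some ${}_U M$, where its stabilizer is open by the second step, and this same stabilizer fixes $f$ inside $\FGH(M)$. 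I do not expect a substantial obstacle; the only mildly delicate point is matching the compact-open topology with the naive product-of-discretes topology in the first step, which is handled cleanly by the compactness of $\OL[G/U]$.
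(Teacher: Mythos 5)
Your proof is correct and follows essentially the same route as the paper's: discreteness of ${}_U M$ from finiteness of $G/U$ and discreteness of $M$, continuity of the $G$-action via exhibiting an open stabilizer of each $f$, injectivity of the transition maps from left-exactness of $\Hom_{\OL}(-,M)$ applied to the surjection $\OL[G/V'] \twoheadrightarrow \OL[G/V]$, and passage to the filtered colimit for $\FGH(M)$. If anything, your use of the open normal core $\bigcap_{g \in G} gUg^{-1}$ to control the permutation of the cosets $g_iU$ makes the continuity step more explicit than the paper's rather terse treatment of that point.
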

	
	In the above situation, under the additional assumption that $U$ is normal in $G$,
	\Nekovar introduces in \cite[(8.1.6.3) Conjugation, p.\,151]{neksc} an action from $G/U$ on ${}_U M$ which will
	be important for us. We recall this action in the following Remark and we prove the statements.
	
	\begin{rem} \label{guact}
	Let $G$ be a profinite group, $U \triangleleft G$ be an open, normal subgroup and $M$ a discrete
	$\OL[G]$-module. For $g \in G$ and $f \in \IndUG(M)$ we define \glssymbol{Adt} to be
	\[ (\Adt(g)(f) )(\sigma) \coloneqq g(f(g^{-1}\sigma)).\]
	This is an action from $G$ on $\IndUG(M)$ which is trivial on $U$, i.e. it induces
	an action from $G/U$ on $\IndUG(M)$ which we will denote also by $\Adt$.
	Since both, $\IndUG(M)$ and $G/U$ carry the
	discrete topology, this action is continuous.\\
	Furthermore,
	this action commutes with the standard action from $G$ and under the isomorphism $\IndUG(M) \cong {}_U M$
	from \emph{\hyperref[indiso]{Remark \ref*{indiso}}} it corresponds to the $G/U$-action
	\[ (\Adt(gU)(f))(\sigma U) = f(\sigma g U)\]
	on ${}_U M$. Then clearly the $G$-action on ${}_U M$ commutes with this action from $G/U$
	and the latter is again continuous.
	\end{rem}
	
	\begin{lem} \label{fghact}  
	Let $G$ be a profinite group and $H \triangleleft G$ a closed, normal subgroup, such that $G/H$ is
	abelian. Then $\Adt$ induces a continuous action from $G/H$ on $\FGH(M)$.\\
	In particular, with this action $\FGH(M)$ becomes an $\OLGH$-module.
	\begin{proof}
	The action from $G/H$ on $\FGH(M)$ is given as follows:
	For $f \in \FGH(M)$ and $U \in \UK$ such that $f \in {}_U M$ and $g \in G$ we have
	\[  \Adt(gH)(f) = \Adt(gU)(f).\]
	This is well defined, since if $V \in \UK$ such that $V \subseteq U$
	then $f \in {}_V M$ and for $\sigma \in G$ we have
	\[
	 \Adt(gU)(f)(\sigma U) = f(\sigma g U)
	 				= f(\sigma g V)
	 				= \Adt(gV)(f)(\sigma V).
	 \]
	The action is continuous since the above $f$ is fixed under $U/H$, which is an open
	subgroup of $G/H$.\\
	If $f$ is as above, $x \in \OLGH$ and $\pr_U \colon \OLGH \to \OL[G/U]$ denotes the
	canonical projection, then we have
	\[ \Adt(x)(f) = \Adt(\pr_U(x))(f).\]
	This again is well defined and makes $\FGH(M)$ into an $\OLGH$-module.
	\end{proof}
	\end{lem}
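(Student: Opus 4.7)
The plan is to verify the three assertions of the lemma — well-definedness of the $G/H$-action on the colimit $\FGH(M)$, continuity, and the promotion to an $\OLGH$-module structure — in sequence, leveraging Remark \ref{guact} at each $U \in \UK$. The key observation making the whole setup work is that $G/H$ is abelian, so by Remark \ref{abnormal} every $U \in \UK$ is automatically normal in $G$, and hence for each such $U$ the construction of Remark \ref{guact} provides a continuous action $\Adt$ of $G/U$ on ${}_U M$ that commutes with the standard $G$-action.

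First I would check well-definedness. Given $f \in \FGH(M)$ pick $U \in \UK$ with $f \in {}_U M$, and for $g \in G$ set $\Adt(gH)(f) := \Adt(gU)(f) \in {}_U M \hookrightarrow \FGH(M)$. Independence of the representative $g$ modulo $H$ is trivial since $H \subseteq U$, so $gH \subseteq gU$ and $\Adt(gU)$ depends only on $gU \supseteq gH$. Independence of $U$ requires showing that the transition map ${}_U M \hookrightarrow {}_V M$ (for $V \subseteq U$ in $\UK$) intertwines the actions $\Adt(gU)$ and $\Adt(gV)$; this is a direct computation using the formula $(\Adt(gU)(f))(\sigma U) = f(\sigma g U)$: if $f$ is pulled back via $G/V \twoheadrightarrow G/U$, then $(\Adt(gV)(f))(\sigma V) = f(\sigma g V) = f(\sigma g U) = (\Adt(gU)(f))(\sigma U)$.

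Next I would verify continuity. Since $\FGH(M)$ carries the direct limit of discrete topologies, it suffices to show that for any $f \in \FGH(M)$ its $G/H$-stabilizer is open. Choose $U \in \UK$ with $f \in {}_U M$; then $U/H \leq G/H$ is open, and by construction $\Adt$ factors through $G/U$, hence $U/H$ fixes $f$. The same argument shows the action map $G/H \times \FGH(M) \to \FGH(M)$ is continuous.

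Finally, for the $\OLGH$-module structure I would use that $\OLGH = \varprojlim_{U \in \UK} \OL[G/U]$ with canonical projections $\pr_U \colon \OLGH \to \OL[G/U]$. For $f \in {}_U M$ and $x \in \OLGH$ set $\Adt(x)(f) := \Adt(\pr_U(x))(f)$, using the $\OL[G/U]$-module structure on ${}_U M$ coming from the $G/U$-action. Compatibility with the transition maps as $V$ shrinks below $U$ is exactly the compatibility I checked above (in linear combinations), so this descends to a well-defined $\OLGH$-action extending the $G/H$-action. The only mild subtlety — and the step I would be most careful about — is the bookkeeping in verifying compatibility of $\Adt(gV)$ and $\Adt(gU)$ under the canonical inclusion ${}_U M \hookrightarrow {}_V M$; everything else is essentially formal.
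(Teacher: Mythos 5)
Your proposal is correct and follows essentially the same route as the paper: define $\Adt(gH)(f):=\Adt(gU)(f)$ for $f\in{}_U M$, check independence of $U$ via the computation $f(\sigma gU)=f(\sigma gV)$ along the transition maps, deduce continuity from the fact that $U/H$ is an open subgroup fixing $f$, and extend to an $\OLGH$-action through the projections $\pr_U$. Your explicit appeal to Remark \ref{abnormal} (normality of each $U\in\UK$, so that Remark \ref{guact} applies) is a point the paper leaves implicit, but otherwise the arguments coincide.
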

	
	\begin{prop}
	Let $G$ be a profinite group and $H \triangleleft G$ a closed, normal subgroup such that
	$G/H$ is abelian. Then
	$\FGH$ is an exact functor, viewed as functor from discrete \linebreak $\OL[G]$-modules to discrete
	$\OLGH[G]$-modules.
	\begin{proof}
	The above \hyperref[fghact]{Lemma \ref*{fghact}} says that $\FGH$ is a functor from
	discrete $\OL[G]$-modules to discrete $\OLGH[G]$-modules. So it is left to check that it is
	exact. For fixed \linebreak$U \in \UK$ the functor $M \mapsto {}_U M$ from discrete $\OL[G]$-modules
	to discrete $\OL[G/U][G]$-modules is exact since $\OL[G/U]$ is a finitely generated, free
	$\OL$-module. Since taking direct limits is exact as well, $\FGH$ is exact.
	\end{proof}
	\end{prop}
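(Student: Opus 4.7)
The plan is to reduce the exactness of $\FGH$ to two well-known general facts: that $\Hom$ out of a projective module is exact, and that filtered colimits (of abelian groups or $\OL$-modules) are exact. Since $\FGH(M)=\varinjlim_{U \in \UK} {}_U M$ with ${}_U M = \Hom_{\OL}(\OL[G/U],M)$, this structure makes the argument essentially a two-step unwinding of definitions.

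First, I would fix $U \in \UK$ and analyse the functor $M \mapsto {}_U M$. Because $G$ is profinite and $U \leq G$ is open, the coset space $G/U$ is finite, so $\OL[G/U]$ is a free $\OL$-module of finite rank, hence projective. It follows that $\Hom_{\OL}(\OL[G/U],-)$ is an exact functor on $\OL$-modules. In particular, if $0 \to M' \to M \to M'' \to 0$ is a short exact sequence of discrete $\OL[G]$-modules, then $0 \to {}_U M' \to {}_U M \to {}_U M'' \to 0$ is exact as a sequence of $\OL$-modules, and the $G$-action (respectively the $G/U$-action via $\Adt$) is preserved by the naturality of the Hom construction, so the sequence is even exact as a sequence of $\OL[G/U][G]$-modules.

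Second, I would pass to the filtered colimit. The index set $\UK$ is downward-directed by intersection: given $U_1,U_2 \in \UK$, their intersection $U_1 \cap U_2$ is again open in $G$ and contains $H$, so lies in $\UK$; moreover the inclusions ${}_{U_i} M \hookrightarrow {}_{U_1\cap U_2} M$ are the relevant transition maps. Since filtered colimits in the category of $\OL$-modules are exact, applying $\varinjlim_{U \in \UK}$ to the short exact sequences $0 \to {}_U M' \to {}_U M \to {}_U M'' \to 0$ yields the short exact sequence $0 \to \FGH(M') \to \FGH(M) \to \FGH(M'') \to 0$. The $G$-action and the $\OLGH$-module structure on the colimit are compatible with the transition maps by \hyperref[fghact]{Lemma \ref*{fghact}}, so exactness holds in the target category.

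There is no real obstacle here; the statement is essentially formal once one observes (a) the finiteness of $G/U$, which makes $\OL[G/U]$ projective, and (b) the directedness of $\UK$, which legitimises the use of exactness of filtered colimits. The only mild subtlety is to check that the extra equivariance data (the $G$- and the $\OLGH$-actions introduced via $\Adt$) do not interfere with exactness, but this is automatic since exactness is detected on underlying $\OL$-modules and the structure maps are functorial.
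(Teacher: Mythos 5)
Your proof is correct and follows essentially the same route as the paper: exactness of $M \mapsto {}_U M$ because $\OL[G/U]$ is finitely generated free over $\OL$, followed by exactness of the filtered colimit over $\UK$. The extra details you supply (directedness of $\UK$ and compatibility of the equivariant structures) are fine and only elaborate on what the paper leaves implicit.
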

	
	\begin{mydef}
	If $\mathbf{C}$ is an abelian category, we denote by $\glssymbol{DA}$ the corresponding derived
	category. As usual, we denote by $\glssymbol{DPA}$ the full subcategory whose objects
	are the complexes, which have no nonnegative entries and by \glssymbol{DBA} the full subcategory
	 whose objects are the bounded below complexes.\\
	If $\Cfrb$ is a complex in an abelian category $\mathbf{C}$, we denote as in \cite{neksc} by \gls{RGa} the
	corresponding complex as an object in the derived category $\mathbf{R}(\mathbf{C})$. \\
	In particular, if $G$ is a profinite group and $M$ is a topological $G$-module we set
	\[ \gls{RGacts}\coloneqq \RGa(\Cctsb(G,M))\]
	as an object in $\mathbf{R}(\mathbf{Ab})$.
	\end{mydef}
	
	\begin{rem} \label{adgh}
	Let $G$ be a profinite group, $H\triangleleft G$ a closed, normal subgroup,
	and $M$ a discrete $\OL[G]$-module.
	As in \emph{\cite[(3.6.1.4), p.\,72]{neksc}} we define an action from $G$ on $\Cctsb(H,M)$ by
	\[ \glssymbol{Ad} \coloneqq g(c(g^{-1}h_0g,\dots,g^{-1}h_ng)),\]
	where $c \in \Ccts^n(H,M)$. In loc. cit. \Nekovar also proves that for $h \in H$ this action
	is homotopic to the identity and therefore induces an action from $G/H$ on
	$\RGactsb(H,M)$ and $H^*(H,M)$ respectively.\\
	Similarly, by
	\[ \begin{xy} \xymatrix{
		\Cctsb(G,\FGH(M)) \ar[r]^{\Adt(g)_*} & \Cctsb(G,\FGH(M)) \ar[r]^{\Ad(g)} & \Cctsb(G,\FGH(M))
	} \end{xy} \]
	we can define an action from $G$ on $\Cctsb(G,\FGH(M))$. Note that in this situation\linebreak
	$ \Ad(g)\colon \Cctsb(G,\FGH(M)) \to \Cctsb(G,\FGH(M))$
	is homotopic to the identity and so
	the complex $\RGactsb(G,\FGH(M))$ becomes a complex of $\OLGH$-modules. See also
	\emph{\hyperref[lamact]{Remark \ref*{lamact}}} below.
 %
%
	\end{rem}

	\begin{prop} \label{shaind}
	Let $G$ be a profinite group, $H\triangleleft G$ a closed, normal subgroup and $M$ a discrete $\OL[G]$-module.
	Then there is a canonical morphism of complexes
	\[ \Cctsb(G,\FGH(M)) \to \Cctsb(H,M),\]
	which is a quasi isomorphism. Moreover, for $g \in G$ the diagram
	\[ \begin{xy} \xymatrix{
		\Cctsb(G,\FGH(M)) \ar[r] \ar[d]_{\Adt(g)}& \Cctsb(H,M) \ar[dd]^{\Ad(g)} \\
		\Cctsb(G,\FGH(M)) \ar[d]_{\Ad(g)}	& \\
		\Cctsb(G,\FGH(M)) \ar[r] & \Cctsb(H,M)
	} \end{xy} \]
	is commutative. So in particular, the corresponding isomorphism \linebreak
	$\RGactsb(G,\FGH(M)) \to \RGactsb(H,M)$ in the derived category
	$\DDp(\OLMod)$ is $G/H$-linear.
	\begin{proof}
	For the proof set $\UU\coloneqq \UK$. \cite[(1.5.1) Proposition, p.\,45--46]{nswo} says that we have
	\[ \Cctsb(G,\FGH(M)) \cong \Cctsb(G,\varinjlim_{U \in \UU} {}_U M) \cong
	\varinjlim_{U \in \UU} \Cctsb(G, {}_U M). \]
	With \hyperref[indiso]{Remark \ref*{indiso}} we then obtain
	\[ \varinjlim_{U \in \UU} \Cctsb(G, {}_U M) \cong \varinjlim_{U \in \UU} \Cctsb(G, \IndUG(M)). \]
	Shapiro's Lemma (cf.\ \cite[(1.6.4) Proposition, p.\,62--63]{nswo}) and again \cite[(1.5.1) Proposition, p.\,45--46]{nswo}
	then give us
	\[ \varinjlim_{U \in \UU} \Cctsb(G, \IndUG(M)) \simeq \varinjlim_{U \in \UU} \Cctsb(U, M)
		\cong \Cctsb(\varprojlim_{U \in \UU} U, M) = \Cctsb(H,M).\]
	\cite[(8.1.6.3), p.\,151]{neksc} says that for $U \in \UK$ and $g \in G$ the diagram
	\[ \begin{xy} \xymatrix{
		\Cctsb(G, \IndUG(M)) \ar[r] \ar[d]_{\Adt(g)}& \Cctsb(U,M) \ar[dd]^{\Ad(g)} \\
		\Cctsb(G,\IndUG(M)) \ar[d]_{\Ad(g)}	& \\
		\Cctsb(G,\IndUG(M)) \ar[r] & \Cctsb(U,M)
	} \end{xy} \]
	is commutative. Taking direct limits then proves the commutativity of the desired diagram.
	\end{proof}
	\end{prop}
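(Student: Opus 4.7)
The plan is to mimic closely the Shapiro-type identification used by Nekovář, reducing first to the level of a fixed $U \in \UKK$ and then passing to the direct limit.

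First I would use that $\FGH(M) = \varinjlim_{U \in \UKK} {}_U M$ and that continuous cochains commute with filtered colimits of discrete coefficient modules (this is a standard fact: for $G$ profinite, $\Ccts^n(G,-) = \Hom_{G}(\ZZ[G^{n+1}],-)$ preserves filtered colimits of discrete modules, cf.\ \cite[(1.5.1)]{nswo}). So there is a canonical isomorphism
\[ \Cctsb(G,\FGH(M)) \;\cong\; \varinjlim_{U \in \UKK} \Cctsb(G,{}_U M). \]
Next I would apply Nekovář's $G$-equivariant identification ${}_U M \cong \IndUG(M)$ from \hyperref[indiso]{Remark \ref*{indiso}} termwise to rewrite each $\Cctsb(G,{}_U M)$ as $\Cctsb(G,\IndUG(M))$.

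Then for each open $U$ I would invoke Shapiro's lemma (in its continuous-cochain form, \cite[(1.6.4)]{nswo}) to obtain a canonical quasi-isomorphism
\[ \Cctsb(G,\IndUG(M)) \;\xrightarrow{\simeq}\; \Cctsb(U,M). \]
Passing to the direct limit over $U \in \UKK$ on both sides and using once more that $\Cctsb(-,M)$ turns projective limits of open subgroups into direct limits of cochain complexes (again \cite[(1.5.1)]{nswo}) gives
\[ \varinjlim_{U \in \UKK} \Cctsb(U,M) \;\cong\; \Cctsb(\varprojlim_{U \in \UKK} U, M) \;=\; \Cctsb(H,M), \]
which is the desired canonical quasi-isomorphism. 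Filtered colimits preserve quasi-isomorphisms, so the direct limit of the Shapiro maps stays a quasi-isomorphism.

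The remaining task is the compatibility with the two $G$-actions. For this I would verify the diagram level by level for a fixed $U$, that is, show that
\[\begin{xy}\xymatrix{\Cctsb(G,\IndUG(M)) \ar[r] \ar[d]_{\Adt(g)} & \Cctsb(U,M) \ar[d]^{\Ad(g)} \\ \Cctsb(G,\IndUG(M)) \ar[r] & \Cctsb(U,M)}\end{xy}\]
commutes, which is exactly \cite[(8.1.6.3)]{neksc}. The only subtlety is that on the source side $\Adt(g)$ and $\Ad(g)$ are \emph{two} commuting operators (the inner one coming from conjugation on $U$, the outer from the action on coefficients through $G/U$), whereas on the target $\Cctsb(H,M)$ only the single Ad-action by $G/H$ survives after passing to the colimit since the two inner $\Ad$-shifts become homotopic to the identity (cf.\ \hyperref[adgh]{Remark \ref*{adgh}}). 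Taking the colimit over $U \in \UKK$ then yields the commutativity of the stated triangle-shaped diagram, and in particular the isomorphism $\RGactsb(G,\FGH(M)) \xrightarrow{\simeq} \RGactsb(H,M)$ in $\DDp(\OLMod)$ is $G/H$-linear. The one mildly delicate point, and the main bookkeeping obstacle, is keeping straight which copy of $\Ad$/$\Adt$ acts where when going through Shapiro's isomorphism; everything else is a formal colimit/limit manipulation.
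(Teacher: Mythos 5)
Your proposal follows essentially the same route as the paper: rewrite $\Cctsb(G,\FGH(M))$ as the filtered colimit of $\Cctsb(G,{}_U M)\cong\Cctsb(G,\IndUG(M))$ over $U\in\UK$, apply Shapiro's lemma level by level, pass to the colimit to land in $\Cctsb(H,M)$, and obtain the equivariance by taking direct limits of \Nekovar's (8.1.6.3). The only minor imprecision is that the level-$U$ diagram should carry the composite $\Ad(g)\circ\Adt(g)$ on the left (as in the paper and in the cited reference), not $\Adt(g)$ alone, but since you invoke exactly that reference and explain the role of the extra $\Ad(g)$ (homotopic to the identity), this is bookkeeping rather than a gap.
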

	
	\begin{cor} \label{lamlinder}
	Let $M \in \ModpGKe$ such that $M$ is discrete as $\OL[G]$-module. Then the above
	\emph{\hyperref[shaind]{Proposition \ref*{shaind}}} together with
	\emph{\hyperref[pre7.21]{Proposition \ref*{pre7.21}}} induces
	the $\GaK$-linear isomorphism
	\[ \begin{xy} \xymatrix{
		\RGa(\CphKLb(\Gamma_K,\FGK(M))) \ar[r]^-{\cong} & \RGa(\CphKLb(M)).
	} \end{xy} \]
	\end{cor}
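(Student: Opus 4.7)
The plan is to apply Proposition \ref{shaind} in the case $G = \GaK$, $H = \{1\}$, and then to use the first spectral sequence of Proposition \ref{pre7.21} to propagate the resulting cohomological statement about the coefficients to a statement about the total complex. The first step I would take is to observe that, under the bijection $\UKK \leftrightarrow \UU(\GaK)$ given by $U \mapsto U/H_K$, the canonical identification $G_K/U \cong \GaK/(U/H_K)$ identifies $\FGK(M) = F_{G_K/H_K}(M)$ with $F_{\GaK/\{1\}}(M)$ as discrete $\OL[\GaK]$-modules carrying the $\Adt$-action. Here I would use that $M$ is discrete and, because $M \in \ModpGKe$, carries only a $\GaK$-action (equivalently, a $G_K$-action on which $H_K$ acts trivially). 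Hence $\FGK(M)$ is the correct coefficient module for the results attached to the pair $G=\GaK$, $H=\{1\}$.

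Next I would apply Proposition \ref{shaind} with these choices to produce a $\GaK$-equivariant quasi-isomorphism of complexes of $\OL$-modules
\[
\Cctsb(\GaK, \FGK(M)) \xrightarrow{\simeq} \Cctsb(\{1\}, M) = M,
\]
where $M$ is placed in degree zero. Because the Shapiro-type map is natural in the coefficient module and $\phKL$ is a $\GaK$-linear endomorphism of $M$ that induces an endomorphism of $\FGK(M)$ by post-composition on values, this quasi-isomorphism commutes with $\phKL - 1$. It therefore extends to a $\GaK$-equivariant morphism of the double complex $\Cctsb(\GaK,\FGK(M)) \xrightarrow{\phKL - 1} \Cctsb(\GaK,\FGK(M))$ into the double complex $M \xrightarrow{\phKL - 1} M$, whose totalizations are $\CphKLb(\GaK,\FGK(M))$ and $\CphKLb(M)$ respectively.

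To promote this to a quasi-isomorphism of total complexes I would invoke the first spectral sequence of Proposition \ref{pre7.21}, namely $\HfphKL^{a}(H^{b}(\GaK, \FGK(M))) \Rightarrow \HfphKL^{a+b}(\GaK, \FGK(M))$. The previous step forces $H^{b}(\GaK, \FGK(M))$ to vanish for $b>0$ and to equal $M$ for $b=0$, so the spectral sequence collapses to edge isomorphisms $\HfphKL^{a}(\GaK, \FGK(M)) \cong \HfphKL^{a}(M)$, which is precisely the cohomological shadow of the morphism constructed above. Hence this morphism is a quasi-isomorphism and descends to the asserted $\GaK$-linear isomorphism in $\DDp(\OL\text{-}\mathbf{Mod})$.

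The main technical point I expect to be the principal obstacle is the compatibility of the Shapiro-type quasi-isomorphism with both the operator $\phKL$ and with the $\Adt$-action of $\GaK$: the former is a direct naturality check, while the latter is exactly the content of the commutative diagram in the second half of Proposition \ref{shaind}, which for $H = \{1\}$ reduces the outer $\Ad$-action to the standard $\GaK$-action on $M$ (so that the two actions on $\Cctsb(\GaK,\FGK(M))$ that one must compare — the given $\GaK$-action on $M$ pulled through the quasi-isomorphism, and the $\Adt$-action of $\GaK$ on $\FGK(M)$ inducing an action on cochains — actually agree up to chain homotopy). Once these compatibilities are in place, the whole argument lifts from cohomology groups to the derived category, yielding the stated $\GaK$-linear isomorphism.
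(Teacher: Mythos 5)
Your proposal is correct and takes essentially the route the paper itself indicates for this corollary (which is stated without a separate proof): apply Proposition \ref{shaind} with $G=\Gamma_K$, $H=\{1\}$ after identifying $F_{G_K/H_K}(M)$ with $F_{\Gamma_K/\{1\}}(M)$, check compatibility with $\varphi_{K|L}-1$ by naturality, conclude with the first spectral sequence of Proposition \ref{pre7.21} (equivalently a morphism of the two spectral sequences), and obtain $\Gamma_K$-linearity from the commutative diagram in Proposition \ref{shaind} together with $\mathrm{Ad}(g)$ being homotopic to the identity. The only nitpick is that $C^{\bullet}_{\mathrm{cts}}(\{1\},M)$ is not literally $M$ placed in degree zero but the complex $M\xrightarrow{0}M\xrightarrow{\mathrm{id}}M\to\cdots$; the evident augmentation onto $M[0]$ is a quasi-isomorphism compatible with $\varphi$ and the $\Gamma_K$-action, so your argument goes through unchanged.
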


	\begin{rem} \label{lamact}
	In the situation of \emph{\hyperref[shaind]{Proposition \ref*{shaind}}}, the morphism
	\[ \begin{xy} \xymatrix{
		\Ad(g)\colon \Cctsb(G,\FGH(M)) \ar[r] & \Cctsb(G,\FGH(M))
	} \end{xy} \]
	for $g \in G$ is homotopic to the identity (cf.\ \emph{\cite[(3.6.1.4), p.\,72]{neksc}} respectively
	\emph{\hyperref[adgh]{Remark \ref*{adgh}}}) and therefore the diagram
	\[ \begin{xy} \xymatrix{
		\RGactsb(G,\FGH(M)) \ar[r] \ar[d]_{\Adt(g)_*} & \RGactsb(H,M) \ar[d]^{\Ad(g)} \\
		\RGactsb(G,\FGH(M)) \ar[r] & \RGacts(H,M)
	} \end{xy} \]
	is commutative. The corresponding diagram for cohomology groups
	\[ \begin{xy} \xymatrix{
		\Hctss(G,\FGH(M)) \ar[r] \ar[d]_{\Adt(g)_*} & \Hctss(H,M) \ar[d]^{\Ad(g)} \\
		\Hctss(G,\FGH(M)) \ar[r] & \Hctss(H,M)
	} \end{xy} \]
	then also is commutative. This then explains that the statement from
	\emph{\cite[p.\,65]{nswo}} coincides with the theory from \Nekovar.
	\end{rem}
\pagebreak
	\begin{prop} \label{qind}
	Let $A=\varinjlim_m A_m$ be a cofinitely generated $\OL$-module,
	where $A_m=\ker(\mu_{\piL^m})$ as usual, with a continuous action from $G_K$ and set
	\begin{align*}
		\Amn & \coloneqq \left(\AAA \otimes_{\OL} A_m\right)/\left(\piL^n\AAAp \otimes_{\OL} A_m\right) \\
		M_{mn} & \coloneqq \left(\AAA \otimes_{\OL} A_m\right)^{H_K}/\left(\piL^n\AAAp
		\otimes_{\OL} A_m\right)^{H_K}.
	\end{align*}
	Then the following diagram is commutative and each arrow in it is a quasi isomorphism. Moreover,
	the vertical arrows on the right hand side are homomorphisms of $\LamK$-modules.
	\[
\xymatrix{
			\Cctsb(H_K,A) & & \Cctsb(G_K,\FGK(A)) \ar[ll]_-{\simeq} \\
			\underset{m \in \NN}{\varinjlim} \Cctsb(H_K,A_m) \ar@{}[d]^(.15){}="a"^(.9){}="b" \ar^-{\simeq} "a";"b"
				\ar[u]_-{\simeq}& &
				\underset{m \in \NN}{\varinjlim} \Cctsb(G_K,\FGK(A_m)) \ar[ll]_-{\simeq}
				\ar@{}[d]^(.15){}="a"^(.9){}="b" \ar_-{\simeq} "a";"b"
				\ar[u]^-{\cong} \\
			\underset{m \in \NN}{\varinjlim} \underset{n \in \NN}{\varprojlim} \CFrb(H_K,\Amn)
	& & \underset{m \in \NN}{\varinjlim} \underset{n \in \NN}{\varprojlim} \CFrb(G_K, \FGK(\Amn)) \ar[ll]_-{\simeq}\\
			\underset{m \in \NN}{\varinjlim} \underset{n \in \NN}{\varprojlim} \CphKLb(M_{mn})
			\ar@{}[u]^(.1){}="a"^(.85){}="b" \ar_-{\simeq} "a";"b"
	& & \underset{m \in \NN}{\varinjlim} \underset{n \in \NN}{\varprojlim}  \CphKLb(\GaK,\FGK(M_{mn})) \ar[ll]_-{\simeq}
	\ar@{}[u]^(.1){}="a"^(.85){}="b" \ar^-{\simeq} "a";"b" \\
			\underset{m \in \NN}{\varinjlim} \CphKLb(\MAK(A_m)) \ar@{}[u]^(.1){}="a"^(.8){}="b" \ar_-{\cong} "a";"b"
			\ar@{}[d]^(.15){}="a"^(.9){}="b" \ar^-{\simeq} "a";"b"& &\\
			\CphKLb(\MAK(A)). & &
	}
  \]
	In particular, the induced isomorphism $\RGa(\CphKLb(\MAK(A))) \cong \RGactsb(G_K,\FGK(A))$
	in \linebreak $\DDp(\OLMod)$ is $\LamK$-linear, i.e. it is an isomorphism in
	$\DDp(\LamKMod)$.
	\begin{proof}
	We start with the left column and we consider the following diagram
	\[ \begin{xy} \xymatrix{
			\Cctsb(H_K,A) &   \\
			\underset{m \in \NN}{\varinjlim} \Cctsb(H_K,A_m)
			\ar@{}[d]^(.15){}="a"^(.9){}="b" \ar_-{(2)}^-{\simeq} "a";"b" \ar[u]^-{(1)}_-{\simeq}& \\
	\underset{m \in \NN}{\varinjlim} \CFrb(H_K,\AAA\otimes_{\OL} A_m)
	\ar@{}[dr]^(.15){}="a"^(.87){}="b" \ar^-{(5)}_-{\cong} "a";"b" &\\
			& \underset{m \in \NN}{\varinjlim} \underset{n \in \NN}{\varprojlim} \CFrb(H_K,\Amn) \\
			& \underset{m \in \NN}{\varinjlim} \underset{n \in \NN}{\varprojlim} \CphKLb(M_{mn})
			\ar@{}[u]^(.1){}="a"^(.85){}="b" \ar_-{(7)}^-{\simeq} "a";"b"  \\
			\underset{m \in \NN}{\varinjlim} \CphKLb(\MAK(A_m))
			\ar@{}[ur]^(.2){}="a"^(.67){}="b" \ar_-{(6)}^-{\cong} "a";"b"
			\ar@{}[d]^(.15){}="a"^(.9){}="b" \ar_-{(4)}^-{\simeq} "a";"b"
			\ar@{}[uuu]^(.05){}="a"^(.95){}="b" \ar^-{(3)}_-{\simeq} "a";"b" &\\
			\CphKLb(\MAK(A)). &
	} \end{xy} \]
	That the morphisms (1) and (4) are quasi isomorphisms is well known (cf.\ eg.
	\cite[(1.5.1) Proposition, p.\,45--46]{nswo}). (2) and (3) are quasi isomorphisms by
	\hyperref[torcofquas]{Proposition \ref*{torcofquas}}.
	\hyperref[galc1.8]{Proposition \ref*{galc1.8}} says that (5) and (6) are isomorphisms of complexes. But then
	(7) is also a quasi isomorphism. So, all the morphisms in the left column of the original diagram
	are at least quasi isomorphisms. The horizontal morphisms are quasi isomorphisms by
	\hyperref[shaind]{Proposition \ref*{shaind}} and therefore the morphisms in the right
	column are also quasi isomorphisms. So it is left to check that the induced isomorphism
	$\RGa(\CphKLb(\MAK(A))) \cong \RGactsb(G_K,\FGK(A))$ is $\LamK$-linear. But the morphisms
	\[ \begin{xy} \xymatrix{
		\underset{m \in \NN}{\varinjlim} \CphKLb(\MAK(A_m)) \ar[r] & \CphKLb(\MAK(A))
	} \end{xy} \]
	and
	\[ \begin{xy} \xymatrix{
		\underset{m \in \NN}{\varinjlim} \underset{n \in \NN}{\varprojlim} \CphKLb(M_{mn}) \ar[r] &
		\underset{m \in \NN}{\varinjlim} \CphKLb(\MAK(A_m))
	} \end{xy} \]
	are clearly $\LamK$-linear and so are all the morphisms in the right column of the original
	diagram with respect to the $\LamK$-action induced by $\Adt$ (which is
	the correct action in the derived category according to \hyperref[lamact]{Remark \ref*{lamact}}).
	Finally, the morphism
	\[ \begin{xy} \xymatrix{
		\underset{m \in \NN}{\varinjlim} \underset{n \in \NN}{\varprojlim} \RGa(\CphKLb(\Gamma_K,\FGK(M_{mn})))
		\ar[r] & \underset{m \in \NN}{\varinjlim} \underset{n \in \NN}{\varprojlim} \RGa(\CphKLb(M_{mn}))
	} \end{xy} \]
	is $\LamK$-linear by \hyperref[lamlinder]{Corollary \ref*{lamlinder}}.
	\end{proof}
	\end{prop}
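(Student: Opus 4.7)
The plan is to assemble the large diagram piece by piece, using the already established Proposition \ref{torcofquas} as the engine that links the $\Cctsb(H_K,-)$ side with the $\CphKLb(\MAK(-))$ side, and Proposition \ref{shaind} to cross between the $H_K$ row and the $G_K$ row. All vertical arrows on either side come either from commuting $\Cctsb(H_K,-)$ (resp.\ $\Cctsb(G_K,\FGK(-))$) with filtered colimits over $m$, or from commuting $\CphKLb$ with inverse limits over $n$.

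First I would handle the left column. The top map is the standard quasi-isomorphism $\varinjlim_m \Cctsb(H_K,A_m) \to \Cctsb(H_K,A)$ from \cite[(1.5.1) Proposition]{nswo}. For each fixed $m$, Proposition \ref{torcofquas} applied to the cofinitely generated module $A_m$ gives a zig-zag of quasi-isomorphisms through $\CFrb(H_K,\AAA\otimes_{\OL}A_m)$ and $\CphKLb(\MAK(A_m))$. I would then rewrite each of these two middle complexes as an inverse limit over $n$: concretely, $\AAA\otimes_{\OL}A_m = \varprojlim_n \Amn$ and $\MAK(A_m) = \varprojlim_n M_{mn}$ topologically, and by Lemma \ref{galc1.8} the $\CFrb$ and $\CphKLb$ functors commute with inverse limits of topological modules. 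Taking $\varinjlim_m$ throughout and using that direct limits preserve quasi-isomorphisms, the whole left column is a chain of quasi-isomorphisms.

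Next I would treat the horizontal arrows. The bottom horizontal is the identity, so nothing needs to be checked there after the identification $\varinjlim_m \CphKLb(\MAK(A_m))=\CphKLb(\MAK(A))$. For the higher horizontal arrows, at the level of discrete coefficients (that is, for each $A_m$ or each $\Amn$ or $M_{mn}$), Proposition \ref{shaind} produces a canonical quasi-isomorphism $\Cctsb(G_K,\FGK(-)) \xrightarrow{\simeq} \Cctsb(H_K,-)$ functorially in the coefficient module. Functoriality in the coefficients lets one pass these horizontals through $\varinjlim_m$ and $\varprojlim_n$, and through the replacement of $\Cctsb$ by $\CFrb$ or $\CphKLb$ (the latter using Proposition \ref{pre7.21} and/or Corollary \ref{lamlinder}), giving commutativity of each of the six small squares in the diagram. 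That every horizontal arrow is a quasi-isomorphism follows by two-out-of-three, since in the respective rows both the left and the right vertical columns are already known to be quasi-isomorphisms.

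Finally, I would verify $\LamK$-linearity of the induced isomorphism $\mathbf{R}\Gamma(\CphKLb(\MAK(A))) \cong \RGactsb(G_K,\FGK(A))$. On the right-hand column, $\LamK$ acts on each $\FGK(-)$ via the $\widetilde{\mathrm{Ad}}$-action of Lemma \ref{fghact}, and each vertical morphism in the right column is induced by a functorial construction in the coefficient, so it is strictly $\LamK$-equivariant. On the left column every term is an $\OL$-module and the $\LamK$-action in the derived category is transported from the right column along the horizontal quasi-isomorphisms; Corollary \ref{lamlinder} (together with Remark \ref{lamact}, which ensures $\mathrm{Ad}(g)$ is homotopic to the identity so that the derived action is well-defined and matches $\widetilde{\mathrm{Ad}}$) gives the required compatibility at the level of $\CphKLb$. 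The main obstacle I expect is bookkeeping: making sure that each of the limit/colimit interchanges is legitimate (which uses that the transition maps in the relevant pro-systems are surjective, as established in Lemma \ref{galc1a.5} and Lemma \ref{galc1.8l2}, so that Proposition \ref{galc1.16} applies), and that the two a priori different $\LamK$-structures on the left-hand terms — one coming from $\widetilde{\mathrm{Ad}}$ through the Shapiro-type map, the other from the tautological $\GaK$-action on $\MAK(-)$ — agree in the derived category.
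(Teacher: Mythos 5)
Your overall strategy coincides with the paper's: the left column is handled exactly as in the paper (the colimit quasi-isomorphism, Proposition \ref{torcofquas} for each $A_m$, and Lemma \ref{galc1.8} to rewrite $\CFrb(H_K,\AAA\otimes_{\OL}A_m)$ and $\CphKLb(\MAK(A_m))$ as $\varprojlim_n$ of the corresponding complexes with coefficients $\Amn$ and $M_{mn}$), and the $\LamK$-linearity argument via $\Adt$, Remark \ref{lamact} and Corollary \ref{lamlinder} is also the paper's. However, there is a genuine logical gap in your treatment of the horizontal and right-hand arrows. You justify the horizontal arrows being quasi-isomorphisms ``by two-out-of-three, since in the respective rows both the left and the right vertical columns are already known to be quasi-isomorphisms'' --- but the right column is \emph{not} known at that point; its middle arrows are part of what the proposition asserts, and they are not mere limit/colimit commutations (contrary to your opening sentence): there is no analogue of Proposition \ref{torcofquas} or Corollary \ref{cofcomtriv} for $G_K$-cohomology with coefficients in $\FGK(\AAA\otimes_{\OL}A_m)$, so an arrow such as $\varinjlim_m\varprojlim_n\CphKLb(\GaK,\FGK(M_{mn}))\to\varinjlim_m\varprojlim_n\CFrb(G_K,\FGK(\Amn))$ cannot be established directly. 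As written, your argument is circular and never actually proves the right-column quasi-isomorphisms.

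The deduction must run in the opposite direction, which is what the paper does: first prove that every \emph{horizontal} arrow is a quasi-isomorphism directly from Proposition \ref{shaind}, applied levelwise to the discrete coefficients $A$, $A_m$, $\Amn$, $M_{mn}$, and then passed through $\varinjlim_m$ (harmless) and through $\varprojlim_n$, the latter being legitimate because the transition maps of the systems of complexes are surjective (the coefficients are discrete with surjective, split transition maps, $\FGK$ is exact, so \hyperref[pre6.10]{Lemma \ref*{pre6.10}} and Lemma \ref{galc1.8l2} apply and Proposition \ref{galc1.16}/Remark \ref{galc1.16r} allow the limit). Only then does two-out-of-three in the commutative squares yield that the right-column arrows are quasi-isomorphisms. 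You in fact sketch this direct $\mathrm{shaind}$-plus-limits argument for the horizontals in the same paragraph, so the repair is a reordering rather than a new idea; but the proof as proposed does not establish the claim about the right column, and the two-out-of-three step is invoked with an unproved hypothesis.
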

This description now has the advantage that the objects of the complexes
	are $\LamK$-modules which allows us to apply the theory of Matlis duality. We give a brief
	overview of this theory.
	\begin{rem} \label{actonlam}
	We have to consider different types of group actions on $\LamK$. First, $\GaK$ acts by
	multiplication and $G_K$ acts by multiplication through the natural projection \linebreak
	$\pr\colon G_K \twoheadrightarrow \GaK$. Sometimes we also have to consider $\LamK$ as
	$\LamK$-module via the
	involution \gls{iota}, i.e. $\GaK$ then acts by $\gamma \cdot x \coloneqq \gamma^{-1} x$. If this is the case,
	we write \glssymbol{Miota}. Note that this does also affect the action from $G_K$, i.e. $G_K$ acts on
	$\LamK^{\iota}$ by $g \cdot x = \pr(g)^{-1}x$ and $\GaK$ acts by $\gamma \cdot x = \gamma^{-1} x$.
	\\
	Additionally, if $M$ is a $\LamK$-module, we denote by $M^{\iota}$ the $\LamK$-module
	$M$ where $\GaK$ acts via the involution $\iota$, i.e. for all $\gamma \in \GaK$ and $m \in M$ we
	have $\gamma \cdot m = \gamma^{-1} m$. If $N$ is another $\LamK$-module we clearly have
	\[ \Hom_{\LamK}(M,N^{\iota})=\Hom_{\LamK}(M^{\iota},N).\]
	\end{rem}
	
	\begin{mydef}
	A $\LamK$-module with a $\LamK$-semilinear action of $G_K$ is a $\LamK$-module $M$
	with an action from $G_K$ such that for all
	$\lambda \in \LamK$, $m \in M$ and $g \in G_K$ we have
	\[ g(\lambda m)= g(\lambda) g(m) =\pr(g)\lambda g(m),\]
	where $\pr\colon G_K \twoheadrightarrow \GaK$ denotes the canonical projection
	(cf.\ \hyperref[actonlam]{Remark \ref*{actonlam}}).
	\end{mydef}
	
	\begin{rem} \label{gkonlam}
	For us it feels more natural to consider $\LamK$-modules with a semilinear   $G_K$-action instead of
	$\LamK$-modules with a
	linear action from $G_K$, which are considered in \emph{\cite{neksc}}. The main reason for this is that if we consider
	modules with a linear action from $G_K$ we would have to consider $\LamK$ with the
	trivial action from $G_K$. But this feels nonintuitive. In the text below we will always compare
	our results to the results of \Nekovar in \emph{\cite{neksc}}. He considers $\LamK$ with the trivial action
	of $G_K$ (cf.\ \emph{\cite[(8.4.3.1) Lemma, p.\,161--162]{neksc}}).\\
	Both concepts are linked in the following sense: If $M$ is a $\LamK$-module with a (linear
	or semilinear) action from $G_K$, then for $n \in \ZZ$ denote by $M<n>$ the $\LamK$-module $M$
	with the $G_K$-action given by
	\[ g \cdot m = \pr(g)^n g(m),\]
	with $g \in G_K$ and $m \in M$ and where $g(m)$ denotes the given action of $G_K$ on $M$
	(cf.\ \emph{\cite[(8.4.2), p.\,161]{neksc}}). Then $M \mapsto M<1>$ induces a morphism from $\LamK$-modules
	with a linear action from $G_K$ to $\LamK$-modules with a semilinear action from $G_K$. Its inverse
	clearly is $M \mapsto M<-1>$.
	\end{rem}
	
	\begin{rem} \label{actonlamod}
	Let $M,N$ be $\LamK$-modules with a $\LamK$-semilinear action of $G_K$.
	Then \linebreak $\Hom_{\LamK}(M,N)$ also carries actions from
	both $G_K$ and $\GaK$ (respectively $\LamK$). The action from $\GaK$ is given by
	the multiplication of $\LamK$ on $N$ (respectively $M$ since the homomorphisms are $\LamK$-linear).
	The action from $G_K$ is given by
	\[ (g \cdot f)(m) \coloneqq g_N(f(g_M^{-1}(m))),\]
	for $f \in \Hom_{\LamK}(M,N)$ and $m \in M$ and where $g_M$ respectively $g_N$ denote the
	actions from $G_K$ on $M$ and $N$.
	\end{rem}
	
	\begin{rem} \label{actonhom}
	Let $T$ be a topological $\OL$-module with a continuous action from $G_K$ and let $M$
	be a $\LamK$-module with a $\LamK$-semilinear action of $G_K$. Then $\GaK$ acts on $\HomOL(T,M)$ by
	multiplication on the coefficients and $G_K$ as in the above
	\emph{\hyperref[actonlamod]{Remark \ref*{actonlamod}}}, i.e. by
	\[ (g \cdot f)(t) \coloneqq g_M(f(g_T^{-1}(t))),\]
	for $f \in \HomOL(T,M)$ and $m \in M$ and where $g_T$ and $g_M$ denote the
	actions from $G_K$ on $T$ and $M$ respectively.
	\end{rem}
	
	\begin{lem} \label{lemlamhom}
	Let $T$ be a topological $\OL$-module with a continuous action from $G_K$ and let $M$
	be a $\LamK$-module with a $\LamK$-semilinear action of $G_K$. Then the homomorphism of $\OL$-modules
	\[ \begin{xy} \xymatrix{
	\HomOL(T,M) \ar[r] & \Hom_{\LamK}(T \otimes_{\OL} \LamK, M), \ f \ar@{|->}[r] & \beta_f \coloneqq [t \otimes x
	\mapsto xf(t)]
	} \end{xy} \]
	is an isomorphism which respects the actions from $\GaK$ and $G_K$
	described in the above \emph{\hyperref[actonhom]{Remark \ref*{actonhom}}} for the left
	hand side and \emph{\hyperref[actonlam]{Remark \ref*{actonlam}}} for the right hand side.
	\begin{proof}
	The inverse homomorphism is given by
	\[ \begin{xy} \xymatrix{
	\Hom_{\LamK}(T\otimes_{\OL} \LamK,M) \ar[r] & \HomOL(T,M), \ h \ar@{|->}[r] &
	[t \mapsto h(t \otimes 1)].
	} \end{xy} \]
	So it is left to check that the above homomorphisms respects the actions from $\GaK$ and $G_K$, which we leave to the reader.
	\end{proof}
	\end{lem}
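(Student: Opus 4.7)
The proof is essentially the standard Hom-tensor adjunction plus a bookkeeping check of equivariance, so my plan is to verify bijectivity first and then compatibility with the two actions separately.

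First I would check that $\beta_f$ as defined is indeed $\LamK$-linear: this is immediate from the defining formula $\beta_f(t\otimes x) = xf(t)$ together with $\OL$-linearity of $f$ and the fact that $\LamK$ acts on the right via multiplication in the left factor of the Hom. Next, I would write down the candidate inverse
\[
\Hom_{\LamK}(T\otimes_{\OL}\LamK, M) \longrightarrow \HomOL(T,M), \qquad h \longmapsto \bigl[t \mapsto h(t \otimes 1)\bigr],
\]
which is clearly $\OL$-linear in $h$, and then verify that the two maps are mutually inverse. In one direction, $\beta_f$ is sent to $t \mapsto \beta_f(t\otimes 1) = 1\cdot f(t) = f(t)$. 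In the other direction, starting from $h$, the image $\beta_{t\mapsto h(t\otimes 1)}$ sends $t\otimes x$ to $x\cdot h(t\otimes 1) = h(t\otimes x)$ by $\LamK$-linearity of $h$, recovering $h$.

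For the $\GaK$-equivariance, acting on $\Hom_{\OL}(T,M)$ by $\gamma \in \GaK$ means post-composing with multiplication by $\gamma$ on the target, while on $\Hom_{\LamK}(T\otimes_{\OL}\LamK, M)$ it is given by the $\LamK$-structure on the Hom-space. A one-line computation $\beta_{\gamma f}(t\otimes x) = x\gamma f(t) = \gamma\cdot(xf(t)) = (\gamma\cdot\beta_f)(t\otimes x)$ settles this.

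The step requiring the most care, and which I expect to be the only nontrivial point, is the $G_K$-equivariance, because this is where the semilinearity of the action of $G_K$ on $M$ (via the projection $\pr\colon G_K \twoheadrightarrow \GaK$) has to be used and where the diagonal action on $T\otimes_{\OL}\LamK$ (with $G_K$ acting on $\LamK$ through $\pr$) must be tracked carefully. For $g \in G_K$ one computes
\[
(g\cdot \beta_f)(t\otimes x) = g_M\bigl(\beta_f(g_T^{-1}(t)\otimes \pr(g)^{-1}x)\bigr) = g_M\bigl(\pr(g)^{-1}x\cdot f(g_T^{-1}(t))\bigr),
\]
and by the semilinearity relation $g_M(\lambda m) = \pr(g)\lambda\, g_M(m)$ applied to $\lambda = \pr(g)^{-1}x$ this collapses to $x\, g_M(f(g_T^{-1}(t))) = \beta_{g\cdot f}(t\otimes x)$, which is precisely the claimed compatibility. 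Since the inverse map has already been written down, this finishes the proof.
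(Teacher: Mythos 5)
Your proposal is correct and follows the same route as the paper: the paper simply exhibits the inverse $h \mapsto [t \mapsto h(t\otimes 1)]$ and leaves the $\GaK$- and $G_K$-equivariance to the reader, which is exactly the bookkeeping you carry out (correctly using commutativity of $\LamK$ for the $\GaK$-check and the semilinearity relation $g_M(\lambda m)=\pr(g)\lambda\, g_M(m)$ for the $G_K$-check). Nothing is missing; you have just written out the verification the paper omits.
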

	
	\begin{rem} \label{gakonpodu}
	Let $M$ be a $\LamK$-module with a $\LamK$-semilinear action of $G_K$.
	Then $M^{\vee}=\HomctsOL(M,L/\OL)$ also carries actions from
	$G_K$ and $\GaK$. Both are given by
	\[ (g \cdot f)(m) = f(g^{-1}(m)),\]
	where $g \in G_K$ or in $\GaK$, $f \in M^{\vee}$ and $m \in M$.\\
	Note that \Nekovar considers the Pontrjagin dual of $M$ with the $\GaK$-action without the
	involution, i.e. by $(\gamma\cdot f)(m)=f(\gamma(m))$ (cf.\ the proof respectively the result
	of \emph{\cite[(8.4.3.1) Lemma, p.\,161--162]{neksc}}). In our notation the Pontrjagin
	dual of \Nekovar of $M$ is $(M^{\vee})^{\iota}=(M^{\iota})^{\vee}$.
	\end{rem}
	
	\begin{rem} \label{gkpondushi}
	Let $M$ be a $\LamK$-module with a $\LamK$-semilinear action of $G_K$ and $n \in \ZZ$. Then
	the identity of $M^{\vee}$ induces an isomorphism
	of $\LamK$-modules with a $\LamK$-semilinear action of $G_K$
	\[ (M<n>)^{\vee} \cong M^{\vee}<n>.\]
	\end{rem}
	
	\begin{mydef}
	Let $M$ be a $\LamK$-module. The {\bfseries Matlis dual} \index{Matlis dual} of $M$ is
	defined as
	\[ \glssymbol{ODK} \coloneqq \Hom_{\Lambda_K}(M,\Lambda_K^{\vee}).\]
	This is a contravariant functor of $\LamK$-modules and maps finitely generated $\LamK$-modules
	to cofinitely generated and vice versa.\\
	$\LamK$ acts on $\ODK(M)$ by multiplication and if $M$ has also a semilinear action from $G_K$, then
	$G_K$ acts on $\ODK(M)$ as described in the above
	\hyperref[actonlamod]{Remark \ref*{actonlamod}}
	\end{mydef}
	
	\begin{rem} \label{matduex}
	$\Lambda_K^{\vee}$ is an injective $\Lambda_K$-module. Moreover, it is an injective hull
	of the residue class field of $\LamK$ as $\LamK$-module. Therefore $\ODK$ is exact and for every finitely
	respectively cofinitely generated $\LamK$-module the canonical homomorphism
	$M \to \ODK(\ODK(M))$ is an isomorphism.
	\begin{proof}
	Since $\gamma \mapsto \gamma^{-1}$ defines an isomorphism of $\LamK$-modules $\LamK \to \LamK^{\iota}$,
	the first statement is \cite[(8.4.3.2) Corollary, p.\,162]{neksc}. For this, note that in
	\cite[(8.4.3.1) Lemma, p.\,161--162]{neksc} \Nekovar proves that $(\LamK^{\vee})^{\iota}=(\LamK^{\iota})^{\vee}$
	and \Nekovars dualizing module coincide and with
	$(\LamK^{\iota})^{\vee}$ also $\LamK^{\vee}$ is a dualizing module.
	The second statement is \cite[Theorem 3.2.12, p.\,105--107]{bruher}.
	\end{proof}
	\end{rem}

	\begin{rem} \label{homdu}
	As mentioned in \emph{\cite[(2.3.3, p.\,41)]{neksc}} $L/\OL$ is an injective hull for $k_L$. Therefore
	we have a canonical isomorphism $M \cong \HomOL(\HomOL(M,L/\OL),L/\OL)$ for every finitely
	or cofinitely generated $\OL$-module $M$ and $\HomOL(-,L/\OL)$ is an exact functor. As above,
	the proof for this is \emph{\cite[Theorem 3.2.12, p.\,105--107]{bruher}}.
	\end{rem}

	We need some more notation from \cite{neksc}.

	\begin{rem} \label{actfcgk}
	Let $T \in \RepGKOLfg$ and $U \in \UKK$. Then we have two group actions on \linebreak
	$T \otimes_{\OL} \OL[G_K/U]$.
	The first action, is the diagonal action from $G_K$
	\[ g \cdot (a \otimes xU) = (ga)\otimes (gxU).\]
	The second action is the following action from $G_K/U$:
	\[ \Adt(gU)(a \otimes xU) \coloneqq a \otimes xg^{-1}U.\]
	The homomorphism $\sum a_{xU} \otimes xU \mapsto \sum a_{xU} \delta_{xU}$ where $\delta_{xU}$ is the
	Kronecker delta-function on $G_K/U$ (i.e. it is $1$ for $xU$ and zero otherwise) defines
	an isomorphism between \linebreak $T \otimes_{\OL} \OL[G_K/U]$ and ${}_U T$
	(cf.\ \emph{\cite[(8.1.3), p.\,149; (8.2.1) p.\,157]{neksc}}) under which the actions described above coincide with
	the corresponding actions on ${}_U T$ (cf.\ \emph{\cite[(8.1.6.3), p.\,151]{neksc}}).
	\end{rem}

	\begin{mydef}
	Let $T \in \RepGKOLfg$. We set
	\[ \glssymbol{FCK}\coloneqq \varprojlim_{U \in \UKK} T \otimes_{\OL} \OL[G_K/U]\]
	together with the two actions from $G_K$ and $\GaK$ described in the above
	\hyperref[actfcgk]{Remark \ref*{actfcgk}}.
	With this, we define
	\[ \glssymbol{RGaiw}\coloneqq \RGactsb(G_K,\FCK(T)).\]
	Furthermore, by \gls{tender} we denote the derived tensor product over the ring $R$.	
	\end{mydef}
	
	\begin{rem}
	At \emph{\cite[p.\,201]{neksc}} \Nekovar proves
	\[ \HIW^*(\Kin|K,T) \cong H^*(\RGaiwb(\Kin|K,T)),\]
	i.e. that the cohomology of the above complex coincides with the Iwasawa cohomology
	defined in \emph{\hyperref[iwcohdef]{Definition \ref*{iwcohdef}}}.
	\end{rem}

	\begin{rem} \label{FCKLam}
	Let $T \in \RepGKOLfg$, then we have an isomorphism of $\LamK$-modules
	with a $\LamK$-semilinear action of $G_K$
	\[ \FCK(T) \cong T \otimes_{\OL} \LamK^{\iota}.\]
	\begin{proof}
	Since $T$ is finitely generated and $\OL$ is a discrete valuation ring, $T$ is finitely presented.
	Therefore we have
	\[ \varprojlim_{U \in \UKK} T \otimes_{\OL} \OL[G_K/U] = T \otimes_{\OL} \LamK\]
	as $\OL$-modules. $G_K$ acts on both sides diagonally and $\GaK$ acts on the left hand side
	via $\Adt$ (which technically means via the involution) on the right hand term $\OL[G_K/U]$. Since
	$\GaK$ acts on $\LamK^{\iota}$ also via the involution, the claim follows.
	\end{proof}
	\end{rem}
	
	\begin{lem}  \label{lamdu}
	We have an isomorphism of $\LamK$-modules with a $\LamK$-semilinear action of $G_K$
	\[ (\LamK^{\iota})^{\vee} \cong \FGK(L/\OL) \left(=
	\varinjlim_{U \in \UKK} \HomOL(\OL[G_K/U],L/\OL)\right).\]
	\begin{proof}
	$\OL[G_K/U]$ is compact for $U \in \UKK$, therefore $\HomOL(\OL[G_K/U],L/\OL)$ is discrete and so
	$\varinjlim_{U \in \UKK} \HomOL(\OL[G_K/U],L/\OL)$ is discrete too. This means that every map with source
	$\varinjlim_{U \in \UKK} \HomOL(\OL[G_K/U],L/\OL)$ into any topological space is continuous. We then compute
	(as $\OL$-modules)
	\begin{align*}
			\HomcOL(\FGK(L/\OL),L/\OL)
				&=\HomcOL(\varinjlim_{U \in \UKK} \HomOL(\OL[G_K/U],L/\OL),L/\OL)\\
				= &\HomOL(\varinjlim_{U \in \UKK} \HomOL(\OL[G_K/U],L/\OL),L/\OL) \\
				\cong &\varprojlim_{U \in \UKK}\HomOL( \HomOL(\OL[G_K/U],L/\OL),L/\OL) \\
				\cong&\varprojlim_{U \in \UKK} \OL[G_K/U]\\
				= &\LamK.
	\end{align*}
	At the third equation, we used the identification
	\[\OL[G_K/U] \cong \HomOL( \HomOL(\OL[G_K/U],L/\OL),L/\OL)\]
	from \hyperref[homdu]{Remark \ref*{homdu}}. Now we head towards the action from $\GaK$. For
	$\gamma \in \GaK$, \linebreak
	$f \in \HomcOL(\FGK(L/\OL),L/\OL)$ and $h\in \FGK(L/\OL)$ we have
	\[ (\gamma \cdot f)(h)=f(\gamma^{-1} \cdot h) = f(\Adt(\gamma^{-1})h)\]
	for all $x \in \FGK(L/\OL)$. Going through the above isomorphisms shows that this results in
	an action from $\GaK$ on $\LamK$ via the involution, i.e. we have an isomorphism of
	$\LamK$-modules
	\[ \HomcOL(\FGK(L/\OL),L/\OL) \cong \LamK^{\iota}.\]
	With the above notation, we have for $g \in G_K$
	\[ (g \cdot f)(h)= f(g^{-1}\cdot h ) = f(h \circ g),\]
	since $G_K$ acts trivial on $L/\OL$ by definition. Therefore the above isomorphism is also
	$G_K$-linear.
	\end{proof}
	\end{lem}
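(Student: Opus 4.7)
The plan is to establish the isomorphism by dualizing both sides and identifying each dual piece explicitly at the finite level before passing to the limit. Since $\OL[G_K/U]$ is a finitely generated free $\OL$-module and hence compact, its $\OL$-dual $\HomOL(\OL[G_K/U],L/\OL)$ is discrete, so $\FGK(L/\OL)=\varinjlim_{U\in\UKK}\HomOL(\OL[G_K/U],L/\OL)$ is a discrete torsion $\OL$-module. By the involutive nature of Pontrjagin duality (Proposition \ref{pontrjdual}), it suffices to produce a $\GaK$- and $G_K$-equivariant isomorphism $\FGK(L/\OL)^{\vee}\cong\LamK^{\iota}$, and then dualize back.

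The computational core, as $\OL$-modules, is the chain
\begin{align*}
\HomcOL(\FGK(L/\OL),L/\OL)
&= \HomOL(\varinjlim_{U\in\UKK}\HomOL(\OL[G_K/U],L/\OL),L/\OL)\\
&\cong \varprojlim_{U\in\UKK}\HomOL(\HomOL(\OL[G_K/U],L/\OL),L/\OL)\\
&\cong \varprojlim_{U\in\UKK}\OL[G_K/U] \;=\; \LamK,
\end{align*}
where the first equality uses that the source is discrete (so continuity is automatic), the second is the standard conversion of a colimit into a limit under $\Hom$, and the third is the double duality statement recalled in Remark \ref{homdu} applied to each finitely generated $\OL$-module $\OL[G_K/U]$.

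Next I would trace the $\GaK$-action through this chain. On the left, $\gamma\in\GaK$ acts on $f\in\HomcOL(\FGK(L/\OL),L/\OL)$ by $(\gamma\cdot f)(h)=f(\Adt(\gamma^{-1})h)$. Under the isomorphisms above, this corresponds at each finite level $\OL[G_K/U]$ to right multiplication by $\gamma^{-1}$, because $\Adt$ on $\OL[G_K/U]$ is defined by $xU\mapsto xg^{-1}U$ (Remark \ref{actfcgk}). Passing to the limit yields multiplication by $\gamma^{-1}$ on $\LamK$, which is precisely the $\GaK$-action on $\LamK^{\iota}$. For $G_K$-equivariance, for $g\in G_K$ and $h\in\FGK(L/\OL)$, one has $(g\cdot f)(h)=f(g^{-1}\cdot h)$; since $G_K$ acts trivially on $L/\OL$, unwinding the identifications shows that $g$ acts on the limit $\varprojlim_U\OL[G_K/U]=\LamK$ by left multiplication through the natural projection $G_K\twoheadrightarrow\GaK$, matching the required $G_K$-action on $\LamK^{\iota}$ from Remark \ref{actonlam}. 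The main obstacle is purely bookkeeping: keeping track of where the involution $\iota$ enters, which it does precisely because of the $g^{-1}$ in the definition of $\Adt$; once that sign is carried consistently through all three isomorphisms, dualizing once more via Pontrjagin duality yields the desired $\FGK(L/\OL)\cong(\LamK^{\iota})^{\vee}$.
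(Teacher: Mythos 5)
Your argument is correct and follows essentially the same route as the paper's proof: both reduce to computing $\HomcOL(\FGK(L/\OL),L/\OL)$ using discreteness of the colimit, the standard conversion of the colimit into a limit under $\Hom$, and double duality for the finite free modules $\OL[G_K/U]$, and then trace the $\GaK$-action (which produces the involution) and the $G_K$-action (using that $G_K$ acts trivially on $L/\OL$). The only cosmetic difference is that you make the final dualize-back step via Pontrjagin duality explicit, whereas the paper leaves it implicit.
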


	\begin{rem} \label{nekdiff1}
	The above result differs a bit from \Nekovars result in \emph{\cite[(8.4.3.1) Lemma, p.\,161--162]{neksc}} since
	\Nekovar considers $\LamK$-modules with a $\LamK$-linear action from $G_K$ and therefore
	he considers $\LamK$ with a trivial $G_K$ action (cf.\ \emph{\hyperref[gkonlam]{Remark \ref*{gkonlam}}}).
	Furthermore, his Pontrjagin dual and ours for $\LamK$-modules differ in the action of $\GaK$ by
	an involution (cf.\ \emph{\hyperref[gakonpodu]{Remark \ref*{gakonpodu}}}).
	For a better comparison, if we consider $\LamK$ with the trivial action from
	$G_K$ the result of loc. cit in our notation is
	\[ (\LamK^{\vee})^{\iota} \cong \FGK(L/\OL)<1>. \]
	This is equivalent to
	\[ (\LamK^{\vee})^{\iota}<-1> \cong \FGK(L/\OL)\]
	and for the left hand side we obtain
	\begin{align*}
		(\LamK^{\vee})^{\iota}<-1> &= (\LamK^{\iota})^{\vee} <-1> \\
							&= (\LamK^{\iota}<-1>)^{\vee}\\
							&=((\LamK<1>)^{\iota})^{\vee}. 				
	\end{align*}
	In the second line we used \emph{\hyperref[gkpondushi]{Remark \ref*{gkpondushi}}}. But this means that
	\Nekovars result translate into ours since we considered $\LamK$ with the action from
	$G_K$ given by the canonical projection $\pr\colon G_K \twoheadrightarrow \GaK$.
	\end{rem}

	\begin{lem} \label{fdualf}
	Let $T\in \RepGKOLfg$. Then we have an isomorphism of $\LamK$-modules with a $\LamK$-semilinear
	action of $G_K$:
	\[ \FGK(T^{\vee}) \cong \ODK(\FCK(T)).\]
	\begin{proof}
	This proof follows similarly as in \cite[(8.4.5.1) Lemma, p.\,163]{neksc}, see \cite[5.2.42]{kupferer} for details.\\
	\end{proof}
	\end{lem}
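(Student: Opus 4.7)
The plan is to chain together a sequence of canonical identifications, starting from the right-hand side $\ODK(\FCK(T)) = \Hom_{\LamK}(\FCK(T), \LamK^{\vee})$ and ending at $\FGK(T^{\vee})$, and then to verify the compatibility of the $\LamK$-module structure and the $\LamK$-semilinear $G_K$-action at each step.

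First I would rewrite $\FCK(T)$ using Remark \ref{FCKLam} as $T \otimes_{\OL} \LamK^{\iota}$, so that
\[ \ODK(\FCK(T)) \cong \Hom_{\LamK}\bigl(T \otimes_{\OL} \LamK^{\iota}, \LamK^{\vee}\bigr). \]
Next I would establish a twisted version of Lemma \ref{lemlamhom}: a $\LamK$-linear map $f\colon T \otimes_{\OL} \LamK^{\iota} \to M$ is determined by $\varphi := f(-\otimes 1) \in \HomOL(T,M)$, via $f(t \otimes \mu) = \iota(\mu)\varphi(t)$, and the induced $\LamK$-action on $\HomOL(T,M)$ is just the natural one via $M$. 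Applying this to $M = \LamK^{\vee}$ gives
\[ \Hom_{\LamK}\bigl(T \otimes_{\OL} \LamK^{\iota}, \LamK^{\vee}\bigr) \;\cong\; \HomOL\bigl(T, \LamK^{\vee}\bigr). \]

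Then I would use the tensor--hom adjunction in its continuous form, exchanging the two arguments:
\[ \HomOL\bigl(T, \HomctsOL(\LamK, L/\OL)\bigr) \;\cong\; \HomctsOL\bigl(\LamK, \HomOL(T, L/\OL)\bigr) = \HomctsOL(\LamK, T^{\vee}). \]
Because $T$ is finitely generated, $T^{\vee}$ is cofinitely generated, hence discrete by Remark \ref{remcoftop}; since $\LamK$ is profinite, every continuous homomorphism $\LamK \to T^{\vee}$ factors through some finite quotient $\OL[G_K/U]$ with $U \in \UKK$, so
\[ \HomctsOL(\LamK, T^{\vee}) \;=\; \varinjlim_{U \in \UKK} \HomOL\bigl(\OL[G_K/U], T^{\vee}\bigr) \;=\; \FGK(T^{\vee}). \]

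The main obstacle is the action bookkeeping. I would carefully track the $\GaK$-action on each intermediate module, using the identity $(M^{\iota})^{\vee} = (M^{\vee})^{\iota}$ on Pontrjagin duals together with Lemma \ref{lamdu} in the form $\LamK^{\vee} \cong \FGK(L/\OL)^{\iota}$, so that the involution coming from $\LamK^{\iota}$ on the left of the Hom cancels the involution hidden inside $\LamK^{\vee}$ on the right. For the semilinear $G_K$-action the verification is similar but more delicate: the diagonal $G_K$-action on $T \otimes_{\OL} \LamK^{\iota}$ transports, under the above chain, to the action on $\FGK(T^{\vee})$ given by $(g\cdot f)(xU) = g\bigl(f(g^{-1}xU)\bigr)$, and one checks this by evaluating a generic element $f \in \HomOL(\OL[G_K/U], T^{\vee})$ on both sides. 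Once these compatibilities are confirmed, the composite isomorphism is $\LamK$-linear and $G_K$-semilinear as required.
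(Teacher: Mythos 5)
Your chain of identifications --- $\FCK(T)\cong T\otimes_{\OL}\LamK^{\iota}$ from Remark \ref{FCKLam}, stripping off the $\LamK$-factor by the (twisted) adjunction of Lemma \ref{lemlamhom}, and then identifying $\HomOL(T,\LamK^{\vee})$ with $\FGK(T^{\vee})$ via finite generation of $T$ and continuity (equivalently, Lemma \ref{lamdu} plus commuting $\HomOL(T,-)$ with the filtered colimit) --- is precisely the argument the paper delegates to \Nekovars Lemma (8.4.5.1) and the first author's thesis, so your proposal is correct and follows essentially the same route. The only point needing the care you already flag is the equivariance bookkeeping, where the involution and twist conventions (cf.\ Remarks \ref{gakonpodu} and \ref{nekdiff1}) must be matched before the transported $G_K$-action can be recognized as the standard one on $\FGK(T^{\vee})$.
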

	
	\begin{rem}
	Again, the above result differs slightly from the analogous result of \Nekovar
	(cf.\ \emph{\cite[(8.4.5.1) Lemma, p.\,163]{neksc}}). This is a consequence of the difference pointed out in the above
	\emph{\hyperref[nekdiff1]{Remark \ref*{nekdiff1}}}. Translated to our notation, \Nekovars result from
	(loc.\ cit.) then is that there is an isomorphism of $\LamK$-modules with a $\LamK$-semilinear action of $G_K$
	\[ \FGK((T^{\vee})^{\iota}) \cong \Hom_{\LamK}(\FCK(T)^{\iota},(\LamK^{\vee})^{\iota}).\]
	Note that \Nekovars original result is formulated for $\LamK$-modules with a linear action from
	$G_K$. But as pointed out in \emph{\hyperref[gkonlam]{Remark \ref*{gkonlam}}} both concepts are linked
	by the shifts $<1>$ and $<-1>$ respectively. So to be precise, \Nekovars result is the above shifted
	by $<-1>$. If we apply this shift, we would have to invert it below in order to compare \Nekovars result to our
	result.
	Since $\GaK$ acts trivially on $T$ and therefore also on $T^{\vee}$ we have
	$(T^{\vee})^{\iota}=T^{\vee}$ and we have a canonical isomorphism of
	$\LamK$-modules with a $\LamK$-semilinear action of $G_K$
	\[ \Hom_{\LamK}(\FCK(T)^{\iota},(\LamK^{\vee})^{\iota})=
		\Hom_{\LamK}(\FCK(T),\LamK^{\vee})=\ODK(\FCK(T)). \]
	Combining the above identifications then gives us an isomorphism of
	$\LamK$-modules with a $\LamK$-semilinear action of $G_K$
	\[ (\FGK(T^{\vee})) \cong \ODK(\FCK(T)),\]
	which is exactly our result.
	\end{rem}

	\begin{lem} \label{iwdcts}
	Let $T \in \RepGKOLfg$. We then have an isomorphism
	\[ \RGaiwb(\Kin|K,T) \cong \ODK\left(\RGactsb(G_K, \FGK(T^{\vee})(1))\right) [-2].\]
	For the cohomology groups we then have
	for all $i \geq 0$ an isomorphism of $\LamK$-modules
	\[ \ODK(\HIW^{i}(\Kin|K,T))\cong\Hcts^{2-i}(G_K,\FGK(T^{\vee}(1)))\cong \Hcts^{2-i}(H_K,T^{\vee}(1)).\]
	\begin{proof}
	This is \cite[(8.11.2.2); (8.11.2.3), p.\,201]{neksc}, but note that the shift of our complex is outside $\ODK(-)$
	and
	that we have
	$ \FGK(T^{\vee}) \cong \ODK(\FCK(T))$ (cf.\ \hyperref[fdualf]{Lemma \ref*{fdualf}}) since
	we have a slightly different convention for the involved action of $\GaK$.
	In particular, this is \hyperref[fdualf]{Lemma \ref*{fdualf}}
	together with \cite[(5.2.6) Lemma, p.\,92]{neksc}.
	The last isomorphism of the cohomology groups is
	\hyperref[shaind]{Proposition \ref*{shaind}}.
	\end{proof}
	\end{lem}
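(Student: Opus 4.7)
The plan is to reduce the statement, after unwinding definitions, to an application of Nekovář's derived-categorical formulation of Tate local duality, using Lemma \ref{fdualf} as the bridge between the ``Iwasawa side'' $\FCK(T)$ and the ``cohomology side'' $\FGK(T^\vee)$.

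Concretely, I would proceed as follows. First, by definition $\RGaiwb(\Kin|K,T) = \RGactsb(G_K,\FCK(T))$. Lemma \ref{fdualf} gives an isomorphism of $\LamK$-modules with semilinear $G_K$-action
\[
\FGK(T^\vee) \;\cong\; \ODK(\FCK(T)),
\]
and since $\FCK(T) \cong T \otimes_{\OL}\LamK^\iota$ is finitely generated over $\LamK$ (Remark \ref{FCKLam}), the involutivity of Matlis duality (Remark \ref{matduex}) yields the dual identification
\[
\FCK(T)\;\cong\;\ODK(\FGK(T^\vee)).
\]
Plugging this into the definition of $\RGaiwb$, the problem becomes to identify $\RGactsb(G_K,\ODK(M))$ with $\ODK(\RGactsb(G_K,M(1)))[-2]$ for $M=\FGK(T^\vee)$, which is precisely the content of Nekovář's derived Tate duality in \cite[(8.11.2.2)]{neksc}, applied to the ind-admissible $\LamK$-module $M$ (and noting that $\FGK$ of a torsion discrete $G_K$-module like $T^\vee$ is ind-admissible, cf.\ the setting of \cite[(8.4)--(8.11)]{neksc}). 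The shift by $[-2]$ reflects the cohomological dimension of the local field $K$, and the Tate twist $(1)$ comes from the appearance of $\OL(1)$ in the local duality pairing.

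Second, I would reconcile the conventions: Nekovář's Matlis dualizing module $(\LamK^\vee)^\iota$ agrees with ours by Remark \ref{nekdiff1}, and his ``linear'' $G_K$-action on $\LamK$-modules corresponds to our ``semilinear'' convention via the twist $M\mapsto M\langle 1\rangle$ of Remark \ref{gkonlam}. This bookkeeping shows that, after applying the shifts, Nekovář's isomorphism matches the one we want, with $\ODK$ placed outside the complex and the shift $[-2]$ taken outside $\ODK$ (as in the statement).

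For the cohomology part, I would take $H^i$ of both sides of the derived isomorphism. Since $\LamK^\vee$ is injective (Remark \ref{matduex}), $\ODK$ is exact, so it commutes with taking cohomology of a complex. Combined with the shift $[-2]$ this gives
\[
\HIW^i(\Kin|K,T)\;\cong\;\ODK\bigl(\Hcts^{2-i}(G_K,\FGK(T^\vee(1)))\bigr),
\]
and applying $\ODK$ once more and using $\ODK\circ\ODK=\id$ on (co)finitely generated modules yields $\ODK(\HIW^i(\Kin|K,T))\cong\Hcts^{2-i}(G_K,\FGK(T^\vee(1)))$. The final identification $\Hcts^{2-i}(G_K,\FGK(T^\vee(1)))\cong\Hcts^{2-i}(H_K,T^\vee(1))$ is immediate from the $G/H$-equivariant Shapiro-type quasi-isomorphism established in Proposition \ref{shaind} (applied with $G=G_K$, $H=H_K$, $M=T^\vee(1)$ viewed as a discrete $\OL[G_K]$-module, which it is since $T$ is $\OL$-torsion or after passing to $T/\pi_L^n$ and taking limits).

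The main obstacle is not conceptual but notational: carefully tracking the $\iota$-twists, the shift $\langle n\rangle$ needed to translate between linear and semilinear $G_K$-actions on $\LamK$-modules, and the position of the Tate twist $(1)$ inside versus outside $\ODK$. Once the dictionary between our conventions and Nekovář's is set up (as in Remarks \ref{gkonlam}, \ref{gakonpodu}, \ref{gkpondushi}, \ref{nekdiff1}), invoking \cite[(8.11.2)]{neksc} together with Lemma \ref{fdualf} finishes the proof without further computation.
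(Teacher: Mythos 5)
Your proposal is correct and follows essentially the same route as the paper: both reduce the statement to Nekov\'a\v{r}'s derived local duality \cite[(8.11.2.2), (8.11.2.3)]{neksc}, use Lemma \ref{fdualf} (together with the convention-translating remarks) to identify $\FGK(T^\vee)$ with $\ODK(\FCK(T))$, and obtain the final identification of cohomology groups from Proposition \ref{shaind}. The only cosmetic difference is that where you argue the passage from the derived isomorphism to cohomology via exactness of $\ODK$ and double duality (Remark \ref{matduex}), the paper delegates this bookkeeping to \cite[(5.2.6) Lemma]{neksc}.
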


	\begin{prop} \label{secdual}
	Let $T \in \RepGKOLfg$. Then the
	sequence
	\[ \begin{xy} \xymatrix@C-0.6pc{
		0 \ar[r] & \HIW^1(\Kin|K,T) \ar[r] & \ODK\left(\mathcal{M}\right) \ar[rrr]^-{\ODK(\phKL)-\id} & & &
		\ODK\left(\mathcal{M}\right) \ar[r] & \HIW^2(\Kin|K,T) \ar[r] &0
	} \end{xy} \]
	is exact, where $\mathcal{M}=\MAK(T^{\vee}(1))$.
	\begin{proof}
	With $A\coloneqq T^{\vee}(1)$ we deduce from \hyperref[torcofseq]{Proposition \ref*{torcofseq}}
	and \hyperref[shaind]{Proposition \ref*{shaind}}
	that the sequence
	\[ \begin{xy} \xymatrix@C=0.9pc{
		0 \ar[r] & \Hcts^0(G_K,\FGK(A)) \ar[r] & \MAK(A) \ar[rrr]^{\phKL-\id}
		& & & \MAK(A) \ar[r] & \Hcts^1(G_K,\FGK(A)) \ar[r] &0
	} \end{xy} \]
	is exact and \hyperref[qind]{Proposition \ref*{qind}} says that it is a sequence of $\LamK$-modules.
	Applying $\ODK(-)$ then gives the exact sequence
	\[ \begin{xy} \xymatrix@R-2pc@C-0.7pc{
		0 \ar[r] & \ODK(\Hcts^1(G_K,\FGK(A))) \ar[r] & \ODK(\MAK(A)) \ar[rr]^-{\ODK(\phKL)-\id}
		 & & \cdots \\ \cdots \ar[r] & \ODK(\MAK(A)) \ar[r] & \ODK(\Hcts^0(G_K,\FGK(A))) \ar[rr] & & 0
	} \end{xy} \]
	(cf.\ \hyperref[matduex]{Remark \ref*{matduex}}). \hyperref[iwdcts]{Lemma \ref*{iwdcts}} translates this
	sequence into the desired one.
	\end{proof}
	\end{prop}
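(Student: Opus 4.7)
The plan is to obtain the desired sequence by applying the (exact) Matlis dual functor $\ODK$ to the $(\phKL-1)$-sequence of \hyperref[torcofseq]{Corollary \ref*{torcofseq}}, and then to recognise the outer Matlis-dualised terms as $\HIW^1$ respectively $\HIW^2$ via \hyperref[iwdcts]{Lemma \ref*{iwdcts}} combined with Matlis reflexivity.

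More concretely, I would first set $A:=T^{\vee}(1)$, which is a cofinitely generated $\OL$-module with a continuous $G_K$-action, and invoke \hyperref[torcofseq]{Corollary \ref*{torcofseq}} to obtain the exact sequence
\[ 0 \to \Hcts^0(H_K,A) \to \MAK(A) \xrightarrow{\phKL-\id} \MAK(A) \to \Hcts^1(H_K,A) \to 0. \]
By \hyperref[shaind]{Proposition \ref*{shaind}}, the outer terms may be rewritten as $\Hcts^i(G_K,\FGK(A))$, and \hyperref[qind]{Proposition \ref*{qind}} ensures that the entire sequence is a sequence of $\LamK$-modules and $\LamK$-linear maps.

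Next, I would apply $\ODK(-)$: it is exact by \hyperref[matduex]{Remark \ref*{matduex}}, and it turns the $\LamK$-linear map $\phKL-\id$ into $\ODK(\phKL)-\id$. This yields an exact four-term sequence whose middle map is already the one appearing in the statement, and whose outer terms are $\ODK\bigl(\Hcts^1(G_K,\FGK(A))\bigr)$ on the left and $\ODK\bigl(\Hcts^0(G_K,\FGK(A))\bigr)$ on the right. To finish, I would use \hyperref[iwdcts]{Lemma \ref*{iwdcts}}, which identifies $\ODK(\HIW^{i}(\Kin|K,T))$ with $\Hcts^{2-i}(H_K,A)$. Combining this with Matlis reflexivity (again \hyperref[matduex]{Remark \ref*{matduex}}), which applies because $\HIW^1$ is a finitely generated $\LamK$-module and $\HIW^2$ is even a finitely generated $\OL$-module, one obtains
\[ \HIW^{i}(\Kin|K,T) \cong \ODK\bigl(\ODK(\HIW^{i}(\Kin|K,T))\bigr) \cong \ODK\bigl(\Hcts^{2-i}(G_K,\FGK(A))\bigr), \]
so that for $i=1,2$ the outer terms of the dualised sequence are exactly $\HIW^1(\Kin|K,T)$ and $\HIW^2(\Kin|K,T)$.

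There is essentially no serious obstacle; the argument is a chase through the equivalences and identifications already established. The only point that needs a bit of care is to keep track of all structures as $\LamK$-modules — in particular that the boundary maps $\MAK(A) \to \Hcts^1(G_K,\FGK(A))$ and $\Hcts^0(G_K,\FGK(A)) \hookrightarrow \MAK(A)$ are $\LamK$-linear, which is guaranteed by \hyperref[qind]{Proposition \ref*{qind}} — so that the application of $\ODK$ and the use of Matlis reflexivity are legitimate, and so that the identifications of the outer terms with the Iwasawa cohomology groups are compatible with the differential $\ODK(\phKL)-\id$ in the middle.
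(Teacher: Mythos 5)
Your argument is correct and follows essentially the same route as the paper's proof: dualise the four-term $(\phKL-\id)$-sequence from Corollary \ref{torcofseq} (rewritten via Proposition \ref{shaind} and made $\LamK$-linear by Proposition \ref{qind}) using the exact functor $\ODK$, then identify the outer terms through Lemma \ref{iwdcts}. Your explicit appeal to Matlis reflexivity for the identifications $\HIW^{i}\cong\ODK\bigl(\Hcts^{2-i}(G_K,\FGK(A))\bigr)$ merely spells out what the paper leaves implicit in the phrase ``translates this sequence into the desired one,'' so there is nothing to add.
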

	
	This sequence looks similar to the sequence
	\[ \begin{xy} \xymatrix@C=0.8pc{
		0 \ar[r] & \HIW^1(\Kin|K,T) \ar[r] & \MAK(T(\tau^{-1})) \ar[rr]^{\psi-\id} & &
		\MAK(T(\tau^{-1})) \ar[r] & \HIW^2(\Kin|K,T) \ar[r] &0
	} \end{xy} \]
	from \hyperref[loctat13]{Theorem \ref*{loctat13}}
	where $\tau^{-1}=\chLT\chcyc^{-1}$ and
	$T \in \RepGKOLfg$. In order to compare
	these sequences, we prove the following.
	
	\begin{lem} \label{incdual1}
	Let $n \in \NN$. We have
	$\OmeAAK/\piL^n\OmeAAK
		= (\AAK/\piL^n\AAK)^{\vee}$ and a $\GaK$-linear inclusion
		\[ \begin{xy} \xymatrix{
		\OmeAAK/\piL^n\OmeAAK \ar@{^{(}->}[r] 				
			& \ODK(\AAK/\piL^n\AAK)	
		} \end{xy} \]
	\begin{proof}
	The isomorphism is a reformulation of an analogue of \cite[Lemma 3.5, p.\,11]{SV15}. For the inclusion
	using the tensor-hom adjunction   we obtain
	\begin{align*}
		\HomOL(\AAK/\piL^n\AAK,L/\OL)  & \cong
		\HomOL(\AAK/\piL^n\AAK \otimes_{\Lambda_K} \Lambda_K, L/\OL) \\
			& \cong \Hom_{\Lambda_K}(\AAK/\piL^n\AAK,\HomOL(\Lambda_K,L/\OL)).
	\end{align*}
	So we have to check that under this isomorphism
	$\HomcOL(\AAK/\piL^n\AAK,L/\OL)$ is sent to
	$\Hom_{\Lambda_K}(\AAK/\piL^n\AAK,(\Lambda_K)^{\vee})$.
	For this, recall the above isomorphism precisely: Let $f \in \HomcOL(\AAK/\piL^n\AAK,L/\OL)$,
	then $f$ is mapped to the element
	\[ [a \mapsto f_a \coloneqq [\lambda \mapsto f(\lambda a)]]\]
	in $ \Hom_{\Lambda_K}(\AAK/\piL^n\AAK,\HomOL(\Lambda_K,L/\OL))$.
	For $a \in \AAK/\piL^n\AAK$ the homomorphism $f_a$ then is the composition
	\[ \begin{xy} \xymatrix@R-2pc{
		\Lambda_K \ar[r] & \AAK/\piL^n\AAK \ar[r]^-{f} & L/\OL \\
		\lambda \ar@{|->}[r] & \lambda a &
	} \end{xy} \]
	of continuous maps, i.e. $f_a$ is continuous too and we get the desired
	inclusion
	\[ \begin{xy} \xymatrix{
		\HomcOL(\AAK/\piL^n\AAK, L/\OL) \ar@{^{(}->}[r] 		
	&  \Hom_{\Lambda_K}(\AAK/\piL^n\AAK,(\Lambda_K)^{\vee}).
	} \end{xy} \]
	It is easy to check this inclusion is $\GaK$-linear.
	\end{proof}
	\end{lem}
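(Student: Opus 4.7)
The plan is to establish the two parts separately. For the identification $\OmeAAK/\piL^n\OmeAAK \cong (\AAK/\piL^n\AAK)^{\vee}$, I would mimic the construction of \cite[Lemma 3.5]{SV15}, using the residue pairing. Concretely, fix a generating differential $\mrmd \omphi$ and a trace-like $\OL$-linear map $\mathrm{tr}\colon \OK \to \OL$, and define the pairing $\AAK/\piL^n\AAK \times \OmeAAK/\piL^n\OmeAAK \to L/\OL$ by $(g, f \mrmd \omphi) \mapsto \piL^{-n} \mathrm{tr}(\mathrm{Res}(g f \mrmd \omphi)) \bmod \OL$. The non-degeneracy is the standard duality of Laurent series with respect to the residue pairing, and continuity of the induced map $\OmeAAK/\piL^n\OmeAAK \to (\AAK/\piL^n\AAK)^\vee$ is immediate once we check that finite sets of coefficients determine the image modulo a fixed neighbourhood of $0$ in $L/\OL$.

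Next, to produce the inclusion into $\ODK(\AAK/\piL^n\AAK)=\Hom_{\LamK}(\AAK/\piL^n\AAK, \LamK^\vee)$, I would exploit the fact that $\AAK/\piL^n\AAK$ is itself a $\LamK$-module, write the trivial identification $\AAK/\piL^n\AAK \cong \AAK/\piL^n\AAK \otimes_{\LamK} \LamK$, and then apply tensor-hom adjunction:
\[
\HomOL(\AAK/\piL^n\AAK, L/\OL) \cong \Hom_{\LamK}\bigl(\AAK/\piL^n\AAK, \HomOL(\LamK, L/\OL)\bigr).
\]
Under this adjunction, $f$ corresponds to $a \mapsto f_a$, with $f_a(\lambda)=f(\lambda a)$. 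If $f$ is continuous, then $f_a$ factors as the composition of the continuous multiplication map $\lambda \mapsto \lambda a$ on $\AAK/\piL^n\AAK$ with $f$, hence $f_a \in \HomcOL(\LamK, L/\OL) = \LamK^\vee$. Combined with the first part this gives the desired inclusion $\OmeAAK/\piL^n\OmeAAK \hookrightarrow \ODK(\AAK/\piL^n\AAK)$.

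It remains to verify $\GaK$-equivariance. On the left, $\gamma \in \GaK$ acts on $f \mrmd \omphi$ through its action on $\AAK$ together with the derivative factor coming from $\mrmd \omphi \mapsto \partial([\chLT(\gamma)]_\phi)/\partial \omphi \cdot \mrmd \omphi$; on the right, $\GaK$ acts on $\Hom_{\LamK}(\AAK/\piL^n\AAK, \LamK^\vee)$ by the conjugation-style rule from Remark \ref{actonlamod} together with the (involutive) Pontrjagin-dual action on $\LamK^\vee$. Compatibility then reduces to $\GaK$-invariance of the residue pairing, i.e.\ $\mathrm{Res}(\gamma(g) \cdot \gamma(f \mrmd \omphi)) = \gamma(\mathrm{Res}(gf \mrmd \omphi))$, which is the usual change-of-variables formula for residues; since $\GaK$ acts trivially on $L/\OL$ this gives the desired compatibility.

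The main obstacle I anticipate is bookkeeping of the $\GaK$-action, in particular making sure that the factor arising when pulling the derivative $\partial [\chLT(\gamma)]_\phi /\partial \omphi$ through the residue pairing matches exactly the action on $\Hom_{\LamK}(\AAK/\piL^n\AAK, \LamK^\vee)$ after taking into account the involution $\iota$ hidden in $\LamK^\vee$ (Remark \ref{gakonpodu}). Once one writes out both actions on a pure tensor $a \otimes \lambda$, the equivariance becomes formal.
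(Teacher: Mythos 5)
Your proposal is correct and follows essentially the same route as the paper: the identification $\OmeAAK/\piL^n\OmeAAK \cong (\AAK/\piL^n\AAK)^{\vee}$ is obtained from the residue-pairing duality (which the paper simply cites as an analogue of \cite[Lemma 3.5]{SV15} and you spell out), and the inclusion into $\ODK(\AAK/\piL^n\AAK)$ comes from the same tensor-hom adjunction $f \mapsto (a \mapsto f_a)$, with continuity of $f_a$ checked by factoring it through the multiplication map $\lambda \mapsto \lambda a$. Your extra discussion of the $\GaK$-equivariance (invariance of the residue under the substitution $[\chLT(\gamma)]_\phi$ plus the involution bookkeeping on $\LamK^{\vee}$) is consistent with what the paper leaves as ``easy to check.''
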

						
	\begin{mydef}
	Let $M$ be a topological $\AAK$-module with a continuous and semilinear action from $\GaK$. We define
	\[ \CD(M) \coloneqq \Hom_{\AAK}(M,\OmeAAK \otimes_{\AAK} \BBK/\AAK).\]
	And we define the $\GaK$-action on $\CD(M)$ to be
	\[ (\gamma \cdot f)(m) \coloneqq \gamma(f(\gamma^{-1}(m))),\]
	where $\GaK$ acts diagonal on the tensor product.
	\end{mydef}
	
	\begin{rem}
	Using the isomorphism $ \AAK(\chLT) \to \OmeAK, \ f \otimes t_0 \mapsto f \gLT \mrmd Z$ we can identify $\CD(M)$, for $M$ as above,
	with
	\[ \Hom_{\AAK}(M,\BBK/\AAK(\chLT)).\]
	\end{rem}
	
	\begin{lem} \label{incdual2}
	Let $M$ be a discrete $\AAK$-module with a continuous and semilinear action from $\GaK$
	such that $M=\varinjlim_m M_m$ where $M_m =\ker(\mu_{\piL^m})$.
	Then we have a $\GaK$-linear inclusion
		\[ \begin{xy} \xymatrix{
				\CD(M) \ar@{^{(}->}[r] & \ODK(M).					
		} \end{xy} \]
	\begin{proof}
	For $m \in \NN$ we obtain with the tensor-hom adjunction
	\begin{align*}
		\ODK(M_m)
			& \cong \Hom_{\Lambda_K}(M_m,(\Lambda_K)^{\vee}) \\
			&\cong \Hom_{\LamK}(M_m \otimes_{\AAK} \AAK/\piL^m\AAK, (\LamK)^{\vee})\\
			&\cong \Hom_{\AAK}(M_m, \Hom_{\Lambda_K}(\AAK/\piL^m\AAK/,(\LamK)^{\vee})).
	\end{align*}
	\hyperref[incdual1]{Lemma \ref*{incdual1}} then implies, that there is an inclusion
	\[ \begin{xy} \xymatrix{
	\Hom_{\AAK}(M_m,\OmeAAK/\piL^m\OmeAAK) \ar@{^{(}->}[r] & \ODK(M_m).			
	} \end{xy} \]
	But since $\piL^m M_m = 0$ it is
	\[ \Hom_{\AAK}(M_m,\OmeAAK/\piL^m\OmeAAK)=
		\Hom_{\AAK}(M_m,\OmeAAK \otimes_{\AAK}  \BBK/\AAK),\]
	i.e. we have an inclusion $\CD(M_m) \hookrightarrow \ODK(M_m)$. Since $\Hom_R(-,X)$
	commutes with limits for arbitrary rings $R$ and $R$-modules $X$, we get the desired inclusion
	$\CD(M) \hookrightarrow \ODK(M)$ by applying limits.
	\end{proof}
	\end{lem}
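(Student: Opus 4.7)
The plan is to reduce the statement to the torsion pieces $M_m = \ker(\mu_{\piL^m})$ and then assemble everything by a projective limit. Since $M = \varinjlim_m M_m$ and the Hom-functors $\Hom_{\AAK}(-,X)$ and $\Hom_{\LamK}(-,Y)$ both turn direct limits into inverse limits, it suffices to produce, for each $m\geq 1$, a $\GaK$-linear inclusion $\CD(M_m)\hookrightarrow \ODK(M_m)$ which is compatible with the transition maps induced by $M_m\hookrightarrow M_{m+1}$.

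For fixed $m\geq 1$ I would start by using $\piL^m M_m = 0$ together with the tensor-hom adjunction to rewrite
\[ \ODK(M_m) = \Hom_{\LamK}(M_m,\LamK^{\vee}) \cong \Hom_{\LamK}\bigl(M_m\otimes_{\AAK}\AAK/\piL^m\AAK,\LamK^{\vee}\bigr) \cong \Hom_{\AAK}\bigl(M_m,\Hom_{\LamK}(\AAK/\piL^m\AAK,\LamK^{\vee})\bigr). \]
This is an isomorphism of $\GaK$-modules once one equips both sides with the diagonal action, which is routine.

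Next, the previous Lemma \ref{incdual1} supplies a $\GaK$-linear inclusion
\[ \OmeAAK/\piL^m\OmeAAK \hookrightarrow \Hom_{\LamK}(\AAK/\piL^m\AAK,\LamK^{\vee}), \]
and applying the (left exact) functor $\Hom_{\AAK}(M_m,-)$ yields a $\GaK$-linear inclusion
\[ \Hom_{\AAK}(M_m,\OmeAAK/\piL^m\OmeAAK) \hookrightarrow \ODK(M_m). \]
Because $\piL^m M_m=0$, any $\AAK$-module homomorphism from $M_m$ into $\OmeAAK\otimes_{\AAK}\BBK/\AAK$ automatically factors through the $\piL^m$-torsion of the target, which is canonically $\OmeAAK/\piL^m\OmeAAK$. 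Hence the left-hand side of the last display coincides with $\CD(M_m)$, giving the torsion case.

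The main things to verify are the naturality of these identifications in $m$ (so that the inclusions $\CD(M_m)\hookrightarrow \ODK(M_m)$ are compatible with the transition maps) and that the $\GaK$-action is preserved at each step; both boil down to unwinding the diagonal action on Hom-modules as described in Remark \ref{actonhom} and Remark \ref{actonlamod}. Once this is done, passing to $\varprojlim_m$ yields the $\GaK$-linear inclusion $\CD(M)\hookrightarrow \ODK(M)$, as desired. The only mildly delicate point is the torsion identification $\Hom_{\AAK}(M_m,\OmeAAK/\piL^m\OmeAAK) = \Hom_{\AAK}(M_m,\OmeAAK\otimes_{\AAK}\BBK/\AAK)$, which however is immediate from $\piL^m M_m=0$ since $\OmeAAK/\piL^m\OmeAAK$ is precisely the $\piL^m$-torsion subgroup of $\OmeAAK\otimes_{\AAK}\BBK/\AAK$.
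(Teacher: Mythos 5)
Your proposal is correct and follows essentially the same route as the paper's proof: the tensor--hom adjunction to rewrite $\ODK(M_m)$, the inclusion from Lemma \ref{incdual1} applied via the left exact functor $\Hom_{\AAK}(M_m,-)$, the identification $\Hom_{\AAK}(M_m,\OmeAAK/\piL^m\OmeAAK)=\CD(M_m)$ using $\piL^m M_m=0$, and passage to $\varprojlim_m$. Your explicit remark that $\OmeAAK/\piL^m\OmeAAK$ is the $\piL^m$-torsion of $\OmeAAK\otimes_{\AAK}\BBK/\AAK$, and your attention to compatibility in $m$ and $\GaK$-equivariance, just spell out points the paper leaves implicit.
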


	\begin{lem} \label{incdual3}
	Let $A$ be a cofinitely generated $\OL$-module with a continuous action from $G_K$.
	Then we have
	\[ \CD(\MAK(A))\cong \MAK(A^{\vee}(\chLT)).\]
	This isomorphism respects the action from $\GaK$.
	\begin{proof}
	As usual we write $A=\varinjlim_{m} A_m$ with $A_m = \ker(\mu_{\piL^m})$. By an analogue of \cite[Lemma 3.6, p.\,11--12]{SV15} we have an isomorphism
	\[ \CD(\MAK(A_m)) \cong \MAK(A_m)^{\vee},\]
	which is $\GaK$-linear by similar arguments as   in (the proofs of)  \cite[Corollary 3.18, Proposition 3.19]{SV15}.
	\hyperref[remdual]{Remark \ref*{remdual}} says that we have a $\GaK$-linear isomorphism
	\[ \MAK(A_m)^{\vee} \cong \MAK((A_m)^{\vee}(\chLT)).\]
	Combining these results gives us the $\GaK$-linear isomorphism
	\[ \CD(\MAK(A_m))\cong \MAK((A_m)^{\vee}(\chLT)).\]
	Applying limits now gives the desired result.
	\end{proof}
	\end{lem}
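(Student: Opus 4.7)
The plan is to reduce to the finite length (torsion) case via the colimit decomposition $A = \varinjlim_m A_m$ from Proposition \ref{structcof}, apply the already-established torsion version of the identification (cited as an analogue of \cite[Lemma 3.6]{SV15} together with Remark \ref{remdual}), and then pass to the limit on both sides. The key point is that the two sides of the desired isomorphism behave dually with respect to the colimit, which matches because $\CD = \Hom_{\AAK}(-,\Omega^1_{\AAK}\otimes_{\AAK}\BBK/\AAK)$ converts colimits in its first argument to inverse limits, while $A \mapsto A^{\vee}(\chLT)$ converts colimits to inverse limits by Pontrjagin duality (Proposition \ref{pontrjdual}).

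First I would fix the torsion case. For $m \in \NN$ the module $A_m$ is a finitely generated $\OL$-module of finite length, so the cited analogue of \cite[Lemma 3.6]{SV15} gives a natural isomorphism
\[ \CD(\MAK(A_m)) \cong \MAK(A_m)^{\vee}. \]
Combined with Remark \ref{remdual}, which supplies the natural isomorphism $\MAK(A_m)^{\vee} \cong \MAK(A_m^{\vee}(\chLT))$, this yields a natural isomorphism
\[ \CD(\MAK(A_m)) \cong \MAK(A_m^{\vee}(\chLT)), \]
and one checks $\GaK$-equivariance along the same lines as in (the proofs of) \cite[Corollary 3.18, Proposition 3.19]{SV15}, since each constituent isomorphism is built out of the canonical pairings, the Pontrjagin duality pairing, and the natural twist by $\chLT$, all of which are $\GaK$-equivariant by construction.

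Next I would assemble these torsion isomorphisms into a compatible inverse system and pass to the limit. Since $\MAK$ is an exact equivalence of categories (Theorem \ref{equivcat}) and a left adjoint to $\VK$, it commutes with colimits, so $\MAK(A) = \varinjlim_m \MAK(A_m)$. Applying $\CD = \Hom_{\AAK}(-,\Omega^1_{\AAK}\otimes_{\AAK}\BBK/\AAK)$ and using that $\Hom_{\AAK}(-,X)$ turns colimits in the first variable into limits, one obtains
\[ \CD(\MAK(A)) \cong \varprojlim_m \CD(\MAK(A_m)). \]
On the other side, Pontrjagin duality (Proposition \ref{pontrjdual}) gives $A^{\vee} = \varprojlim_m A_m^{\vee}$ as topological $\OL$-modules with $G_K$-action, and twisting by $\chLT$ is compatible with limits; then, since the transition maps of the system $(A_m^{\vee}(\chLT))_m$ are surjective homomorphisms of finitely generated torsion $\OL$-modules, $\MAK$ applied to this inverse system yields an inverse system of $(\phKL,\GaK)$-modules whose limit coincides with $\MAK(A^{\vee}(\chLT))$ (this can be verified directly on the level of the defining formula $\MAK(V) = (\AAA\otimes_{\OL} V)^{H_K}$, since $\AAA\otimes_{\OL}-$ commutes with such limits of finite length modules and $(-)^{H_K}$ commutes with all limits). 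Hence
\[ \MAK(A^{\vee}(\chLT)) \cong \varprojlim_m \MAK(A_m^{\vee}(\chLT)). \]

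Putting everything together, the naturality of the torsion case isomorphism makes the diagram
\[ \begin{xy} \xymatrix{
    \CD(\MAK(A_{m+1})) \ar[r]^-{\cong} \ar[d] & \MAK(A_{m+1}^{\vee}(\chLT)) \ar[d]\\
    \CD(\MAK(A_m)) \ar[r]^-{\cong} & \MAK(A_m^{\vee}(\chLT))
} \end{xy} \]
commute for every $m$, and the passage to the inverse limit produces the desired natural $\GaK$-equivariant isomorphism
\[ \CD(\MAK(A)) \cong \MAK(A^{\vee}(\chLT)). \]
The main technical obstacle, which is resolved by the two commutation-with-limit/colimit observations above, is keeping track of how $\CD$ and $A \mapsto A^{\vee}(\chLT)$ each swap direct and inverse limits; once this matching is in place, the lemma reduces formally to the finite length statement.
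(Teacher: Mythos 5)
Your proposal follows the same route as the paper's proof: reduce to the torsion pieces $A_m$, invoke the analogue of [SV15, Lemma 3.6] together with Remark \ref{remdual} for the $\GaK$-equivariant isomorphism $\CD(\MAK(A_m))\cong\MAK(A_m^{\vee}(\chLT))$, and pass to the limit. The only difference is that you spell out the limit bookkeeping (that $\CD$ turns the colimit $\MAK(A)=\varinjlim_m\MAK(A_m)$ into an inverse limit, and that $\MAK(A^{\vee}(\chLT))\cong\varprojlim_m\MAK(A_m^{\vee}(\chLT))$ via $\piL$-adic completeness of finitely generated $\AAA$-modules) which the paper compresses into ``applying limits now gives the desired result''; this is a correct and welcome elaboration, not a different argument.
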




	\begin{prop} \label{iwpsi}
	Let $T \in \RepGKOLfg$ and set
	\[ \Ccb_{\psi}(\MAK(T(\tau^{-1}))) \coloneqq \Ccb_{\CD(\varphi)}(\CD(\MAK(T^{\vee}(1)))[-1].\]
	Then the inclusion of complexes
	\[ \begin{xy} \xymatrix@R-1pc{
	\Ccb_{\psi}(\MAK(T(\tau^{-1}))) \ar@{^{(}->}[r] &
		 \Ccb_{\ODK(\varphi)}(\ODK(\MAK(T^{\vee}(1))))[-1] \ar@{=}[d]\\
		&\ODK(\Ccb_{\varphi}(\MAK(T^{\vee}(1))))[-2]
	}\end{xy} \]
	is a quasi isomorphism. So in particular we have an isomorphism in the derived
	category $\DDb(\LamK-{\mathbf{Mod}})$
	\[ \RGa(\Ccb_{\psi}(\MAK(T(\tau^{-1})))) \cong \RGaiwb(\Kin|K,T).\]
	\begin{proof}
	With $T^{\vee}(1)=T(-1)^{\vee}$, the above \hyperref[incdual2]{Lemma \ref*{incdual2}} and
	\hyperref[incdual3]{Lemma \ref*{incdual3}} imply
	\[ \begin{xy} \xymatrix{
	\MAK(T(\tau^{-1}) \ar[r]^-{\cong} & \CD(\MAK(T^{\vee}(1)) \ar@{^{(}->}[r] & \ODK(\MAK(T^{\vee}(1)).
	} \end{xy} \]
	The cited lemmata also show that both homomorphisms are $\GaK$-linear.
	Let \linebreak $\mathcal{M} \coloneqq \MAK(T^{\vee}(1))$ then \hyperref[secdual]{Proposition \ref*{secdual}}
	together with \hyperref[loctat13]{Theorem \ref*{loctat13}} implies the commutative diagram
	with exact rows and $\LamK$-linear vertical homomorphisms
	\[ \begin{xy} \xymatrix@C-1.1pc@R-0.3pc{
		0 \ar[r] & \HIW^1(\Kin|K,T) \ar[r] & \ODK\left(\mathcal{M}\right) \ar[rr]^{\ODK(\varphi)-\id} & &
		\ODK\left(\mathcal{M}\right) \ar[r] & \HIW^2(\Kin|K,T) \ar[r] &0 \\
		0 \ar[r] & \HIW^1(\Kin|K,T) \ar[r] \ar@{=}[u]& \MAK(T(\tau^{-1})) \ar@{^{(}->}[u]\ar[rr]^{\psi-\id} & &
		\MAK(T(\tau^{-1})) \ar[r] \ar@{^{(}->}[u] & \HIW^2(\Kin|K,T) \ar[r] \ar@{=}[u] &0.
	} \end{xy} \]
	This gives the desired quasi isomorphism. The second statement then follows from
	\hyperref[iwdcts]{Lemma \ref*{iwdcts}} by using \hyperref[qind]{Proposition \ref*{qind}}.
	\end{proof}
	\end{prop}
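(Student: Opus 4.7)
The plan is to reduce the quasi-isomorphism claim to identifying the cohomology of both sides with generalized Iwasawa cohomology and then compare via a diagram chase. Both complexes in question are concentrated in two consecutive degrees (after the shifts, namely in degrees $1$ and $2$), so their cohomology is just the kernel and cokernel of the respective differentials $\psi-1$ and $\ODK(\varphi)-1$. Hence it suffices to show that the inclusion induces an isomorphism on the degree-$1$ and degree-$2$ cohomology groups.

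First I would invoke \hyperref[incdual2]{Lemma \ref*{incdual2}} and \hyperref[incdual3]{Lemma \ref*{incdual3}} to obtain a $\GaK$-linear chain of maps
\[ \MAK(T(\tau^{-1})) \xrightarrow{\ \cong\ } \CD(\MAK(T^{\vee}(1))) \hookrightarrow \ODK(\MAK(T^{\vee}(1))),\]
which yields the inclusion of two-term complexes in question. The essential observation is that under this inclusion the endomorphism $\psi$ on $\MAK(T(\tau^{-1}))$ corresponds to $\ODK(\varphi)$ on the target; this is the duality between $\varphi$ and $\psi$ recorded in spirit in \hyperref[remdual]{Remark \ref*{remdual}} (and built into the proof of \hyperref[loctat13]{Theorem \ref*{loctat13}} via local Tate duality).

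Then I would assemble the commutative diagram whose top row is the four-term sequence from \hyperref[secdual]{Proposition \ref*{secdual}} (computing $\HIW^{1}$ and $\HIW^{2}$ via $\ODK$) and whose bottom row is the four-term sequence from \hyperref[loctat13]{Theorem \ref*{loctat13}} (computing the same Iwasawa cohomology via $\psi-1$), with the identity on the outer $\HIW^{i}$'s and the inclusions from \hyperref[incdual2]{Lemma \ref*{incdual2}}, \hyperref[incdual3]{Lemma \ref*{incdual3}} on the middle terms. Commutativity of this diagram, combined with the 5-lemma, forces the inclusion to be a quasi-isomorphism. For the last statement, I would first apply \hyperref[iwdcts]{Lemma \ref*{iwdcts}} to rewrite $\RGaiwb(\Kin|K,T)$ as $\ODK(\RGactsb(G_K,\FGK(T^\vee(1))))[-2]$, then apply $\ODK$ to the $\LamK$-linear quasi-isomorphism $\RGa(\CphKLb(\MAK(T^{\vee}(1)))) \simeq \RGactsb(G_K,\FGK(T^{\vee}(1)))$ coming from \hyperref[qind]{Proposition \ref*{qind}} (using that $\ODK$ is exact by \hyperref[matduex]{Remark \ref*{matduex}} so it preserves quasi-isomorphisms).

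The main technical obstacle will be verifying that the middle square of the comparison diagram really commutes, i.e.\ that the inclusion intertwines $\psi-\id$ with $\ODK(\varphi)-\id$; this is not formal, since the map $\psi$ is defined via the trace in $\BK/\phKL(\BK)$, whereas $\ODK(\varphi)$ is the dual of Frobenius on $\LamK$. Tracing through the identifications in \hyperref[incdual3]{Lemma \ref*{incdual3}} (which ultimately relies on an analogue of \cite[Lemma 3.6]{SV15}) is where the real content lies. A secondary but also delicate point is to check that all identifications are $\LamK$-linear so that the final isomorphism lives in $\DDb(\LamK\text{-}\mathbf{Mod})$ and not merely in $\DDb(\OL\text{-}\mathbf{Mod})$; here one uses the $\LamK$-linearity already established in \hyperref[qind]{Proposition \ref*{qind}} together with the $\GaK$-equivariance statements in Lemmata \ref{incdual2} and \ref{incdual3}.
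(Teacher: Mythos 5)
Your proposal follows essentially the same route as the paper: the same chain $\MAK(T(\tau^{-1}))\cong\CD(\MAK(T^{\vee}(1)))\hookrightarrow\ODK(\MAK(T^{\vee}(1)))$ from Lemmata \ref{incdual2} and \ref{incdual3}, the same comparison of the four-term exact sequences from Proposition \ref{secdual} and Theorem \ref{loctat13} with identities on the outer $\HIW^{i}$-terms (the paper leaves the commutativity you flag equally implicit), and the same derivation of the derived-category statement from Lemma \ref{iwdcts}, Proposition \ref{qind} and exactness of $\ODK$. This matches the paper's argument in substance, with only presentational differences such as naming the 5-lemma explicitly.
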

		
\begin{qes}
It follows that, for $A$ cofinitely generated over $\mathcal{O}_L$ and with continuous $G_K$-action, the complex
\[\xymatrix@C=0.5cm{
  0 \ar[r] & \ODK(\MAK(A))/\CD(\MAK(A)) \ar[rr]^{\ODK(\varphi)-\id } && \ODK(\MAK(A))/\CD(\MAK(A)) \ar[r] & 0 }\] is acyclic, in particular for   $ \CD(\MAK(A))= \AAK.$ Can one show this directly, without going the intricate way using Matlis duality and the Nekovar's results? Moreover, is it realistically conceivable that even  $\ODK(\MAK(A))=\CD(\MAK(A)) $  holds?
\end{qes}
		
	\begin{thm} \label{thmpsicts}
	Let $T \in \RepGKOLfg$ and let
	$K \subseteq K' \subseteq \Kin$ an intermediate field, finite over $K$, such that
	$\GaKp\coloneqq \Gal(\Kin|K')$ is isomorphic to some $\Zp^r$. Then we have an
	isomorphism in the derived category $\DDp(\OL\text{-}{\mathbf{Mod}})$
	\[ \RGaiwb(\Kin|K,T) \tender_{\Lambda_{K'}} \OL
		\cong \RGactsb(G_{K'},T).\]
	In particular, we have
	\[ \RGa(\Ccb_{\psi}(\mathcal{D}_{K|L}(T(\tau^{-1})))
		\tender_{\Lambda_{K'}} \OL
		\cong \RGactsb(G_{K'},T).\]
	\begin{proof}
	The first assertion is 
	\cite[(8.4.8.1) Proposition, p.\,168]{neksc}. Note that we have an isomorphism
	$\RGaiwb(\Kin|K',T) \cong \RGaiwb(\Kin|K,T)$ in
	$\DDp(\Lambda_{K'}\text{-}\mathbf{Mod})$ since the intermediate fields of $\Kin|K'$ are
	cofinal in the intermediate fields of $\Kin|K$.
	The second assertion then is an application of
	\hyperref[iwpsi]{Proposition \ref*{iwpsi}}.
	\end{proof}
	\end{thm}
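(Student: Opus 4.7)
The plan is to reduce the general statement to Nekov\'a\v{r}'s descent result and then combine with the $\psi$-description of Iwasawa cohomology already established. Since the directed system of finite intermediate extensions of $\Kin|K'$ is cofinal in the directed system of finite intermediate extensions of $\Kin|K$, the definition of generalized Iwasawa cohomology (cf.\ \hyperref[Iwfinext]{Remark \ref*{Iwfinext}}) together with the derived-level description $\RGaiwb(\Kin|K,T) = \RGactsb(G_K,\FGK(T))$ gives a canonical isomorphism
\[
\RGaiwb(\Kin|K,T) \;\cong\; \RGaiwb(\Kin|K',T)
\]
in $\DDp(\Lambda_{K'}\text{-}\mathbf{Mod})$. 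Hence I may assume $K=K'$ from the start and treat $\GaK \cong \Zp^r$.

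Next I would carry out the descent $-\tender_{\Lambda_{K'}}\OL$ on the right-hand complex \linebreak $\RGactsb(G_{K'},\FGKp(T))$. The key input is the identification $\FGKp(T)\cong T\otimes_{\OL}\Lambda_{K'}^{\iota}$ of $\LamKp$-modules with semilinear $G_{K'}$-action (\hyperref[FCKLam]{Remark \ref*{FCKLam}}). Since $\GaKp\cong\Zp^r$ is a Poincar\'e duality group of cohomological dimension $r$, the augmentation $\Lambda_{K'}\to\OL$ has a finite, explicit Koszul resolution by free $\Lambda_{K'}$-modules (given by topological generators $\gamma_1-1,\dots,\gamma_r-1$ of the augmentation ideal). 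Tensoring $\RGactsb(G_{K'},\FGKp(T))$ with this Koszul resolution and using that continuous cochains commute with finite direct sums and with multiplication by $\gamma_i-1$ produces the Koszul-style total complex on $\Cctsb(G_{K'},T)$, which in turn computes $\RGactsb(G_{K'},T)$. This is essentially the content of \cite[(8.4.8.1) Proposition, p.\,168]{neksc}, and one simply has to check that Nekov\'a\v{r}'s framework applies to our situation (using \hyperref[qind]{Proposition \ref*{qind}} to identify the two sides as genuine objects in $\DDp(\Lambda_{K'}\text{-}\mathbf{Mod})$). The main obstacle here is bookkeeping: ensuring that the semilinear $G_{K'}$-action, the derived tensor product and the continuous topology interact in the way Nekov\'a\v{r}'s machinery requires, in particular that the Koszul resolution behaves well against the profinite completion defining $\Lambda_{K'}$.

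For the ``in particular'' part, I would simply substitute \hyperref[iwpsi]{Proposition \ref*{iwpsi}}, which gives
\[
\RGaiwb(\Kin|K,T) \;\cong\; \RGa\!\left(\Ccb_{\psi}(\MAK(T(\tau^{-1})))\right)
\]
in $\DDb(\Lambda_K\text{-}\mathbf{Mod})$, and hence \emph{a fortiori} in $\DDp(\Lambda_{K'}\text{-}\mathbf{Mod})$ after restriction of scalars along $\Lambda_{K'}\hookrightarrow\Lambda_K$. Applying $-\tender_{\Lambda_{K'}}\OL$ to both sides and invoking the descent step above yields the second isomorphism. The whole argument is therefore essentially a two-line citation, but the first paragraph's cofinality reduction and the verification that the cited Nekov\'a\v{r} statement applies verbatim in the Lubin-Tate setting (as opposed to the cyclotomic one) constitute the content one has to record.
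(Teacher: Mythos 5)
Your proposal follows essentially the same route as the paper: the cofinality reduction identifying $\RGaiwb(\Kin|K,T)$ with $\RGaiwb(\Kin|K',T)$, the citation of \Nekovars (8.4.8.1) Proposition for the descent isomorphism $\RGaiwb(\Kin|K',T)\tender_{\Lambda_{K'}}\OL \cong \RGactsb(G_{K'},T)$, and the substitution of Proposition \ref{iwpsi} for the $\psi$-complex in the second assertion. The Koszul-resolution discussion merely unpacks the cited black box (note only the small notational slip: the coefficient object entering the definition of $\RGaiwb$ is $\mathcal{F}_{\Gamma_{K'}}(T)\cong T\otimes_{\OL}\Lambda_{K'}^{\iota}$ from Remark \ref{FCKLam}, not the colimit-type module $F_{\Gamma_{K'}}(T)$), so nothing substantive differs from the paper's argument.
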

	
	 Using \cite[Prop.\ 1.6.5 (3)]{fukaya-kato} we obtain the following variant.
\begin{thm}
Let $T \in \RepGKOLfg$ and let
	$K \subseteq K' \subseteq \Kin$ any intermediate field, finite over $K$. Then we have an
	isomorphism in the derived category $\DDp(\OL\text{-}{\mathbf{Mod}})$
\[ \RGaiw(\Kin|K,T) \tender_{\Lambda_K} \OL[\Gal(K'|K)]  \cong \RGacts(G_{K'},T)\] in particular  \[ \RGa(\Ccb_{\psi}(\mathcal{D}_{K|L}(T(\tau^{-1})))
		\tender_{\Lambda_K} \OL   \cong \RGacts(G_{K},T).\]
	\end{thm}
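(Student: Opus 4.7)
The plan is to reduce the statement to Theorem \ref{thmpsicts} by a change-of-rings argument, using the cited Fukaya--Kato proposition as the technical bridge that removes the hypothesis $\Gamma_{K'} \cong \Zp^r$.

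First I would choose an auxiliary intermediate field. Since $\Gamma_K$ embeds into $\OL^\times$ via the Lubin--Tate character $\chLT$, it is a $p$-adic Lie group, so every sufficiently small open subgroup is uniform pro-$p$ and in particular isomorphic to $\Zp^r$ for some $r$. Hence we can choose a finite extension $K''$ of $K'$ inside $\Kin$ such that $\Gamma_{K''} := \Gal(\Kin|K'') \cong \Zp^r$. Theorem \ref{thmpsicts} applied to $K''$ then yields
\[
\RGaiw(\Kin|K,T) \tender_{\Lambda_{K''}} \OL \; \cong \; \RGactsb(G_{K''},T)
\]
in $\DDp(\OL\text{-}\mathbf{Mod})$.

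Next, I would rewrite the left-hand side of the desired isomorphism using the identification
\[
\Lambda_K \otimes_{\Lambda_{K'}} \OL \; \cong \; \OL[\Gamma_K/\Gamma_{K'}] \; = \; \OL[\Gal(K'|K)],
\]
where $\OL$ is viewed as the trivial $\Lambda_{K'}$-module via the augmentation. Because $\Lambda_K$ is a free (hence flat) $\Lambda_{K'}$-module of rank $[\Gamma_K:\Gamma_{K'}]$, no derived subtleties arise, and associativity of the derived tensor product gives
\[
\RGaiw(\Kin|K,T) \tender_{\Lambda_K} \OL[\Gal(K'|K)] \; \cong \; \RGaiw(\Kin|K,T) \tender_{\Lambda_{K'}} \OL.
\]
So it remains to identify the right-hand side with $\RGactsb(G_{K'},T)$ for an arbitrary finite intermediate $K'$.

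This last identification is exactly the input from \cite[Prop.\ 1.6.5 (3)]{fukaya-kato}, which (in our setup) says that for any finite intermediate extension $K'|K$ inside $\Kin$ the derived base change of $\RGaiw(\Kin|K,T)$ along $\Lambda_K \to \Lambda_{K'} \to \OL$ computes continuous $G_{K'}$-cohomology of $T$. Alternatively, one can see this by combining the $\Zp^r$-case at $K''$ above with a Shapiro/induction-type argument: since $\OL[\Gal(K''|K')] \cong \Lambda_{K'} \otimes_{\Lambda_{K''}} \OL$, applying $- \tender_{\Lambda_{K'}} \OL$ to
\[
\RGaiw(\Kin|K,T)\tender_{\Lambda_{K''}}\OL \;\cong\; \RGactsb(G_{K''},T) \;\cong\; \RGactsb(G_{K'},\mathrm{Ind}_{G_{K''}}^{G_{K'}}T)
\]
(the last isomorphism by Shapiro) together with the trivialization $\mathrm{Ind}_{G_{K''}}^{G_{K'}}T \otimes_{\OL[\Gal(K''|K')]}^{\mathbf L} \OL \simeq T$ produces the desired comparison. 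The main obstacle in executing this plan is verifying the precise form of Fukaya--Kato's base-change statement in the present semilinear setting (in particular compatibility with the $\Gamma_K$-action induced by $\Adt$ in Proposition \ref{qind}); once this is in place, the ``in particular'' assertion follows by substituting the quasi-isomorphism $\RGa(\Ccb_{\psi}(\mathcal{D}_{K|L}(T(\tau^{-1})))) \cong \RGaiw(\Kin|K,T)$ of Proposition \ref{iwpsi} and taking $K' = K$.
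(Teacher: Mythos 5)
Your reduction step is fine: since $\Lambda_K$ is finite free over $\Lambda_{K'}$ and $\Lambda_K\otimes_{\Lambda_{K'}}\OL\cong\OL[\Gal(K'|K)]$, associativity of $\tender$ does give $\RGaiw(\Kin|K,T)\tender_{\Lambda_K}\OL[\Gal(K'|K)]\cong\RGaiw(\Kin|K,T)\tender_{\Lambda_{K'}}\OL$. The gap lies in what you then attribute to Fukaya--Kato. Their Prop.\ 1.6.5\,(3) is only a base-change statement for continuous cohomology: it lets you move a derived tensor product inside $\RGacts(G_K,-)$, nothing more; it does not by itself identify the base-changed Iwasawa complex with $\RGactsb(G_{K'},T)$. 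To make it bite one must first use $\RGaiw(\Kin|K,T)=\RGacts(G_K,\FCK(T))$ together with the identification $\FCK(T)\cong\Lambda_K^{\iota}\otimes_{\OL}T$ of Remark \ref{FCKLam}, so that the coefficients form a finitely generated $\Lambda_K$-module; base change then yields $\RGacts\bigl(G_K,\OL[\Gal(K'|K)]^{\iota}\otimes_{\OL}T\bigr)$, and one still needs Shapiro's Lemma to pass to $\RGactsb(G_{K'},T)$. These two steps, which you explicitly defer (``verifying the precise form of Fukaya--Kato's base-change statement in the present semilinear setting''), are exactly the content of the paper's proof; in particular the paper argues directly, without Theorem \ref{thmpsicts} and without any auxiliary field, by the chain: definition of $\RGaiw$, Remark \ref{FCKLam}, Fukaya--Kato base change, the trivial identification of the coefficient module, and Shapiro.

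Your fallback route via an auxiliary $K''$ with $\Gamma_{K''}\cong\Zp^r$ has a further gap of its own: to apply $-\tender_{\OL[\Gal(K''|K')]}\OL$ to the isomorphism of Theorem \ref{thmpsicts} at level $K''$, that isomorphism must be equivariant for the residual $\Gal(K''|K')$-action, i.e.\ an isomorphism in $\DDp(\OL[\Gal(K''|K')]\text{-}\mathbf{Mod})$; but the theorem, resting on \Nekovars (8.4.8.1), is stated and proved only in $\DDp(\OLMod)$, so this descent is not available without upgrading the statement. Likewise the Shapiro identification $\RGactsb(G_{K''},T)\cong\RGactsb\bigl(G_{K'},\mathrm{Ind}_{G_{K''}}^{G_{K'}}T\bigr)$ would have to be matched with the $\OL[\Gal(K''|K')]$-module structures on both sides before you may tensor it down. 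The final remark, that the ``in particular'' clause follows by taking $K'=K$ and substituting the quasi-isomorphism of Proposition \ref{iwpsi}, is correct once the main isomorphism is established.
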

\begin{proof} We have the following isomorphisms
 \begin{align*}
 \RGacts(G_K,\FCK(T)) \tender_{\Lambda_K} \OL[\Gal(K'|K)]&\cong
	\RGacts(G_K,\Lambda_K^{\iota} \otimes_{\OL} T) \tender_{\Lambda_K} \OL[\Gal(K'|K)]\\
		& \cong  \RGacts(G_K, \OL[\Gal(K'|K)] \otimes_{\Lambda_K} \left(\Lambda_K^{\iota} \otimes_{\OL} T\right)) \\
		& \cong \RGacts(G_K, \OL[\Gal(K'|K)]^{\iota} \otimes_{\OL} T) \\
		& \cong \RGacts(G_{K'},T)
	\end{align*}
where the first isomorphisms comes from Remark \ref{FCKLam}, the second one from (loc.\ cit.), the third one is trivial while the last one is Shapiro's Lemma.
\end{proof}

	\begin{rem}
	We want to give a more concrete statement of the above \emph{\hyperref[thmpsicts]{Theorem \ref*{thmpsicts}}}.
	So let as there $T \in \RepGKOLfg$ and
	$K \subseteq K' \subseteq \Kin$ an intermediate field, finite over $K$, such that
	$\GaKp\coloneqq \Gal(\Kin|K')$ is isomorphic to some $\Zp^r$. Let furthermore $\gamma_1,\dots,\gamma_r$
	be a set of generators of $\GaKp$. The {\bfseries \emph{Koszul-complex}} $K_{\bullet}(\LamKp)$
	of $\LamKp$ then is the complex
	\[ \begin{xy} \xymatrix{
	0 \ar[r] & \bigwedge^r \LamKp  \ar[r]^-{\mrmd_r} & \bigwedge^{r-1} \LamKp \ar[r]^-{\mrmd_{r-1}}
	& \cdots \ar[r] & \LamKp \ar[r]^-{\mrmd_1} & \OL \ar[r] & 0 ,
	} \end{xy} \]
	where $\bigwedge^{i} \LamKp$ denotes the $i$-th exterior algebra of $\LamKp$ and
	\[ d_i(x_1 \wedge \cdots \wedge x_i) = \sum_{j=1}^{i} (-1)^{j+1} \pr(x_j) x_1\wedge \cdots \wedge \widehat{x_j}
	\wedge \cdots \wedge  x_i.\]
	Here $\widehat{(-)}$ denotes that this entry is omitted and $\pr$ denotes the projection \linebreak
	$\LamKp \twoheadrightarrow \LamKp/(\gamma_1-1,\dots,\gamma_r-1) \cong \OL$
	(cf.\ \emph{\cite[Section 15.28]{stacks-project}}). Under the
	(uncanonical) isomorphism $\LamKp \to \OL \llbracket X_1,\dots,X_r \rrbracket, \ \gamma_i-1 \mapsto X_i$
	the above projection becomes the projection to degree zero. Then by
	\emph{\cite[Theorem 16.5, p.\,128--129]{mats1}} the Koszul-complex $K_{\bullet}(\LamKp)$
	of $\LamKp$ is a free resolution of
	$\OL$ and therefore (cf.\ \emph{\cite[Section 15.57, Definition 15.57.15]{stacks-project}})
	$\RGa(\Ccb_{\psi}(\mathcal{D}_{K|L}(T(\tau^{-1}))) \tender_{\Lambda_{K'}} \OL$ is represented by the complex
	\[ \begin{xy} \xymatrix{
	(\Ccb_{\psi}(\mathcal{D}_{K|L}(T(\tau^{-1}))) \otimes_{\LamKp} K_{\bullet}(\LamKp)
	} \end{xy} \]
	which then is isomorphic to the complex
	\[ \begin{xy} \xymatrix@R-1.5pc{
		\Tot\Big(\mathcal{D}_{K|L}(T(\tau^{-1})) \otimes_{\LamKp} K_{\bullet}(\LamKp) \ar[rr]^-{(\psi-\id)\otimes \id}
		 & &\mathcal{D}_{K|L}(T(\tau^{-1})) \otimes_{\LamKp} K_{\bullet}(\LamKp)\Big) \cong\\
		\Tot\Big(K_{\bullet}(\mathcal{D}_{K|L}(T(\tau^{-1}))) \ar[rr]^{K_{\bullet}(\psi)-\id} & &
		K_{\bullet}(\mathcal{D}_{K|L}(T(\tau^{-1})))\Big).
	} \end{xy} \]
	Here $K_{\bullet}(\mathcal{D}_{K'|L}(T(\tau^{-1})))$ denotes the Koszul-complex of
	$\mathcal{D}_{K'|L}(T(\tau^{-1}))$ which is defined in
	an analogous way to the Koszul-complex of $\LamKp$. This last complex then is the generalization of
	the $\psi$-Herr complex from the classical theory.
	\end{rem}

Using the self-duality of the Koszul-complex and an inspection of the complex in the last remark compared to the Pontrjagin dual of the (by local Tate-duality) corresponding $\varphi$-Herr-complex, one can indeed derive now that the differentials in the original $\varphi$-Herr-complex are strict with closed image (at least for finitely generated torsion coefficients). Indeed, its dual complex  has the right cohomology groups, namely the duals of the cohomology groups of the original $\varphi$ by the above results. It would be desirable to show these topological properties directly in order to get a genuine theory within the world of $(\varphi,\Gamma)$-modules.

	
%
	
%



\bibliographystyle{amsplain}
\bibliography{book}{}
\end{document}